\newtheorem{thm}{Theorem}[section]
\newtheorem{theorem}{Theorem}[section]
\newtheorem{corollary}[thm]{Corollary}
\newtheorem{lemma}[thm]{Lemma}
\newtheorem{proposition}[thm]{Proposition}
\newtheorem{definition}[thm]{Definition}
\newtheorem{remark}[thm]{Remark}
\newtheorem{example}[thm]{Example}
\newtheorem{assumption}[thm]{Assumption} 
\numberwithin{equation}{section}
\numberwithin{table}{section}
\numberwithin{figure}{section}
\makeatletter\@addtoreset{equation}{section}
\title{Ergodicity and invariant measure approximation of the stochastic Cahn–Hilliard equation via an explicit fully discrete scheme}
\author{Nan Deng\thanks{Email: deng\_nan@seu.edu.cn}, Yibo Wang\thanks{Email: wangyb@seu.edu.cn}, Wanrong Cao\thanks{Corresponding author. Email address: wrcao@seu.edu.cn. Tel/Fax: +86 25 52090590}\\\\School of Mathematics, Southeast University, Nanjing 210096, P.R.China.} 
\date{}
\begin{document}
\maketitle	

\begin{abstract} 
This paper  investigates the stochastic Cahn--Hilliard equation (SCHE) driven by additive space--time white noise. 
	We first refine the analytical ergodic theory by proving that the continuum equation admits a unique invariant measure in the more regular state space $H_\alpha$, extending the classical result of \cite{da1996stochastic} on the negative Sobolev space $\dot{H}^{-1}_\alpha$. 
	To approximate long-time behaviour, we introduce an explicit fully discrete scheme that combines a finite-difference spatial discretization with a strongly tamed exponential Euler method in time. 
	Uniform-in-time moment bounds in the $L^\infty$-norm are established for the numerical solution, and a uniform strong convergence estimate with an explicit rate is derived for the fully discrete approximation. 
	Exploiting a mass-preserving minorization tailored to Neumann boundary conditions, we further show that the numerical scheme is geometrically ergodic and possesses a unique invariant measure, together with polynomial-order error bounds for approximating the exact invariant measure. 
	Strong laws of large numbers are proved for both the continuous and discrete systems, ensuring almost-sure convergence of temporal averages to the corresponding ergodic limits. 
	Numerical experiments corroborate the theoretical findings, including the long-time strong convergence and the accuracy of invariant measure approximation. 
	Overall, the results provide a complete analytical and numerical framework for investigating the long-time statistical behaviour of the SCHE.\\

	\textbf{Keywords:} {Stochastic Cahn--Hilliard equation; ergodicity; fully discrete schemes;  approximation of invariant measures; law of large numbers}\\
	
{\emph{\textbf{MSC 2020}:} 60H15; 65C30; 35Q56; 37A30; 60F15}
\end{abstract}

\section{Introduction} 
\par The stochastic Cahn--Hilliard equation (SCHE)
\begin{subequations}\label{eq:CH_FractionalNoise}
	\begin{numcases}{}
		\frac{\partial u}{\partial t}+\Delta^2 u=\Delta f(u)+\sigma \frac{\partial W}{\partial t}, 
		\quad \text{in } [0,\infty)\times\mathcal{O}, \label{eq:CH_FractionalNoise a}\\
		\frac{\partial u}{\partial n}=\frac{\partial \Delta u}{\partial n}=0,
		\qquad\qquad\qquad\quad \text{on } [0,\infty)\times\partial\mathcal{O}, 
		\label{eq:CH_FractionalNoise b}\\
		u(0,\cdot)=u_0, \label{eq:CH_FractionalNoise c}
	\end{numcases}
\end{subequations}
with a cubic polynomial $f$ and additive space--time white noise 
is a prototypical phase-field model for binary alloys subject to thermal fluctuations 
\cite{Cahn1961,cook1970brownian,langer1971theory}. 
Its fourth-order dissipative structure, non-globally Lipschitz drift, 
and mass conservation under Neumann boundary conditions jointly create substantial analytical 
and numerical challenges, particularly in the study of long-time behaviour and invariant measures.

The ergodic properties of the SCHE have been established only in low-regularity spaces. 
The classical work of \cite{da1996stochastic} proved existence and uniqueness of an invariant measure 
on the negative Sobolev space $\dot{H}^{-1}_\alpha$, and subsequent refinements for singular 
nonlinearities or degenerate noise \cite{debussche2007,debussche2011,goudenege2015} remain confined 
to this setting. 
The only existence result in a more regular space---due to \cite{jiang2014}---does not yield uniqueness. 
Consequently, the ergodic theory of the SCHE in physically relevant Hilbert spaces remains incomplete.

On the numerical side, substantial progress has been made on finite-time discretizations of the SCHE. 
Strong convergence rates have been obtained for additive noise using Galerkin finite element or spectral 
methods combined with backward Euler, accelerated implicit Euler, or tamed exponential Euler schemes 
\cite{Qi2020,CuiHongSun2021,cai2023strong}, 
and for fractional Brownian forcing via finite difference and tamed exponential integrators 
\cite{deng2025}. 
Related advances for multiplicative noise appear in 
\cite{	cui2020absolute, cui2022wellposedness, Hong2024}. 
However, all these works are restricted to finite-time analysis, 
and none address uniform moment bounds, ergodicity, or invariant measure approximation.

Invariant measure approximation for SPDEs is well developed for linear problems and semilinear equations 
with globally Lipschitz or monotone drifts 
\cite{Brehier2016,chen2017approximation,chen2020approximation,Hong2019,CuiHongSun2021}. 
These works typically rely on implicit time discretizations, whose computational cost becomes prohibitive 
for long-time simulations. 
Motivated by this, recent studies have explored explicit exponential or tamed exponential schemes for 
nonlinear SPDEs, yielding polynomial-in-time or uniform moment bounds for the stochastic Allen--Cahn 
and Burgers--Huxley equations 
\cite{Brehier2022,WangZhangCao2025,WangY2024}. 
The unified explicit framework of \cite{Jiang2025} further demonstrates that geometric ergodicity 
can be achieved for a broad class of semilinear SPDEs. 
Nevertheless, none of these approaches apply to the SCHE: 
the cubic nonlinearity appears under an additional Laplacian, which leads to an unbounded operator 
applied to a non-globally Lipschitz drift; 
combined with the fourth-order structure and the mass constraint imposed by the Neumann boundary condition, 
this destroys the dissipativity and minorization arguments used in existing long-time analyses for 
second-order equations such as the stochastic Allen--Cahn equation. 
Consequently, long-time accuracy, ergodicity, and invariant measure approximation for the SCHE remain 
completely open.

These limitations motivate the following fundamental questions:  
(i) Can the SCHE admit a unique invariant measure on a function space more regular than 
$\dot{H}^{-1}_\alpha$, consistent with the physical state space?  
(ii) Can one design an explicit fully discrete scheme that achieves uniform-in-time moment bounds and 
uniform strong convergence, despite the equation’s fourth-order structure and nonlinearity?  
(iii) Can such schemes be shown to possess unique invariant measures that converge quantitatively to 
the exact one, together with convergence of numerical time averages to the ergodic limit?

This paper provides affirmative answers to all three questions.  
First, we refine the analytical theory by deriving uniform-in-time $L^\infty$ moment bounds for the exact 
solution via a pair of perturbed auxiliary equations. 
These bounds enable us to extend the invariant measure theory from $\dot{H}^{-1}_\alpha$ to the more 
regular state space $H_\alpha$ associated with mass conservation. 
We prove existence and uniqueness of an invariant measure in $H_\alpha$, while showing that uniqueness 
fails in $H=L^2(\mathcal{O})$, thereby identifying the precise regularity threshold for ergodicity.

Second, we construct an explicit fully discrete scheme by combining a finite difference spatial 
discretization with a strongly tamed exponential Euler method in time. 
The taming strategy, originally introduced in \cite{WangY2024}, is incorporated into our fourth-order 
setting and allows us to establish uniform-in-time moment bounds for the fully discrete solution---a 
property that has not previously been proved for numerical approximations of the SCHE. 
These bounds form the key ingredient for our long-time analysis. 
On this basis, we prove a uniform strong convergence result
\[
\sup_{t\ge0} \|u(t)-u^{\tau,N}(t)\|_{L^p(\Omega;L^\infty)}
\le C\big(h^{1-\varepsilon} + \tau^{ \frac{3}{8}-\frac{\varepsilon}{2}}\big),
\]
which provides, to the best of our knowledge, the first uniform-in-time strong error estimate---with an 
explicit convergence rate---for an explicit fully discrete scheme applied to the SCHE.

Third, we develop a new ergodicity framework suitable for Neumann boundary conditions. 
Since the conserved mass invalidates the classical minorization condition of \cite{mattingly2002}, 
we introduce a mass-preserving minorization posed on affine hyperplanes and combine it with a coupling 
argument to prove existence, uniqueness, and geometric ergodicity of the numerical invariant measure. 
Furthermore, we obtain a quantitative approximation estimate
\[
\Big| \int\phi \, \mathrm{d}\pi - \int\phi \, \mathrm{d}\tilde{\pi}^{\tau,N} \Big|
\le C\big(h^{1-\varepsilon} + \tau^{\frac{3}{8}-\frac{\varepsilon}{2}}\big),
\]
and show that numerical time averages converge both in expectation and almost surely to the ergodic limit. 
These results imply that long-time statistical properties can be efficiently computed from a single 
trajectory of the explicit scheme.

In summary, this work develops a complete analytical and numerical ergodic theory for the SCHE. 
We identify a more regular state space in which the continuum equation is uniquely ergodic, 
design the first explicit fully discrete scheme with uniform moment bounds and uniform strong convergence, 
and establish invariant measure convergence together with ergodic consistency of numerical time averages. 
Taken together, these results provide new mathematical insight into the long-time behaviour of the SCHE 
and yield a practical computational framework for exploring its invariant states.  

The remainder of the paper is organized as follows.  
Section~\ref{section2} introduces notation, functional settings, and preliminary results.  
Section~\ref{section3} establishes existence and uniqueness of the invariant measure for the SCHE in the 
space $H_\alpha$.  
Section~\ref{section4} presents the fully discrete scheme and proves its uniform strong convergence.  
Section~\ref{section5} develops the numerical ergodicity theory, including geometric ergodicity, 
invariant measure approximation, and convergence of time averages.  
Section~\ref{section6} reports numerical experiments illustrating the theoretical findings.

	\section{Preliminaries}\label{section2}  
Recall that $\mathcal{O}=(0, \pi)$. We define $\mathcal{C}^k(\mathcal{O})$ as the space of $k$-times continuously differentiable functions on $\mathcal{O}$ for $k \in \mathbb{N}$. 
In particular,  $\mathcal{C}^0(\mathcal{O}) = \mathcal{C}(\mathcal{O})$ denotes the space of all continuous functions. 
For any integer $d \geq 1$,  the Euclidean norm and inner product on $\mathbb{R}^d$ are denoted by $|\cdot|$ and $\langle \cdot, \cdot \rangle$, respectively.  For  $ p \geq 1$, $L^p(\mathcal{O}) $ is the space of all functions defined on $\mathcal{O}$ that are $p$-th integrable,  equipped with the inner product $\langle \cdot, \cdot \rangle_{L^p}$ and norm $\|\cdot\|_{L^p}$.
When $p=2$, we denote  $H=L^2(\mathcal{O})$. For any   $k \in \mathbb{N}$ and $ p \geq 1$, we denote by $W^{k,p}(\mathcal{O})$ the Sobolev space   endowed with the norm $\|\cdot\|_{W^{k,p}}$.   When $p=2$, we similarly denote $H^k =W^{k,2}(\mathcal{O})$. 
Let $(\Omega, \mathcal{F}, \mathbb{P})$ be a probability space.	Moreover, given a Banach space $(\mathcal{X}, \|\cdot\|_{\mathcal{X}})$, we denote by $L^p(\Omega; \mathcal{X})$ the space of all $\mathcal{X}$-valued random variables whose $p$-th moment is finite, endowed with the norm 
$\|\cdot\|_{L^p(\Omega;\mathcal{X})} := \big(\mathbb{E}[\|\cdot\|_{\mathcal{X}}^p]\big)^{\frac{1}{p}}$. 
Throughout this paper, let $C$ be a generic positive constant that is independent of the discretization parameters and may change from line to line.  \par

Considering the Neumann boundary condition, we define  $\dot{H} :=\left\{v \in H:\left.\frac{\partial v}{\partial n}\right|_{x \in \partial O}=0\right\}$. The operator $A := \Delta$ denotes the Laplacian with  Neumann boundary conditions, whose domain is $\mathcal{D}(A) := H^{2} \cap \dot{H}$.  There exists an orthonormal basis  $\left\{\phi_j\right\}_{j=0}^\infty$ of $\dot{H} $ and an increasing sequence of eigenvalues $\left\{\lambda_j\right\}_{j=0}^\infty$ satisfying $-A\phi_j=\lambda_j\phi_j$.   
Specifically,  $\phi_0(x)= \sqrt{1 / \pi}$ and $\phi_j(x)= \sqrt{2 / \pi} \cos (j x)$ for  $x\in \mathcal{O}$ and $j\geq 1$. The corresponding eigenvalues are  $\lambda_j=j^2$ for $j\geq 0$.   
We define the fractional powers of $-A$ on $\dot{H}$   by  $$(-A)^\beta v=\sum_{j=1}^{\infty} \lambda_j^\beta \left\langle v, \phi_j\right\rangle_{L^2} \phi_j,\quad  \beta \in \mathbb{R}.$$  
The space $\dot{H}^\beta:=\operatorname{dom}\big(A^{\frac{\beta}{2}}\big)$ is a Banach space equipped with the following norm  
$$
\|v\|_{\dot{H}^\beta}:= \big\|(-A)^{\frac{\beta}{2}} v\big\|_{L^2}= \Big(\sum_{j=1}^{\infty} \lambda_j^\beta\left|\left\langle v, \phi_j\right\rangle_{L^2}\right|^2\Big)^\frac{1}{2}, \quad \beta\in \mathbb{R}.
$$ 
To study  the invariant measure, we follow  \cite[]{da1996stochastic}  and introduce the spaces corresponding to $\alpha \in \mathbb{R}$:
\begin{equation}\label{eq:nespace}
	\dot{H}^{-1}_\alpha :=\left\{v \in \dot{H}^{-1}:\frac{1}{\pi} \int_{\mathcal{O}}v \mathrm{ d}x=\alpha\right\}
	\ \text{and}\  H_\alpha := \left\{v \in  \dot{H} : \frac{1}{\pi}\int_{\mathcal{O}}v \mathrm{ d}x=\alpha\right\}.
\end{equation}
\par 
It is straightforward to verify that  $-A$ is a densely defined, self-adjoint, and non-negative definite operator, which is unbounded but has a compact inverse. Consequently, it follows that the operator $ A^2$ generates an analytic semigroup $ e^{-A^2 t} $ for $t \geq 0 $.  \par 
Moreover, for any $\alpha\in[0,\infty)$ and $\hat{\alpha}\in[0,1)$, there exists  a constant  $C>0$  such that
\begin{equation} \label{estimate: e^{-x} 1}
	e^{-x} \leq C  x^{-\alpha}  \quad \text{and} \quad  	1-	e^{-x} \leq C  x^{\hat{\alpha}}, \quad   x> 0.
\end{equation}  
These inequalities will be repeatedly used in the subsequent analysis.  \par 
The  properties of the semigroup $e^{-A^2 t}$ are presented in the following lemma, whose proof is given in Appendix A. 
\begin{lemma}   \label{lem: e {-A^2 t}}
	For any  $ \gamma, \gamma_1,   \gamma_2 \in[0, \infty),\ \gamma_3 \in[0,1)$ satisfying $ \gamma_1-\gamma_2+2\gamma_3 >0$ , and $\hat{u} \in   H^{1} \cap  \dot{H} $, 
	the following inequalities hold:
	\begin{align}
		&\left\|(-A)^\gamma e^{-A^2t}\hat{u}\right\|_{L^{2}}  \leq C\left(t^{-\frac{\gamma}{2}}e^{-\frac{\lambda_{1}^2}{2}t}\left\|\hat{u}\right\|_{L^2}+g(\gamma)\left|\left\langle \hat{u}, \phi_0\right\rangle_{L^2}\right|\right),\quad t>0,\  
		\label{ineq: e^{-A^2t}(-A)^r to l2} \\ 
		&\left\|(-A)^{\gamma_1} e^{-A^2t}\left( I- e^{-A^2s}\right)\hat{u}\right\|_{L^{2}}  \leq C\left( t^{-\frac{\gamma_1-\gamma_2+2\gamma_3}{2}}s^{\gamma_3}e^{-\frac{\lambda_{1}^2}{2}t}\left\|(-A)^{\gamma_2}\hat{u}\right\|_{L^2}+g(\gamma_1)\left|\left\langle \hat{u}, \phi_0\right\rangle_{L^2}\right|\right), \nonumber	  \\
			&\qquad 	\qquad 	\qquad 	\qquad 	\qquad 	\qquad 	\qquad 	\qquad   	\qquad 	\qquad 	\qquad 	\qquad 	\qquad \qquad 	\qquad 	\qquad 	\qquad s,t>0,\label{ineq: (-A)^re^{-A^2t}(I- e^{-A^2t}) to l2}  
	\end{align}
	where  $g:[0,\infty)\to\mathbb{R}$ is defined by $g(0)=1$ and $g(x)=0$ for all $x>0$, and $C>0$ is a constant independent of $s$, $t$, and $\hat{u}$.
\end{lemma}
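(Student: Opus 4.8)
The plan is to diagonalize everything in the orthonormal eigenbasis $\{\phi_j\}_{j\ge0}$ and reduce both estimates to elementary scalar inequalities in the Fourier coefficients. Writing $\hat u=\sum_{j\ge0}\hat u_j\phi_j$ with $\hat u_j=\langle\hat u,\phi_j\rangle_{L^2}$, the semigroup and the fractional power act as Fourier multipliers, $e^{-A^2t}\hat u=\sum_{j}e^{-\lambda_j^2t}\hat u_j\phi_j$ and $(-A)^\gamma\hat u=\sum_{j}\lambda_j^\gamma\hat u_j\phi_j$, so by Parseval each left-hand side is a weighted $\ell^2$-sum of the $|\hat u_j|^2$. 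The whole lemma then follows once I bound the scalar multipliers $\lambda_j^\gamma e^{-\lambda_j^2t}$ and $\lambda_j^{\gamma_1}e^{-\lambda_j^2t}(1-e^{-\lambda_j^2s})$ for each $j$. The one structural feature to track throughout is the zero eigenvalue $\lambda_0=0$: the constant mode $\phi_0$ is fixed by the semigroup ($e^{-A^2t}\phi_0=\phi_0$) and hence does not decay, which is precisely what the terms $g(\gamma)\,|\langle\hat u,\phi_0\rangle_{L^2}|$ are designed to absorb.

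For \eqref{ineq: e^{-A^2t}(-A)^r to l2} I would split off the $j=0$ mode and treat $j\ge1$ separately. For $j\ge1$ I use $\lambda_j\ge\lambda_1$ to factor $e^{-2\lambda_j^2t}\le e^{-\lambda_1^2t}e^{-\lambda_j^2t}$, and then apply the first bound in \eqref{estimate: e^{-x} 1} with $x=\lambda_j^2t$ and exponent $\alpha=\gamma$, which gives $\lambda_j^{2\gamma}e^{-\lambda_j^2t}\le C t^{-\gamma}$. Summing the resulting estimate $\lambda_j^{2\gamma}e^{-2\lambda_j^2t}\le Ct^{-\gamma}e^{-\lambda_1^2t}$ against $|\hat u_j|^2$ controls the $j\ge1$ part of the squared norm by $Ct^{-\gamma}e^{-\lambda_1^2t}\|\hat u\|_{L^2}^2$; taking square roots produces the stated factor $t^{-\gamma/2}e^{-\lambda_1^2t/2}$, the halving of the exponent coming from the square root. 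The $j=0$ term contributes $\lambda_0^\gamma|\hat u_0|$, which vanishes for $\gamma>0$ and equals $|\langle\hat u,\phi_0\rangle_{L^2}|$ for $\gamma=0$; this is exactly the case recorded by $g(\gamma)$.

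Estimate \eqref{ineq: (-A)^re^{-A^2t}(I- e^{-A^2t}) to l2} follows the same template but now carries the extra factor $1-e^{-\lambda_j^2s}$, which I would bound using the second inequality in \eqref{estimate: e^{-x} 1} with $x=\lambda_j^2s$ and exponent $\hat\alpha=\gamma_3$, yielding $(1-e^{-\lambda_j^2s})^2\le C\lambda_j^{4\gamma_3}s^{2\gamma_3}$. The key is then to choose the exponent in the first inequality of \eqref{estimate: e^{-x} 1} to be exactly $\alpha=\gamma_1-\gamma_2+2\gamma_3$, so that, writing the target as $\lambda_j^{2(\gamma_1-\gamma_2)}e^{-2\lambda_j^2t}(1-e^{-\lambda_j^2s})^2$ and inserting the two bounds together with $e^{-2\lambda_j^2t}\le e^{-\lambda_1^2t}e^{-\lambda_j^2t}$, all powers of $\lambda_j$ cancel identically:
\[
\lambda_j^{2(\gamma_1-\gamma_2)}\,e^{-2\lambda_j^2 t}\bigl(1-e^{-\lambda_j^2 s}\bigr)^2\le C\,t^{-(\gamma_1-\gamma_2+2\gamma_3)}\,s^{2\gamma_3}\,e^{-\lambda_1^2 t},\qquad j\ge 1.
\]
Summing this against $\lambda_j^{2\gamma_2}|\hat u_j|^2$ reproduces $\|(-A)^{\gamma_2}\hat u\|_{L^2}^2$ on the right, and square-rooting gives the claim (the $j=0$ term again vanishes, since $1-e^{-\lambda_0^2 s}=0$, so the $g(\gamma_1)$ term is merely a harmless nonnegative addition). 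The main obstacle---really the only delicate point---is the bookkeeping of the $\lambda_j$-exponents: the hypothesis $\gamma_1-\gamma_2+2\gamma_3>0$ is exactly what makes $\alpha>0$ admissible in $e^{-x}\le Cx^{-\alpha}$ and simultaneously guarantees the clean cancellation above, so I would verify this algebra carefully before summing.
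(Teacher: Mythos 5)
Your proposal is correct and follows essentially the same route as the paper's Appendix A proof: diagonalize in the eigenbasis, apply Parseval, split off the $\lambda_0=0$ mode into the $g(\cdot)$ term, factor out $e^{-\lambda_1^2 t}$ using $\lambda_j\ge\lambda_1$, and invoke the two scalar bounds in \eqref{estimate: e^{-x} 1} with exactly the exponents $\alpha=\gamma$ (resp.\ $\alpha=\gamma_1-\gamma_2+2\gamma_3$) and $\hat\alpha=\gamma_3$ so that the powers of $\lambda_j$ cancel before summing against $\lambda_j^{2\gamma_2}|\langle\hat u,\phi_j\rangle_{L^2}|^2$. Your observation that the $j=0$ term vanishes identically in the second estimate (so $g(\gamma_1)$ is merely a harmless addition) is also consistent with the paper's computation.
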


We next state the assumptions on the nonlinear term $f$, the  initial value  $u_0$, and the noise $W(t,x)$.  
\begin{assumption}\label{assu:1}
	Suppose that $f$ satisfies the following two conditions:\\
	\textbf{(i)} $f(x) =  a_0 x^3 + a_1 x^2 + a_2 x + a_3$, where  $a_0 > 0$ and $a_i \in \mathbb{R}$ for $i = 1, 2, 3$.\\
	\textbf{(ii)} $L_f < \lambda_1$, where $L_f =- \sup_{x \in \mathcal{O}} f'(x)$. 
\end{assumption}
The above assumption ensures that $f$ satisfies 
\begin{align}
	(y-x)\bigl(f(x)-f(y)\bigr) &\leq L_f\,|x-y|^2, \qquad\  \qquad \quad\ x,y \in \mathbb{R},\\
	|f(x)-f(y)| &\leq C_f\bigl(1 + x^2 + y^2\bigr) |x-y|, \quad  x,y \in \mathbb{R}, \label{es: f(x)-f(y)| }\\
	(1-\epsilon_0)\left\langle A\hat{u},\hat{u}  \right\rangle_{L^2}+\left\langle F(\hat{u}-\hat{v})-F(\hat{v}),\hat{u}  \right\rangle_{L^2} 
	&\leq - \epsilon_0\| \hat{u} \|_{L^2},\qquad \qquad \ \qquad \hat{u},\hat{v}  \in L^6(\mathcal{O}) \label{eq: assu:1},
\end{align}  
where $C_f>0$ is a constant dependent of $f$, $\epsilon_0=\frac{\lambda_{1}-L_f}{1+\lambda_{ 1}}$ , and  $F: L^6(\mathcal{O})\to H$ is  a Nemytskij operator defined by $F(v)(x) := f(v(x))$ for $v \in L^6(\mathcal{O})$.  
	
	\begin{assumption}\label{assu:2}
		Suppose that the initial condition $u_0\in H^1.$  
	\end{assumption}
	\begin{assumption}\label{assu:3 noise}
		Suppose that the noise $W(t,\cdot)$ is a cylindrical Wiener process on $\dot{H}$, which can be represented by the formal series
		$$   W(t,x):=\sum_{j=1}^{\infty} \phi_j(x) \beta_j(t), \quad  (t,x)\in[0,\infty)\times\mathcal{O},   $$
		where $\{\beta_{j}\}_{j=1}^\infty$ is a sequence of independent standard Brownian motions.
	\end{assumption}
	
	The concepts of the invariant measure and the Feller property are introduced below, following \cite{Hong2019} for details. For a given Banach space $\mathcal{X}$, we denote by $B_b(\mathcal{X})$ (resp. $C_b(\mathcal{X})$) the Banach space of all bounded, Borel-measurable functions (resp. bounded continuous functions) on $\mathcal{X}$.  Furthermore,  let $\mathcal{L}(\mathcal{X})$ denote the space of all bounded linear operators on $\mathcal{X}$.
	We define the transition semigroup $P_t :[0, \infty) \rightarrow \mathcal{L}\left(B_b(\mathcal{X})\right)$  by  
	$$
	P_t \phi(u_0)=\mathbb{E}[\phi(u(u_0;t,\cdot))], \quad \forall  \phi \in B_b(\mathcal{X}),\ u_0\in \mathcal{X} ,
	$$
	where $u(u_0;t,\cdot)$ is the solution  of the SCHE \eqref{eq:CH_FractionalNoise}  with initial condition $u(0,\cdot)=u_0( \cdot) $. 
	It can be verified that $\left\{P_t\right\}_{t \geq 0}$ is a Markov semigroup.

	\begin{definition}\label{defn: invariant measure}
		A  probability measure $\pi$ on  $(\mathcal{X}, \mathcal{B}(\mathcal{X}))$  is called an invariant measure for the transition semigroup $\left\{P_t\right\}_{t \geq 0}$ if 
		$$
		\int_\mathcal{X} P_t \phi(v) \pi( \mathrm{ d}v)=\int_\mathcal{X} \phi(v)  \pi( \mathrm{ d}v), \quad \forall  \phi \in B_b(\mathcal{X}),\ t \geq 0 .
		$$
		Furthermore, an invariant measure $\pi$ is called ergodic if
		$$ \lim _{T \rightarrow \infty} \frac{1}{T} \int_0^T P_t \phi(u) \mathrm{~d} t=\int_\mathcal{X} \phi(v)  \pi( \mathrm{ d}v) \quad \text{in} \quad L^2(\mathcal{X}; \pi)$$ 
		for all $ \phi\in L^2(\mathcal{X}; \pi)$.  
	\end{definition} 
	\begin{definition}  
		The transition semigroup $\left\{P_t\right\}_{t \geq 0}$ is said to possess the Feller property if, for each $\phi\in C_b(\mathcal{X})$ and every $t>0$, it holds that $P_t\phi\in C_b(\mathcal{X})$.
	\end{definition}

	\begin{remark}
		In Definition 2.4, we define the invariant measure for the transition semigroup $P_t$, which is uniquely determined by the SCHE \eqref{eq:CH_FractionalNoise}. Therefore, in the following, the invariant measure for the semigroup, the SCHE, or the solution will refer to the same object and be used interchangeably.
	\end{remark}

	\section{Invariant measure for the stochastic Cahn--Hilliard equation}\label{section3} 
	This section is devoted to the investigation of  the invariant measure for the SCHE \eqref{eq:CH_FractionalNoise}.  In  \cite{da1996stochastic}, the authors established the existence and uniqueness of the invariant measure for \eqref{eq:CH_FractionalNoise} on the space $\dot{H}^{-1}_\alpha $. In this work, we extend these results to the space $H_\alpha$.  Employing similar arguments to those in \cite[]{da1996stochastic}, we directly conclude that  the SCHE \eqref{eq:CH_FractionalNoise} admits a unique mild solution $u$ given by
	\begin{equation}\label{eq: CH_FractionalNoise_solution_mild}
		\begin{aligned}
			u(t, x)= & \int_{\mathcal{O}} G_t(x, y) u_0\left(y\right) \mathrm{d} y+\int_0^t \int_{\mathcal{O}} \Delta G_{t-s}(x, y) f\left(u\left(s, y\right)\right) \mathrm{d} y \mathrm{d} s  +\sigma\sum_{j=1}^{\infty} \int_0^t \int_{\mathcal{O}} G_{t-s}(x, y)  \phi_j(y) \mathrm{d} y \mathrm{d}  \beta_j(s)  ,
		\end{aligned}
	\end{equation}
	where 	$G_t(x, y)$ is the Green's function given by
	\begin{equation}  \label{eq:the series decompositionof $G$}
		G_t(x, y):=\sum_{j=0}^{\infty} e^{-\lambda_j^2 t} \phi_j(x) \phi_j(y),\quad  t \in[0, \infty),\ x,\ y \in \mathcal{O}.
	\end{equation}   
	Next, we state several  properties of  $G_t(x, y)$, which is proofed in Appendix B.
	\begin{lemma}   
		For any $\alpha\in(0,1)$, there exist  constants $C>0$  such that 
		\begin{align}  
			&  \int_\mathcal{O}|G_{t}(x, y)-G_{t}(z, y)|^2 \mathrm{d} y \leq e^{-\lambda_{ 1} t}\sum_{j=0}^{\infty}\lambda_j e^{-\lambda_j^2 t} \left|x-z\right|^2 ,\quad    t\geq 0,\ x, z\in\mathcal{O},  \label{es:  G_t(x,y)-G_t(z,y)}\\
			& \int_\mathcal{O}|\Delta G_{t}(x, y)-\Delta G_{t}(z, y)|  \mathrm{d} y  \leq Ce^{-\frac{\lambda_{ 1} }{2}t}\left|x-z\right|^\alpha \sqrt{ \sum_{j=0}^{\infty}\lambda_j^{2+ \alpha }e^{-\lambda_j^2 t}},\quad t\geq 0,\ x, z\in\mathcal{O}.   \label{es:  DeltaG_t(x,y)-DeltaG_t(z,y)}
		\end{align} 
	\end{lemma}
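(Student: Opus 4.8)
The plan is to expand both Green's-function differences in the orthonormal basis $\{\phi_j\}_{j=0}^\infty$ and reduce each estimate to Parseval's identity, after which everything hinges on a single pointwise bound for $\phi_j(x)-\phi_j(z)$. Since $\phi_0\equiv\sqrt{1/\pi}$ is constant, the $j=0$ term drops out of every difference, so all sums may be started at $j=1$ and the vanishing factor $\lambda_0=0$ causes no trouble. For $j\ge1$ I would record two elementary facts: the Lipschitz bound $|\cos(jx)-\cos(jz)|\le j|x-z|$ from the mean value theorem, and the trivial bound $|\cos(jx)-\cos(jz)|\le 2$; together with $\lambda_j=j^2$ these give $(\phi_j(x)-\phi_j(z))^2\le \tfrac{2}{\pi}\lambda_j|x-z|^2$ and, by interpolating the two bounds with weight $\alpha$, $(\phi_j(x)-\phi_j(z))^2\le C\,\lambda_j^{\alpha}|x-z|^{2\alpha}$ for $\alpha\in(0,1)$.

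For the first inequality, I would write $G_t(x,y)-G_t(z,y)=\sum_{j\ge1}e^{-\lambda_j^2t}(\phi_j(x)-\phi_j(z))\phi_j(y)$ and apply Parseval in $y$ to obtain $\int_\mathcal{O}|G_t(x,y)-G_t(z,y)|^2\,\mathrm dy=\sum_{j\ge1}e^{-2\lambda_j^2t}(\phi_j(x)-\phi_j(z))^2$. Inserting the quadratic bound and using $\lambda_j^2\ge\lambda_1$ for $j\ge1$ to split $e^{-2\lambda_j^2t}\le e^{-\lambda_1 t}e^{-\lambda_j^2t}$, the right-hand side is controlled by $\tfrac{2}{\pi}e^{-\lambda_1 t}|x-z|^2\sum_{j\ge1}\lambda_j e^{-\lambda_j^2t}$; since $\tfrac{2}{\pi}<1$ and the $j=0$ term vanishes, this is exactly the claimed bound with constant one.

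For the second inequality, I would first use $\Delta\phi_j=-\lambda_j\phi_j$ to get $\Delta G_t(x,y)-\Delta G_t(z,y)=-\sum_{j\ge1}\lambda_j e^{-\lambda_j^2t}(\phi_j(x)-\phi_j(z))\phi_j(y)$. Because the left-hand side of the target is an $L^1$-norm in $y$, I would pass to $L^2$ by Cauchy--Schwarz on $\mathcal{O}=(0,\pi)$, costing only the factor $\sqrt\pi$, and then apply Parseval to reach $\sum_{j\ge1}\lambda_j^2 e^{-2\lambda_j^2t}(\phi_j(x)-\phi_j(z))^2$. Substituting the interpolated bound $(\phi_j(x)-\phi_j(z))^2\le C\lambda_j^\alpha|x-z|^{2\alpha}$ and again splitting $e^{-2\lambda_j^2t}\le e^{-\lambda_1t}e^{-\lambda_j^2t}$ yields $C|x-z|^{2\alpha}e^{-\lambda_1 t}\sum_{j\ge1}\lambda_j^{2+\alpha}e^{-\lambda_j^2t}$; taking the square root and absorbing $\sqrt\pi$ into $C$ produces the stated estimate, with the exponent $\tfrac{\lambda_1}{2}$ appearing precisely because the outer power is one half.

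The only genuinely delicate points are analytic rather than algebraic. The termwise identity $\Delta\phi_j=-\lambda_j\phi_j$ and the appeals to Parseval must be justified, which for every $t>0$ is immediate from the super-exponential decay $e^{-\lambda_j^2 t}=e^{-j^4t}$ that makes each series---in particular $\sum_j\lambda_j^{2+\alpha}e^{-\lambda_j^2t}$---absolutely convergent; at $t=0$ the right-hand sides diverge and the inequalities hold vacuously, so the regime $t>0$ is the only substantive one. I expect the main obstacle to be choosing the interpolation weight so that the power of $\lambda_j$ produced by the pointwise estimate matches the exponent $2+\alpha$ demanded on the right, together with verifying that the constant in the first bound can indeed be taken to be one; both are settled by $\tfrac{2}{\pi}<1$ and the identity $\lambda_j=j^2$.
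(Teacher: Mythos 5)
Your proposal is correct and follows essentially the same route as the paper's Appendix~B proof: expansion in the eigenbasis with Parseval/orthonormality, the interpolated pointwise bound $|\phi_j(x)-\phi_j(z)|\le \lambda_j^{\beta/2}|x-z|^{\beta}$ for $\beta\in[0,1]$, the split $e^{-2\lambda_j^2 t}\le e^{-\lambda_1 t}e^{-\lambda_j^2 t}$, and H\"older (Cauchy--Schwarz) to reduce the $L^1$ estimate in \eqref{es:  DeltaG_t(x,y)-DeltaG_t(z,y)} to an $L^2$ one. Your extra verifications---the vanishing $j=0$ term, the factor $\tfrac{2}{\pi}<1$ making the first bound hold with constant one, and the vacuity of both inequalities at $t=0$---are consistent with, and slightly more explicit than, the paper's argument.
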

	
	%
	For the subsequent analysis, we rewrite \eqref{eq:CH_FractionalNoise} as
	\begin{equation} \label{eq: CH_FractionalNoise Banach}
		\left\{ 
		\begin{aligned}
			&\mathrm{d} U_t+A^2 U_t  \mathrm{d} t  = AF\left(U_t\right)  \mathrm{d} t+ \sigma  \mathrm{d}  W_t   ,\quad  t \in(0, \infty), \\
			& U_0= u_0\in H .
		\end{aligned}
		\right.
	\end{equation}   
	Eq. \eqref{eq: CH_FractionalNoise Banach} possesses a unique mild solution given by
	$$ U_t=e^{-A^2t}u_0+\int_{0}^{t}    Ae^{- A^2(t-s)}F(U_s) \mathrm{d} s +	\mathscr{O}_t , $$
	where	$\mathscr{O}_t:=\sigma\sum_{j=1}^{\infty}  \int_{0}^{t}  e^{- A^2(t-s)}   \phi_j  \mathrm{d}  \beta_j(s)$ is the stochastic convolution.   
	
	\subsection{Key estimates for two auxiliary equations} 
	In order to demonstrate the 	uniform moment boundedness and the Feller property of $u(t,x)$, in this  subsection  we provide  essential estimates for two auxiliary equations.\par 
	We denote $V_t := U_t - \mathscr{O}_t$, which solves the following perturbed equation
	\begin{equation} \label{eq: firsst auxiliary equation}
		\left\{ 
		\begin{aligned}
			&\mathrm{d} V_t+A^2 V_t  \mathrm{d} t  = AF\left(V_t+\mathscr{O}_t\right)  \mathrm{d} t ,\quad  t \in(0, \infty), \\
			& V_0= u_0.
		\end{aligned}
		\right.
	\end{equation}
	We first introduce some  necessary notations and  inequalities. For convenience, we define the functions $\mathbb{I}_1: \mathbb{R}^3\times L([0,\infty);\mathbb{R}) \to \mathbb{R}$ and $\mathbb{I}_2:   \mathbb{R}^3\times L([0,\infty);\mathbb{R}) \to \mathbb{R}$ by
	
	\begin{equation*} 
		\begin{aligned}
			&\mathbb{I}_1(t,\alpha_1,\alpha_2,g):= \int_{0}^{t}(t-s)^{-\alpha_1} e^{-\alpha_2(t-s)}  g(s) \mathrm{d} s,\\ &\mathbb{I}_2(t,\alpha_1,\alpha_2 ,g):= \int_{0}^{t}(t-s)^{-\alpha_1} e^{-\alpha_2(t-s)} \int_{0}^{s}(s-r)^{-\alpha_1} e^{-\alpha_2(s-r)} g(r) \mathrm{d} r\mathrm{d}s.
		\end{aligned}
	\end{equation*}    
	We introduce two special Gagliardo–Nirenberg inequalities:
	\begin{equation} \label{ineq: continuous l^6 to l2} 
		\left\|v \right\|_{L^6}\leq C \left(\left\|v \right\|_{L^2}^2+ \big\|(-A)^\frac{1}{2}v \big\|_{L^2}^2+\left\| A v \right\|_{L^2}^2   \right)^\frac{1}{12}\left\|v \right\|_{L^2}^\frac{5}{6} \leq C \left(   \left\| A v \right\|_{L^2}^\frac{1}{6}\left\|v \right\|_{L^2}^\frac{5}{6}    +\left\|v \right\|_{L^2}   \right),\quad v \in H^2,
	\end{equation} 
	and 
	\begin{equation} \label{ineq: continuous l^infty to l2} 
		\left\|v \right\|_{L^\infty}\leq C \left(\left\|v \right\|_{L^2}^2+ \big\|(-A)^\frac{1}{2}v \big\|_{L^2}^2   \right)^\frac{1}{4}\left\|v \right\|_{L^2}^\frac{1}{2} \leq C \left(    \left\|v \right\|_{L^2}    +\big\|(-A)^\frac{1}{2}v \big\|_{L^2}  \right),\quad v \in H^1.
	\end{equation}  
	In the next proposition, we present a general result. In particular, when $Z_t=\mathscr{O}_t$ and $y_0=u_0$ \eqref{eq: perturbed problem Galerkin abstract}, we obtain the estimate for \eqref{eq: firsst auxiliary equation}. 
	\begin{proposition}\label{propo: perturbed problem}  
		Suppose that $Y_t$ solves
		\begin{equation} \label{eq: perturbed problem Galerkin abstract}
			\left\{ 
			\begin{aligned}
				&\mathrm{d} Y_t+A^2 Y_t  \mathrm{d} t  = AF\left(Y_t+Z_t\right)  \mathrm{d} t ,\quad   t \in(0, \infty), \\
				& Y_0= y_0 ,
			\end{aligned}
			\right.
		\end{equation}   
		where $y_0\in H^1 $ and $Z_t\in W^{1,4}(\mathcal{O})$. 
		Then for any  $p \geq 1$, there exists a constant $C>0$ independent of $t$, such that 
		\begin{equation}\label{es: solution_perturbed problem}
			\begin{aligned}
				&   \|Y_t\|_{L^2} + \big\|(-A)^\frac{1}{2}Y_t\big\|_{L^2} \\ 
				\leq&   	C \left( 1+\|y_0\|_{L^\infty}^{20}+\big\| (-A)^{\frac{1}{2}} y_0\big\|_{L^2}^{10}+ \mathbb{I}_1\big(t, 3/4, \lambda_{1}^2/2,|K_1 |^{3}	+\left\| Z \right\|_{L^6}^3 \big) 
				+ \mathbb{I}_2\big(t, 3/4,  \lambda_{1}^2/2,
				|K_1 |^{5}+\left\| Z \right\|_{L^6}^9 \big) \right) ,
			\end{aligned}
		\end{equation} 
		where $ K_1(t):= \mathbb{I}_1 \big(t,0,  \lambda_{1}^2/2,\left\|  Z  \right\|^8_{L^\infty}+\big\| \left(-A\right)^{\frac{1}{2}}  Z  \big\|^4_{L^4} \big) .   $
	\end{proposition}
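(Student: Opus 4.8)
\medskip
\noindent\textbf{Proof strategy.}\quad
The plan is to bound $\|Y_t\|_{L^2}+\|(-A)^{1/2}Y_t\|_{L^2}$ by combining an energy estimate, which exploits the dissipative sign of the cubic drift to furnish uniform-in-time a priori control, with a bootstrap on the mild representation
\[
Y_t=e^{-A^2t}y_0+\int_0^t Ae^{-A^2(t-s)}F(Y_s+Z_s)\,\mathrm{d}s,
\]
which upgrades that control to the claimed $H^1$ bound and organises the dependence on $Z$ into the convolution functionals $\mathbb{I}_1,\mathbb{I}_2$. Throughout, the exponential factor $e^{-\lambda_1^2(t-s)/2}$ in the smoothing estimates of the preceding lemma is what converts finite-time bounds into bounds with a constant $C$ independent of $t$.

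First I would establish the auxiliary a priori quantity $K_1$. Testing \eqref{eq: perturbed problem Galerkin abstract} against $Y_t$ and integrating by parts under the Neumann condition gives
\[
\tfrac12\tfrac{\mathrm{d}}{\mathrm{d}t}\|Y_t\|_{L^2}^2+\|AY_t\|_{L^2}^2
=-\langle f'(Y_t+Z_t)\,\nabla(Y_t+Z_t),\nabla Y_t\rangle_{L^2}.
\]
Splitting $\nabla(Y_t+Z_t)=\nabla Y_t+\nabla Z_t$, the diagonal contribution is controlled by $f'\ge -L_f$, the spectral gap $\|AY_t\|_{L^2}^2\ge\lambda_1\|(-A)^{1/2}Y_t\|_{L^2}^2$ on $\dot H$, and the assumption $L_f<\lambda_1$, which leaves a strictly negative leading term $(1-L_f/\lambda_1)\|AY_t\|_{L^2}^2$; the cross term involving $\nabla Z_t$ is absorbed by Young's inequality, its interplay with the quadratic growth of $f'$ (the $\|Y_t\|_{L^\infty}$ factors being handled by the second Gagliardo--Nirenberg inequality) being the source of the forcing $\|Z_t\|_{L^\infty}^8+\|(-A)^{1/2}Z_t\|_{L^4}^4$. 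Since the mean of $Y_t$ is conserved and bounded by the data, a Gronwall argument with dissipation rate $\lambda_1^2/2$ then bounds the relevant norms of $Y$ by $\|y_0\|_{L^\infty}$-data together with $K_1(t)$.

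Next I would run the bootstrap. Applying $(-A)^{1/2}$ to the mild formula and estimating $\|(-A)^{3/2}e^{-A^2(t-s)}F(Y_s+Z_s)\|_{L^2}$ by the preceding lemma produces exactly the kernel $(t-s)^{-3/4}e^{-\lambda_1^2(t-s)/2}$, matching the parameters $(3/4,\lambda_1^2/2)$ of $\mathbb{I}_1$ and $\mathbb{I}_2$. The cubic growth of $f$ gives $\|F(Y_s+Z_s)\|_{L^2}\le C(1+\|Y_s\|_{L^6}^3+\|Z_s\|_{L^6}^3)$, and the first Gagliardo--Nirenberg inequality converts $\|Y_s\|_{L^6}^3$ into $\|AY_s\|_{L^2}^{1/2}\|Y_s\|_{L^2}^{5/2}+\|Y_s\|_{L^2}^3$. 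Inserting the a priori bounds for these $L^2$ and $H^2$ factors (which contribute the powers of $K_1$ and of $\|y_0\|_{L^\infty}$, $\|(-A)^{1/2}y_0\|_{L^2}$), and then substituting the resulting $L^2$-estimate for $Y_s$ back into the integral a second time, generates the single and double convolutions $\mathbb{I}_1(t,3/4,\lambda_1^2/2,\,|K_1|^3+\|Z\|_{L^6}^3)$ and $\mathbb{I}_2(t,3/4,\lambda_1^2/2,\,|K_1|^5+\|Z\|_{L^6}^9)$; the $\|Y_t\|_{L^2}$ part of the left-hand side is treated in the same way, extracting the operator $(-A)$ rather than $(-A)^{3/2}$ from the semigroup.

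The main obstacle is to close the estimate uniformly in $t$ with a non-globally-Lipschitz cubic drift acting under the unbounded operator $A$. A direct Gronwall argument on the mild formula fails, since the nonlinearity is superlinear and the norm it demands, $\|Y_s\|_{L^6}$ and hence $\|AY_s\|_{L^2}$, is strictly stronger than the $H^1$ norm being bounded, while extracting $(-A)^{5/2}$ from the semigroup would create the non-integrable kernel $(t-s)^{-5/4}$. The resolution is precisely the two-phase split above: the dissipativity \eqref{eq: assu:1}, resting on $a_0>0$ and $L_f<\lambda_1$, supplies the missing higher-regularity control through $K_1$, while the exponential decay of the semigroup keeps $\mathbb{I}_1,\mathbb{I}_2$ finite and time-uniform. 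Tracking how the cubic nonlinearity compounds the data through the successive substitutions is what produces the high exponents $\|y_0\|_{L^\infty}^{20}$, $\|(-A)^{1/2}y_0\|_{L^2}^{10}$ and the fifth and ninth powers appearing inside $\mathbb{I}_2$.
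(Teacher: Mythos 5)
Your second phase (the mild-formula bootstrap) is essentially the paper's Step 2: the same extraction of $(-A)^{3/2}$ from the semigroup giving the kernel $(t-s)^{-3/4}e^{-\lambda_1^2(t-s)/2}$, the same Gagliardo--Nirenberg reduction of $\|Y_s\|_{L^6}^3$ to $\|AY_s\|_{L^2}^{1/2}\|Y_s\|_{L^2}^{5/2}$, and the same resubstitution that generates $\mathbb{I}_2$ and the exponents $20$, $10$, $5$, $9$. The genuine gap is in your first phase, the a priori estimate that must deliver $K_1$ together with control of $\int_0^t e^{-\frac{\lambda_1^2}{2}(t-s)}\|AY_s\|_{L^2}^2\,\mathrm{d}s$. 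You propose to close the energy identity
\[
\tfrac12\tfrac{\mathrm{d}}{\mathrm{d}t}\|Y_t\|_{L^2}^2+\|AY_t\|_{L^2}^2
=-\big\langle f'(Y_t+Z_t)\,\partial_x(Y_t+Z_t),\,\partial_x Y_t\big\rangle_{L^2}
\]
using only $f'\ge -L_f$, the spectral gap, and $L_f<\lambda_1$. That controls the diagonal term $-\int f'(Y+Z)|\partial_xY|^2\,\mathrm{d}x$, but it discards the quartic coercivity coming from $a_0>0$, and the cross term then cannot be absorbed: since $|f'(Y+Z)|\le C(1+Y^2+Z^2)$, the term $\int(1+Y^2+Z^2)|\partial_xZ_t|\,|\partial_xY_t|\,\mathrm{d}x$, after H\"older, Agmon/Gagliardo--Nirenberg and Young, leaves forcing of the type $\|Y_t\|_{L^2}^4\|\partial_xZ_t\|_{L^2}^2$, which is superlinear in the energy $\|Y_t\|_{L^2}^2$. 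A differential inequality of the form $u'\le-cu+C\,g(t)\,u^2+\dots$ with $g$ merely bounded does not give a time-uniform bound polynomial in the data (this is the ODE blow-up scenario), so your Gronwall step fails precisely where the non-globally-Lipschitz drift matters. A telltale sign that the mechanism is off: Assumption \ref{assu:1}(ii) and $\epsilon_0$ are not used anywhere in the paper's proof of this proposition --- they power the error-equation estimate behind the Feller property, not this bound; here only $a_0>0$ is needed.

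What the paper does instead is a two-tier energy argument. First, in the $L^2$ estimate it integrates the cubic term by parts explicitly, retaining the sign-definite term $-3a_0\int Y^2|\partial_xY|^2\,\mathrm{d}x$ to dominate the mixed terms; this yields the forcing $C\big(1+\|Y_t\|_{L^4}^4+\|Z_t\|_{L^\infty}^8+\|(-A)^{1/2}Z_t\|_{L^4}^4\big)$, with the quartic $\|Y_t\|_{L^4}^4$ still uncontrolled. Second --- and this is the idea missing from your proposal --- it runs a separate estimate on the shifted negative norm $\sum_{j\ge1}\lambda_j^{-1}\langle Y_t-y_0,\phi_j\rangle_{L^2}^2$: testing with $(-A)^{-1}(Y_t-y_0)$ removes the Laplacian in front of $F$ and exposes $\langle F(Y_t+Z_t),Y_t+Z_t\rangle_{L^2}\ge\frac78 a_0\|Y_t+Z_t\|_{L^4}^4-C$, see \eqref{eq: < F(Y+Z), Y+Z >}, i.e.\ exactly the quartic dissipation that bounds $\int_0^te^{-\frac{\lambda_1^2}{2}(t-s)}\|Y_s+Z_s\|_{L^4}^4\,\mathrm{d}s$ and closes \eqref{es: Y L2 exact}. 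The shift by $y_0$ is how the conserved zero mode ($\lambda_0=0$) is handled, and it is also why $\|y_0\|_{L^\infty}^4$, rather than just $\|y_0\|_{L^2}^2$, enters the estimate. Without this $\dot H^{-1}$-level coercivity step your first phase does not close, and the bootstrap has nothing to feed on.
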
 
	\begin{proof} 
		We will divide the proof into two steps.\par 
		\textbf{Step 1: }	In this step we prove
		\begin{equation} \label{es: Y L2 exact}
			\begin{aligned}
				\|Y_t\|^2_{L^2}+ \int_{0}^{t} e^{-\frac{\lambda_{1}^2}{2}(t-s)} 	\|AY_s\|^2_{L^2} \mathrm{d}s    
				\leq   
				C\left(1+ \| y_0\|_{L^\infty}^4+ \big\| (-A)^{\frac{1}{2}} y_0\big\|_{L^2}^2+ K_1(t)\right).
			\end{aligned}
		\end{equation}  
		Taking the inner product of \eqref{eq: perturbed problem Galerkin abstract} with $Y_t$ yields 
		\begin{equation} \label{es: Y_t  1}
			\begin{aligned}
				\frac{1}{2}\frac{\mathrm{d}}{\mathrm{d} t} \left(\left\| Y_t\right\|^2_{L^2}\right) 
				=& -\left\|A Y_t\right\|^2_{L^2}+  \left\langle F(Y_t+Z_t) ,AY_t\right\rangle_{L^2}   .
			\end{aligned}
		\end{equation}
		We denote $y(t,x):=Y_t(x)$ and $z(t,x):=Z_t(x)$. By  Young's inequality, one can deduce that
		\begin{equation*} 
			\begin{aligned}
				\left\langle  F(Y_t+Z_t) ,AY_t  \right\rangle_{L^2}
				=&\int_{\mathcal{O}} \left(a_0 \left(y(t,x)+z(t,x)\right)^3+a_1 \left(y(t,x)+z(t,x)\right)^2 
				+a_2 \left(y(t,x)+z(t,x)\right)+a_3\right)\\
				&\qquad\qquad\qquad\times \partial_{xx} y(t,x) \mathrm{d} x  \\ 
				\leq&a_0\int_{\mathcal{O}}   \left(y^3(t,x)+ 3y^2(t,x)z(t,x) \right) \partial_{xx} y (t,x) \mathrm{d} x    
				+\frac{1}{4}\int_{\mathcal{O}} |\partial_{xx} y(t,x)|^2 \mathrm{d} x   \\
				&+C\int_{\mathcal{O}} \left(1+   y^4(t,x)+z^8(t,x) \right)  \mathrm{d} x   .    
			\end{aligned}
		\end{equation*} 
		Using the  integration by parts and  Young's inequality, we obtain
		\begin{equation*} 
			\begin{aligned}
				&\int_{\mathcal{O}}   \left(y^3(t,x)+ 3y^2(t,x)z(t,x) \right)  \partial_{xx} y(t,x) \mathrm{d} x  \\
				=&	-\int_{\mathcal{O}} \left(3 y^2(t,x)  |\partial_{x} y(t,x)|^2+ 6y(t,x)z(t,x) | \partial_{x} y(t,x)|^2+3y^2(t,x)\partial_{x} z(t,x)   \partial_{x} y(t,x) \right) \mathrm{d} x  \\
				\leq &	-\int_{\mathcal{O}} y^2(t,x)  |\partial_{x} y(t,x)|^2\mathrm{d} x 
				+C\int_{\mathcal{O}}  \left( z^2(t,x) | \partial_{x} y(t,x)|^2+ y^2(t,x)|\partial_{x} z(t,x)|^2\right) \mathrm{d} x  \\
				\leq &	-\int_{\mathcal{O}} y^2(t,x)  |\partial_{x} y(t,x)|^2\mathrm{d} x \\
				&+C\int_{\mathcal{O}}  \left( |z(t,x)   \partial_{x} z(t,x)   y(t,x) \partial_{x} y(t,x)|+|z^2(t,x)y(t,x)\partial_{xx} y(t,x)|   + y^2(t,x)|\partial_{x} z(t,x)|^2\right) \mathrm{d} x  \\
				\leq &\frac{1}{4a_0} \int_{\mathcal{O}} |\partial_{xx} y(t,x)|^2 \mathrm{d} x   + C\int_{\mathcal{O}}  \left(1+  y^4(t,x)+z^8(t,x) +|\partial_{x} z(t,x) |^4 \right) \mathrm{d} x.
			\end{aligned}
		\end{equation*} 
		Substituting the above estimations into \eqref{es: Y_t  1} yields
		\begin{equation*} \label{es: Y   1.1}
			\begin{aligned}
				\frac{\mathrm{d}}{\mathrm{d} t} \left(\left\| Y_t\right\|^2_{L^2}\right)  \leq &-\|AY_t\|^2_{L^2}+	C \left(1+ \left\| Y_t \right\|^4_{L^4}+\left\|  Z_t \right\|^8_{L^8}+\big\| \left(-A\right)^{\frac{1}{2}} Z_t \big\|^4_{L^4}\right),
			\end{aligned}
		\end{equation*} 
		which implies
		\begin{equation*} 
			\begin{aligned}
				\frac{	\mathrm{d}}{	\mathrm{d} t}  \left(e^{\frac{\lambda_{1}^2}{2}t}	\|Y_t \|^2_{L^2} \right)  
				=&e^{\frac{\lambda_{1}^2}{2}t}	\frac{\mathrm{d}}{\mathrm{d} t} \left(\left\| Y_t\right\|^2_{L^2}\right)   +\frac{\lambda_{1}^2}{2}e^{\frac{\lambda_{1}^2}{2}t}	\|Y_t \|^2_{L^2}\\
				\leq&  -e^{\frac{\lambda_{1}^2}{2}t}\|AY_t\|^2_{L^2}+	C e^{\frac{\lambda_{1}^2}{2}t}\left(1+ \left\| Y_t+Z_t \right\|^4_{L^4}+\left\|  Z_t \right\|^8_{L^\infty}+\big\| \left(-A\right)^{\frac{1}{2}} Z_t \big\|^4_{L^4}\right).
			\end{aligned}
		\end{equation*}
		Thus, we obtain
		\begin{equation}  \label{eq: (-A)^{1/2}Y(t)  1.8} 
			\begin{aligned}
				&	\|Y_t\|^2_{L^2}+ \int_{0}^{t} e^{-\frac{\lambda_{1}^2}{2}(t-s)} 	\|AY_s\|^2_{L^2} \mathrm{d}s   \\
				\leq & e^{-\frac{\lambda_{1}^2}{2}  t } \|y_0\|^2_{L^2} + C \int_{0}^{t}  e^{-\frac{\lambda_{1}^2}{2}(t-s)}\left(1+ \left\| Y_s+Z_s\right\|^4_{L^4}+\left\|  Z_s \right\|^8_{L^\infty}+\big\|  \left(-A\right)^{\frac{1}{2}} Z_s \big\|^4_{L^4}\right)  \mathrm{d} s.
			\end{aligned}
		\end{equation} 
		
		We next estimate  $\int_{0}^{t}e^{-\frac{\lambda_{1}^2}{2}(t-s)}\left\|Y_s + Z_s\right\|^4_{L^4} \mathrm{d} s$. According to \eqref{eq: perturbed problem Galerkin abstract}, we have $\left\langle Y_t, \phi_0\right\rangle_{L^2} = \left\langle y_0, \phi_0\right\rangle_{L^2}$  and
		\begin{equation*} 
			\mathrm{d} \left\langle  Y_t-y_0,\phi_j\right\rangle_{L^2} +\lambda_{j}^2 \left\langle  Y_t,\phi_j\right\rangle_{L^2} \mathrm{d} t=-\lambda_{j} \left\langle F(Y_t+Z_t),\phi_j\right\rangle_{L^2} \mathrm{d} t, \quad  j\in \mathbb{N}.
		\end{equation*}
		It follows from $\lambda_{0}=0$ and $\left\langle  Y_t- y_0,\phi_0\right\rangle =0$ that
		\begin{equation}\label{eq: (-A)^{1/2}Y(t)  1.1} 
			\begin{aligned}
				& \sum_{j=1}^{\infty}	\frac{\mathrm{d} }{\mathrm{d} t }\left( \frac{ \left\langle  Y_t-y_0,\phi_j\right\rangle^2_{L^2} }{ 2\lambda_{j}}\right)+  \big\| (-A)^{\frac{1}{2}} Y_t\big\|^2_{L^2} \\
				=   	&\big\langle(-A)^{\frac{1}{2}}Y_t,(-A)^{\frac{1}{2}} y_0\big\rangle_{L^2} - \langle F(Y_t +Z_t),  Y_t+Z_t \rangle_{L^2}
				+ \langle F(Y_t +Z_t),  Z_t+ y_0 \rangle_{L^2}.\\
			\end{aligned}
		\end{equation}
		By employing Young's inequality, one can verify that
		\begin{equation}\label{eq: < F(Y+Z), Y+Z >} 
			\begin{aligned}
				& \big\langle F (Y_t +Z_t),Y_t +Z_t \big\rangle  \geq \frac{7}{8}a_0  \left\|Y_t +Z_t\right\|_{L^4}^4 -C,
			\end{aligned}
		\end{equation}
		and
		\begin{equation}\label{eq: < F(Y+Z), Z >} 
			\begin{aligned}
				\big\langle F (Y_t +Z_t), Z_t+ y_0 \big\rangle  
				\leq& \frac{5}{8}a_0  \left\|Y_t +Z_t\right\|_{L^4}^4
				+C\left(1+\|y_0\|_{L^\infty}^4+\left\|Z_t\right\|_{L^4}^4 \right) . \\
			\end{aligned}
		\end{equation}
		Substituting  \eqref{eq: < F(Y+Z), Y+Z >}  and \eqref{eq: < F(Y+Z), Z >}    into \eqref{eq: (-A)^{1/2}Y(t)  1.1}, then applying  H\"{o}lder's inequality and Young's inequality,  we obtain
		\begin{equation}\label{eq: (-A)^{1/2}Y(t)  1.3} 
			\begin{aligned}
				\sum_{j=1}^{\infty}	\frac{\mathrm{d} }{\mathrm{d} t }\left( \frac{ \left\langle  Y_t-y_0,\phi_j\right\rangle^2_{L^2} }{ 2\lambda_{j}}\right)
				\leq&- \frac{1}{2}  \big\| (-A)^{\frac{1}{2}} Y_t\big\|^2_{L^2}- \frac{a_0}{4}  \left\|Y_t +Z_t\right\|_{L^4}^4
				+C    \left( 1+\| y_0\|_{L^\infty}^4+ \big\| (-A)^{\frac{1}{2}} y_0\big\|_{L^2}^2+\left\|Z_t\right\|_{L^4}^4\right) .
			\end{aligned}
		\end{equation}
		Since
		$$ \frac{1}{2}\big\| (-A)^{\frac{1}{2}} Y_t\big\|^2_{L^2}= \frac{1}{2}\sum_{j=1}^{\infty}\lambda_{j} {\left\langle  Y_t ,\phi_j\right\rangle^2_{L^2} }
		\geq \frac{\lambda_{1}^2}{2} \sum_{j=1}^{\infty}\frac{\left\langle  Y_t-y_0,\phi_j\right\rangle^2_{L^2} }{2\lambda_{j}}-C\| y_0\|_{L^2},$$ 
		we have
		\begin{equation*}\label{eq: (-A)^{1/2}Y(t)  1.6} 
			\begin{aligned}
				\sum_{j=1}^{\infty}\frac{\mathrm{d} }{\mathrm{d} t }\left( \frac{e^{\frac{\lambda_{1}^2}{2}t} \left\langle  Y_t- y_0,\phi_j\right\rangle^2_{L^2} }{ 2\lambda_{j}}\right) 
				=  	& e^{\frac{\lambda_{1}^2}{2}t}\sum_{j=1}^{\infty}\frac{\mathrm{d} }{\mathrm{d} t }\left( \frac{ \left\langle  Y_t- y_0,\phi_j\right\rangle^2_{L^2} }{ 2\lambda_{j}}\right)  + \frac{\lambda_{1}^2}{2}e^{\frac{\lambda_{1}^2}{2}t} \sum_{j=1}^{\infty}\frac{  \left\langle  Y_t-y_0,\phi_j\right\rangle^2  }{2\lambda_{j}} \\
				\leq & -\frac{a_0}{4} e^{\frac{\lambda_{1}^2}{2}t} \left\|Y_t +Z_t\right\|_{L^4}^4 +  C e^{\frac{\lambda_{1}^2}{2}t}   \left( 1+\| y_0\|_{L^\infty}^4+ \big\| (-A)^{\frac{1}{2}} y_0\big\|_{L^2}^2+\left\|Z_t\right\|_{L^4}^4\right),
			\end{aligned}
		\end{equation*}
		which yields 
		\begin{equation*}\label{eq: (-A)^{1/2}Y(t)  1.7} 
			\begin{aligned}
				&	\sum_{j=1}^{\infty}	 \frac{  \left\langle  Y_t-y_0,\phi_j\right\rangle^2_{L^2} }{ 2\lambda_{j}}  
				+\frac{a_0}{4} 	\int_{0}^{t}  e^{-\frac{\lambda_{1}^2}{2}(t-s)} \left\|Y_s +Z_s\right\|_{L^4}^4 \mathrm{d} s\\
				\leq &   C\int_{0}^{t}  e^{-\frac{\lambda_{1}^2}{2}(t-s)}    \left( 1+\| y_0\|_{L^\infty}^4+ \big\| (-A)^{\frac{1}{2}} y_0\big\|_{L^2}^2+\left\|Z_t\right\|_{L^4}^4\right) \mathrm{d} s.
			\end{aligned}
		\end{equation*}  
		This combined with \eqref{eq: (-A)^{1/2}Y(t)  1.8}   verifies \eqref{es: Y L2 exact}.

		\textbf{Step 2: }	In this step, we prove
		\begin{equation}  \label{es: (-A_n)^1/2 Y}
			\begin{aligned}
				\big\|(-A)^\frac{1}{2}Y_t\big\|_{l^2_N} 
				\leq&   	C \Big( 1+\|y_0\|_{L^\infty}^{20}+\big\| (-A)^{\frac{1}{2}} y_0\big\|_{L^2}^{10}+ \mathbb{I}_1 (t, 3/4,  \lambda_{1}^2/2,|K_1 |^{3}	+\left\| Z \right\|_{L^6}^3  ) \\
				&\qquad\qquad +\mathbb{I}_2(t, 3/4,  \lambda_{1}^2/2,
				|K_1 |^{5}+\left\| Z \right\|_{L^6}^9 )\Big).\\ 
			\end{aligned}
		\end{equation}   
		The mild solution of  \eqref{eq: perturbed problem Galerkin abstract} is given by
		$$ Y_t=e^{-A^2t} y_0+\int_0^t Ae^{-A^2(t-s)}  F\left(Y_s+Z_s\right)  \mathrm{d} s.$$ 
		Using \eqref{ineq: e^{-A^2t}(-A)^r to l2}, we have
		\begin{equation} \label{es: (-A_n)^1/2 Y 1.1}
			\begin{aligned}
				\big\|(-A)^\frac{1}{2}Y_t\big\|_{L^2} \leq&   \big\|(-A)^\frac{1}{2}e^{-A^2t}y_0\big\|_{L^2} +
				\int_{0}^{t}  \big\|(-A)^\frac{3}{2}e^{-A^2(t-s)}  F(Y_s+Z_s)\big\|_{L^2} \mathrm{d} s\\
				\leq&  C\left(1+\big\|(-A)^\frac{1}{2} y_0\big\|_{L^2}
				+\mathbb{I}_1\Big(t, 3/4,  \lambda_{1}^2/2,\left\| Y\right\|_{L^6}^3+\left\| Z\right\|_{L^6}^3  \Big)  \right).
			\end{aligned}
		\end{equation}  
		Applying  \eqref{ineq: continuous l^6 to l2}, \eqref{ineq: continuous l^infty to l2}, \eqref{ineq: e^{-A^2t}(-A)^r to l2}, and H\"{o}lder's inequality,   one can obtain
		\begin{align}
			\left\|Y_t\right\|_{L^6}\leq&   \big\|e^{-A^2t}y_0\big\|_{L^6}+
			\int_{0}^{t}   \big\| Ae^{- A^2(t-s)}F(Y_s+Z_s) \big\|_{L^6}\mathrm{d} s\nonumber\\
			\leq&  \big\|e^{-A^2t}y_0\big\|_{L^\infty}+
			C	\int_0^t \Big(   \big\|  A^2e^{- A^2(t-s)}F(Y_s+Z_s)   \big\|_{L^2}^\frac{1}{6} \big\| Ae^{- A^2(t-s)}F(Y_s+Z_s)  \big\|_{L^2}^\frac{5}{6}  \nonumber\\
			&\qquad\qquad\qquad\qquad\qquad\qquad  + \big\| Ae^{- A^2(t-s)}F(Y_s+Z_s) \big\|_{L^2}   \Big)  \mathrm{d} s\nonumber \\ 
			\leq&  C\bigg(\|y_0\|_{L^2}+\big\|(-A)^\frac{1}{2} y_0\big\|_{L^2}
			+\int_0^t    \left((t-s)^{-\frac{7}{12}}+ (t-s)^{-\frac{1}{2}}\right) e^{-\frac{ \lambda_{1}^2}{2}(t-s)} \left\|   F(Y_s+Z_s)\right\|_{L^2}        \mathrm{d} s  \bigg). \label{es: Y_l6 1.1 exact} 
		\end{align}
		By \eqref{ineq: continuous l^6 to l2} and Young's inequality, we derive that for any $0\leq\alpha<\frac{3}{4}$, 
		\begin{equation} \label{es: Y_l6 1.2 exact}
			\begin{aligned}
				&	\int_0^t    (t-s)^{-\alpha} e^{-\frac{ \lambda_{1}^2}{2}(t-s)} \left\|   F(Y_s+Z_s)\right\|_{L^2}        \mathrm{d} s \\
				\leq&  C\left( 1+ \int_0^t  (t-s)^{-\alpha} e^{-\frac{ \lambda_{1}^2}{2}(t-s)}\left( \left\|A Y_s\right\|_{L^2}^{\frac{1}{2}}\|Y_s\|_{L^2}^{\frac{5}{2}} +  \left\| Y_s\right\|^3_{L^2}   +\left\| Z_s\right\|^3_{L^2}     \right) \mathrm{d} s\right)  \\ 
				\leq&  C\bigg( 1
				+\int_0^t  (t-s)^{-\alpha} e^{-\frac{ \lambda_{1}^2}{2}(t-s)}\left(\left\| Y_s\right\|_{L^2}^3+\left\| Z_s\right\|_{L^2}^3\right)    \mathrm{d} s  \\ &\quad +\int_0^t    e^{-\frac{ \lambda_{1}^2}{2}(t-s)} \left\|A Y_s\right\|_{L^2}^2   \mathrm{d} s+\int_0^t   (t-s)^{-\frac{4\alpha}{3} }  e^{-\frac{ \lambda_{1}^2}{2}(t-s)}\left\| Y_s\right\|_{L^2}^\frac{10}{3}   \mathrm{d} s\bigg). 
			\end{aligned}
		\end{equation}
		By combining  \eqref{es: Y_l6 1.1 exact}, \eqref{es: Y_l6 1.2 exact}, and  H\"{o}lder's inequality, we obtain 
		\begin{equation} \label{es: Y_l6 1 exact}
			\begin{aligned}
				\left\|Y_t\right\|_{L^6}^3\leq&   C\bigg(1+\|y_0\|_{L^2}^3+\big\|(-A)^\frac{1}{2} y_0\big\|_{L^2}^3
				\bigg)+C \left| \int_0^t\left((t-s)^{-\frac{7}{12}}+ (t-s)^{-\frac{1}{2}}\right) e^{-\frac{ \lambda_{1}^2}{2}(t-s)}\left(\left\| Y_s\right\|_{L^2}^3+\left\| Z_s\right\|_{L^2}^3\right)    \mathrm{d} s\right|^3\\
				& +C \left|  \int_0^t    e^{-\frac{ \lambda_{1}^2}{2}(t-s)} \left\|A Y_s\right\|_{L^2}^2   \mathrm{d} s\right|^3
				+C \left| \int_0^t  \left((t-s)^{-\frac{7}{9}}+ (t-s)^{-\frac{2}{3}}\right)  e^{-\frac{ \lambda_{1}^2}{2}(t-s)}\left\| Y_s\right\|_{L^2}^\frac{10}{3}   \mathrm{d} s\right|^3\\
				\leq&   C\bigg(1+\|y_0\|_{L^2}^3+\big\|(-A)^\frac{1}{2} y_0\big\|_{L^2}^3
				+ \mathbb{I}_1\Big(t, \frac{2}{3},  \frac{ \lambda_{1}^2}{2},\left\| Y \right\|_{L^2}^{10}+\left\| Z \right\|_{L^2}^9  \Big) 
				+\left|  \int_0^t    e^{-\frac{ \lambda_{1}^2}{2}(t-s)} \left\|A Y_s\right\|_{L^2}^2   \mathrm{d} s\right|^3
				\bigg)  .
			\end{aligned}
		\end{equation} 
		It follows from  \eqref{es: Y L2 exact},  \eqref{es: (-A_n)^1/2 Y 1.1}, and \eqref{es: Y_l6 1 exact} that \eqref{es: (-A_n)^1/2 Y} holds. \par 
		Finally, by gathering \eqref{es: Y L2 exact} and \eqref{es: (-A_n)^1/2 Y}, we obtain \eqref{es: solution_perturbed problem}. This completes the proof.
	\end{proof} 
	
	\begin{remark} \label{re: compare prop 1}
		A similar estimate for the perturb equation  in the $\dot{H}^{-1}$-norm can be found in \cite[Theorem 3.1]{da1996stochastic}. Nevertheless, such an estimate is insufficient to guarantee the existence of invariant measure on the space $H_\alpha$.  In Proposition \ref{propo: perturbed problem}, we improve this estimate to establish the uniform moment boundedness of $u(t,\cdot)$ in the more stringent $L^\infty$-norm.
	\end{remark}
	
	We next consider another auxiliary equation, which is used to prove the Feller property of $u(t,x)$. For any $ \hat{u}_0,\ \tilde{u}_0 \in H_\alpha$, we let $U_{\hat{u}_0,t}:=u(\hat{u}_0;t,\cdot) $ and $U_{\tilde{u}_0,t}:=u(\tilde{u}_0;t,\cdot) $ denote the solutions of \eqref{eq:CH_FractionalNoise} with initial conditions $\hat{u}_0$ and $\tilde{u}_0$, respectively. Define $\hat{E}_t=U_{\hat{u}_0,t}-U_{\tilde{u}_0,t}$, which solves
	\begin{equation}  \label{eq: Feller 0}
		\left\{ 
		\begin{aligned}
			&\mathrm{d} \hat{E}_t+A^2 \hat{E}_t\mathrm{d} t= A\big(F(\hat{E}_t + U_{\tilde{u}_0,t} )-F (U_{\tilde{u}_0,t}) \big) \mathrm{d} t,\quad  t \in(0, \infty), \\
			&  \hat{E}_0=\hat{u}_0-\tilde{u}_0 .
		\end{aligned}
		\right.
	\end{equation} 
	In the next proposition, we state a general result. In particular, when $\hat{X}_t=0$, $\hat{Z}_t=U_{\tilde{u}_0,t}$, and $\hat{y}_0=\hat{u}_0-\tilde{u}_0$ in \eqref{eq:Feller}, we obtain the estimate for \eqref{eq: Feller 0}.
	\begin{proposition} \label{propo: error problem  exact} 
		Suppose that Assumption \ref{assu:1} holds and $\hat{Y}_t$ solves
		\begin{equation} \label{eq:Feller}
			\left\{ 
			\begin{aligned}
				&\mathrm{d} \hat{Y}_t+A^2 \hat{Y}_t\mathrm{d} t= A\left(F(\hat{Y}_t+\hat{X}_t+\hat{Z}_t)-F(\hat{X}_t)\right) \mathrm{d} t,\quad  t \in(0, \infty), \\
				& \hat{Y}_0=\hat{y}_0 ,
			\end{aligned}
			\right.
		\end{equation} 
		where $\left\langle\hat{y}_0,\phi_0\right\rangle_{L^2}=0$, $y_0\in H$, and $\hat{X}_t,\ \hat{Z}_t \in  W^{1,2}(\mathcal{O})$.
		Then
		for any   $p \geq 1$, there exists a constant $C>0$, independent of $t$, such that 
		\begin{equation}\label{es: solution ERROR problem}
			\begin{aligned}
				\|\hat{Y}_t\|_{L^p(\Omega;H)}
				&\le C e^{-\frac{\epsilon_0}{2}t}\,\|\hat{y}_0\|_{L^2}
				+ C\sqrt{K_3(t)}\,
				\Big(
				\mathbb{I}_1\big(t,1/2,1/2,\|\hat{Z}\|_{L^{2p}(\Omega;L^{2}(\mathcal{O}))}^2\big)
				\Big)^{\frac{1}{2}} \\[1mm]
				&\quad
				+ C\sqrt{K_2(t)K_3(t)}\,
				\Big(
				\mathbb{I}_1\big(t,0,\epsilon_0,\|\hat{Z}\|_{L^{2p}(\Omega;L^4(\mathcal{O}))}^4\big)
				\Big)^{\frac{1}{4}},
				\qquad t\in(0,\infty).
			\end{aligned}
		\end{equation} 
		Here 
		\begin{equation*} 
			\begin{aligned}
				&	K_2(t):=\mathbb{I}_1\big(t,0,\epsilon_0,1+
				\big\|\hat{X} +\hat{Y} \big\|_{L^{2p}(\Omega;L^8(\mathcal{O}))}^8+\big\|\hat{Z} \big\|_{L^{2p}(\Omega;L^8(\mathcal{O}))}^8\big)  ,\\
				&	K_3(t):=\mathbb{I}_1\big(t, {1}/{2}, \epsilon_0/{2},1+	\big\|\hat{X}_s+\hat{Y} +\hat{Z} \big\|_{L^{2p}(\Omega; W^{1,2}(\mathcal{O}))}^4+\big\|\hat{X} +\hat{Y} \big\|_{L^{2p}(\Omega; W^{1,2}(\mathcal{O}))}^4+	 	\big\| \hat{X} \big\|_{L^{2p}(\Omega; W^{1,2}(\mathcal{O}))}^4\big)   ,
			\end{aligned}
		\end{equation*} 
		where $\epsilon_0$ is given in \eqref{eq: assu:1}.
	\end{proposition}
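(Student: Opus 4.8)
The plan is to combine a dissipative energy estimate, which produces the exponential factor $e^{-\frac{\epsilon_0}{2}t}$, with the variation-of-constants representation of \eqref{eq:Feller}, which organises the $\hat Z$-driven inhomogeneity into the convolution functionals $\mathbb{I}_1$. Starting from the mild form
\[
\hat Y_t = e^{-A^2 t}\hat y_0 + \int_0^t A e^{-A^2(t-s)}\big(F(\hat Y_s+\hat X_s+\hat Z_s)-F(\hat X_s)\big)\,\mathrm{d}s,
\]
I would decompose the nonlinear difference as
\[
F(\hat Y_s+\hat X_s+\hat Z_s)-F(\hat X_s)
=\underbrace{F(\hat Y_s+\hat X_s+\hat Z_s)-F(\hat X_s+\hat Z_s)}_{\text{self-interaction}}
+\underbrace{F(\hat X_s+\hat Z_s)-F(\hat X_s)}_{\text{forcing}},
\]
so that the first group is a difference of $F$ at arguments differing exactly by $\hat Y_s$, which is governed by the dissipativity \eqref{eq: assu:1} (with $\hat u=\hat Y_s$, $\hat v=\hat X_s+\hat Z_s$), while the second group depends only on $\hat X_s,\hat Z_s$ and vanishes as $\hat Z\to 0$—reflecting that $\hat Y\equiv 0$ when $\hat Z\equiv 0$.

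For the decay, I would test \eqref{eq:Feller} against an $\dot H^{-1}$-type functional (so that the quadratic form $\langle A\hat Y_t,\hat Y_t\rangle_{L^2}$ appearing in \eqref{eq: assu:1} is produced), using that $\langle\hat Y_t,\phi_0\rangle_{L^2}=0$ is preserved by the flow. The dissipativity \eqref{eq: assu:1} then turns the leading diffusion plus self-interaction into a strictly negative contribution $-\epsilon_0\|\hat Y_t\|_{L^2}^2$, yielding a differential inequality of the schematic form $\tfrac{\mathrm d}{\mathrm d t}\mathcal E_t\le-\epsilon_0\,\mathcal E_t+(\text{forcing})$. A Gronwall / variation-of-constants step then delivers both the homogeneous term $C e^{-\frac{\epsilon_0}{2}t}\|\hat y_0\|_{L^2}$ and the exponentially weighted convolutions $e^{-\epsilon_0(t-s)}$ of the forcing; the subsequent passage to the $H$-norm is carried out with the smoothing bound \eqref{ineq: e^{-A^2t}(-A)^r to l2} (the operator $A$ in front of $F$ producing the integrable singular kernels such as $(t-s)^{-1/2}$) together with the Gagliardo–Nirenberg inequalities \eqref{ineq: continuous l^6 to l2}--\eqref{ineq: continuous l^infty to l2}.

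For the forcing itself I would invoke the cubic local-Lipschitz bound $|f(x)-f(y)|\le C_f(1+x^2+y^2)|x-y|$, which factors out a single power of $\hat Z_s$ (the smallness) against a quadratic coefficient in $\hat X_s,\hat Z_s$. Hölder in space distributes the integrability so that this coefficient is absorbed into the uniform moment functionals $K_2,K_3$ (the $L^8$- and $W^{1,2}$-norms of $\hat X,\hat Y,\hat Z$), whose finiteness and uniformity in $t$ are supplied by Proposition \ref{propo: perturbed problem}, while the remaining $\hat Z$-dependence is isolated in the factors $\mathbb{I}_1\big(\cdot,\cdot,\cdot,\|\hat Z\|_{L^2}^2\big)$ and $\mathbb{I}_1\big(\cdot,\cdot,\cdot,\|\hat Z\|_{L^4}^4\big)$. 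The product structures $\sqrt{K_3}\,(\cdots)^{1/2}$ and $\sqrt{K_2 K_3}\,(\cdots)^{1/4}$ then arise from a Cauchy–Schwarz (respectively Hölder) splitting of the Duhamel integral into a moment-coefficient factor and a $\hat Z$-smallness factor. Finally, taking $L^p(\Omega)$-norms and exchanging expectation and time integration by Minkowski's integral inequality lands the $L^{2p}(\Omega;\cdot)$ norms inside $K_2,K_3$ and the $\mathbb{I}_1$-terms, giving \eqref{es: solution ERROR problem}.

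The main obstacle will be controlling the cubic forcing $F(\hat X_s+\hat Z_s)-F(\hat X_s)$ uniformly in $t$: one must simultaneously extract a genuine linear-in-$\hat Z$ smallness (so the whole bound collapses to the decaying initial-data term when $\hat Z\to 0$), keep the coefficient moments $K_2,K_3$ bounded uniformly in time, and reconcile the fourth-order smoothing—which, because $A$ acts on the cubic drift, forces gradient and higher-integrability norms of $\hat X,\hat Y,\hat Z$—with the exponential weight $e^{-\epsilon_0(t-s)}$ so that every $\mathbb{I}_1$ converges uniformly in $t$. Balancing the dissipative rate $\epsilon_0$ against the integrable but singular kernels generated by $A$ is the crux of the argument.
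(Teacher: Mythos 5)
Your proposal is correct and follows essentially the same two-stage argument as the paper: a $\dot H^{-1}$-type energy estimate exploiting $\langle\hat Y_t,\phi_0\rangle_{L^2}=0$ and the dissipativity \eqref{eq: assu:1} to control $\int_0^t e^{-\epsilon_0(t-s)}\big\|(-A)^{\frac12}\hat Y_s\big\|_{L^2}^2\,\mathrm{d}s$, followed by the Duhamel representation with semigroup smoothing, the product estimate for $(-A)^{\frac12}$ applied to the zero-mean factor $\hat Y$, and a Cauchy--Schwarz/H\"older splitting that feeds the energy bound back into the mild form, producing exactly the $\sqrt{K_3}$ and $\sqrt{K_2K_3}$ structures. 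The only deviation is that you transpose the decomposition of the drift (dissipative part $F(\hat Y+\hat X+\hat Z)-F(\hat X+\hat Z)$ and forcing $F(\hat X+\hat Z)-F(\hat X)$, versus the paper's $F(\hat X+\hat Y)-F(\hat X)$ and $F(\hat X+\hat Y+\hat Z)-F(\hat X+\hat Y)$), which merely permutes which sums of $\hat X,\hat Y,\hat Z$ appear inside the moment functionals $K_2,K_3$ without affecting the argument.
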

	\begin{proof}
		It follows from \eqref{eq:Feller} that for $t>0$,  $\left\langle\hat{Y}_t,\phi_0\right\rangle_{L^2} =0 $ and 
		\begin{equation*} 
			\begin{aligned}
				\frac{\mathrm{d} \left\langle \hat{Y}_t,\phi_j\right\rangle_{L^2}}{\mathrm{d} t} +\lambda_{j}^2 \left\langle  \hat{Y}_t,\phi_j\right\rangle_{L^2} 
				=& -\lambda_{j}  \big\langle F(\hat{Y}_t+\hat{X}_t+\hat{Z})-F(\hat{X}_t), \phi_j \big\rangle_{L^2}  ,\quad   j\geq 1.
			\end{aligned}
		\end{equation*}
		Multiplying the above equation by $\left\langle \hat{Y}_t,\phi_j\right\rangle_{L^2}\big/{\lambda_{j}}$ and summing over $j\geq 1$ yields
		$$		\begin{aligned}
			& \frac{\mathrm{d}}{\mathrm{d}t} \left(\sum_{j=1}^{\infty}\frac{\big\langle \hat{Y}_t,\phi_j\big\rangle^2_{L^2}}{2\lambda_{j}}\right)+  \big\|(-A)^{\frac{1}{2}}\hat{Y}_t\big\|^2_{L^2} 
			=- \left\langle F(\hat{Y}_t+\hat{X}_t+\hat{Z})-F(\hat{X}_t),\hat{Y}_t\right\rangle_{L^2}. 
		\end{aligned}$$ 
		It follows from Assumption \ref{assu:1}, Young's inequality, \eqref{es: f(x)-f(y)| } and H{\"o}lder's inequality that 
		\begin{equation*} \label{es: (-A_N)^{-1/2}(U_t-tilde{U}(t))    1}
			\begin{aligned}
				\frac{\mathrm{d}}{\mathrm{d}t} \left(\sum_{j=1}^{\infty}\frac{\big\langle \hat{Y}_t,\phi_j\big\rangle^2_{L^2}}{2\lambda_{j}}\right)
				=  &-\big\|(-A)^{\frac{1}{2}}\hat{Y}_t\big\|^2_{L^2}+
				\Big\langle F (\hat{X}_t)-F(\hat{X}_t+\hat{Y}_t),  \hat{Y}_t \Big\rangle_{L^2} \\
				&+ \Big\langle F (\hat{X}_t+\hat{Y}_t)-F(\hat{X}_t+\hat{Y}_t+\hat{Z}_t),  \hat{Y}_t\Big\rangle_{L^2}  \\
				\leq&-\epsilon_0\left\|(-A )^{\frac{1}{2}}\hat{Y}_t\right\|^2_{L^2}   
				+ C  \left\| F(\hat{X}_t+\hat{Y}_t)-F(\hat{X}_t+\hat{Y}_t+\hat{Z}_t)\right\|_{L^2}^2\\
				\leq&  - \epsilon_0\big\|(-A)^{\frac{1}{2}}\hat{Y}_t\big\|^2_{L^2}
				+ C \left(1+\big\|\hat{X}_t+\hat{Y}_t\big\|_{L^{ 8}}^4+\|\hat{Z}_t\|_{L^8}^4\right)\|\hat{Z}_t\|_{L^{4}}^2.
			\end{aligned}
		\end{equation*}  
		This together with the fact $\sum_{j=1}^{\infty}\frac{  \langle \hat{Y}_t,\phi_j\rangle^2_{L^2} }{ \lambda_{j}}\leq \big\|(-A)^{\frac{1}{2}}\hat{Y}_t\big\|^2_{L^2} $         leads to
		\begin{equation*} \label{eq: (-A)^{1/2}Y(t)  1.6} 
			\begin{aligned}    
				\frac{\mathrm{d}  }{\mathrm{d} t}  \left( \sum_{j=1}^{\infty}\frac{ e^{\epsilon_0t}\big\langle \hat{Y}_t,\phi_j\big\rangle^2_{L^2} }{2\lambda_{j}}\right) 
				= & e^{ \epsilon_0 t} \frac{\mathrm{d}  }{\mathrm{d} t}  \left( \sum_{j=1}^{\infty}\frac{  \big\langle \hat{Y}_t,\phi_j\big\rangle^2_{L^2} }{2\lambda_{j}}\right) +  \epsilon_0e^{ \epsilon_0 t}\sum_{j=1}^{\infty}\frac{  \big\langle \hat{Y}_t,\phi_j\big\rangle^2_{L^2} }{2\lambda_{j}}\\
				\leq & -\frac{\epsilon_0e^{\epsilon_0 t}}{2} 
				\big\|(-A)^{\frac{1}{2}}\hat{Y}_t\big\|^2_{L^2}+ C \left(1+\big\|\hat{X}_t+\hat{Y}_t\big\|_{L^{ 8}}^4+\|\hat{Z}_t\|_{L^8}^4\right)\|\hat{Z}_t\|_{L^{4}}^2 .
			\end{aligned}
		\end{equation*}   
		Thus, we obtain
		\begin{equation*} \label{eq: (-A)^{1/2}Y(t)  1.7} 
			\begin{aligned}
				&\sum_{j=1}^{\infty}\frac{\big\langle \hat{Y}_t,\phi_j\big\rangle^2_{L^2} }{2\lambda_{j}}+\frac{\epsilon_0}{2} \int_{0}^{t}e^{-\epsilon_0 (t-s)} \big\|(-A)^{\frac{1}{2}}\hat{Y}_s\big\|^2_{L^2}\mathrm{d} s \\
				\leq &e^{-\epsilon_0 t}\sum_{j=1}^{\infty}\frac{\big\langle \hat{y}_0,\phi_j\big\rangle^2_{L^2} }{2\lambda_{j}}+C \int_{0}^{t}e^{-\epsilon_0 (t-s)} \left(1+\|\hat{X}_s+\hat{Y}_s\|_{L^{ 8}}^4+\|\hat{Z}_s\|_{L^8}^4\right)\|\hat{Z}_s\|_{L^{4}}^2\mathrm{d} s \\
				\leq&e^{-\epsilon_0 t}\|\hat{y}_0\|_{L^2}^2 +C \int_{0}^{t}e^{-\epsilon_0 (t-s)} \left(1+\|\hat{X}_s+\hat{Y}_s\|_{L^{ 8}}^4+\|\hat{Z}_s\|_{L^8}^4\right)\|\hat{Z}_s\|_{L^{4}}^2\mathrm{d} s. \\
			\end{aligned}
		\end{equation*} 
		By the above estimate and  H{\"o}lder's inequality, we obtain
		\begin{equation}  \label{ineq: int  Y 1,N}
			\begin{aligned}
				\left\|\int_{0}^{t}e^{-\epsilon_0(t-s)} \big\|(-A)^{\frac{1}{2}}\hat{Y}_s\big\|^2_{L^2}\mathrm{d} s \right\|_{L^p(\Omega) }    
				\leq&C e^{-\epsilon_0  t}\|\hat{y}_0\|_{L^2}^{2}+C K_2(t) \left(\int_{0}^{t}  e^{-\epsilon_0(t-s)}  \big\|\hat{Z}_s\big\|_{L^{2p}(\Omega;L^4(\mathcal{O}))}^4    \mathrm{d}s\right)^\frac{1}{2},
			\end{aligned}
		\end{equation}  
		where $$K_2(t):=\left(\int_{0}^{t}e^{-\epsilon_0(t-s)}   \left(1+
		\left\|\hat{X}_s+\hat{Y}_s\right\|_{L^{2p}(\Omega;L^8(\mathcal{O}))}^8+\big\|\hat{Z}_s\big\|_{L^{2p}(\Omega;L^8(\mathcal{O}))}^8\right) \mathrm{d}s\right)^\frac{1}{2}. $$
		
		It can be verified that
		\begin{equation}  \label{ineq: A(uv) to Au Av}
			\| (-A)^\frac{1}{2}(v_1v_2) \|_{L^2}\leq   C \|v_1  \|_{W^{1,2}}  \|(-A)^\frac{1}{2} v_2   \|_{L^2},\quad  v_1,v_2\in W^{1,2}(\mathcal{O}) \ \text{and}\ \langle v_2,\phi_0\rangle_{L^2}=0.
		\end{equation}  
		Since the mild solution to \eqref{eq:Feller} can be written as $$ \hat{Y}_t=e^{-A^2t}\hat{y}_0+\int_0^t Ae^{-A^2(t-s)}\left(F(\hat{X}_s+\hat{Y}_s+\hat{Z}_s)-F(\hat{X}_s)\right) \mathrm{d} s,$$  
		it can be derived from \eqref{ineq: e^{-A^2t}(-A)^r to l2},  \eqref{es: f(x)-f(y)| } and \eqref{ineq: A(uv) to Au Av}    that 
		\begin{equation} \label{ineq: ||Y_t||_L^2}
			\begin{aligned}
				\big\| \hat{Y}_t \big\|_{L^2} \leq& \big\|  e^{-A^2t}\hat{y}_0 \big\|_{L^2}+\int_0^t\big\|A e^{-A^2(t-s)}\big(F(\hat{X}_t+\hat{Y}_t+\hat{Z}_t)-F(\hat{X}_t)\big)\big\|_{L^2} \mathrm{d} s \\
				\leq&Ce^{-\frac{\lambda_{1}^2}{2}t}\left\| \hat{y}_0 \right\|_{L^2}+ C \int_0^t(t-s)^{-\frac{1}{2}} e^{- \frac{\lambda_{ 1}^2}{2}(t-s)} \left\|F(\hat{X}_s+\hat{Y}_s+\hat{Z}_s)-F(\hat{X}_s+\hat{Y}_s) \right\|_{L^2}\mathrm{d} s\\
				&+C \int_0^t(t-s)^{-\frac{1}{4}}  e^{-\frac{\lambda_{1}^2}{2}(t-s)}\big\|\left(-A\right)^{\frac{1}{2}}\big(F(\hat{X}_s+\hat{Y}_s)-F(\hat{X}_s)\big) \big\|_{L^2}\mathrm{d} s  \\
				\leq&Ce^{-\frac{\lambda_{1}^2}{2}t}\left\|\hat{y}_0 \right\|_{L^2}+ C \int_0^t(t-s)^{-\frac{1}{2}}e^{- \frac{\lambda_{1}^2}{2}(t-s)}\big(1+\big\|\hat{X}_s+\hat{Y}_s+\hat{Z}_s\big\|_{L^{\infty}}^2+\big\|\hat{X}_s+\hat{Y}_s \big\|_{L^{\infty}}^2\big)\big\|\hat{Z}_s\big\|_{L^2}\mathrm{d} s \\
				&+ C \int_0^t(t-s)^{-\frac{1}{4}}e^{- \frac{\lambda_{1}^2}{2}(t-s)}\big(1 + 	\| \hat{X}_s+\hat{Y}_s \|_{W^{1,2} }^2+	\| \hat{X}_s\|_{W^{1,2} }^2\big)\big\| (-A)^\frac{1}{2}\hat{Y}_s\big\|_{L^2}\mathrm{d} s.
			\end{aligned}
		\end{equation} 
		By applying the H\"older inequality, \eqref{ineq: ||Y_t||_L^2}, \eqref{ineq: int Y 1,N}, and using the Sobolev embedding  $W^{1,2}(\mathcal{O}) \hookrightarrow L^\infty(\mathcal{O})$,  we obtain
		\begin{equation*} 
			\begin{aligned}
				&	  \| \hat{Y}_t \|_{L^p(\Omega;H)}  \\
				\leq 	&Ce^{-\frac{\lambda_{1}^2}{2}t}\left\| \hat{y}_0 \right\|_{L^2}	 + C \int_0^t (t-s)^{-\frac{1}{2}} e^{-\frac{\lambda_{1}^2}{2}t} 
				\Big(1 + \|\hat{X}_s+\hat{Y}_s+\hat{Z}_s\|_{L^{2p}(\Omega;L^{\infty}(\mathcal{O}))}^2 \\
					& \qquad \qquad  \qquad \qquad  \qquad \qquad + \|\hat{X}_s+\hat{Y}_s\|_{L^{2p}(\Omega;L^{\infty}(\mathcal{O}))}^2 \Big) \|\hat{Z}_s\|_{L^{2p}(\Omega;H)} \, \mathrm{d} s \\
				&+C	   \left(\int_0^t(t-s)^{-\frac{1}{2}} e^{-(\lambda_{1}^2-\epsilon_0)(t-s)}\left(1+	\| \hat{X}_s+\hat{Y}_s \|_{L^p(\Omega; W^{1,2}(\mathcal{O}))}^4+	\| \hat{X}_s\|_{L^p(\Omega; W^{1,2}(\mathcal{O}))}^4\right)\mathrm{d} s \right)^\frac{1}{2}   \\
					& \qquad \qquad  \qquad \qquad  \times
				\left\|\int_0^t  e^{-\epsilon_0(t-s)} \left\| (-A)^\frac{1}{2}\hat{Y}_s \right\|_{L^2}\mathrm{d} s \right\|_{L^{p}(\Omega)}^\frac{1}{2}\\
				\leq 	&C e^{-\frac{ \epsilon_0  }{2}t}\left\| \hat{y}_0 \right\|_{L^2} + C\sqrt{K_3(t)} 
				\left(\mathbb{I}_1\Big(t,1/2,1/2,\big\|\hat{Z} \big\|_{L^{2p}(\Omega;L^{2}(\mathcal{O}))}^2  \Big)  \right)^\frac{1}{2}  \\
					&+C \sqrt{K_2(t)K_3(t)} \left(
				\mathbb{I}_1\Big(t,0,\epsilon_0, \big\|\hat{Z} \big\|_{L^{2p}(\Omega;L^4(\mathcal{O}))}^4    \Big)
				\right)^\frac{1}{4}.
			\end{aligned}
		\end{equation*}    
		This completes the proof.
	\end{proof}
	
	\begin{corollary} \label{coro: Feller}
		Suppose that Assumptions \ref{assu:1}-\ref{assu:3 noise} hold, then the transition semigroup $\{P_t\}_{t\in[0,\infty)}$ of  SCHE \eqref{eq:CH_FractionalNoise} possesses the Feller property.
	\end{corollary}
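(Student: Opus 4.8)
The plan is to derive the Feller property from continuous dependence of the law of the solution on its initial datum, the quantitative input being the contraction estimate furnished by Proposition \ref{propo: error problem exact}. Since every $\phi\in C_b(H_\alpha)$ is bounded, we have $|P_t\phi(u_0)|\le \sup_{v\in H_\alpha}|\phi(v)|$, so boundedness of $P_t\phi$ is immediate and the entire task reduces to showing that the map $u_0\mapsto P_t\phi(u_0)=\mathbb{E}[\phi(u(u_0;t,\cdot))]$ is continuous on $H_\alpha$ for each fixed $t>0$.

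First I would fix $\hat{u}_0\in H_\alpha$ together with a sequence $\tilde{u}_0^{(n)}\to\hat{u}_0$ in $H_\alpha$, and examine the difference $\hat{E}_t^{(n)}:=u(\hat{u}_0;t,\cdot)-u(\tilde{u}_0^{(n)};t,\cdot)$ driven by the same noise. As recorded above, $\hat{E}_t^{(n)}$ solves \eqref{eq: Feller 0}, which is exactly the template \eqref{eq:Feller} with the frozen state $\hat{X}_t=u(\tilde{u}_0^{(n)};t,\cdot)$, the vanishing perturbation $\hat{Z}_t\equiv 0$, and initial datum $\hat{y}_0=\hat{u}_0-\tilde{u}_0^{(n)}$. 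Crucially, because $\hat{u}_0$ and $\tilde{u}_0^{(n)}$ share the same mass $\alpha$, the compatibility condition $\langle\hat{y}_0,\phi_0\rangle_{L^2}=0$ required by Proposition \ref{propo: error problem exact} holds automatically. Applying that proposition with $\hat{Z}_t\equiv0$, every term on the right-hand side of \eqref{es: solution ERROR problem} carrying a factor $\|\hat{Z}\|$ vanishes identically (the functionals $K_2(t),K_3(t)$ are finite by the uniform moment bounds of Proposition \ref{propo: perturbed problem} combined with the $H^1$-regularity of the stochastic convolution $\mathscr{O}_t$, so these products are unambiguously zero), leaving the contraction estimate
\begin{equation*}
  \big\| u(\hat{u}_0;t,\cdot)-u(\tilde{u}_0^{(n)};t,\cdot)\big\|_{L^p(\Omega;H)}\le C\,e^{-\frac{\epsilon_0}{2}t}\,\big\|\hat{u}_0-\tilde{u}_0^{(n)}\big\|_{L^2}.
\end{equation*}

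Letting $n\to\infty$ drives the right-hand side to zero, so $u(\tilde{u}_0^{(n)};t,\cdot)\to u(\hat{u}_0;t,\cdot)$ in $L^p(\Omega;H)$ and hence in probability in the $L^2$-topology of $H_\alpha$. Since $\phi$ is bounded and continuous on $H_\alpha$, I would then close the argument by the standard subsequence principle: from any subsequence extract a further one along which the convergence is $\mathbb{P}$-almost sure, apply dominated convergence using $|\phi|\le\sup_{v}|\phi(v)|$ to pass the limit through the expectation, and invoke uniqueness of the limit to conclude that the full sequence satisfies $\mathbb{E}[\phi(u(\tilde{u}_0^{(n)};t,\cdot))]\to\mathbb{E}[\phi(u(\hat{u}_0;t,\cdot))]$. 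This is precisely $P_t\phi(\tilde{u}_0^{(n)})\to P_t\phi(\hat{u}_0)$, which yields continuity; together with boundedness it gives $P_t\phi\in C_b(H_\alpha)$.

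The step I expect to be the main obstacle is justifying that Proposition \ref{propo: error problem exact} genuinely applies, that is, that the frozen state $\hat{X}_t=u(\tilde{u}_0^{(n)};t,\cdot)$ lies in $W^{1,2}(\mathcal{O})$ with moments finite uniformly in $t$, so that $K_2(t),K_3(t)<\infty$ and the vanishing of the $\hat{Z}$-terms is unambiguous. This regularity is not automatic for a fourth-order equation whose cubic drift sits under a Laplacian, but it follows by decomposing $u=V_t+\mathscr{O}_t$, bounding $V_t$ in $H^1$ via Proposition \ref{propo: perturbed problem}, and controlling $\mathscr{O}_t$ in $\dot{H}^\beta$ for $\beta<3/2$ (hence in $H^1$) through the fast decay of $e^{-A^2t}$. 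Once this regularity is secured, the remaining steps are routine.
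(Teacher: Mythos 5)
Your proof is correct and rests on the same quantitative core as the paper's: both derive the contraction $\|u(\hat u_0;t,\cdot)-u(\tilde u_0;t,\cdot)\|_{L^p(\Omega;H)}\le C e^{-\frac{\epsilon_0}{2}t}\|\hat u_0-\tilde u_0\|_{L^2}$ by applying Proposition \ref{propo: error problem  exact} to the difference equation \eqref{eq: Feller 0}, and then upgrade $L^p(\Omega;H)$-continuity of the solution map into continuity of $P_t\phi$. The differences are in the soft-analysis closure, and they mostly favor your version. First, the paper fixes one $\delta>0$ with $|\phi(v_1)-\phi(v_2)|<\epsilon/2$ whenever $\|v_1-v_2\|_{L^2}<\delta$ and splits the expectation over the event $\Omega_{t,\delta}$ via Chebyshev; as written this uses a single modulus for all pairs, i.e.\ uniform continuity of $\phi$, whereas your subsequence-plus-dominated-convergence argument needs only pointwise continuity and boundedness and so handles all of $C_b$ without that implicit strengthening. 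Second, your identification $\hat X_t=u(\tilde u_0;t,\cdot)$, $\hat Z_t\equiv 0$ is the substitution that actually reproduces \eqref{eq: Feller 0} from the template \eqref{eq:Feller}; the paper's remark preceding Proposition \ref{propo: error problem  exact} states the roles swapped ($\hat X_t=0$, $\hat Z_t=U_{\tilde u_0,t}$), which must be a slip, since with $\hat Z_t=U_{\tilde u_0,t}$ the bound \eqref{es: solution ERROR problem} would retain non-vanishing $\hat Z$-terms and could not collapse to the pure contraction \eqref{ineq: Feller 01}. Third, you are more careful about the hypothesis $\langle\hat y_0,\phi_0\rangle_{L^2}=0$: it holds on $H_\alpha$ because the two initial data share the same mass, whereas the paper asserts \eqref{ineq: Feller 01} for arbitrary $\hat u_0,\tilde u_0\in H$, where this zero-mean condition can fail. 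Finally, your verification that $K_2(t),K_3(t)<\infty$ (via the decomposition $u=V_t+\mathscr{O}_t$, Proposition \ref{propo: perturbed problem}, and Lemma \ref{lem: 3.1 regularity of o(t,x)}, i.e.\ the moment machinery behind Theorem \ref{th:  u_regularity}) is exactly the regularity input the paper relies on, so no gap remains in your argument.
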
 
	\begin{proof}
		For any $t \in [0,\infty)$, it follows from  \eqref{eq: Feller 0} and Proposition \ref{propo: error problem  exact} that
		\begin{equation} \label{ineq: Feller 01} 
			\left\|u(\hat{u}_0;t,\cdot)- u(\tilde{u}_0;t,\cdot)\right\|_{L^2 (\Omega;H)}  			\leq  C e^{-\frac{ \epsilon_0  }{2}t}\left\| \hat{u}_0-\tilde{u}_0  \right\|_{L^2},\quad \hat{u}_0,\tilde{u}_0  \in H. 
		\end{equation}   
		For any $\epsilon > 0$ and $\phi \in C_b(H)$, there exists $\delta > 0$ such that   $|\phi(v_1) - \phi(v_2)| < \frac{\epsilon}{2}$  whenever $\|v_1 - v_2\|_{L^2} < \delta$.  
		Define $$\Omega_{t,\delta}:=\big\{\omega\in\Omega\ \big| \  \|u(\hat{u}_0;t,\cdot)- u(\tilde{u}_0;t,\cdot) \|_{L^2}<\delta  \big\}.$$ 
		
		By using the continuity and boundedness of $\phi$, \eqref{ineq: Feller 01}, and the H\"older  inequality, we obtain
		\begin{equation*}  
			\begin{aligned}
				|P_t  \phi(\hat{u}_0)- P_t  \phi(\tilde{u}_0) | 
				\leq& \mathbb{E}\left[  | \phi(u(\hat{u}_0;t,\cdot))-\phi(u(\tilde{u}_0;t,\cdot))| \cdot \chi_{\Omega_{t,\delta}} \right] 
				+ \mathbb{E}\left[ | \phi(u(\hat{u}_0;t,\cdot))-\phi(u(\tilde{u}_0;t,\cdot))|\cdot \chi_{\Omega_{t,\delta}^C} \right]\\
				\leq &\frac{\epsilon}{2}+ \sqrt{\mathbb{E}\left[ | \phi(u(\hat{u}_0;t,\cdot))-\phi(u(\tilde{u}_0;t,\cdot))|^2   \right] }\sqrt{\mathbb{E}\left[  \chi_{\Omega_{t,\delta}^C}  \right] }\\
				\leq &\frac{\epsilon}{2}+ 2 \sup_{v\in H} | \phi(v)|\cdot \frac{\left\|u(\hat{u}_0;t,\cdot)- u(\tilde{u}_0;t,\cdot)\right\|_{L^2 (\Omega;H)}  }{\delta }\\
				\leq &\frac{\epsilon}{2}+  C e^{-\frac{ \epsilon_0  }{2}t}\left\| \hat{u}_0 -\tilde{u}_0\right\|_{L^2},
			\end{aligned}
		\end{equation*} 
		where ${B}^C$ denotes the complement of a set ${B}$, and $\chi_{{B}}$ denotes its indicator function.
		Therefore, there exists a constant $\hat{\delta}>0$, such that,  whenever  $\|\hat{u}_0- \tilde{u}_0\|_{L^2} < \hat{\delta},$ we have
		$ |P_t  \phi(u_0)- P_t  \phi(\hat{u}_0) |  < \epsilon ,$   
		which means	 $P_t$ is continuous. 
		It follows from the boundedness of  $\phi$ that $P_t$ is  bounded. In summary, $\{P_t\}_{t\in[0,\infty)}$ possesses the Feller property
	\end{proof}

	\subsection{Existence and uniqueness of invariant measure}
	In this subsection, we establish the existence and uniqueness  of invariant measures. We first need to prove the uniform moment boundedness of $u(t,x)$ to obtain the existence of invariant measures. 
	By  Proposition \ref{propo: perturbed problem}, it is sufficient to establish the regularity of $ \mathscr{O}_t$  which is stated in the following lemma.
	
	\begin{lemma}  \label{lem: 3.1 regularity of o(t,x)}
		For any $p \geq 1$, $0\leq\beta<\frac{3}{2}$, and $\gamma<\frac{1}{2}$, it holds that
		\begin{align} 
			&	\sup_{t\geq 0} \mathbb{E}\left[\|  \mathscr{O}_t\|^p_{\dot{H}^\beta}\right]+\sup_{t\geq 0} \mathbb{E}\left[\| \left(-A\right)^{\frac{1}{2}}  \mathscr{O}_t\|^p_{W^{\gamma,2}}\right] <\infty.\label{ineq: regularity of o(t,x) continuous 1} 
		\end{align}
		Moreover, denote $o(t,x):=\mathscr{O}_t(x)$, then for  any   $0\leq s<t$ it holds that
		\begin{equation}\label{ineq: continuous of o(t,x)}
			\mathbb{E}\left[\| \mathscr{O}_t- \mathscr{O}_s\|^p_{\dot{H}^\beta}\right]\leq C(t-s)^{ \left(\frac{3}{8}-\frac{\beta }{4}
				-\epsilon \right) p},
		\end{equation} 
		\begin{equation}\label{ineq: futher property of  continuous O_t}
			\sup_{t\geq 0}\mathbb{E}\left[|o(t,x)-o(t,z)|^p\right]\leq C|x-z|^p,  \quad  x,\ z \in \mathcal{O},
		\end{equation} 
		where $C>0$ is a constant independent of $s, t, x$ and $z$ and  $\epsilon$ is an arbitrarily small positive number. 
	\end{lemma}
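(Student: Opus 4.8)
The plan is to analyse the stochastic convolution $\mathscr{O}_t = \sigma\sum_{j=1}^\infty \int_0^t e^{-A^2(t-s)}\phi_j\,\mathrm{d}\beta_j(s)$ directly through its spectral representation. Writing $\mathscr{O}_t = \sigma\sum_{j=1}^\infty \big(\int_0^t e^{-\lambda_j^2(t-s)}\,\mathrm{d}\beta_j(s)\big)\phi_j$, each mode is a centred Gaussian whose variance I would compute explicitly via the It\^o isometry, giving $\mathbb{E}\big|\langle \mathscr{O}_t,\phi_j\rangle_{L^2}\big|^2 = \sigma^2\frac{1-e^{-2\lambda_j^2 t}}{2\lambda_j^2}$ for $j\ge 1$. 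Since $\mathscr{O}_t$ lives on $\dot H$ (the zeroth mode is absent), the $\dot H^\beta$-norm is $\sum_{j\ge1}\lambda_j^\beta|\langle\mathscr{O}_t,\phi_j\rangle|^2$, and the first bound in \eqref{ineq: regularity of o(t,x) continuous 1} reduces to checking $\sup_{t\ge0}\sum_{j\ge1}\lambda_j^{\beta}\frac{1-e^{-2\lambda_j^2 t}}{2\lambda_j^2}\le\sum_{j\ge1}\frac{\lambda_j^{\beta-2}}{2}<\infty$, which converges precisely because $\lambda_j=j^2$ forces $\lambda_j^{\beta-2}=j^{2\beta-4}$ to be summable when $\beta<3/2$. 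Because a Gaussian has all moments controlled by its variance (equivalently, one uses the Gaussian hypercontractivity / Kahane--Khintchine equivalence of $L^p$ norms for elements of a fixed Wiener chaos), the general $p\ge1$ bound follows from the $p=2$ computation. The second term, involving $(-A)^{1/2}\mathscr{O}_t$ in the fractional space $W^{\gamma,2}$, I would handle the same way after noting $\|(-A)^{1/2}v\|_{W^{\gamma,2}}$ is comparable to a weighted $\dot H^{1+\gamma}$-type norm; the exponent bookkeeping gives a series $\sum_j j^{2(1+\gamma)-4}$, summable for $\gamma<1/2$.

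For the temporal continuity estimate \eqref{ineq: continuous of o(t,x)}, the approach is again modal. I would split $\mathscr{O}_t-\mathscr{O}_s$ into the part already present at time $s$ acted on by $e^{-A^2(t-s)}-I$, plus the fresh increment on $(s,t]$:
\begin{equation*}
\mathscr{O}_t-\mathscr{O}_s=\big(e^{-A^2(t-s)}-I\big)\mathscr{O}_s+\sigma\sum_{j=1}^\infty\Big(\int_s^t e^{-\lambda_j^2(t-r)}\,\mathrm{d}\beta_j(r)\Big)\phi_j.
\end{equation*}
For the first piece I would invoke the smoothing--difference inequality \eqref{ineq: (-A)^re^{-A^2t}(I- e^{-A^2t}) to l2}, trading the temporal gap $(t-s)^{\gamma_3}$ against spatial regularity of $\mathscr{O}_s$ (whose moments are uniformly bounded by the first part of the lemma); optimising the H\"older exponent $\gamma_3<1$ against the available regularity margin $3/2-\beta$ yields the exponent $\frac{3}{8}-\frac{\beta}{4}-\epsilon$. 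For the fresh-increment piece, the It\^o isometry gives variance $\sum_{j\ge1}\lambda_j^\beta\frac{1-e^{-2\lambda_j^2(t-s)}}{2\lambda_j^2}$, and using \eqref{estimate: e^{-x} 1} in the form $1-e^{-x}\le Cx^{\hat\alpha}$ with a suitable $\hat\alpha<1$ extracts exactly the factor $(t-s)^{2(3/8-\beta/4-\epsilon)}$ after balancing the power of $\lambda_j$ kept inside the sum against summability; the $p$-th moment bound then follows by Gaussianity.

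For the spatial H\"older regularity \eqref{ineq: futher property of continuous O_t}, I would compute the variance of the pointwise difference $o(t,x)-o(t,z)=\sigma\sum_{j\ge1}\big(\int_0^t e^{-\lambda_j^2(t-s)}\,\mathrm{d}\beta_j(s)\big)(\phi_j(x)-\phi_j(z))$, again by It\^o isometry, obtaining $\sigma^2\sum_{j\ge1}\frac{1-e^{-2\lambda_j^2 t}}{2\lambda_j^2}|\phi_j(x)-\phi_j(z)|^2$. Since $\phi_j(x)=\sqrt{2/\pi}\cos(jx)$ satisfies $|\phi_j(x)-\phi_j(z)|\le C\min\{1,j|x-z|\}$, I would split the sum at $j\sim|x-z|^{-1}$: the low frequencies contribute $|x-z|^2\sum_{j\lesssim|x-z|^{-1}}j^2\cdot j^{-4}$ and the high frequencies contribute $\sum_{j\gtrsim|x-z|^{-1}}j^{-4}$, both of which I expect to scale like $|x-z|^2$ (indeed the margin here is comfortable, since $\lambda_j^{-2}=j^{-4}$ decays fast enough that even the crude bound $|\phi_j(x)-\phi_j(z)|\le Cj|x-z|$ used for all $j$ already gives $|x-z|^2\sum_j j^{-2}<\infty$). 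Gaussianity upgrades this to the $p$-th moment bound uniformly in $t$.

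\textbf{Main obstacle.} The routine spectral computations are straightforward; the genuinely delicate step is the temporal estimate \eqref{ineq: continuous of o(t,x)}, where the exponent $\frac{3}{8}-\frac{\beta}{4}-\epsilon$ must be obtained sharply. The difficulty is that neither piece of the decomposition alone is regular enough to give this rate for free: the smoothing-difference term forces a trade-off governed by \eqref{ineq: (-A)^re^{-A^2t}(I- e^{-A^2t}) to l2} in which the H\"older exponent $\gamma_3$ in time can be taken arbitrarily close to $1$ only at the cost of spending spatial regularity, while the fresh-increment term's rate is dictated by how much of the factor $1-e^{-2\lambda_j^2(t-s)}$ can be converted into powers of $(t-s)$ before the residual $\lambda_j$-power destroys summability at $\beta=3/2$. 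Reconciling these two constraints to land on the same exponent $\frac{3}{8}-\frac{\beta}{4}$ — and tracking the arbitrarily small loss $\epsilon$ consistently — is where the careful bookkeeping lies.
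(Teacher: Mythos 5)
Your proposal is correct and follows essentially the same route as the paper: spectral representation of $\mathscr{O}_t$, It\^o isometry combined with Gaussian moment equivalence to upgrade from $p=2$ to general $p$, the elementary bounds \eqref{estimate: e^{-x} 1}, and the identical decomposition of $\mathscr{O}_t-\mathscr{O}_s$ into $\big(e^{-A^2(t-s)}-I\big)\mathscr{O}_s$ plus the fresh increment on $(s,t]$, with the same exponent bookkeeping yielding $\frac{3}{8}-\frac{\beta}{4}-\epsilon$. The only minor deviation is the $W^{\gamma,2}$ term, where you invoke the norm equivalence with a $\dot H^{1+\gamma}$-type norm for $\gamma<\frac12$, whereas the paper bounds the Gagliardo seminorm of $\partial_x o(t,\cdot)$ directly through pointwise It\^o-isometry estimates using $|\psi_j(x)-\psi_j(y)|\le C\lambda_j^{\alpha/2}|x-y|^\alpha$; both variants close the argument.
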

	\begin{proof}
		By employing Parseval’s identity, the orthonormality of $\left\{\phi_j\right\}_{j\geq 0} $, Itô's isometry, and \eqref{estimate: e^{-x} 1},  we obtain, for any $p \geq 1$ and  $0\leq\beta<\frac{3}{2}$,
		\begin{equation*} 
			\begin{aligned}
				\|\mathscr{O}_t-\mathscr{O}_s\|_{L^{2p}(\Omega;\dot{H}^\beta)} 
				=&\Big\|\sum_{j=1}^{\infty}\lambda_{j}^\beta \sigma^2 \Big| \int_0^s     \Big(e^{-\lambda_j^2 (t-r)}- e^{-\lambda_j^2 (s-r)}\Big)   \mathrm{d}  \beta_j(r) +\int_s^t   e^{-\lambda_j^2 (t-r)} \mathrm{d}  \beta_j(r)  \Big|^2	\Big\|_{L^p(\Omega;\mathbb{R})}^\frac{1}{2}\\
				\leq &C\Big(  \sum_{j=1}^{\infty}\lambda_{j}^\beta  \Big(\big(1-e^{-\lambda_j^2(t-s)}\big)^2 \Big\|  \int_0^s   e^{-\lambda_j^2(s-r)} \mathrm{d}  \beta_j(r) \Big\|_{L^{2p}(\Omega;\mathbb{R})}^2 \\
				&\qquad\qquad\qquad+	\Big\|  \int_s^t   e^{-\lambda_j^2(t-r)} \mathrm{d}  \beta_j(r)  \Big\|_{L^{2p}(\Omega;\mathbb{R})}^2 \Big)\Big)^\frac{1}{2}   \\
				\leq &C\Big(  \sum_{j=1}^{\infty}\lambda_{j}^\beta  \Big(\big(1-e^{-\lambda_j^2(t-s)}\big)^2   \int_0^s    e^{-2\lambda_j^2(s-r)} \mathrm{d} r+	   \int_s^t    e^{-2\lambda_j^2(t-r)}   \mathrm{d} r \Big)\Big)^\frac{1}{2}   \\
				\leq &C\Big(   \sum_{j=1}^{\infty}\lambda_j^{\beta-2}\Big( \big(1-e^{-\lambda_j^2(t-s)}\big)^2\big(1-e^{-2\lambda_j^2s}\big)    + \big(1-e^{-2\lambda_j^2(t-s)}\big) \Big)
				\Big)^\frac{1}{2}\leq C(t-s)^{  \frac{3}{8} -\frac{\beta }{4}	-\epsilon   },
			\end{aligned}
		\end{equation*}  
		where  $\epsilon$ is an arbitrarily small positive number. 
		By an analogous derivation, we obtain	 
		\begin{equation} \label{regularity of o(t,x)  1.0}	
			\sup_{t\geq0} \mathbb{E}\left[\|  \mathscr{O}_t\|^p_{\dot{H}^\beta}\right] < \infty.
		\end{equation}  
		Let  $\psi_j(x)= 
		\sqrt{2/ \pi} \sin (j x)$ for $j\geq 1$, which
		form an orthonormal basis of  $L^2_0(\mathcal{O})$ given by
		$$L^2_0(\mathcal{O}):=\left\{v \in L^2(O):\ v|_{x \in \partial O}=0\right\}.$$  
		It can be verified that    
		$
		\left|\psi_j(x)-\psi_j(y)\right| \leq C \lambda_j^{\frac{\alpha}{2}}|x-y|^\alpha 
		$
		for any $\alpha \in[0,1]$ and $x, y \in \mathcal{O}$.
		It follows from  $$  \partial_x o(t,x)-\partial_y o(t,y)= -\sigma\sum_{j=1}^{\infty}  \int_{0}^{t}\lambda_j^\frac{1}{2} e^{-\lambda_j^2 (t-s)}\left(\psi_j(x)-\psi_j(y)\right)   \mathrm{d}  \beta_j(s)    ,   $$
		the orthonormality of $\{\psi_j\}_{j\geq 1}$, and Itô's isometry  that
		\begin{equation} \label{regularity of o(t,x)  1.2}
			\begin{aligned}
				\left\|	 \partial_x o(t,x)-\partial_y o(t,y)\right\|^2_{L^{ p}(\Omega;\mathbb{R})}  
				\leq&   C\sum_{j=1}^{\infty}   \int_{0}^{t}\lambda_j  e^{-2\lambda_j^2 (t-s)}|\psi_j(x)-\psi_j(y)|^2  \mathrm{d} s 
				\leq   C\sum_{j=1}^{\infty}  \lambda_j^{-1}|\psi_j(x)-\psi_j(y)|^2  \\
				\leq &   C\sum_{j=1}^{\infty}    \lambda_j^{-\frac{1}{2}-\epsilon}|x-y|^{1-2\epsilon} 
				\leq  C |x-y|^{1-2\epsilon} .
			\end{aligned}
		\end{equation} 
		Therefore,
		\begin{equation} \label{regularity of o(t,x)  1.3}
			\begin{aligned}
				\left\| \left(\int_{\mathcal{O}} \int_{\mathcal{O}}  \frac{|\partial_x o(t,x)-\partial_y o(t,y)|^2}{|x-y|^{1+2\gamma}}\mathrm{d} x \mathrm{d} y\right)^\frac{1}{2}\right\|_{L^{p}(\Omega;\mathbb{R} )} 
				\leq & \left(   \int_{\mathcal{O}} \int_{\mathcal{O}}  \frac{\left\|	 \partial_x o(t,x)-\partial_y o(t,y)\right\|^2_{L^{ p}(\Omega;\mathbb{R})}}{|x-y|^{1+2\gamma}}\mathrm{d} x \mathrm{d} y  \right)^\frac{1}{2}\\
				\leq & C \left(   \int_{\mathcal{O}} \int_{\mathcal{O}}  |x-y|^{-2\epsilon-2\gamma}\mathrm{d} x \mathrm{d} y  \right)^\frac{1}{2} 
				\leq  C.
			\end{aligned}
		\end{equation}
		Combining  \eqref{regularity of o(t,x)  1.0}, \eqref{regularity of o(t,x) 1.3}, and the following identy
		\begin{equation*}  
			\begin{aligned}
				\| \left(-A\right)^{\frac{1}{2}} \mathscr{O}_t\|^2_{W^{\gamma,2}} 
				=  &\|  \mathscr{O}_t \|_{\dot{H}^1}^2+ \int_{\mathcal{O}} \int_{\mathcal{O}}  \frac{|\partial_x o(t,x)-\partial_y o(t,y)|^2}{|x-y|^{1+2\gamma}}\mathrm{d} x \mathrm{d} y,
			\end{aligned}
		\end{equation*}
		we obtain \eqref{ineq: regularity of o(t,x) continuous 1}. 
		In addition, \eqref{ineq: futher property of continuous O_t} follows from an argument analogous to that of \eqref{regularity of o(t,x) 1.2}. This completes the proof.
	\end{proof}

	Next we demonstrate the uniform moment boundedness of $u(t,x)$,  which is essential for ensuring the existence of invariant measure.
	\begin{theorem}  \label{th:  u_regularity}
		Suppose that Assumptions  \ref{assu:1}--\ref{assu:3 noise} hold.
		Then for any $p \geq 1$, 
		it holds that
		\begin{equation}\label{ineq: u_regularity}
			\sup_{t\in[0,\infty)} \mathbb{E}\left[\|u(t,\cdot)\|^p_{L^\infty}\right]   < \infty .
		\end{equation}
	\end{theorem}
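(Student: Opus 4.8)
The plan is to exploit the decomposition $u(t,\cdot)=U_t=V_t+\mathscr{O}_t$ already introduced before \eqref{eq: firsst auxiliary equation}, to treat the two summands separately, and to control each in $L^\infty$ uniformly in $t$ and in $L^p(\Omega)$. Since the domain $\mathcal{O}=(0,\pi)$ is one–dimensional, the interpolation estimate \eqref{ineq: continuous l^infty to l2} gives $\|v\|_{L^\infty}\le C\big(\|v\|_{L^2}+\|(-A)^{\frac12}v\|_{L^2}\big)$ for $v\in H^1$. Hence it suffices to bound each of $\mathscr{O}_t$ and $V_t$ in the norm on the right-hand side, uniformly in $t$, and then invoke the triangle inequality $\|u(t,\cdot)\|_{L^\infty}\le\|V_t\|_{L^\infty}+\|\mathscr{O}_t\|_{L^\infty}$.

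For the stochastic convolution I would first note that $\langle\mathscr{O}_t,\phi_0\rangle_{L^2}=0$, because under Assumption \ref{assu:3 noise} the noise lives on $\dot{H}$ and is spanned by $\{\phi_j\}_{j\ge1}$; consequently the zero-mean Sobolev embedding $\dot{H}^\beta\hookrightarrow L^\infty$, valid for $\beta>\tfrac12$, applies. Choosing any $\beta\in(\tfrac12,\tfrac32)$ and invoking the first bound in \eqref{ineq: regularity of o(t,x) continuous 1} of Lemma \ref{lem: 3.1 regularity of o(t,x)} yields $\sup_{t\ge0}\mathbb{E}\big[\|\mathscr{O}_t\|_{L^\infty}^p\big]<\infty$, which disposes of the $\mathscr{O}_t$ contribution entirely.

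The substantive part is $V_t$. Applying Proposition \ref{propo: perturbed problem} with $Y_t=V_t$, $Z_t=\mathscr{O}_t$ and $y_0=u_0$ bounds $\|V_t\|_{L^2}+\|(-A)^{\frac12}V_t\|_{L^2}$ by a constant times $1+\|u_0\|_{L^\infty}^{20}+\|(-A)^{\frac12}u_0\|_{L^2}^{10}$ plus the two convolution functionals $\mathbb{I}_1(t,3/4,\lambda_1^2/2,\cdot)$ and $\mathbb{I}_2(t,3/4,\lambda_1^2/2,\cdot)$, whose integrands are powers of $\|\mathscr{O}\|_{L^6}$ and of $K_1$. By Assumption \ref{assu:2} we have $u_0\in H^1\hookrightarrow L^\infty$, so the initial-data terms are finite deterministic constants. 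I would then take $L^p(\Omega)$-norms, move them inside the time integrals via Minkowski's inequality, and reduce the problem to the finiteness of the kernel $\int_0^\infty r^{-3/4}e^{-\lambda_1^2 r/2}\,\mathrm{d}r$ (and its self-convolution governing $\mathbb{I}_2$), which holds since $3/4<1$ and the exponential weight is integrable, together with the time-uniform finiteness of the moments of the integrands.

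The crux is therefore the uniform-in-time moment control of those integrands, and this is where the main obstacle lies. Here I would use that every norm of the Gaussian process $\mathscr{O}_t$ has finite moments of all orders, bounded uniformly in $t$: the $L^6$-moments follow from $\dot{H}^\beta\hookrightarrow L^6$ (needing only $\beta\ge\tfrac13$) with \eqref{ineq: regularity of o(t,x) continuous 1}, while for $K_1(t)=\mathbb{I}_1\big(t,0,\lambda_1^2/2,\|\mathscr{O}\|_{L^\infty}^8+\|(-A)^{\frac12}\mathscr{O}\|_{L^4}^4\big)$ one controls $\|\mathscr{O}\|_{L^\infty}$ as above and $\|(-A)^{\frac12}\mathscr{O}\|_{L^4}$ through $W^{\gamma,2}\hookrightarrow L^4$ (which requires only $\gamma\ge\tfrac14$, compatible with the range $\gamma<\tfrac12$ in Lemma \ref{lem: 3.1 regularity of o(t,x)}) and the second bound in \eqref{ineq: regularity of o(t,x) continuous 1}. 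Since $K_1$ is itself a finite-kernel convolution of a uniformly integrable process, $\sup_t\mathbb{E}\big[|K_1(t)|^q\big]<\infty$ for every $q$, and combined with the integrable kernels this renders the $L^p(\Omega)$-norms of $\mathbb{I}_1$ and $\mathbb{I}_2$ bounded uniformly in $t$. The delicate bookkeeping concerns the nested double integral $\mathbb{I}_2$ and the high powers $|K_1|^5$ and $\|\mathscr{O}\|_{L^6}^9$: one must take the moment order in $\mathbb{E}[\cdot]$ large enough to absorb these exponents while relying only on the finiteness of Gaussian moments, and verify that the decay $e^{-\lambda_1^2 r/2}$ defeats every polynomial singularity produced by the iterated convolution. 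Feeding the resulting uniform bounds back through \eqref{ineq: continuous l^infty to l2} gives $\sup_{t\ge0}\mathbb{E}\big[\|V_t\|_{L^\infty}^p\big]<\infty$, and combining with the bound on $\mathscr{O}_t$ completes the proof.
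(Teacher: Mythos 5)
Your proposal follows essentially the same route as the paper: the decomposition $u(t,\cdot)=V_t+\mathscr{O}_t$, the uniform Gaussian moment bounds for $\mathscr{O}_t$ from Lemma \ref{lem: 3.1 regularity of o(t,x)} (including $\|(-A)^{\frac12}\mathscr{O}_t\|_{L^4}$ via the $W^{\gamma,2}$-embedding), the application of Proposition \ref{propo: perturbed problem} with $Y_t=V_t$, $Z_t=\mathscr{O}_t$, $y_0=u_0$, and the final Sobolev/interpolation passage to $L^\infty$. The additional bookkeeping you carry out (Minkowski's inequality, integrability of the kernels in $\mathbb{I}_1$ and $\mathbb{I}_2$, high moments absorbing the powers $|K_1|^5$ and $\|\mathscr{O}\|_{L^6}^9$) is exactly what the paper's terse citation of Proposition \ref{propo: perturbed problem} and Lemma \ref{lem: 3.1 regularity of o(t,x)} implicitly relies on, so your argument is correct and matches the intended proof.
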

	\begin{proof}
		It follows from   \eqref{ineq: continuous l^infty to l2}, the Sobolev embedding theorem, and  Lemma \ref{lem: 3.1 regularity of o(t,x)} that 
		\begin{equation}   \label{es: regularity of o(t,x) further}
			\begin{aligned}
				&\sup_{t\geq 0} \mathbb{E}\left[\|  \mathscr{O}_t\|^q_{L^\infty}\right]+\sup_{t\geq 0} \mathbb{E}\left[\|  (-A)^\frac{1}{2}\mathscr{O}_t\|^q_{L^4}\right] \\
				\leq&  \sup_{t\geq 0} \mathbb{E}\left[\|  \mathscr{O}_t\|^q_{L^2}\right]+\sup_{t\geq 0} \mathbb{E}\left[\|  \mathscr{O}_t\|^q_{ \dot{H}^1}\right] +\sup_{t\geq 0} \mathbb{E}\left[\| \left(-A\right)^{\frac{1}{2}}  \mathscr{O}_t\|^q_{W^{\gamma,2}}\right] <\infty, \quad \forall q\geq 1.
			\end{aligned}
		\end{equation}
		Combining \eqref{eq: firsst auxiliary equation}, \eqref{es: regularity of o(t,x) further} with Proposition \ref{propo: perturbed problem} yields
		\begin{equation*}  
			\begin{aligned}
				\sup_{t\geq 0}  \mathbb{E}\left[\|V_t\|^p_{L^2}\right] + \sup_{t\geq 0} \mathbb{E}\left[\|(-A)^\frac{1}{2}V_t\|^p_{L^2}\right] <\infty.
			\end{aligned}
		\end{equation*} 
		Applying the above estimate,  Lemma \ref{lem: 3.1 regularity of o(t,x)}, and the Sobolev embedding  $W^{1,2}(\mathcal{O}) \hookrightarrow L^\infty(\mathcal{O})$,  we obtain
		\begin{equation}  \label{es: U  uniform W1,2}
			\begin{aligned}
				\sup_{t\geq 0}	\mathbb{E}\left[\|u(t,\cdot)\|^p_{L^\infty}\right]=	\sup_{t\geq 0}	\mathbb{E}\left[\|U_t\|^p_{L^\infty}\right]\leq C\sup_{t\geq 0}	\mathbb{E}\left[\|U_t\|^p_{W^{1,2}}\right]  <\infty.
			\end{aligned}
		\end{equation}  
		The proof  is completed.
	\end{proof}
	
	By using Theorem \ref{th:  u_regularity} and Corollary \ref{coro: Feller}, we present   the existence of invariant measure for the SCHE \eqref{eq:CH_FractionalNoise} in the following theorem. We recall that $H_\alpha := \left\{v \in  \dot{H} : \frac{1}{\pi}\int_{\mathcal{O}}v \mathrm{ d}x=\alpha\right\}$. The following analysis is valid for any fixed $\alpha$.
	\begin{theorem}  \label{th: invariant measure for the SCHE 1}
		Suppose that Assumptions \ref{assu:1}-\ref{assu:3 noise} hold, then there exists an invariant  
		measure $\pi$ for the SCHE \eqref{eq:CH_FractionalNoise} on $H_\alpha$. 
	\end{theorem}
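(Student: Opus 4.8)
The plan is to apply the Krylov--Bogoliubov existence theorem. Since the Feller property of $\{P_t\}_{t\ge0}$ has already been secured in Corollary~\ref{coro: Feller} (and its estimate \eqref{ineq: Feller 01} holds for arbitrary data in $H$, hence restricts to $H_\alpha$), the two remaining ingredients are (a) tightness of the time-averaged occupation measures associated with a fixed initial datum, and (b) the verification that the dynamics, and hence every such measure, is supported on the affine set $H_\alpha$.

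First I would establish mass conservation. Projecting the abstract formulation \eqref{eq: CH_FractionalNoise Banach} onto the constant mode $\phi_0$ and using $\lambda_0=0$ together with the fact that the noise in Assumption~\ref{assu:3 noise} carries no $\phi_0$-component, one obtains $\langle A^2U_t,\phi_0\rangle_{L^2}=0$ and $\langle AF(U_t),\phi_0\rangle_{L^2}=-\lambda_0\langle F(U_t),\phi_0\rangle_{L^2}=0$, whence $\frac{\mathrm d}{\mathrm dt}\langle U_t,\phi_0\rangle_{L^2}=0$. Thus $u(u_0;t,\cdot)\in H_\alpha$ for all $t\ge0$ whenever $u_0\in H_\alpha$, and since $H_\alpha=\{v:\langle v,\phi_0\rangle_{L^2}=\alpha\sqrt\pi\}$ is closed in $H$, every measure built from the law of the solution is supported on $H_\alpha$.

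Next I would fix $u_0\in H_\alpha$ and define the averaged measures $\mu_T$ on $H_\alpha$ through $\int_{H_\alpha}\phi\,\mathrm d\mu_T:=\frac1T\int_0^TP_t\phi(u_0)\,\mathrm dt$ for $\phi\in C_b(H_\alpha)$. The crucial quantitative input is the uniform-in-time bound $\sup_{t\ge0}\mathbb E[\|U_t\|_{W^{1,2}}^p]<\infty$ obtained inside the proof of Theorem~\ref{th: u_regularity} (itself a consequence of Proposition~\ref{propo: perturbed problem} and Lemma~\ref{lem: 3.1 regularity of o(t,x)}). Because the embedding $W^{1,2}(\mathcal O)\hookrightarrow H$ is compact, the balls $K_R:=\{v\in H_\alpha:\|v\|_{W^{1,2}}\le R\}$ are relatively compact in $H_\alpha$, and Chebyshev's inequality gives $\mu_T(H_\alpha\setminus K_R)\le R^{-p}\sup_{t\ge0}\mathbb E[\|U_t\|_{W^{1,2}}^p]$ uniformly in $T$. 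Sending $R\to\infty$ shows that $\{\mu_T\}_{T>0}$ is tight, so by Prokhorov's theorem a subsequence $\mu_{T_n}$ converges weakly to a probability measure $\pi$ on $H_\alpha$.

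Finally I would close the argument with the usual invariance computation. For any $s\ge0$ and $\phi\in C_b(H_\alpha)$, the Feller property gives $P_s\phi\in C_b(H_\alpha)$, so weak convergence and the semigroup property yield
\[
\int_{H_\alpha}P_s\phi\,\mathrm d\pi=\lim_{n\to\infty}\frac1{T_n}\int_0^{T_n}P_{t+s}\phi(u_0)\,\mathrm dt
=\lim_{n\to\infty}\frac1{T_n}\int_s^{T_n+s}P_t\phi(u_0)\,\mathrm dt,
\]
and the two boundary contributions over $[0,s]$ and $[T_n,T_n+s]$ are bounded by $2s\sup_{v}|\phi(v)|/T_n\to0$, so the right-hand side equals $\int_{H_\alpha}\phi\,\mathrm d\pi$; this is precisely the invariance identity of Definition~\ref{defn: invariant measure}. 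The main obstacle I anticipate is not the abstract Krylov--Bogoliubov step but ensuring that tightness is genuinely realized in the topology of the affine space $H_\alpha$: one must couple the uniform $W^{1,2}$ moment bound with mass conservation so that the compactness afforded by Rellich's theorem takes place within $H_\alpha$ rather than merely within $H$.
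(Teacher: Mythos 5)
Your proposal is correct and follows essentially the same route as the paper: tightness of the time-averaged occupation measures on $H_\alpha$ via the uniform moment bound of Theorem~\ref{th:  u_regularity}, combined with the Feller property of Corollary~\ref{coro: Feller} and the Krylov--Bogoliubov theorem. You merely make explicit several steps the paper compresses (mass conservation ensuring the dynamics stays in $H_\alpha$, the Chebyshev--Prokhorov tightness argument, and the shift computation verifying invariance), all of which are carried out correctly.
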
 
	\begin{proof}
		Define
		$$ 
		\hat{\pi}_T(G):=\frac{1}{T} \int_0^T \mathbb{P}_t\left(u_0, G\right) \mathrm{d} t  , \quad  G \in \mathscr{B}(H_\alpha),\ T>0,
		$$
		where $\mathbb{P}_t\left(u_0, G\right)=\mathbb{E}\left[\chi_G\left(u(u_0;t,\cdot) \right)\right] $ is the transition probability of $u$
		and  $\mathscr{B}(\mathcal{X})$  denotes the Borel $\sigma$-algebra on a specific space $\mathcal{X}$.
		By Theorem \ref{th: u_regularity}, the family  $\left\{\mathbb{P}_t\left(u_0, \cdot\right)\right\}_{t\geq 0}$ is tight on $\mathcal{P}\left(H_\alpha\right):=\left\{\mu \mid \mu\right.$ is a probability measure on $\left.\mathscr{B}\left(H_\alpha\right)\right\}$. Hence, for any $\epsilon>0$, there exists a compact set $G_{\epsilon}$ such that
		$$
		\hat{\pi}_T\left(G_{\epsilon}\right)=\frac{1}{T} \int_0^T  \mathbb{P}_t\left(u_0, G_{\epsilon}\right)\ \mathrm{d} t \geq \frac{1}{T} \int_0^T(1-\epsilon) \mathrm{d} t=1-\epsilon.
		$$ 
		This implies that the family  $\left\{\hat{\pi}_t\right\}_{t\in[0,\infty)}$ is tight. Consequently, by Corollary \ref{coro: Feller} and the Krylov–Bogoliubov Theorem \cite[Theorem 1.2]{Hong2019}, the mild solution \eqref{eq: CH_FractionalNoise_solution_mild} admits an invariant measure $\pi$ on $ H_\alpha $.  
	\end{proof}
	
	\cite[]{da1996stochastic} has established the uniqueness of the invariant measures on $\dot{H}^{-1}_\alpha$. Next we aim to extend this result to  $H_\alpha$. To this end, we first establish the following lemma to explain the relationship between the uniqueness of the invariant measure on different spaces.
	
	\begin{lemma} \label{lem: uniqueness invariant measure from big to small} 
		Suppose that the following conditions hold:\\
		\textbf{(i)}  $\left(\mathcal{X},\|\cdot\|_{\mathcal{X}}\right)$ and  $\left(\mathcal{V} ,\|\cdot\|_{\mathcal{V} }\right)$ are Banach spaces;\\
		\textbf{(ii)}  $\mathcal{V}$ is continuously embedded in $\mathcal{X}$;\\
		\textbf{(iii)}   If $u_0\in \mathcal{V} $, then $u(u_0;t,\cdot)\in \mathcal{V} $ for all $t \ge 0$.\\
		If the transition semigroup $\{P_t\}_{t \geq 0}$ admits a unique invariant measure on $\mathcal{X}$, then it admits at most one invariant measure on $\mathcal{V} $.
	\end{lemma}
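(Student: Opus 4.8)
The plan is to argue by a lifting-and-descent scheme: push any invariant measure on $\mathcal{V}$ forward to an invariant measure on $\mathcal{X}$, invoke the assumed uniqueness on $\mathcal{X}$, and then descend again using injectivity of the pushforward. Write $\iota:\mathcal{V}\hookrightarrow\mathcal{X}$ for the continuous (hence Borel) inclusion guaranteed by (ii), and for a probability measure $\mu$ on $\mathscr{B}(\mathcal{V})$ let $\iota_*\mu$ denote its pushforward, $(\iota_*\mu)(B)=\mu(\iota^{-1}(B))$ for $B\in\mathscr{B}(\mathcal{X})$. Supposing $\pi_1,\pi_2$ are two invariant measures for $\{P_t\}_{t\ge0}$ on $\mathcal{V}$, it then suffices to prove that each $\iota_*\pi_i$ is invariant on $\mathcal{X}$ and that $\iota_*$ is injective.

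First I would check that $\iota_*\pi_i$ is invariant on $\mathcal{X}$. Fix $\phi\in B_b(\mathcal{X})$ and set $\tilde\phi:=\phi\circ\iota$, which lies in $B_b(\mathcal{V})$ since $\iota$ is continuous. Condition (iii) ensures that for $w\in\mathcal{V}$ the solution $u(w;t,\cdot)$ stays in $\mathcal{V}$, so by uniqueness of the mild solution the $\mathcal{X}$-valued process started at $\iota(w)$ coincides with the image under $\iota$ of the $\mathcal{V}$-valued process. Consequently the two transition semigroups are intertwined by $\iota$, namely $(P_t\phi)(\iota(w))=\mathbb{E}\big[\phi(u(w;t,\cdot))\big]=(P_t^{\mathcal{V}}\tilde\phi)(w)$, where $P_t^{\mathcal{V}}$ denotes the semigroup acting on $B_b(\mathcal{V})$. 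The change-of-variables formula for the pushforward together with the invariance of $\pi_i$ on $\mathcal{V}$ then yields
\begin{equation*}
\int_{\mathcal{X}}P_t\phi\,\mathrm{d}(\iota_*\pi_i)
=\int_{\mathcal{V}}(P_t^{\mathcal{V}}\tilde\phi)\,\mathrm{d}\pi_i
=\int_{\mathcal{V}}\tilde\phi\,\mathrm{d}\pi_i
=\int_{\mathcal{X}}\phi\,\mathrm{d}(\iota_*\pi_i),
\end{equation*}
so $\iota_*\pi_i$ is an invariant measure on $\mathcal{X}$. By the assumed uniqueness on $\mathcal{X}$, $\iota_*\pi_1=\iota_*\pi_2$.

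It remains to descend, i.e. to show that $\iota_*$ is injective. Here I would use that $\mathcal{V}$ and $\mathcal{X}$ are separable Banach spaces---as is the case for $\mathcal{V}=H_\alpha$ and $\mathcal{X}=\dot{H}^{-1}_\alpha$---hence Polish, and that $\iota$ is a continuous injection. By the Lusin--Souslin theorem, such a map sends Borel sets to Borel sets and is a Borel isomorphism onto its image $\iota(\mathcal{V})\in\mathscr{B}(\mathcal{X})$. Therefore, for every $B\in\mathscr{B}(\mathcal{V})$ one has $\iota(B)\in\mathscr{B}(\mathcal{X})$ and, by injectivity of $\iota$, $(\iota_*\pi_i)(\iota(B))=\pi_i(\iota^{-1}(\iota(B)))=\pi_i(B)$; hence $\iota_*\pi_1=\iota_*\pi_2$ forces $\pi_1(B)=\pi_2(B)$ for all $B$, i.e.\ $\pi_1=\pi_2$. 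The main obstacle I anticipate is precisely this measure-theoretic descent: the naive expectation that a continuous injection produces an injective pushforward is false without a measurability statement on the image, and it is exactly the Lusin--Souslin (Kuratowski) theorem, valid because the spaces are Polish, that repairs it. The invariance-transfer step, by contrast, is conceptually routine once the intertwining identity $(P_t\phi)\circ\iota=P_t^{\mathcal{V}}(\phi\circ\iota)$ has been recorded using (iii) and the uniqueness of the mild solution.
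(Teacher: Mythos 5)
Your proposal is correct and is essentially the paper's own argument in contrapositive form: your pushforward $\iota_*\pi_i$, with $(\iota_*\pi_i)(B)=\pi_i(\iota^{-1}(B))=\pi_i(B\cap\mathcal{V})$, is exactly the paper's extension $\tilde{\pi}_i$, and your intertwining identity $(P_t\phi)\circ\iota=P_t^{\mathcal{V}}(\phi\circ\iota)$, justified via condition (iii), is the same invariance-transfer step the paper carries out. The one substantive difference is the descent: the paper simply asserts that $\pi_1\neq\pi_2$ implies $\tilde{\pi}_1\neq\tilde{\pi}_2$, whereas you correctly observe that this needs the trace $\sigma$-algebra $\{B\cap\mathcal{V}:B\in\mathscr{B}(\mathcal{X})\}$ to exhaust $\mathscr{B}(\mathcal{V})$, and you supply the Lusin--Souslin theorem for that purpose. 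Note that this imports a hypothesis not present in the lemma as stated — separability (Polishness) of $\mathcal{V}$ and $\mathcal{X}$ — so your argument is slightly less general than the bare Banach-space formulation; but since the lemma is applied with $\mathcal{V}=H_\alpha$ and $\mathcal{X}=\dot{H}^{-1}_\alpha$, both separable, your version covers the paper's use case and in fact makes explicit a measurability point the paper's proof leaves implicit.
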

	\begin{proof}
		We prove this lemma by contradiction. Assume that there exist two distinct invariant measures on $\mathcal{V}$, denoted by 
		$\pi_1$ and $\pi_2$. Let $\mathcal{I}:\mathcal{V} \to\mathcal{X}$ be the inclusion map, i.e., $\mathcal{I}(v)=v$ for all $v\in\mathcal{V}$. Since $\mathcal{V}$ is continuously embedded into $\mathcal{X}$, $\mathcal{I}$ is continuous. Thus, for any $B\in\mathscr{B}(\mathcal{X})$ we have $B\cap\mathcal{V}=\mathcal{I}^{-1}(B)\in\mathscr{B}(\mathcal{V})$. Define $\tilde{\pi}_1$ and $\tilde{\pi}_2$ on $\mathcal{X}$ by
		$$ \tilde{\pi}_1(B)=\pi_1(B\cap\mathcal{V}  )\quad \text{and}  \quad  \tilde{\pi}_2(B)=\pi_2(B\cap\mathcal{V}  ),\quad  \forall B\in   \mathscr{B}(\mathcal{X}). $$
		It is straightforward to verify that $\tilde{\pi}_1$ and $\tilde{\pi}_2$ are probability measures on $\mathcal{X}$.
		
			%
		%
		
		For any $\phi\in B_b(\mathcal{X})$,  we define $\hat{\phi}:  \mathcal{V} \to \mathbb{R}$ by 
		$  \hat{\phi}(v):=  \phi (\mathcal{I}(v))   $   for all $v\in\mathcal{V}$.
		Obviously, $\hat{\phi} $ is bounded.   It follows from the measurability of $\phi$ that $\phi^{-1}(\hat{B}) \in \mathscr{B}(\mathcal{X})$ for every $\hat{B}\in\mathscr{B}(\mathbb{R})$. Consequently,
		$$
		\hat{\phi}^{-1}(\hat{B})= \left\{v \in \mathcal{V} : \phi(\mathcal{I}(v)) \in \hat{B}\right\}=\mathcal{I}^{-1}\big(\phi^{-1}(\hat{B})\big) \in \mathscr{B}\left(\mathcal{V} \right),
		$$
		which means that  $\hat{\phi}\in B_b(\mathcal{V} )$. Since $u_0\in \mathcal{V} $ implies $u(u_0;t,\cdot)\in \mathcal{V} $, by the definition of   invariant measure, we have
		\begin{equation*} 
			\begin{aligned}
				\int_\mathcal{X} P_t \phi(u) \tilde{\pi}_1( \mathrm{ d}u)=	\int_{\mathcal{V} } P_t \hat{\phi}(u)  \pi_1( \mathrm{ d}u)=\int_{\mathcal{V} } \hat{\phi}(u)  \pi_1( \mathrm{ d}u)=\int_\mathcal{X}  \phi(u)  \tilde{\pi}_1( \mathrm{ d}u).
			\end{aligned}
		\end{equation*} 
		This implies that $\tilde{\pi}_1$ is an invariant measure on $\mathcal{X}$, and so does $\tilde{\pi}_2$.  It follows from  $\pi_1 \neq  \pi_2$  that  $\tilde{\pi}_1 \neq \tilde{\pi}_2$, which contradicts the uniqueness of the invariant measure on $\mathcal{X}$. This completes the proof.
		
	\end{proof}
	
	It is time to prove the uniqueness of invariant measure, which together with existence proved in Theorem \ref{th: invariant measure for the SCHE 1} implies ergodicity.
	\begin{theorem}  \label{th: invariant measure for the SCHE}
		Suppose that Assumptions \ref{assu:1}-\ref{assu:3 noise} hold, then the  invariant  
		measure $\pi$ for the SCHE \eqref{eq:CH_FractionalNoise} is unique  on $H_\alpha$. 
	\end{theorem}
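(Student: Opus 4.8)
The plan is to deduce uniqueness on $H_\alpha$ from the uniqueness on the larger space $\dot{H}^{-1}_\alpha$, which was established in \cite{da1996stochastic}, by invoking Lemma \ref{lem: uniqueness invariant measure from big to small}. Concretely, I would apply that lemma with $\mathcal{X}=\dot{H}^{-1}_\alpha$ and $\mathcal{V}=H_\alpha$: once its three hypotheses are checked, the lemma yields \emph{at most} one invariant measure on $H_\alpha$, and combining this with the existence already proved in Theorem \ref{th: invariant measure for the SCHE 1} gives exactly one, i.e. uniqueness.

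The verification of hypotheses (i) and (ii) is routine. Both $\dot{H}^{-1}_\alpha$ and $H_\alpha$ are closed affine hyperplanes obtained by fixing the mean, equivalently the $\phi_0$-coordinate, since $\langle v,\phi_0\rangle_{L^2}=\sqrt{\pi}\,\alpha$; hence they are complete metric spaces for the metrics induced by $\|\cdot\|_{\dot{H}^{-1}}$ and $\|\cdot\|_{L^2}$, respectively. For the embedding, any difference $w:=u-v$ of two elements of $H_\alpha$ has zero mean, and since $\lambda_j=j^2\ge 1$ for $j\ge 1$,
\[
\|w\|_{\dot{H}^{-1}}^2=\sum_{j=1}^{\infty}\lambda_j^{-1}\big|\langle w,\phi_j\rangle_{L^2}\big|^2\le\sum_{j=1}^{\infty}\big|\langle w,\phi_j\rangle_{L^2}\big|^2=\|w\|_{L^2}^2 .
\]
Thus the inclusion $H_\alpha\hookrightarrow\dot{H}^{-1}_\alpha$ is $1$-Lipschitz and in particular continuous, giving hypothesis (ii).

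The crucial step is hypothesis (iii): the affine hyperplane $H_\alpha$ must be invariant under the flow, i.e. $u_0\in H_\alpha$ implies $u(u_0;t,\cdot)\in H_\alpha$ for all $t\ge 0$. This is exactly mass conservation, and I would establish it by testing \eqref{eq: CH_FractionalNoise Banach} against the constant mode $\phi_0$. Since $A\phi_0=0$, one has $\langle A^2U_t,\phi_0\rangle_{L^2}=\langle U_t,A^2\phi_0\rangle_{L^2}=0$ and $\langle AF(U_t),\phi_0\rangle_{L^2}=\langle F(U_t),A\phi_0\rangle_{L^2}=0$, while the noise contributes $\sum_{j\ge1}\langle\phi_j,\phi_0\rangle_{L^2}\,\mathrm{d}\beta_j(t)=0$ by orthonormality of $\{\phi_j\}_{j\ge0}$. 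Consequently $\langle U_t,\phi_0\rangle_{L^2}=\langle u_0,\phi_0\rangle_{L^2}$ almost surely for every $t$, so the mean is preserved; together with the fact that the solution remains in $\dot{H}$, this yields $u(u_0;t,\cdot)\in H_\alpha$.

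I do not expect a deep obstacle here, as the argument is essentially a bookkeeping reduction to \cite{da1996stochastic}. The two points requiring care are that $H_\alpha$ and $\dot{H}^{-1}_\alpha$ are affine rather than linear subspaces, so one works with the metrics induced on differences instead of genuine norms, and that the mass-conservation identity must be justified rigorously for the mild solution---in particular checking that the cylindrical noise, built from the zero-mean modes $\{\phi_j\}_{j\ge1}$, leaves the $\phi_0$-coordinate untouched. With (i)--(iii) in place, Lemma \ref{lem: uniqueness invariant measure from big to small} together with Theorem \ref{th: invariant measure for the SCHE 1} completes the proof.
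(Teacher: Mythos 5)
Your proposal is correct and follows essentially the same route as the paper: the paper's proof likewise invokes the uniqueness on $\dot{H}^{-1}_\alpha$ from \cite{da1996stochastic}, applies Lemma \ref{lem: uniqueness invariant measure from big to small} with $\mathcal{X}=\dot{H}^{-1}_\alpha$ and $\mathcal{V}=H_\alpha$, and combines the result with the existence from Theorem \ref{th: invariant measure for the SCHE 1}. Your explicit verifications of the continuous embedding and of mass conservation via the $\phi_0$-mode are exactly the details the paper's terse proof takes as given, and they are carried out correctly.
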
 
	\begin{proof}
		According to \cite[Theorem 3.3]{da1996stochastic}, the mild solution given by \eqref{eq: CH_FractionalNoise_solution_mild} admits an unique invariant measure $\pi$ on $\dot{H}^{-1}_\alpha$. Since $H_\alpha$ is a complete subspace of $\dot{H}^{-1}_\alpha$ that is continuously embedded into $\dot{H}^{-1}_\alpha$ and $u(t,\cdot)\in H_\alpha$ for all $t \geq 0$, it follows from Lemma \ref{lem: uniqueness invariant measure from big to small} and Theorem \ref{th: invariant measure for the SCHE 1} that there exists a unique invariant measure $\pi$ for the SCHE \eqref{eq:CH_FractionalNoise} on $H_\alpha$.  
	\end{proof}
	\begin{remark} \label{rem: no unique}
		It also follows from Theorem \ref{th: invariant measure for the SCHE} that the transition semigroup $ \left\{P_t\right\}_{t \geq 0} $ does not admit a unique invariant measure on $H$. In fact, given two distinct real numbers $\alpha_1$ and $\alpha_2$, There exist invariant measures $\pi_1$ and $\pi_2$ for $\{P_t\}_{t \geq 0}$ on $H_{\alpha_1}$ and $H_{\alpha_2}$, respectively. Using the method in Lemma \ref{lem: uniqueness invariant measure from big to small}, the measures $\pi_1$ and $\pi_2$ can be extended to invariant measures $\tilde{\pi}_1$ and $\tilde{\pi}_2$ on $H$, respectively. Hence, we have $\tilde{\pi}_1 \neq \tilde{\pi}_2$. It is worth noting that the invariant measure on $H$  becomes unique if Neumann boundary conditions are replaced by Dirichlet boundary conditions. The proof is left to the readers. 
	\end{remark}

	\section{Fully discrete scheme and uniform strong convergence} \label{section4}
	In this section, following the methods developed in \cite{deng2025}, we introduce a fully discrete scheme for original equation. We use a finite difference method for the spatial discretization and a tamed exponential Euler method for the temporal discretization.  
	We establish the uniform strong convergence  of our proposed scheme. \par 
	For the spatial discretization, let  $N\in\mathbb{N}^+$  and let $h=\pi / N$ be the spatial step-size. We denote by   $x_i=\left(i-\frac{1}{2}\right) h$, $i=1,2, \ldots, N$,  the spatial grid points.
	We denote  
	$l_N^p:=\left\{  \vec{u}= \left(u_1, \ldots, u_N\right)^{\top} \Big| \|\vec{u}\|_{l_N^p}<\infty  \right\},  $
	where
	$$
	\|\vec{u}\|_{l_N^p}= \begin{cases}\left(\frac{1}{N} \sum_{i=1}^{N}\left|u_i\right|^p\right)^{\frac{1}{p}}, & 1 \leq p<\infty, \\ \max _{1 \leq i \leq N}\left|u_i\right|, & p=\infty.\end{cases} 
	$$   
	For any   $\vec{u} \in l_N^2$, we define second and fourth order difference operators with the homogeneous Neumann boundary condition by
	$$
	\delta_h^2 u_i :=\left\{\begin{array}{ll}
		\left(-u_1 +u_2 \right) / h^2, & i=1, \\
		\left(u_{i-1} -2u_i +u_{i+1} \right) / h^2, & i=2,3, \cdots, N-1, \\
		\left(u_{N-1} -u_N \right) / h^2, & i=N,
	\end{array} \quad \text { and } \quad \delta_h^4 u_i :=\delta_h^2\left(\delta_h^2 u_i \right),\ i=1,2, \cdots, N .\right.
	$$
	See \cite[Chapter 9]{Sun2023} for details.
	Considering Eq. \eqref{eq:CH_FractionalNoise} at the point $(t,x_i)$, we approximate $\Delta$ and $\Delta^2$ by $\delta_h^2$ and $\delta_h^4$.   $W(t,x_i)$ is approximated by $\sum_{j=1}^{N-1}\phi_j(x_i)\beta_j(t)$. In this way, we establish a scheme given by
	\begin{equation}  \label{eq: spatial discretization pre}
		\mathrm{d} u^N\left(t, x_i\right)+\delta_h^4 u^N\left(t, x_i\right) \mathrm{d} t=\delta_h^2 f\left(u^N\left(t, x_i\right)\right) \mathrm{d} t+\sigma \sum_{j=1}^{N-1}\phi_j(x_i)\mathrm{ d}\beta_{j}(t)  .
	\end{equation}
	Note that  \eqref{eq: spatial discretization pre} is defined only at the grid points. In order to construct its spatial continuous extension, it is necessary to introduce two operators.  We introduce the operator  $\mathcal{S}_N:\ H \to l_N^2$  defined by
	$$   \mathcal{S}_N v:=  (v(x_1),v(x_2),\cdots,v(x_N))^\top, \quad v\in H,$$
	and  $P_N:\ l^2_N \to H$,  defined for $\vec{v}=(v_1,v_2,\cdots,v_N)^\top\in l^2_N $   by
	$$  P_N\vec{v}(x) := \begin{cases}v_1, & x \in\left[0, x_1\right] \\ 
		v_i+\frac{x-x_i}{h}\left(v_{i+1}-v_i\right), & x \in\left[x_i, x_{i+1}\right], i=1,2, \cdots, N-1, \\v_N, & x \in\left[x_N, \pi\right].\end{cases}     $$  
	We additionally introduce
	$$
	U_t^N=\left(u^N\left(t, x_1\right), u^N\left(t, x_2\right), \cdots, u^N\left(t, x_N\right)\right)^{\top} \quad \text { and } \quad \phi_{N, j} =  \mathcal{S}_N \phi_j,
	$$
	where $\left\{\phi_j\right\}_{j=0}^\infty$  is  an orthonormal basis  of $\dot{H} $.
	Then it follows from \eqref{eq: spatial discretization pre}  that
	\begin{subequations}\label{eq:eq:discrete_CH_FractionalNoise_SDE_matrix}
		\begin{numcases}{}
			\mathrm{d} U_t^N+A_N^2 U_t^N \mathrm{d} t=A_N F_N(U_t^N) \mathrm{d} t+  \sigma  \sum_{j=1}^{N-1}\phi_{N, j} \mathrm{d} \beta_{j}(t)  ,\quad    t>0,\label{eq:eq:discrete_CH_FractionalNoise_SDE_matrix a} \ \\
			U_0^N=u^N_0, \label{eq:eq:discrete_CH_FractionalNoise_SDE_matrix b}  \
		\end{numcases}
	\end{subequations}  
	where 
	$$
	A_N:=\frac{N^2}{\pi^2}\left(\begin{array}{ccccc}
		-1 & 1 & 0 & \cdots & 0 \\
		1 & -2 & 1 & \ddots & \vdots \\
		0 & \ddots & \ddots & \ddots & 0 \\
		\vdots & \ddots & 1 & -2 & 1 \\
		0 & \cdots & 0 & 1 & -1
	\end{array}\right),\quad  u_0^N=\left(\begin{array}{c}
		u^N_{0,1} \\
		u^N_{0,2} \\
		\vdots \\
		u^N_{0, N}
	\end{array}\right):=\mathcal{S}_N u_0,
	$$
	and   
	$F_N(\vec{v}):=(f(v_1),f(v_2),\cdots,f(v_N))^\top $ for $\vec{v}=(v_1,v_2,\cdots,v_N)^\top\in l^2_N.$   
	The matrix $-A_N$   is positive semi-definite  and admits the eigenpairs $\left\{\lambda_{N,j}, \phi_{N, j}\right\}_{j=0}^{N-1}$, where $\lambda_{N,j}=  4N^2\pi^{-2} \sin^2(\frac{j\pi}{2N})  $.
	Moreover,  $\left\{\phi_{N, j}\right\}_{j=0}^{N-1}$ forms an orthonormal basis in  $l^2_N$.  
	Similar to the continuous case, we introduce the discrete $W^{\gamma,2}$-norm defined by
	$$   \| \vec{v} \|_{w^{\gamma,2}_N}=\left\{\begin{array}{ll}\left( \|  \vec{v}  \|_{l_N^2}^2+ \big\|\left(-A_N\right)^{\frac{1}{2}}  \vec{v} \big\|_{l_N^2}^2 \right)^\frac{1}{2}, & \gamma=1, \\ 
		\left(   \| \vec{v} \|_{l_N^2}^2+  \int_{\mathcal{O}} \int_{\mathcal{O}}  \frac{\left|	P_N \vec{v} (x)-P_N \vec{v} (y)\right|^2}{|x-y|^{1+2\gamma}}\mathrm{d} x \mathrm{d} y  \right)^\frac{1}{2} , &0<\gamma<1,\end{array}\right. 
	$$
	where  $\left(-A_N\right)^\frac{1}{2}:=\sum_{j=0}^{N-1}\lambda_{N,j}^\frac{1}{2} \phi_{N, j} \phi_{N, j}^\top$.  \par 
	We next state several important properties. 
	\begin{lemma}  \label{lem: property A_N}
		Suppose $N \geq 2$. For any $\vec{u}=\left(u_i\right)_{i=1}^N, \vec{v}=\left(v_i\right)_{i=1}^N, \vec{w}=\left(u_i v_i\right)_{i=1}^N \in l_N^2$, 
		$ \gamma,\ \gamma_1,\ \gamma_2\in[0,\infty),$ and $\gamma_3\in[0,1)$ satisfying $ \gamma_1-\gamma_2+2\gamma_3 >0$, 	the following inequalities hold:
		\begin{align}
			&\| \vec{u}\|_{l_N^{\infty}}   \leq C\|\vec{u}\|_{w^{1,2}_N}  ,   \label{ineq: l_infty to l2} \\
			&\| \vec{w}\|_{w^{1,2}_N}   \leq C\|\vec{u}\|_{w^{1,2}_N}\|\vec{v}\|_{w^{1,2}_N},  \label{ineq: uv to u v} \\  
			&\|\vec{u}\|_{l_N^{6}}   \leq
			C\Big(\left\|A_N \vec{u}\right\|_{l_N^2}^{\frac{1}{6}}\|\vec{u}\|_{l_N^2}^{\frac{5}{6}}+\|\vec{u}\|_{l_N^2}\Big)  ,     \label{ineq: l_6 to l2} \\
			&\left\|(-A_N)^\gamma e^{-A_N^2t}\vec{u}\right\|_{l_N^{2}}  \leq C\Big(t^{-\frac{\gamma}{2}}e^{-\frac{\lambda_{N,1}^2}{2}t}\left\|\vec{u}\right\|_{l_N^2}+g(\gamma)\big|\left\langle \vec{u}, \phi_{N,0}\right\rangle_{l_N^2}\big|\Big),\   t>0, \  
			\label{ineq: e^{-A_N^2t}(-A_N)^r to l2} \\ 
			&\left\|(-A_N)^{\gamma_1} e^{-A_N^2t}\left( I- e^{-A_N^2s}\right)\vec{u}\right\|_{l_N^{2}}  \leq C \Big(t^{-\frac{\gamma_1-\gamma_2+2\gamma_3}{2}}s^{\gamma_3}e^{-\frac{\lambda_{N,1}^2}{2}t}\left\|(-A_N)^{\gamma_2}\vec{u}\right\|_{l_N^2}+g(\gamma_1)\big|\left\langle \hat{u}, \phi_{N,0}\right\rangle_{l_N^2}\big|\Big),		\nonumber\\
			&\qquad 	\qquad 	\qquad 	\qquad 	\qquad 	\qquad 	\qquad 	\qquad 	 	\qquad 	\qquad 	\qquad 	\qquad 	\qquad 	\qquad 	\qquad 	\qquad 	\qquad s,t>0, 	\label{ineq: (-A_N)^re^{-A_N^2t}(I- e^{-A_N^2t}) to l2}  
		\end{align}
		where  $g$ is given by Lemma \ref{lem: e {-A^2 t}} and $C>0$ is a constant independent of $s$, $t$,  $\vec{u}$, $\vec{v}$, and $\vec{w}$.
	\end{lemma}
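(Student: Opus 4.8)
The plan is to split the five inequalities into two families handled by different tools. The semigroup bounds \eqref{ineq: e^{-A_N^2t}(-A_N)^r to l2} and \eqref{ineq: (-A_N)^re^{-A_N^2t}(I- e^{-A_N^2t}) to l2} I would prove by direct spectral calculus in the orthonormal eigenbasis $\{\phi_{N,j}\}_{j=0}^{N-1}$ of $-A_N$, while the functional inequalities \eqref{ineq: l_infty to l2}--\eqref{ineq: l_6 to l2} I would obtain by transporting their continuous counterparts through the piecewise-linear extension $P_N$. The one ingredient that forces every constant to be independent of $N$ is the two-sided eigenvalue comparison $c\,j^2 \le \lambda_{N,j} \le j^2$ for $1 \le j \le N-1$, which follows from $\lambda_{N,j}=4N^2\pi^{-2}\sin^2(j\pi/(2N))$ and the elementary estimate $\tfrac{2}{\pi}\theta \le \sin\theta \le \theta$ on $[0,\pi/2]$. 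In particular $\lambda_{N,1}$ is bounded away from both $0$ and $\infty$ uniformly in $N$, which is exactly what keeps the prefactor $e^{-\lambda_{N,1}^2 t/2}$ and the convergence of the spectral sums under control.

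For \eqref{ineq: e^{-A_N^2t}(-A_N)^r to l2} I would write $\|(-A_N)^\gamma e^{-A_N^2t}\vec u\|_{l_N^2}^2=\sum_{j=0}^{N-1}\lambda_{N,j}^{2\gamma}e^{-2\lambda_{N,j}^2 t}|\langle\vec u,\phi_{N,j}\rangle_{l_N^2}|^2$, peel off the mode $j=0$ separately (it vanishes when $\gamma>0$ and produces the term $g(\gamma)|\langle\vec u,\phi_{N,0}\rangle_{l_N^2}|$ when $\gamma=0$, since $\lambda_{N,0}=0$), and on each remaining mode apply the scalar bound $e^{-x}\le Cx^{-\alpha}$ from \eqref{estimate: e^{-x} 1} with $x=\lambda_{N,j}^2 t$ to extract $t^{-\gamma}$ while retaining $e^{-\lambda_{N,j}^2 t}\le e^{-\lambda_{N,1}^2 t}$ for the decay. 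Inequality \eqref{ineq: (-A_N)^re^{-A_N^2t}(I- e^{-A_N^2t}) to l2} follows the same template, additionally using the second bound in \eqref{estimate: e^{-x} 1}, namely $1-e^{-y}\le Cy^{\gamma_3}$ with $y=\lambda_{N,j}^2 s$, to create the factor $s^{\gamma_3}$; redistributing the powers of $\lambda_{N,j}$ against $t$ then produces the exponent $-(\gamma_1-\gamma_2+2\gamma_3)/2$, with the hypothesis $\gamma_1-\gamma_2+2\gamma_3>0$ ensuring the term remains integrable near $t=0$. These are the verbatim discrete analogues of Lemma~\ref{lem: e {-A^2 t}}.

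For the functional inequalities I would first record the uniform norm equivalences $\|\vec u\|_{l_N^p}\asymp\|P_N\vec u\|_{L^p}$ together with the exact identity $\|(-A_N)^{1/2}\vec u\|_{l_N^2}^2=\pi^{-1}\|(P_N\vec u)'\|_{L^2}^2$ (obtained by summation by parts, since the difference quotients of $\vec u$ are precisely the slopes of the interpolant). Granting these, \eqref{ineq: l_infty to l2} is the one-dimensional embedding $H^1(\mathcal O)\hookrightarrow L^\infty(\mathcal O)$ applied to $P_N\vec u$, using $\|P_N\vec u\|_{L^\infty}=\|\vec u\|_{l_N^\infty}$. Crucially, I would not transport \eqref{ineq: continuous l^6 to l2} directly, since the interpolant has no second derivative; instead \eqref{ineq: l_6 to l2} follows from the discrete Agmon inequality $\|\vec u\|_{l_N^\infty}^2\le C(\|\vec u\|_{l_N^2}\|(-A_N)^{1/2}\vec u\|_{l_N^2}+\|\vec u\|_{l_N^2}^2)$ (itself a first-order transfer), the elementary $\|\vec u\|_{l_N^6}^6\le\|\vec u\|_{l_N^\infty}^4\|\vec u\|_{l_N^2}^2$, and the spectral interpolation $\|(-A_N)^{1/2}\vec u\|_{l_N^2}\le\|A_N\vec u\|_{l_N^2}^{1/2}\|\vec u\|_{l_N^2}^{1/2}$; collecting exponents reproduces the $\tfrac16$--$\tfrac56$ split. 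For the algebra property \eqref{ineq: uv to u v} I would avoid interpolating products (which does not commute with $P_N$) and instead use the discrete product rule $\tfrac1h(u_{i+1}v_{i+1}-u_iv_i)=u_{i+1}\tfrac{v_{i+1}-v_i}{h}+v_i\tfrac{u_{i+1}-u_i}{h}$, bound the resulting gradient by $\|\vec u\|_{l_N^\infty}\|(-A_N)^{1/2}\vec v\|_{l_N^2}+\|\vec v\|_{l_N^\infty}\|(-A_N)^{1/2}\vec u\|_{l_N^2}$, control the $l_N^2$ part of $\vec w$ by $\|\vec u\|_{l_N^\infty}\|\vec v\|_{l_N^2}$, and finish by inserting \eqref{ineq: l_infty to l2} for both factors.

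The step I expect to demand the most care is establishing the $L^p$-norm equivalence $\|\vec u\|_{l_N^p}\asymp\|P_N\vec u\|_{L^p}$ with constants free of $N$ (for $p\in\{2,6,\infty\}$), together with verifying that the Neumann boundary rows of $A_N$ do not corrupt either the summation-by-parts identity or the zero-mode bookkeeping in the spectral sums. The $L^\infty$ and gradient comparisons are exact up to universal constants, but the $L^p$ comparison for finite $p$ requires controlling $\int_{x_i}^{x_{i+1}}|P_N\vec u|^p$ by $|u_i|^p+|u_{i+1}|^p$ uniformly in the mesh; once this is in place, every other estimate reduces to the routine spectral and interpolation computations sketched above, so I would concentrate the write-up there.
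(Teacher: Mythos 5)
Your proposal is correct, and it is worth noting that it is considerably more self-contained than the paper, which omits the proof altogether: the paper obtains \eqref{ineq: l_infty to l2}--\eqref{ineq: l_6 to l2} by citing Theorem 2.1 of \cite{deng2025}, and disposes of \eqref{ineq: e^{-A_N^2t}(-A_N)^r to l2}--\eqref{ineq: (-A_N)^re^{-A_N^2t}(I- e^{-A_N^2t}) to l2} with the remark that the argument of Lemma \ref{lem: e {-A^2 t}} (Appendix A) carries over. Your spectral treatment of the two semigroup bounds---Parseval in the eigenbasis, peeling off the $j=0$ mode to produce the $g(\cdot)$ term, splitting $e^{-2\lambda_{N,j}^2 t}\le e^{-\lambda_{N,1}^2 t}e^{-\lambda_{N,j}^2 t}$, and then invoking the scalar bounds \eqref{estimate: e^{-x} 1}---is exactly the discrete transcription the paper has in mind, and your two-sided comparison $4j^2/\pi^2\le\lambda_{N,j}\le j^2$ is the right (and in the paper unstated) reason every constant stays $N$-independent. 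For the three embedding inequalities, your transfer through $P_N$ is sound: the norm equivalence $\|\vec u\|_{l_N^p}\asymp\|P_N\vec u\|_{L^p}$ for finite $p$ reduces to the uniform equivalence of norms on the two-dimensional space of affine functions on each cell, the summation-by-parts identity relating $\|(-A_N)^{1/2}\vec u\|_{l_N^2}$ to $\|(P_N\vec u)'\|_{L^2}$ is exact (up to the normalization constant coming from the weight $1/N$ versus $h$), and your exponent bookkeeping for \eqref{ineq: l_6 to l2}---discrete Agmon, $\|\vec u\|_{l_N^6}^6\le\|\vec u\|_{l_N^\infty}^4\|\vec u\|_{l_N^2}^2$, and the Cauchy--Schwarz interpolation $\|(-A_N)^{1/2}\vec u\|_{l_N^2}\le\|A_N\vec u\|_{l_N^2}^{1/2}\|\vec u\|_{l_N^2}^{1/2}$---does reproduce the $\tfrac16$--$\tfrac56$ split; recognizing that \eqref{ineq: continuous l^6 to l2} cannot be transported directly because the interpolant has no second derivative is a genuine subtlety handled correctly. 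Two cosmetic points: in \eqref{ineq: (-A_N)^re^{-A_N^2t}(I- e^{-A_N^2t}) to l2} the zero mode contributes nothing, since $I-e^{-A_N^2 s}$ annihilates $\phi_{N,0}$, so the $g(\gamma_1)$ term need not even appear; and the hypothesis $\gamma_1-\gamma_2+2\gamma_3>0$ is not about integrability near $t=0$ but is what permits applying $e^{-x}\le Cx^{-\alpha}$ with a nonnegative exponent $\alpha$, so that the stated $t$-power is the correct one.
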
 
	Inequalities \eqref{ineq: l_infty to l2}-\eqref{ineq: l_6 to l2} can be found in Theorem 2.1 of \cite{deng2025} and the proof of inequalities \eqref{ineq: e^{-A_N^2t}(-A_N)^r to l2}-\eqref{ineq: (-A_N)^re^{-A_N^2t}(I- e^{-A_N^2t}) to l2}  is similar to that of   \eqref{ineq: e^{-A^2t}(-A)^r to l2}-\eqref{ineq: (-A)^re^{-A^2t}(I- e^{-A^2t}) to l2}. Therefore, the proof of Lemma \ref{lem: property A_N} is omitted.
	
	We next introduce the temporal discretization. Let $\tau\in(0,1)$ be the temporal step-size and $t_i=i\tau$, $i\in\mathbb{N}$ be the temporal grid points. We  introduce a notation $\lfloor t\rfloor:=t_i $ for $t \in\left[t_i, t_{i+1}\right)$. We  employ the tame exponential Euler method for the temporal discretization, yielding the following  fully discrete scheme
	\begin{equation} \label{eq: mild_solution_full_discrete}
		\begin{aligned}
			U^{\tau,N}_t= & e^{-A_N^2 t}u_0^N+\int_0^t A_N e^{-A_N^2(t-\lfloor s\rfloor)} \tilde{F}_N(	U^{\tau,N}_{\lfloor s\rfloor})  \mathrm{d} s +\sigma  \sum_{j=1}^{N-1}   \int_0^t e^{-A_N^2(t-\lfloor s\rfloor)} \phi_{N, j} \mathrm{d} \beta_{j}(s),
		\end{aligned}
	\end{equation}  
	Here $	U^{\tau,N}_t\in l^2_N$ and  the function  $\tilde{F}_N(\cdot)$ is given by
	$$ \tilde{F}_N(\vec{v}):= {F_N(\vec{v})}\Big/{\Big(1+\tau\left\|\vec{v}\right\|_{w^{1,2}_N}^{12}\Big) }, \quad  \vec{v}\in l^2_N.   $$
	\begin{remark}
		We are aware that some recent studies (e.g., \cite{deng2025}, \cite{Qi2020}, \cite{qi2022strong}) have proved strong convergence of fully discrete schemes for the SCHE on finite-time intervals.   In our work, we aim to extend these convergence results from finite intervals to the infinite-time case. Inspired by \cite{WangY2024}, we enhance the tamed term in our numerical scheme by replacing  $1/\big(1+\tau\|F_N(\cdot)\|_{l^2_N}\big)$ with  $1/\big(1+\tau\|\cdot\|_{w^{1,2}_N}^{12} \big)$. The design of novel tamed strategies has attracted increasing attention, as exemplified by recent works such as \cite{Jiang2025} and \cite{LiuShen2025}. In addition, the fully discrete scheme \eqref{eq: mild_solution_full_discrete} can be rewritten as
		\begin{equation} \label{eq: mild_solution_full_discrete discrete}
			\begin{aligned}
				U^{\tau,N}_{t_{m+1}}= & e^{-A_N^2 \tau}	U^{\tau,N}_{t_{m}}+\tau A_N e^{-A_N^2\tau} \tilde{F}_N(		U^{\tau,N}_{t_{m}}) 
				+ \sigma   e^{-A_N^2\tau}   \Delta \beta_m  , \quad m\in \mathbb{N},
			\end{aligned}
		\end{equation}   
		where  $\Delta\beta_m=\sum_{j=1}^{N-1}\phi_{N, j}\Delta \beta_{j,m} $ and  the Wiener increments are defined as  $ \Delta \beta_{j,m}:= \beta_{j}(t_{m+1}) -\beta_{j}(t_{m}),\ m\in \mathbb{N}$.  This formulation is straightforward to implement computationally.
	\end{remark}
	
	In order to  introduce the spatially continuous form of \eqref{eq: mild_solution_full_discrete}, we define  $$\kappa_N(x):=\left\{
	\begin{array}{l} x_i, \  x \in [x_i-\frac{h}{2},\ x_i+\frac{h}{2} ),\ i=1,2, \cdots, N-1, \\
		x_N,\ x \in [x_N-\frac{h}{2}, x_N+\frac{h}{2} ].
	\end{array} 
	\right. 
	$$  
	Then by  the interpolation of $	U^{\tau,N}_t$, we  obtain
	\begin{equation}
		\begin{aligned}
			u^{\tau,N}(t, x)=  P_N	U^{\tau,N}_t (x) 
			= & \int_\mathcal{O} G_t^N(x, y) u_0\left(\kappa_N(y)\right) \mathrm{d} y+\int_0^t \int_\mathcal{O} \frac{\Delta_N G_{t-\lfloor s\rfloor}^N(x, y) f\left(u^{\tau, N}\left(\lfloor s\rfloor, \kappa_N(y)\right)\right)}{1+\tau\left\|U^{\tau,N}_{\lfloor s\rfloor}\right\|_{w^{1,2}_N}^{12}} \mathrm{ d} y \mathrm{ d} s \\
			&+\sigma\sum_{j=1}^{N-1} \int_0^t \int_{\mathcal{O}} G_{t-\lfloor s\rfloor}^N(x, y)\phi_j\left(\kappa_N(y)\right) \mathrm{d} y \mathrm{d}  \beta_j(s)  , \quad (t, x) \in[0, \infty) \times \mathcal{O},
		\end{aligned}
	\end{equation}    
	where $G_t^N(x, y)=\sum_{j=0}^{N-1} e^{-\lambda_{N, j}^2 t} \bar{\phi}_{N, j}(x) \phi_j\left(\kappa_N(y)\right)$ with $ \bar{\phi}_{N, j}:=P_N  \phi_{N, j}$. 
	Here $\Delta_N$  is defined by
	$$  \Delta_N v :=  P_N A_N   \mathcal{S}_N   v,\quad    v\in H.$$  
	It follows $\Delta_N \phi_j\left(\kappa_N(y)\right)= -\lambda_{N,j} \phi_j\left(\kappa_N(y)\right)$ that $$ \Delta_N G_{t-s}^N(x, y) =-\sum_{j=1}^{N-1}\lambda_{N,j} e^{-\lambda_{N,j}^2 t} \bar{\phi}_{N,j}(x) \phi_j\left(\kappa_N(y)\right).$$  
	The difference between $\Delta_NG^N$ and $\Delta G $  shown in the following lemma  is fundamental for the subsequent error analysis. Its proof can be found in Appendix C.
	\begin{lemma}   
		Given $\alpha_1 \in (0,1)$, there exists a constant $C>0$ such that  
		\begin{align}
			& \sup_{x \in\mathcal{O}}\int_{\mathcal{O}}\left|\Delta_N G_t^N(x, y)-\Delta G_t(x, y)\right| \mathrm{d} y   \leq Ce^{-\lambda_{N,1}t}  t^{-\frac{2\alpha_1+5}{8}} N^{-\alpha_1} , \quad t >0. \label{es: Delta_N G_s^N-Delta G_s(x, y) } 
		\end{align} 
	\end{lemma}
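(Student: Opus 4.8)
The plan is to convert the $L^1$-in-$y$ estimate into an $\ell^2$ estimate on the Fourier coefficients of the kernel difference, so as to retain the oscillatory cancellation that a termwise triangle inequality would destroy. Since $|\mathcal{O}|=\pi$, Cauchy--Schwarz gives
\[
\sup_{x\in\mathcal{O}}\int_{\mathcal{O}}\big|\Delta_N G_t^N(x,y)-\Delta G_t(x,y)\big|\,\mathrm{d}y
\le \sqrt{\pi}\,\sup_{x\in\mathcal{O}}\Big(\int_{\mathcal{O}}\big|\Delta_N G_t^N(x,y)-\Delta G_t(x,y)\big|^2\,\mathrm{d}y\Big)^{1/2},
\]
so it suffices to bound the right-hand side by $Ce^{-\lambda_{N,1}t}t^{-\frac{2\alpha_1+5}{8}}N^{-\alpha_1}$. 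Writing $\Delta G_t(x,y)=-\sum_{j\ge1}\lambda_j e^{-\lambda_j^2 t}\phi_j(x)\phi_j(y)$ and $\Delta_N G_t^N(x,y)=-\sum_{j=1}^{N-1}\lambda_{N,j}e^{-\lambda_{N,j}^2 t}\bar{\phi}_{N,j}(x)\phi_j(\kappa_N(y))$, I would split the difference into a low-frequency part ($1\le j\le N-1$) and a truncation tail ($j\ge N$), expand the $L^2$-norm in $y$ by an approximate Parseval identity, and reduce matters to an $\ell^2$-sum of coefficient differences. The key observation is that $\sqrt{\sum_{j\ge1}\lambda_j^{2+\alpha_1}e^{-\lambda_j^2 t}}\le C\,t^{-\frac{2\alpha_1+5}{8}}$, precisely the temporal factor already present in \eqref{es:  DeltaG_t(x,y)-DeltaG_t(z,y)}; thus the whole task reduces to showing that every coefficient difference enjoys a spatial gain of order $\lambda_j^{\alpha_1/2}N^{-\alpha_1}$ relative to the continuous coefficient $\lambda_j e^{-\lambda_j^2 t}$.

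I would then treat the three error mechanisms in turn. For the tail the coefficient is simply $\lambda_j e^{-\lambda_j^2 t}\phi_j(x)$ with $j\ge N$; since $\lambda_j^{-\alpha_1/2}=j^{-\alpha_1}\le N^{-\alpha_1}$, writing $\lambda_j=\lambda_j^{1+\alpha_1/2}\cdot\lambda_j^{-\alpha_1/2}$ already produces the desired gain. For the low modes I compare eigenvalues and eigenfunctions separately. The eigenvalue discrepancy obeys $|\lambda_j-\lambda_{N,j}|\le C\lambda_j^2 N^{-2}$ by a Taylor expansion of $\sin^2(j\pi/2N)$, so the mean value theorem for $\lambda\mapsto\lambda e^{-\lambda^2 t}$ bounds its contribution by $C\lambda_j^2 N^{-2}e^{-c\lambda_j^2 t}$; since $\lambda_j^{1-\alpha_1/2}\le(N-1)^{2-\alpha_1}\le N^{2-\alpha_1}$, this is again of order $\lambda_j^{1+\alpha_1/2}N^{-\alpha_1}e^{-c\lambda_j^2 t}$, matching the target scale. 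The remaining errors are the argument rounding $\phi_j(y)\mapsto\phi_j(\kappa_N(y))$, for which Hölder continuity of the cosine gives $|\phi_j(y)-\phi_j(\kappa_N(y))|\le C(\lambda_j^{1/2}h)^{\alpha_1}\le C\lambda_j^{\alpha_1/2}N^{-\alpha_1}$, and the eigenfunction replacement $\phi_j\mapsto\bar{\phi}_{N,j}$, which I would control by the finite-difference eigenvector approximation estimate at the same Hölder rate. Inserting all three contributions into the $\ell^2$-sum, factoring out the exponential decay exactly as in the proof of \eqref{es:  DeltaG_t(x,y)-DeltaG_t(z,y)}, and summing against $\sqrt{\sum_{j}\lambda_j^{2+\alpha_1}e^{-\lambda_j^2 t}}$ yields the claim.

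The main obstacle is twofold. First, the stated temporal exponent cannot be reached by a pointwise bound: taking absolute values inside the sum and then integrating in $y$ discards the cancellation among the $\phi_j(y)$ and yields the strictly worse singularity $t^{-\frac{2\alpha_1+6}{8}}$; this is exactly why the $L^1\!\to\!L^2$ reduction is indispensable and why the argument must handle the coefficient sequence as a whole rather than mode by mode. Second, the family $\{\phi_j(\kappa_N(\cdot))\}_{j=1}^{N-1}$ is orthonormal in the discrete inner product of $l_N^2$ but only approximately so in $L^2(\mathcal{O})$, so the Parseval step generates cross terms that must be absorbed without spoiling the rate. I expect the eigenfunction comparison to be the hardest step: one must establish the uniform-in-$x$ Hölder-rate approximation of $\phi_j$ by the piecewise-linear interpolant $\bar{\phi}_{N,j}$ for every $1\le j\le N-1$, where the interpolants degrade as $j$ approaches $N$, while preserving exactly the $\lambda_j^{\alpha_1/2}N^{-\alpha_1}$ dependence that keeps the final $\ell^2$-sum summable and delivers the sharp rate.
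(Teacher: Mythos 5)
Your proposal follows essentially the same route as the paper's Appendix~C proof: Cauchy--Schwarz to pass from the $L^1$- to the $L^2$-norm in $y$, then a frequency decomposition into the truncation tail, the $y$-argument rounding $\phi_j(y)\mapsto\phi_j(\kappa_N(y))$, the eigenfunction interpolation in $x$, and the eigenvalue discrepancy (which the paper splits into an exponential term $\mathcal{L}_4^N$ and a prefactor term $\mathcal{L}_5^N$ rather than using your single mean-value step), each controlled by $|\lambda_j-\lambda_{N,j}|\le Cj^4N^{-2}$, the H\"older gain $\lambda_j^{\alpha_1/2}N^{-\alpha_1}$, and the smoothing bound \eqref{estimate: e^{-x} 1}, exactly as you outline. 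The two obstacles you anticipate are in fact benign in this setting: the Parseval cross terms vanish identically because $\int_\mathcal{O}\phi_i(\kappa_N(y))\phi_j(y)\,\mathrm{d}y=0$ for $i\neq j$ (the identity from \cite{Cardon2000} that the paper invokes) together with the exact discrete orthogonality of the sampled cosines, and the eigenfunction comparison is elementary rather than hard, since $\phi_{N,j}=\mathcal{S}_N\phi_j$ means $\bar{\phi}_{N,j}=P_N\mathcal{S}_N\phi_j$ is merely the piecewise interpolant of the samples of $\phi_j$ itself, whose uniform H\"older-rate deviation $C\lambda_j^{\alpha_1/2}N^{-\alpha_1}$ follows directly from the smoothness of the cosine.
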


	\subsection{Uniform moment boundedness of the numerical solution}
	In this subsection, we establish the uniform moment boundedness of the numerical solution. To this end, we first introduce   $\mathscr{O}^{N}_t:= \sigma  \sum_{j=1}^{N-1} \int_0^t e^{-A_N^2(t-s)} \phi_{N, j} \mathrm{d} \beta_{j}(s) $  and  
	$\mathscr{O}^{\tau,N}_t:= \sigma  \sum_{j=1}^{N-1} \int_0^t e^{-A_N^2(t-\lfloor s\rfloor)} $ $\phi_{N, j} \mathrm{d} \beta_{j}(s) $.
	In the next lemma, we present some properties of $\mathscr{O}^N_t$ and $\mathscr{O}^{\tau,N}_t$. Since the proofs are similar to those of Theorem \ref{th: u_regularity}, we omit them here for brevity. 
	
	\begin{lemma}\label{lem: O^{M,N}_continuous}  
		For any $p \geq 1$, $0\leq\beta<\frac{3}{2}$, and $\gamma<\frac{1}{2}$, it holds that
		\begin{align} 
			&	\sup_{t\geq 0}	\mathbb{E}\left[  \big\|(-A_N)^{\frac{\beta}{2}}\mathscr{O}^{N}_t \big\|^p_{l^2_N}\right]+	\sup_{t\geq 0} \mathbb{E}\left[ \big\| \left(-A_N\right)^{\frac{1}{2}}   \mathscr{O}^{N}_t\big\|^p_{w^{\gamma,2}_N}\right] <  \infty,\label{ineq: regularity of O^{N}(t,x)} \\
			&	 \sup_{t\geq 0}	\mathbb{E}\left[  \big\|(-A_N)^{\frac{\beta}{2}}\mathscr{O}^{\tau,N}_t \big\|^p_{l^2_N}\right]+	\sup_{t\geq 0} \mathbb{E}\left[ \big\| \left(-A_N\right)^{\frac{1}{2}}   \mathscr{O}^{\tau,N}_t\big\|^p_{w^{\gamma,2}_N} \right]<  \infty.\label{ineq: regularity of O^{M,N}(t,x)} 
		\end{align}
		Moreover,  denote $o^{N}(t,x):=P_N\mathscr{O}^{N}_t(x)$ and $o^{\tau,N}(t,x):=P_N\mathscr{O}^{\tau,N}_t(x)$, then for  any   $0\leq s<t$ it holds that
		\begin{align} 
			&\mathbb{E}\left[\big\|(-A_N)^{\frac{\beta}{2}}\left(\mathscr{O}^{\tau,N}_t-\mathscr{O}^{\tau,N}_s\right) \big\|^p_{l^2_N}\right]+	\mathbb{E}\left[\big\|(-A_N)^{\frac{\beta}{2}}\left(\mathscr{O}^{\tau,N}_t-\mathscr{O}^{\tau,N}_s\right) \big\|^p_{l^2_N}\right]
			\leq C(t-s)^{ \left(\frac{3}{8}-\frac{\beta }{4}
				-\epsilon \right) p},\label{ineq: continuous of O^{M,N}(t,x)}\\ 
			\vspace{2em}
			&	\sup_{t\geq 0}\mathbb{E}\left[|o^{N}(t,x)-o^{N}(t,z)|^p\right]+	\sup_{{t\geq 0}}\mathbb{E}\left[|o^{\tau,N}(t,x)-o^{\tau,N}(t,z)|^p\right] 	 \leq C|x-z|^p, \quad x,\ z \in \mathcal{O}, \label{ineq: futher property of O_t}
		\end{align}
		where $C>0$ is a constant independent of $s,\ t,\ x,\ z,\ \tau $ and $N$, and $\epsilon$ is an arbitrarily small positive number.
	\end{lemma}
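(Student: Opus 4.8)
My plan is to replicate, in the finite-dimensional spectral calculus of $-A_N$, the argument used for the continuous stochastic convolution in Lemma~\ref{lem: 3.1 regularity of o(t,x)}. Three ingredients will drive every estimate: (i) the discrete Itô isometry together with the orthonormality of $\{\phi_{N,j}\}_{j=0}^{N-1}$ in $l_N^2$, which turns each moment into an explicit spectral sum; (ii) the Gaussianity of $\mathscr{O}^N_t$ and $\mathscr{O}^{\tau,N}_t$, so that it suffices to bound the second moment and then invoke the equivalence of Gaussian moments to pass to arbitrary $p\ge 1$; and (iii) the two-sided eigenvalue comparison $\frac{4}{\pi^2}j^2\le\lambda_{N,j}\le j^2$ for $1\le j\le N-1$, which follows from $\frac{2}{\pi}\theta\le\sin\theta\le\theta$ applied to $\theta=\frac{j\pi}{2N}$ and guarantees that every spectral sum converges uniformly in $N$ exactly as its continuous counterpart in $\lambda_j=j^2$ does.

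For the uniform-in-time moment bounds \eqref{ineq: regularity of O^{N}(t,x)}, I would first compute, via Itô's isometry and Parseval in $l_N^2$,
\[
\mathbb{E}\big[\|(-A_N)^{\beta/2}\mathscr{O}^N_t\|_{l^2_N}^2\big]
=\sigma^2\sum_{j=1}^{N-1}\lambda_{N,j}^{\beta}\,\frac{1-e^{-2\lambda_{N,j}^2 t}}{2\lambda_{N,j}^2}
\le C\sum_{j=1}^{N-1}\lambda_{N,j}^{\beta-2}\le C\sum_{j\ge1}j^{2\beta-4},
\]
which is finite and $N$-uniform precisely when $\beta<\frac{3}{2}$. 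The $w^{\gamma,2}_N$-bound is handled through the norm decomposition: the $l^2_N$-part is the $\beta=1$ case just treated, while the Slobodeckij seminorm of $P_N(-A_N)^{1/2}\mathscr{O}^N_t$ is controlled by estimating the pointwise increment $\mathbb{E}[|P_N(-A_N)^{1/2}\mathscr{O}^N_t(x)-P_N(-A_N)^{1/2}\mathscr{O}^N_t(y)|^2]\le C|x-y|^{1-2\epsilon}$ and then integrating $|x-y|^{-1-2\gamma+1-2\epsilon}$, which is integrable for $\gamma<\frac12$. This increment bound rests in turn on the interpolated-eigenvector estimate $|\bar\phi_{N,j}(x)-\bar\phi_{N,j}(y)|\le C\lambda_{N,j}^{\alpha/2}|x-y|^{\alpha}$ for $\alpha\in[0,1]$, obtained by interpolating the max-slope bound $C\lambda_{N,j}^{1/2}|x-y|$ of the piecewise-linear interpolant against the uniform bound $\|\bar\phi_{N,j}\|_{L^\infty}\le C$, with the choice $\alpha=\frac12-\epsilon$. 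The bounds \eqref{ineq: regularity of O^{M,N}(t,x)} for $\mathscr{O}^{\tau,N}_t$ follow by the identical spectral computation once the rounding operator is handled as below.

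For the temporal Hölder continuity \eqref{ineq: continuous of O^{M,N}(t,x)} I would split $\mathscr{O}^{\tau,N}_t-\mathscr{O}^{\tau,N}_s$ into the contribution of the increments on $[0,s)$, where the two exponential weights differ, and the fresh increments on $[s,t)$. The essential observation is that the piecewise-constant argument $\lfloor r\rfloor$ factors out of the difference, since $e^{-\lambda_{N,j}^2(t-\lfloor r\rfloor)}-e^{-\lambda_{N,j}^2(s-\lfloor r\rfloor)}=e^{-\lambda_{N,j}^2(s-\lfloor r\rfloor)}\big(e^{-\lambda_{N,j}^2(t-s)}-1\big)$ and $s-\lfloor r\rfloor\ge s-r$, so the time discretization does not degrade the rate and the analysis collapses to the continuous one. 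Applying $1-e^{-x}\le Cx^{\hat\alpha}$ from \eqref{estimate: e^{-x} 1} and summing $\sum_j\lambda_{N,j}^{\beta-2+2\hat\alpha}\le C\sum_j j^{2\beta-4+4\hat\alpha}$, which converges uniformly in $N$ when $\hat\alpha<\frac34-\frac{\beta}{2}$, yields $\mathbb{E}[\|(-A_N)^{\beta/2}(\mathscr{O}^{\tau,N}_t-\mathscr{O}^{\tau,N}_s)\|_{l^2_N}^2]\le C(t-s)^{2(3/8-\beta/4-\epsilon)}$ after setting $\hat\alpha=\frac34-\frac{\beta}{2}-2\epsilon$; Gaussianity then gives the stated $p$-th moment estimate. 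The spatial continuity \eqref{ineq: futher property of O_t} follows the same template but with the plain weight $\lambda_{N,j}^{-2}$ and $\alpha=1$, so that $\mathbb{E}[|o^N(t,x)-o^N(t,z)|^2]\le C|x-z|^2\sum_j\lambda_{N,j}^{-1}\le C|x-z|^2$.

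The hard part will not be any single estimate but securing uniformity in the spatial parameter $N$ throughout: every spectral sum must be dominated by its continuous counterpart, which hinges on the comparison $\lambda_{N,j}\simeq j^2$ and on $N$-independent control of the interpolated eigenvectors $\bar\phi_{N,j}=P_N\phi_{N,j}$, both their $L^\infty$-size and their Hölder increments. A secondary source of care is the bookkeeping of the rounding operator $\lfloor\cdot\rfloor$ in $\mathscr{O}^{\tau,N}_t$, which must be shown, as above, to factor through a common exponential weight bounded by the continuous one, so that $\mathscr{O}^{\tau,N}_t$ inherits the regularity of $\mathscr{O}^N_t$ uniformly in both $\tau$ and $N$.
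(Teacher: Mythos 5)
Your proposal is correct and takes essentially the same route the paper intends: the paper omits this proof, declaring it similar to the continuous-case estimates for $\mathscr{O}_t$, and your argument is exactly that proof transferred to the spectral calculus of $-A_N$ — It\^o isometry with the orthonormal basis $\{\phi_{N,j}\}$ in $l^2_N$, Gaussian moment equivalence to pass from second to $p$-th moments, the elementary exponential bounds $e^{-x}\le Cx^{-\alpha}$ and $1-e^{-x}\le Cx^{\hat\alpha}$, and the Slobodeckij-seminorm computation via H\"older increments of eigenfunctions, with the split of $\mathscr{O}^{\tau,N}_t-\mathscr{O}^{\tau,N}_s$ into the reweighted past and the fresh increments yielding the rate $\frac{3}{8}-\frac{\beta}{4}-\epsilon$. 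The ingredients you make explicit to secure uniformity — the two-sided comparison $\frac{4}{\pi^2}j^2\le\lambda_{N,j}\le j^2$, the $N$-uniform $L^\infty$ and H\"older control of $\bar\phi_{N,j}=P_N\phi_{N,j}$, and the domination $e^{-\lambda_{N,j}^2(t-\lfloor r\rfloor)}\le e^{-\lambda_{N,j}^2(t-r)}$ coming from $\lfloor r\rfloor\le r$ — are precisely what the paper's brevity leaves implicit, and they are all valid.
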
 
	
	We next establish the uniform moment boundedness of $ u^{\tau, N}(t_m,x)$.
	
	\begin{theorem} \label{thm: the moment boundedness of the numerical solution U^{M,N}}
		Suppose that Assumptions \ref{assu:1}-\ref{assu:3 noise} hold.
		Then for any $p \geq 2$, it holds that
		\begin{equation}\label{es: U^{M,N} }
			\sup _{\tau \in (0,1), N \in \mathbb{N}} \sup _{i\geq 0 } \mathbb{E}\left[\left\|u^{\tau, N}(t_m,\cdot)\right\|_{l^\infty_N}^p\right]<\infty\quad \text{and}	\quad \sup _{\tau \in (0,1), N \in \mathbb{N}} \sup _{i\geq 0 } \mathbb{E}\left[\left\|U_{t_i}^{\tau, N}\right\|_{w^{1,2}_N}^p\right]<\infty.
		\end{equation}
	\end{theorem}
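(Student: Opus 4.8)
The plan is to transcribe the proof of Theorem~\ref{th: u_regularity} into the fully discrete setting. I would split the numerical solution as $U^{\tau,N}_t = V^{\tau,N}_t + \mathscr{O}^{\tau,N}_t$, establish uniform-in-time pathwise bounds on the drift part $V^{\tau,N}$ in the norm $\|\cdot\|_{w^{1,2}_N}$ via a discrete analogue of Proposition~\ref{propo: perturbed problem}, feed in the uniform moment bounds for the discrete stochastic convolution from Lemma~\ref{lem: O^{M,N}_continuous}, and finally invoke the discrete Sobolev embedding \eqref{ineq: l_infty to l2}, $\|\cdot\|_{l^\infty_N}\le C\|\cdot\|_{w^{1,2}_N}$, to pass from the $w^{1,2}_N$-bound to the $l^\infty_N$-bound. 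Since $u^{\tau,N}(t_m,\cdot)=P_N U^{\tau,N}_{t_m}$ attains its sup at the nodes, both assertions in \eqref{es: U^{M,N} } concern the same vector $U^{\tau,N}_{t_m}$, so the theorem reduces to $\sup_{\tau,N}\sup_m\mathbb{E}\big[\|U^{\tau,N}_{t_m}\|^p_{w^{1,2}_N}\big]<\infty$.

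Subtracting $\mathscr{O}^{\tau,N}$ from \eqref{eq: mild_solution_full_discrete discrete} shows that $V^{\tau,N}$ obeys the pathwise recursion $V^{\tau,N}_{t_{m+1}} = e^{-A_N^2\tau}\big(V^{\tau,N}_{t_m}+\tau A_N\tilde{F}_N(U^{\tau,N}_{t_m})\big)$, so all estimates on $V^{\tau,N}$ are deterministic given the noise path, with randomness entering only through $\mathscr{O}^{\tau,N}$. For the discrete $l^2_N$-energy estimate (the analogue of Step~1 of Proposition~\ref{propo: perturbed problem}) I would expand $\|V^{\tau,N}_{t_{m+1}}\|^2_{l^2_N}$ and separate the mean mode $\langle U^{\tau,N}_{t_m},\phi_{N,0}\rangle_{l^2_N}$, which is conserved because $A_N\phi_{N,0}=0$, from the mean-free part, on which $e^{-A_N^2\tau}$ is a strict contraction with factor $e^{-\lambda_{N,1}^2\tau}$. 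The cross term reproduces the dissipative pairing $\langle A_N V^{\tau,N}_{t_m},\tilde{F}_N(U^{\tau,N}_{t_m})\rangle_{l^2_N}$, which, exactly as in the continuous identity, yields after summation-by-parts a gain $-c\tau\|A_N V^{\tau,N}_{t_m}\|^2_{l^2_N}$; as in the continuous case one must couple this with a discrete negative-norm estimate supplying the $-c\tau\|U^{\tau,N}_{t_m}\|^4_{l^4_N}$ control. The genuinely new contribution is the explicit-scheme quadratic correction $\tau^2\|A_N e^{-A_N^2\tau}\tilde{F}_N(U^{\tau,N}_{t_m})\|^2_{l^2_N}$.

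The main obstacle is the uniform-in-$(\tau,N)$ control of this quadratic correction, which is precisely what the strong taming is designed for. Using the discrete smoothing bound \eqref{ineq: e^{-A_N^2t}(-A_N)^r to l2} with $\gamma=1$, $\|A_N e^{-A_N^2\tau}\vec w\|_{l^2_N}\le C\tau^{-1/2}\|\vec w\|_{l^2_N}$, the correction is of order $\tau\|\tilde{F}_N(U^{\tau,N}_{t_m})\|^2_{l^2_N}$; since $\|F_N(\vec v)\|_{l^2_N}\lesssim 1+\|\vec v\|^3_{l^6_N}$ and, by \eqref{ineq: l_6 to l2} and \eqref{ineq: l_infty to l2}, $\|\vec v\|_{l^6_N}\le C\|\vec v\|_{w^{1,2}_N}$, the taming denominator $1+\tau\|\vec v\|^{12}_{w^{1,2}_N}$—whose exponent $12$ is calibrated to the highest nonlinear growth produced by the discrete counterparts of the $\mathbb{I}_1,\mathbb{I}_2$ terms in \eqref{es: solution_perturbed problem}—is what allows the correction to be dominated by the dissipative terms $-c\tau\|A_N V^{\tau,N}_{t_m}\|^2_{l^2_N}$ and $-c\tau\|U^{\tau,N}_{t_m}\|^4_{l^4_N}$, leaving only an $O(\tau)$ remainder driven by $\mathscr{O}^{\tau,N}$. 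Verifying this domination, i.e. tracking the interplay between the $w^{1,2}_N$-power in the denominator and the powers generated by the time-stepping error, is the technical heart of the proof and the step where the fourth-order, non-globally-Lipschitz structure makes the SCHE harder than the second-order equations treated in \cite{WangY2024}. The outcome is a telescoping inequality $\|V^{\tau,N}_{t_{m+1}}\|^2_{l^2_N}\le e^{-c\tau}\|V^{\tau,N}_{t_m}\|^2_{l^2_N}+C\tau R_m$, with $R_m$ depending polynomially on the discrete convolution norms; since $\sum_{k\ge 0} e^{-ck\tau}\tau$ is bounded uniformly in $\tau\in(0,1)$, summation yields a uniform-in-$m$ $l^2_N$-bound on $V^{\tau,N}_{t_m}$ together with a summed bound on the dissipation.

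Finally, for the analogue of Step~2 I would estimate $\|(-A_N)^{1/2}V^{\tau,N}_{t_m}\|_{l^2_N}$ from the mild representation \eqref{eq: mild_solution_full_discrete}: applying $(-A_N)^{1/2}$, using \eqref{ineq: e^{-A_N^2t}(-A_N)^r to l2} with exponent $3/2$ (contributing $(t-\lfloor s\rfloor)^{-3/4}$), the inequalities \eqref{ineq: l_6 to l2} and \eqref{ineq: l_infty to l2}, and the trivial bound $\|\tilde{F}_N(\vec v)\|_{l^2_N}\le\|F_N(\vec v)\|_{l^2_N}$ (the taming factor being $\le 1$, so it does not degrade this step) reproduces the discrete version of \eqref{es: (-A_n)^1/2 Y 1.1}, hence a uniform bound on $\|V^{\tau,N}_{t_m}\|_{w^{1,2}_N}$ in terms of the already-controlled $l^2_N$-quantities and the discrete convolution norms. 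Taking $p$-th moments, invoking Lemma~\ref{lem: O^{M,N}_continuous} for $\mathscr{O}^{\tau,N}$, and recombining through $U^{\tau,N}=V^{\tau,N}+\mathscr{O}^{\tau,N}$ gives $\sup_{\tau,N}\sup_m\mathbb{E}[\|U^{\tau,N}_{t_m}\|^p_{w^{1,2}_N}]<\infty$; the embedding \eqref{ineq: l_infty to l2} then delivers both estimates in \eqref{es: U^{M,N} }. Throughout, the delicate point is that every constant must be tracked to be independent of $\tau$, $N$, and $m$, which is exactly where the uniform-in-time structure of Lemma~\ref{lem: O^{M,N}_continuous} and the telescoping gain $e^{-c\tau}$ are indispensable.
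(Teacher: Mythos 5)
Your architecture matches the paper's (the splitting $U^{\tau,N}=V^{\tau,N}+\mathscr{O}^{\tau,N}$, a discrete analogue of Proposition~\ref{propo: perturbed problem}, Lemma~\ref{lem: O^{M,N}_continuous} for the noise, and the embedding \eqref{ineq: l_infty to l2} at the end), but your core energy step contains a genuine gap, precisely at what you call the technical heart. You analyze the one-step recursion $V^{\tau,N}_{t_{m+1}}=e^{-A_N^2\tau}\bigl(V^{\tau,N}_{t_m}+\tau A_N\tilde{F}_N(U^{\tau,N}_{t_m})\bigr)$ and claim a per-step gain $-c\tau\|A_NV^{\tau,N}_{t_m}\|^2_{l^2_N}$. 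Expanding the square, the dissipation actually available is $\sum_{j}\bigl(1-e^{-2\lambda_{N,j}^2\tau}\bigr)\langle V^{\tau,N}_{t_m},\phi_{N,j}\rangle^2_{l^2_N}$, and since $1-e^{-2\lambda_{N,j}^2\tau}$ saturates at $1$ once $\lambda_{N,j}^2\tau\gtrsim 1$, this dominates $c\tau\|A_NV\|^2_{l^2_N}$ only under a CFL-type restriction $\tau\lesssim N^{-4}$ — but the theorem asserts uniformity over all $\tau\in(0,1)$ and all $N$, with no such coupling. This is not a cosmetic loss: the dissipation functional $\int_0^t e^{-\frac{\lambda_{N,1}^2}{2}(t-s)}\|A_NV_s\|^2_{l^2_N}\,\mathrm{d}s$ is consumed downstream in the discrete analogue of the $L^6$ estimate \eqref{es: Y_l6 1 exact}, without which your Step~2 bound on $\|(-A_N)^{1/2}V^{\tau,N}\|_{l^2_N}$ does not close (controlling $\|V\|_{l^6_N}$ directly by $\|V\|_{w^{1,2}_N}$ would be circular). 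The paper avoids this entirely by working with the continuous-time interpolated equation \eqref{eq: perturbed problem 1 M N}, whose differential energy identity \eqref{es: V^{M,N} 1} yields the exact dissipation $-\|A_NV^{\tau,N}_t\|^2_{l^2_N}$ and produces no quadratic-in-$\tau$ correction at all; the price it pays instead is the three-way splitting of the drift and the temporal continuity estimate \eqref{es: I_{Omega_{R, t_i}} U_t^{M, N}-U_t^{M, N}}, whose byproduct $\tau^{-1/3}\|\mathscr{O}^{\tau,N}_t-\mathscr{O}^{\tau,N}_{\lfloor t\rfloor}\|^2_{l^2_N}$ is harmless by Lemma~\ref{lem: O^{M,N}_continuous}.

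A second, smaller defect: your stated control of the quadratic correction is too lossy. Via $\|A_Ne^{-A_N^2\tau}\vec w\|_{l^2_N}\le C\tau^{-1/2}\|\vec w\|_{l^2_N}$ and cubic growth, the correction is bounded by $C\tau\bigl(1+\|U\|^6_{w^{1,2}_N}\bigr)\big/\bigl(1+\tau\|U\|^{12}_{w^{1,2}_N}\bigr)^2$, whose supremum over $\|U\|_{w^{1,2}_N}$ is of order $\tau^{1/2}$ (attained near $\tau\|U\|^{12}_{w^{1,2}_N}\sim 1$), not $\tau$; summed against your geometric weights this gives $\tau^{1/2}\big/\bigl(1-e^{-c\tau}\bigr)\sim\tau^{-1/2}$, which diverges, and your alternative suggestion of absorbing it into $-c\tau\|U\|^4_{l^4_N}$ fails because the $w^{1,2}_N$-norm is not controlled by the $l^4_N$-norm. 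The repair is to spend only half an order of smoothing against the gradient of the nonlinearity: by \eqref{ineq: uv to u v}, $\|(-A_N)^{1/2}F_N(U)\|_{l^2_N}\le C\bigl(1+\|U\|^3_{w^{1,2}_N}\bigr)$, whence $\tau^2\|A_Ne^{-A_N^2\tau}\tilde{F}_N(U)\|^2_{l^2_N}\le C\tau^{3/2}\|(-A_N)^{1/2}F_N(U)\|^2_{l^2_N}\big/\bigl(1+\tau\|U\|^{12}_{w^{1,2}_N}\bigr)^2\le C\tau$, using $\sqrt{\tau}\,\|U\|^6_{w^{1,2}_N}\le\frac12\bigl(1+\tau\|U\|^{12}_{w^{1,2}_N}\bigr)$; this is exactly the mechanism in the paper's estimate \eqref{es: < F_N(U^{M,N}(s)), A_N(I-e) V^{M,N}(s)>}, which applies \eqref{ineq: (-A_N)^re^{-A_N^2t}(I- e^{-A_N^2t}) to l2} with $\gamma_1=\frac12$, $\gamma_2=1$, $\gamma_3=\frac14$ against $\|(-A_N)^{1/2}F_N\|_{l^2_N}$. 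Your reduction of both assertions to the $w^{1,2}_N$-moment bound and your final recombination are fine; to complete the proof you should either adopt the paper's continuous-time formulation or supply a substitute for the full $\|A_NV\|^2$-dissipation that survives high modes without a CFL condition.
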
 
	\begin{proof}  
		We introduce the process  $V^{\tau, N}_t:=U^{\tau, N}_t-\mathscr{O}^{\tau, N}_t$, which satisfies
		\begin{equation}\label{eq: perturbed problem 1 M N}
			\left\{  
			\begin{aligned}
				&\mathrm{d} V^{\tau, N}_t+A_N^2 V^{\tau, N}_t \mathrm{d} t=A_Ne^{-A_N^2(t-\lfloor t\rfloor)} \tilde{ F}_N(U^{\tau, N}_{\lfloor t\rfloor}) \mathrm{d} t, \quad t>0,\\
				&V^{\tau, N}_0=u_0^N.
			\end{aligned}
			\right.
		\end{equation} 
		Taking the inner product on \eqref{eq: perturbed problem 1 M N} with $ V^{\tau, N}_t $, we obtain 
		\begin{equation} \label{es: V^{M,N}  1}
			\begin{aligned}
				\frac{1}{2}\frac{\mathrm{d}}{\mathrm{d} t} \left(\big\| V^{\tau, N}_t\big\|^2_{l^2_N}\right) 
				=& -\big\| A_N V^{\tau, N}_t\big\|^2_{l^2_N}+ \big\langle\tilde{F}_N(U^{\tau, N}_{\lfloor t\rfloor}),   A_N\big(e^{-A_N^2(t-\lfloor t\rfloor)}-I\big)V^{\tau, N}_t \big\rangle_{l^2_N}  \\  
				& +    \big\langle \tilde{F}_N(U^{\tau, N}_{\lfloor t\rfloor})-\tilde{F}_N(U^{\tau, N}_t),  A_N  V^{\tau, N}_t \big\rangle_{l^2_N}  +\big \langle \tilde{F}_N(U^{\tau, N}_t), A_N  V^{\tau, N}_t  \big\rangle _{l^2_N}   .
			\end{aligned}
		\end{equation}
		Using Hölder's inequality, the estimate \eqref{ineq: (-A_N)^re^{-A_N^2t}(I- e^{-A_N^2t}) to l2}  with $\gamma_1=\frac{1}{2}$, $\gamma_2=1$, and $\gamma_3=\frac{1}{4}$, \eqref{ineq: uv to u v}  and Young's inequality, we obtain
		\begin{equation} \label{es: <  F_N(U^{M,N}(s)), A_N(I-e) V^{M,N}(s)>} 
			\begin{aligned}
				&	\big \langle \tilde{F}_N(U^{\tau, N}_{\lfloor t\rfloor} ), A_N\big(I-e^{-A_N^2(t-\lfloor t\rfloor)}\big) V^{\tau, N}_t  \big\rangle _{l^2_N} \\
				\leq &\frac{ 1 }{1+\tau \big\|  U^{\tau, N}_t  \big\|_{w^{1,2}_N}^{12}}\cdot \big\|\left(-A_N\right)^{\frac{1}{2}} F_N(U^{\tau, N}_t )\big\|_{l^2_N} \big\|  \left(-A_N\right)^{\frac{1}{2}}\big(I-e^{-A_N^2(t-\lfloor t\rfloor)}\big) V^{\tau, N}_t\big\| _{l^2_N}\\
				\leq &\frac{ 1}{1+\tau \big\|  U^{\tau, N}_t  \big\|_{w^{1,2}_N}^{12} }\cdot (t-\lfloor t\rfloor)^\frac{1}{4} \big\|\left(-A_N\right)^{\frac{1}{2}} F_N(U^{\tau, N}_{\lfloor t\rfloor} )\big\| _{l^2_N} \big\|  A_N  V^{\tau, N}_t\big\| _{l^2_N} \\
				\leq &\frac{1}{8} \big\|A_N  V^{\tau, N}_t \big\|^2_{l^2_N}  +C.
			\end{aligned}
		\end{equation}  
		In order to estimate the third term on the right-hand side of \eqref{es: V^{M,N} 1}, we need to establish the continuity of $U^{\tau,N}_t$.
		We   rewrite \eqref{eq: mild_solution_full_discrete} as 
		$$
		U_t^{\tau, N}=e^{-A_N^2(t-\lfloor t\rfloor)}U_{\lfloor t\rfloor}^{\tau, N}
		+(t-\lfloor t\rfloor)  A_Ne^{-A_N^2(t-\lfloor t\rfloor)}\tilde{F}_N\big(U_{\lfloor t\rfloor}^{\tau, N}\big)  
		+\mathscr{O}_t^{\tau, N}-e^{-A_N^2(t-\lfloor t\rfloor)} \mathscr{O}_{\lfloor t\rfloor}^{\tau, N}.
		$$
		Combining this with  \eqref{ineq: e^{-A_N^2t}(-A_N)^r to l2}, we obtain 
		\begin{equation} \label{es: I_{Omega_{R, t_i}} U_t^{M, N}}
			\begin{aligned}
				\big\|U_t^{\tau, N}\big\|_{w^{1,2}_N}  
				\leq &  \left\|e^{-A_N^2(t-\lfloor t\rfloor)} U_{\lfloor t \rfloor}^{\tau, N} \right\|_{w^{1,2}_N} 
				+ (t-\lfloor t\rfloor) \left\|A_Ne^{-A_N^2(t-\lfloor t\rfloor)}\tilde{F}_N\big(U_{\lfloor t\rfloor}^{\tau, N}\big)\right\|_{w^{1,2}_N}  \\
				& +\big\| \mathscr{O}_t^{\tau, N}\big\|_{w^{1,2}_N} 
				+\left\|e^{-A_N^2(t-\lfloor t\rfloor)}  \mathscr{O}_{\lfloor t\rfloor}^{\tau, N}\right\|_{w^{1,2}_N}   \\
				\leq &  C \bigg(  \left\|U_{\lfloor t\rfloor}^{\tau, N} \right\|_{w^{1,2}_N} 
				+(t-\lfloor t\rfloor)^\frac{1}{2}e^{-\frac{1}{2}(t-\lfloor t\rfloor) } \left\|\tilde{F}_N\big(U_{\lfloor t\rfloor}^{\tau, N}\big)\right\|_{w^{1,2}_N}   +\big\| \mathscr{O}_t^{\tau, N}\big\|_{w^{1,2}_N} 
				+\left\| \mathscr{O}_{\lfloor s\rfloor}^{\tau, N}\right\|_{w^{1,2}_N} \bigg) \\ 
				\leq & C\left(  \left\|U_{\lfloor t\rfloor}^{\tau, N} \right\|_{w^{1,2}_N} +1+\mathscr{R}_t  \right),
			\end{aligned}
		\end{equation}
		where $\mathscr{R}_t:=\big\| \mathscr{O}_t^{\tau, N}\big\|_{w^{1,2}_N} +\big\| \mathscr{O}_{\lfloor t\rfloor}^{\tau, N}\big\|_{w^{1,2}_N} $.
		Utilizing \eqref{eq: mild_solution_full_discrete}, we can deduce
		\begin{equation} \label{eq: U_t^{M, N} continuous   1}
			\begin{aligned}
				& \left\|U_t^{\tau, N}-U_{\lfloor t\rfloor}^{\tau, N}\right\|_{l_N^2} \\
				\leq&\left\|\left(e^{-A_N^2t}- e^{-A_N^2\lfloor t\rfloor}\right)u_0^N\right\|_{l^2_N} 
				+\int_{0}^{\lfloor t\rfloor}\left\|A_Ne^{-A_N^2(\lfloor t\rfloor-\lfloor s\rfloor)}\left(I-e^{-A_N^2(t-\lfloor t\rfloor)}\right) \tilde{F}_N\big(U_{\lfloor s\rfloor}^{\tau, N}\big) \right\|_{l^2_N}  \mathrm{d}s\\
				& +(t-\lfloor t\rfloor)\left\|A_Ne^{-A_N^2(t-\lfloor t\rfloor)}\tilde{F}_N\big(U_{\lfloor t\rfloor}^{\tau, N}\big)\right\|_{l^2_N}  
				+\left\|\mathscr{O}_t^{\tau, N}-\mathscr{O}_{\lfloor t\rfloor }^{\tau, N}\right\|_{l_N^2} \\
				=&: \mathscr{K}_1+\mathscr{K}_2+\mathscr{K}_3+\left\|\mathscr{O}_t^{\tau, N}-\mathscr{O}_{\lfloor t\rfloor }^{\tau, N}\right\|_{l_N^2} .
			\end{aligned}
		\end{equation} 
		By using \eqref{ineq: (-A_N)^re^{-A_N^2t}(I- e^{-A_N^2t}) to l2} with $\gamma_1=0,\ \gamma_2=1$ and $\gamma_3=\frac{1}{2}$, and Assumption \ref{assu:2}, we have
		\begin{equation} \label{es: eq: U_t^{M, N} continuous  I1}
			\begin{aligned}
				\mathscr{K}_1 = \left\|e^{ -A_N^2 \lfloor t\rfloor}\left(e^{-A_N^2( t-\lfloor t\rfloor)}-I\right)u_0^N\right\|_{l^2_N}    
				\leq C( t-\lfloor t\rfloor)^\frac{1}{2} \left\|  A_N u_0^N\right\|_{l^2_N} 
				\leq C\tau^\frac{1}{2}.
			\end{aligned}
		\end{equation}
		It follows from \eqref{ineq: (-A_N)^re^{-A_N^2t}(I- e^{-A_N^2t}) to l2} with $\gamma_1=1,\ \gamma_2=\frac{1}{2}$ and $\gamma_3=\frac{1}{2}$ that
		\begin{equation} \label{es: eq: U_t^{M, N} continuous  I2}
			\begin{aligned}
				\mathscr{K}_2    	\leq&  C\int_0^{ \lfloor t\rfloor }(\lfloor t\rfloor-\lfloor s\rfloor)^{-\frac{3}{4}}(t-\lfloor t\rfloor)^{\frac{1}{2}}
				e^{-\frac{1}{2}(\lfloor t\rfloor-\lfloor s\rfloor)}	\left\|   (-A_N)^{\frac{1}{2}}\tilde{F}_N\big(U_{\lfloor s\rfloor}^{\tau, N} \big) \right\|_{l^2_N}  \mathrm{d}s \\
				\leq&  C\int_0^{ \lfloor t\rfloor }\frac{(\lfloor t\rfloor-\lfloor s\rfloor)^{-\frac{3}{4}} 
					e^{-\frac{1}{2}(\lfloor t\rfloor-\lfloor s\rfloor)}\tau^{\frac{1}{2}}	\big\|   U_{\lfloor s\rfloor}^{\tau, N} \big\|_{w^{1,2}_N}^3    }{1+\tau\left\| U_{\lfloor s\rfloor}^{\tau, N}\right\|_{w^{1,2}_N}^{12}} \mathrm{d}s 	\leq  C\tau^{\frac{1}{4} } .
			\end{aligned}
		\end{equation} 
		Similarly, by \eqref{ineq: (-A_N)^re^{-A_N^2t}(I- e^{-A_N^2t}) to l2} and \eqref{ineq: uv to u v} we can obtain
		\begin{equation}\label{es: eq: U_t^{M, N} continuous  I3}
			\begin{aligned}
				\mathscr{K}_3
				\leq 
				\frac{(t-\lfloor t\rfloor )^{ \frac{3}{4}}  \left(1+	\big\|   U_{\lfloor s\rfloor}^{\tau, N} \big\|_{w^{1,2}_N}^3\right)    }{1+\tau\big\| U_{\lfloor s\rfloor}^{\tau, N} \big\|_{w^{1,2}_N}^{12}}   \leq 	C \tau^\frac{1}{2}. 
			\end{aligned}
		\end{equation}
		By substituting \eqref{es: eq: U_t^{M, N} continuous  I1}-\eqref{es: eq: U_t^{M, N} continuous  I3} into \eqref{eq: U_t^{M, N} continuous   1}, we obtain  
		\begin{equation}
			\begin{aligned} \label{es: I_{Omega_{R, t_i}} U_t^{M, N}-U_t^{M, N}}
				\left\|	U_s^{M, N}-U_{\lfloor s\rfloor}^{M, N}\right\|_{l^2_N}  
				\leq& C\tau^{\frac{1}{4} } +
				\left\|\mathscr{O}_s^{\tau, N}- \mathscr{O}_{\lfloor s\rfloor}^{\tau, N}\right\|_{l^2_N},
			\end{aligned}
		\end{equation}
		which combined with Young's inequality, \eqref{es: f(x)-f(y)| }, \eqref{ineq: l_infty to l2} and \eqref{es: I_{Omega_{R, t_i}} U_t^{M, N}}  yields
		\begin{equation} \label{es: I_3   U^MN}
			\begin{aligned}  
				&\big\langle \tilde{F}_N(U^{\tau, N}_{\lfloor t\rfloor})-\tilde{F}_N(U^{\tau, N}_t),  A_N  V^{\tau, N}_t  \big\rangle_{l^2_N}\\
				\leq&\frac{1}{8}\big\| A_N V^{\tau, N}_t\big\|^2_{l^2_N}+C	\left\| \tilde{F}_N(U^{\tau, N}_{\lfloor t\rfloor})- \tilde{F}_N(U^{\tau, N}_t)\right\|_{l^2_N}^2\\ 
				\leq&\frac{1}{8}\big\| A_N V^{\tau, N}_t\big\|^2_{l^2_N}+  \frac{C\left(  1+ \left\|U_{\lfloor t\rfloor}^{M, N} \right\|_{w^{1,2}_N}^4+\mathscr{R}_t^4  \right) \left( \tau^{\frac{1}{2} } +
					\left\|\mathscr{O}_t^{\tau, N}- \mathscr{O}_{\lfloor t\rfloor}^{\tau, N}\right\|_{l^2_N}^2\right)}{1+\tau^2 \left\|  U^{\tau, N}_{\lfloor t\rfloor}  \right\|^{24}}\\
				\leq& \frac{1}{8}\big\| A_N V^{\tau, N}_t\big\|^2_{l^2_N}+ C\left( 1+\mathscr{R}_t^8+\tau^{-\frac{1}{3}} \left\|\mathscr{O}_t^{\tau, N}- \mathscr{O}_{\lfloor t\rfloor}^{\tau, N}\right\|_{l^2_N}^2+\left\|\mathscr{O}_t^{\tau, N}- \mathscr{O}_{\lfloor t\rfloor}^{\tau, N}\right\|_{l^2_N}^4\right).
			\end{aligned}
		\end{equation} 
		Then it follows from \eqref {es: V^{M,N}  1}, \eqref{es: <  F_N(U^{M,N}(s)), A_N(I-e) V^{M,N}(s)>} and \eqref{es: I_3   U^MN}
		\begin{equation} \label{es: V^{M,N}  2}
			\begin{aligned}
				\frac{1}{2}\frac{\mathrm{d}}{\mathrm{d} t} \left(\big\| V^{\tau, N}_t\big\|^2_{l^2_N}\right) 
				=& -\frac{3}{4}\big\| A_N V^{\tau, N}_t\big\|^2_{l^2_N}
				+ C\left( 1+\mathscr{R}_t^8+\tau^{-\frac{1}{3}} \left\|\mathscr{O}_t^{\tau, N}- \mathscr{O}_{\lfloor t\rfloor}^{\tau, N}\right\|_{l^2_N}^2+\left\|\mathscr{O}_t^{\tau, N}- \mathscr{O}_{\lfloor t\rfloor}^{\tau, N}\right\|_{l^2_N}^4\right)\\
				& + \frac{1 }{1+\tau\left\|U^{\tau, N}_{\lfloor t\rfloor}\right\|_{w^{1,2}_N}^{12}} \cdot \big \langle F_N(U^{\tau, N}_t), A_N  V^{\tau, N}_t  \big\rangle _{l^2_N}  .  \\
			\end{aligned}
		\end{equation}
		Similar to the treatment of \eqref{es: Y_t 1}, we can obtain
		\begin{equation*} 
			\begin{aligned}
				&   	\|V^{\tau, N}_t\|_{L^2} + \left\|(-A_N)^\frac{1}{2}V^{\tau, N}_t\right\|_{l^2_N}  \\
				\leq&   	C \Big( 1+\|u^N_0\|_{l^\infty_N}^{20}+\left\| (-A_N)^{\frac{1}{2}} u^N_0\right\|_{l^2_N}^{10}+\mathbb{I}_1\big(t, 3/4,  \lambda_{N,1}^2/2,|K_4 |^{3}	+\left\| \mathscr{O}^{\tau,N} \right\|_{l^6_N}^3 \big) 
				\\
				&\qquad+\mathbb{I}_2\big(t, 3/4,  \lambda_{N,1}^2/2,
				|K_4 |^{5}+\left\|  \mathscr{O}^{\tau,N} \right\|_{l^6_N}^9  \big) \Big),
			\end{aligned}
		\end{equation*}  
		where  $\mathbb{I}_1$ and $\mathbb{I}_2$ are given in Section 3.1 and 
		\[
		\begin{aligned}
			K_4(t)
			:=&\int_0^t e^{-\frac{\lambda_1^2}{2}(t-s)}\Big(
			1+\|\mathscr{O}^{\tau,N}_s\|_{l_N^\infty}^8
			+\|(-A_N)^{\frac12}\mathscr{O}^{\tau,N}_s\|_{l_N^4}^4
			+\mathscr{R}_s^8 \\
			&\qquad 
			+\tau^{-1/3}\|\mathscr{O}^{\tau,N}_s-\mathscr{O}^{\tau,N}_{\lfloor s\rfloor}\|_{l_N^2}^2
			+\|\mathscr{O}^{\tau,N}_s-\mathscr{O}^{\tau,N}_{\lfloor s\rfloor}\|_{l_N^2}^4
			\Big)\,ds .
		\end{aligned}
		\] 
			Then by Lemma \ref{lem: O^{M,N}_continuous},   we obtain
		$$   \sup _{\tau \in (0,1), N \in \mathbb{N}} \sup _{i\geq 0 } \mathbb{E}\left[\left\|V_{t_i}^{\tau, N}\right\|_{w^{1,2}_N}^p\right]<\infty.$$  
		Thus, it follows that
		\begin{equation*} 
			\begin{aligned}
				&\sup _{\tau \in (0,1), N \in \mathbb{N}} \sup _{i\geq 0 } \mathbb{E}\left[\left\|u^{\tau, N}(t_m,\cdot)\right\|_{l^\infty_N}^p\right]\\
					  \leq& C \sup _{\tau \in (0,1), N \in \mathbb{N}} \sup _{i\geq 0 } \mathbb{E}\left[\left\|U_{t_i}^{\tau, N}\right\|_{w^{1,2}_N}^p\right]\\
			  \leq& C \sup _{\tau \in (0,1), N \in \mathbb{N}} \sup _{i\geq 0 } \mathbb{E}\left[\left\|V_{t_i}^{\tau, N}\right\|_{w^{1,2}_N}^p\right]+    C \sup _{\tau \in (0,1), N \in \mathbb{N}} \sup _{i\geq 0 } \mathbb{E}\left[\left\|\mathscr{O}_{t_i}^{\tau, N}\right\|_{w^{1,2}_N}^p\right] <\infty ,      
			\end{aligned}
		\end{equation*} 
		which completes the proof.
	\end{proof}

	\subsection{Uniform strong convergence of the numerical scheme}
	To establish the uniform strong convergence of the numerical scheme, we require an assumption stronger than Assumption \ref{assu:2}.
	\begin{assumption}\label{assu:2.2}
		Suppose that the initial condition   $u_0\in \mathcal{C}^5(\mathcal{O}).$  
	\end{assumption}
	The main result of this subsection is given in the following theorem.
	\begin{theorem}\label{thm: ||u^{M,N}-u||}  
		Suppose that Assumptions  \ref{assu:1}, \ref{assu:3 noise}, \ref{assu:2.2} hold and $h^{-1}\tau^9\leq1$.
		Then for $p \geq 1$, there exists a constant $C>0$, independent of $\tau$ and $h$,  such that  
		\begin{equation}\label{ineq: ||u^{M,N}-u||}
			\sup_{t \geq 0}\left\|u(t,\cdot)-u^{\tau,N}(t, \cdot)\right\|_{L^p(\Omega;L^\infty)}   \leq C\left(h^{1-\epsilon}+ \tau^{\frac{3}{8}-\frac{\epsilon}{2}} \right),
		\end{equation} 
		where $\epsilon$ is an arbitrarily small positive number.  
	\end{theorem}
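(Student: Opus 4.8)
The plan is to interpose the spatially semidiscrete solution $u^N(t,\cdot):=P_N U^N_t$, where $U^N_t$ solves \eqref{eq:eq:discrete_CH_FractionalNoise_SDE_matrix}, and split
$$\sup_{t\ge0}\big\|u(t,\cdot)-u^{\tau,N}(t,\cdot)\big\|_{L^p(\Omega;L^\infty)}\le\sup_{t\ge0}\big\|u(t,\cdot)-u^N(t,\cdot)\big\|_{L^p(\Omega;L^\infty)}+\sup_{t\ge0}\big\|u^N(t,\cdot)-u^{\tau,N}(t,\cdot)\big\|_{L^p(\Omega;L^\infty)},$$
bounding the first (spatial) term by $Ch^{1-\epsilon}$ and the second (temporal) term by $C\tau^{3/8-\epsilon/2}$. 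The decisive structural point is that uniformity in $t$ must not rest on Gronwall's inequality, which would produce an $e^{Ct}$ prefactor; instead it follows from the exponential decay $e^{-\lambda_1^2 t/2}$ built into the semigroup bounds of Lemma~\ref{lem: e {-A^2 t}} and its discrete counterpart, reinforced by the dissipativity $\epsilon_0>0$ from \eqref{eq: assu:1}. The uniform $L^\infty$- and $w^{1,2}_N$-moment bounds of Theorem~\ref{th: u_regularity} and Theorem~\ref{thm: the moment boundedness of the numerical solution U^{M,N}} are precisely what allow the cubic drift to be controlled uniformly in time.

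For the spatial error I would compare the two mild representations termwise. The stochastic convolutions differ only through $G_t-G^N_t$ and are handled with \eqref{es:  G_t(x,y)-G_t(z,y)}; the drift parts split as
$$\int_0^t\!\!\int_{\mathcal{O}}\big(\Delta G_{t-s}-\Delta_N G^N_{t-s}\big)(x,y)\,f(u(s,y))\,\mathrm{d}y\,\mathrm{d}s+\int_0^t\!\!\int_{\mathcal{O}}\Delta_N G^N_{t-s}(x,y)\big(f(u(s,y))-f(u^N(s,y))\big)\,\mathrm{d}y\,\mathrm{d}s.$$
The first integral is estimated by the kernel bound \eqref{es: Delta_N G_s^N-Delta G_s(x, y) }, whose factor $e^{-\lambda_{N,1}t}t^{-(2\alpha_1+5)/8}$ is integrable on $[0,\infty)$ and supplies the rate $N^{-\alpha_1}=O(h^{1-\epsilon})$ once multiplied by the uniform moment of $f(u(s))$; the second integral is closed via the local Lipschitz bound \eqref{es: f(x)-f(y)| }, the uniform moments of $u$ and $u^N$, and the decaying drift kernel. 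To pass from these pointwise-in-$x$ bounds on $\mathbb{E}|u-u^N|^p$ to the $L^\infty$-norm, I would additionally bound the spatial increments by $\mathbb{E}|(u-u^N)(t,x)-(u-u^N)(t,z)|^p\le C|x-z|^{(1-\epsilon)p}$ using the Hölder estimates \eqref{es:  DeltaG_t(x,y)-DeltaG_t(z,y)} and \eqref{ineq: futher property of  continuous O_t}, and then invoke the Kolmogorov/Sobolev embedding, as in the moment proofs.

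For the temporal error I would account for the three discrepancies between $U^N_t$ and $U^{\tau,N}_t$: the taming factor, the frozen drift argument $U^{\tau,N}_{\lfloor s\rfloor}$, and the frozen exponential $e^{-A_N^2(t-\lfloor s\rfloor)}$. The taming error satisfies $\|\tilde F_N(\vec v)-F_N(\vec v)\|_{l^2_N}\le\tau\|\vec v\|_{w^{1,2}_N}^{12}\|F_N(\vec v)\|_{l^2_N}$, which by the uniform $w^{1,2}_N$-bound contributes $O(\tau)$ and is thus absorbed into the target rate. The time-stepping discrepancy is governed by the path-regularity estimate \eqref{es: I_{Omega_{R, t_i}} U_t^{M, N}-U_t^{M, N}} for $\|U^{\tau,N}_s-U^{\tau,N}_{\lfloor s\rfloor}\|_{l^2_N}$ together with the temporal Hölder continuity of the stochastic convolution from Lemma~\ref{lem: O^{M,N}_continuous}, whose $\beta=0$ exponent $3/8-\epsilon$ furnishes the dominant $\tau^{3/8-\epsilon/2}$ rate; I would assemble these into a differential inequality of the form $\tfrac{\mathrm{d}}{\mathrm{d}t}\|\cdot\|_{l^2_N}^2\le -c\|\cdot\|_{l^2_N}^2+(\text{forcing})$, integrate against the decaying kernel to preserve $t$-uniformity, and finally lift to $l^\infty_N$ through \eqref{ineq: l_infty to l2}.

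The hardest part, I expect, is keeping every estimate simultaneously uniform in $t$ and compatible with the unbounded operator $A_N$ acting on the non-globally-Lipschitz drift: each cubic term sitting under a Laplacian demands an extra half-derivative (the $\gamma=3/2$ semigroup factor $t^{-3/4}$), and one must verify that all resulting singular convolutions $\mathbb{I}_1,\mathbb{I}_2$ stay finite and $t$-independent. The hypothesis $h^{-1}\tau^9\le1$ enters here to balance spatial and temporal smoothing so that these convolutions---and in particular the $\tau^{-1/3}$ correction arising in \eqref{es: I_3   U^MN}---do not erode the stated rate.
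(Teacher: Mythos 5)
Your top-level strategy coincides with the paper's: the error is split through the spatially semidiscrete solution $u^N=P_NU^N_t$, the spatial part is exactly the content of Proposition \ref{thm: ||u^{N}-u||} (proved in Appendix D) and the temporal part that of Proposition \ref{thm: ||u^{N}-u^{tau,N}||} (proof omitted, cited to \cite{deng2025}), and your temporal sketch --- taming error of order $\tau$, path regularity of $U^{\tau,N}$, the $\tfrac{3}{8}-\epsilon$ H\"older exponent of the stochastic convolution from Lemma \ref{lem: O^{M,N}_continuous}, a dissipative differential inequality, and the lift to $l^\infty_N$ via \eqref{ineq: l_infty to l2} --- is consistent with that route.

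The genuine gap is in your closing of the spatial error. You propose to compare the two mild formulas directly and to close the term containing $\Delta_N G^N_{t-s}\bigl(f(u)-f(u^N)\bigr)$ ``via the local Lipschitz bound \eqref{es: f(x)-f(y)| }, the uniform moments, and the decaying drift kernel.'' This yields a convolution inequality of the form
\begin{equation*}
e(t)\;\le\; Ch^{1-\epsilon}+C\int_0^t (t-s)^{-\frac{3}{4}}e^{-\lambda(t-s)}\,e(s)\,\mathrm{d}s ,
\end{equation*}
in which the constant $C$ (carrying the moments of the cubic) is not small; since the kernel's total mass generally exceeds one, the resolvent/Gronwall iteration produces an exponentially growing prefactor in $t$ --- precisely the trap you warn against in your opening paragraph --- and the one-sided condition \eqref{eq: assu:1} cannot be invoked in the mild formulation to rescue it. The paper's Appendix D avoids this by inserting a second auxiliary process $\tilde u^N$ in \eqref{eq:auxiliary_CH_FractionalNoise_solution_mild}, namely the semidiscrete flow with the \emph{exact} solution frozen inside the drift: $u-\tilde u^N$ is then a pure consistency error, estimated termwise with the kernel bound \eqref{es: Delta_N G_s^N-Delta G_s(x, y) } and a Kolmogorov-continuity argument (as in your sketch), while $E^N_t=\tilde U^N_t-U^N_t$ satisfies the error equation \eqref{eq: U-tilder(U)^N_matrix}, to which the dissipative energy estimate of Proposition \ref{propo: error problem  exact} applies --- testing in the discrete $\dot H^{-1}$-type norm so that $\epsilon_0=\frac{\lambda_1-L_f}{1+\lambda_1}>0$ supplies the time-uniform decay. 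Your proposal needs this (or an equivalent energy argument exploiting \eqref{eq: assu:1} in variational form) spliced into the spatial step; with that repair the outline matches the paper. Two minor points: the temporal bound should read $C\bigl(h^{1-\epsilon}+\tau^{\frac{3}{8}-\frac{\epsilon}{2}}\bigr)$ as in Proposition \ref{thm: ||u^{N}-u^{tau,N}||}, and the $\tau^{-1/3}$ correction in \eqref{es: I_3   U^MN} belongs to the uniform moment proof (Theorem \ref{thm: the moment boundedness of the numerical solution U^{M,N}}), not to the convergence proof itself.
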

	
	To proof Theorem \ref{thm: ||u^{M,N}-u||} , we introduce the auxiliary processes
	\begin{equation} \label{eq: solution_auxiliary_full_discrete}
		\begin{aligned} 
			U^{N}_t= & e^{-A_N^2 t}U_0+\int_0^t A_N e^{-A_N^2(t-s)} F_N(	U^{N}_{s}) \mathrm{d} s+\mathscr{O}^N_t ,
		\end{aligned}
	\end{equation}  
	where  
	$ \mathscr{O}^{N}_t:= \sigma  \sum_{j=1}^{N-1}\int_0^t e^{-A_N^2(t-s)} \phi_{N, j} \mathrm{d} \beta_{j}(s). $
	Through the interpolation of $	U^{N}_t$, we can obtain
	\begin{equation} \label{eq:eq:discrete_CH_FractionalNoise_SDE_spatial} 
		\begin{aligned}
			u^N(t, x):=P_N U^N_t(x)= & \int_{\mathcal{O}} G_t^N(x, y) u_0\left(\kappa_N(y)\right) \mathrm{d} y+\int_0^t \int_{\mathcal{O}} \Delta_N G_{t-s}^N(x, y) f(u^N\left(s, \kappa_N(y)\right)) \mathrm{d} y \mathrm{d} s \\
			&+\sigma\sum_{j=1}^{N-1} \int_0^t \int_{\mathcal{O}} G_{t-s}^N(x, y)\phi_j\left(\kappa_N(y)\right) \mathrm{d} y \mathrm{d}  \beta_j(s)  .
		\end{aligned}
	\end{equation} 
	Then  we separate the error term $\left\|u(t, \cdot)-u^{\tau, N}(t, \cdot)\right\|_{L^p\left(\Omega; L^{\infty}\right)}$ as follows
	\begin{equation}  
		\begin{aligned}
			\left\|u(t, \cdot)-u^{\tau, N}(t, \cdot)\right\|_{L^p\left(\Omega; L^{\infty}\right)}
			\leq & \left\|u(t, \cdot)-u^{N}(t, \cdot)\right\|_{L^p\left(\Omega; L^{\infty}\right)}+    \left\|u^{N}(t, \cdot)-u^{\tau, N}(t, \cdot)\right\|_{L^p\left(\Omega; L^{\infty}\right)} 	.
		\end{aligned}
	\end{equation}  
	Following the methods used to prove Theorems 3.1 and 4.2 in \cite{deng2025}, we can replicate those steps with only minor modifications to establish the next two propositions. Then Theorem \ref{thm: ||u^{M,N}-u||}  is a straightforward conclusion of two auxiliary results stated below.
	\begin{proposition}\label{thm: ||u^{N}-u||}  
		Suppose that Assumptions  \ref{assu:1}, \ref{assu:3 noise}, \ref{assu:2.2} hold.
		Then for $p \geq 1$, there exists a constant $C>0$, independent of $N$,  such that  
		\begin{equation}\label{ineq: ||u^{M,N}-u||}
			\sup_{t \geq 0}\left\|u(t,\cdot)-u^{ N}(t, \cdot)\right\|_{L^p(\Omega;L^\infty)}   \leq C h^{1-\epsilon} ,
		\end{equation} 
		where $\epsilon$ is an arbitrarily small positive number.
	\end{proposition}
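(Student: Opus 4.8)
The plan is to compare the mild representations \eqref{eq: CH_FractionalNoise_solution_mild} and \eqref{eq:eq:discrete_CH_FractionalNoise_SDE_spatial} term by term, splitting the error into an initial-data part, a nonlinear drift part, and a stochastic-convolution part:
\[
u(t,x)-u^{N}(t,x)=\mathcal{E}_1(t,x)+\mathcal{E}_2(t,x)+\mathcal{E}_3(t,x),
\]
where $\mathcal{E}_1$ collects the terms with $G_t$ versus $G_t^N$ acting on $u_0$, $\mathcal{E}_2$ those with $\Delta G_{t-s}$ versus $\Delta_N G_{t-s}^N$ acting on $f$, and $\mathcal{E}_3$ the stochastic integrals against $G_{t-s}$ versus $G_{t-s}^N$. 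The decisive structural feature, exactly as in \cite{deng2025}, is that every continuous or discrete kernel carries an exponential factor $e^{-\lambda_1^2(t-s)}$ or $e^{-\lambda_{N,1}^2(t-s)}$; this is what allows all time integrals to be closed \emph{uniformly} in $t\ge 0$ rather than only on a finite horizon, and it is precisely why the estimate can be stated as a supremum over $t$.

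For $\mathcal{E}_1$ I would invoke Assumption~\ref{assu:2.2} together with the interpolation bound $\|u_0-u_0\circ\kappa_N\|_{L^\infty}\le Ch$ and the kernel estimate \eqref{es: Delta_N G_s^N-Delta G_s(x, y) } (and its $G_t^N-G_t$ analogue); the exponential decay then yields a uniform-in-$t$ bound of order $h^{1-\epsilon}$ with room to spare, so $\mathcal{E}_1$ is not the rate-limiting term. For the stochastic term $\mathcal{E}_3$ I would control $\mathscr{O}_t-\mathscr{O}_t^N$ through Lemma~\ref{lem: 3.1 regularity of o(t,x)} and Lemma~\ref{lem: O^{M,N}_continuous}, passing to the $L^\infty$-norm via the one-dimensional Sobolev embedding $W^{\gamma,2}(\mathcal{O})\hookrightarrow L^\infty(\mathcal{O})$ for $\gamma>1/2$ combined with \eqref{ineq: continuous l^infty to l2}; It\^{o}'s isometry together with the spatial regularity of $\phi_j\circ\kappa_N$ delivers the rate $h^{1-\epsilon}$ uniformly in $t$, the loss $\epsilon$ arising from the borderline Sobolev exponent. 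This contribution is what pins the overall rate at $h^{1-\epsilon}$.

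The substantive work is the nonlinear term $\mathcal{E}_2$, which I would split as
\[
\begin{aligned}
\mathcal{E}_2(t,x)
&=\int_0^t\!\!\int_{\mathcal{O}}\big(\Delta G_{t-s}-\Delta_N G_{t-s}^N\big)(x,y)\,f(u(s,y))\,\mathrm{d}y\,\mathrm{d}s\\
&\quad+\int_0^t\!\!\int_{\mathcal{O}}\Delta_N G_{t-s}^N(x,y)\big(f(u(s,y))-f(u^N(s,\kappa_N(y)))\big)\,\mathrm{d}y\,\mathrm{d}s.
\end{aligned}
\]
The first (explicit) integral is handled by the kernel-difference estimate \eqref{es: Delta_N G_s^N-Delta G_s(x, y) }, whose temporal singularity $(t-s)^{-(2\alpha_1+5)/8}$ is integrable for $\alpha_1\in(0,1)$ and whose exponential factor preserves uniformity in $t$, together with $\sup_{t\ge 0}\mathbb{E}\|u(t,\cdot)\|_{L^\infty}^p<\infty$ from Theorem~\ref{th: u_regularity} after absorbing the cubic growth of $f$ by \eqref{es: f(x)-f(y)| }. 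The second (implicit) integral is the genuine difficulty: writing $|f(u)-f(u^N)|\le C(1+u^2+(u^N)^2)\,|u-u^N|$ via \eqref{es: f(x)-f(y)| } and linearizing the cubic term by H\"{o}lder's inequality against the uniform moment bounds of $u$ and $u^N$ (Theorem~\ref{th: u_regularity} and Theorem~\ref{thm: the moment boundedness of the numerical solution U^{M,N}}), one reduces it to a weakly-singular, exponentially-weighted convolution of $\|u(s,\cdot)-u^N(s,\cdot)\|_{L^p(\Omega;L^\infty)}$, to which a generalized Gr\"{o}nwall inequality applies. The hard part, and exactly where the fourth-order-with-Laplacian structure bites, is keeping the unbounded operator $\Delta_N$ acting on the non-globally-Lipschitz drift under control while simultaneously retaining an integrable time weight \emph{and} the exponential decay, so that the Gr\"{o}nwall constant stays bounded uniformly in $t$; the uniform-in-time $L^\infty$ moment bounds established earlier are precisely the ingredient that makes this closure possible.
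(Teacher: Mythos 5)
Your decomposition and your treatment of $\mathcal{E}_1$, $\mathcal{E}_3$ broadly match the consistency estimates in the paper's Appendix~D, but the closure of the nonlinear term $\mathcal{E}_2$ contains a genuine gap: a generalized Gr\"onwall inequality for a weakly singular, exponentially weighted convolution does \emph{not} yield a bound that is uniform in $t\ge 0$. If $e(t)\le a(t)+C\int_0^t (t-s)^{-\beta}e^{-\mu(t-s)}e(s)\,\mathrm{d}s$, the resolvent kernel of $K(r)=Cr^{-\beta}e^{-\mu r}$ grows like $e^{(z_0-\mu)t}$ with $z_0=(C\Gamma(1-\beta))^{1/(1-\beta)}$, so uniformity in $t$ requires the operator norm $C\Gamma(1-\beta)\mu^{\beta-1}$ to be smaller than one; here $C$ comes from the cubic moments of $u$ and $u^N$ and is not small, and the exponential weight $e^{-\lambda_{N,1}^2(t-s)}$ alone cannot rescue this. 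This is exactly why the paper does not use Gr\"onwall at all. Instead it inserts the auxiliary process $\tilde u^N$ of \eqref{eq:auxiliary_CH_FractionalNoise_solution_mild} (discrete kernel, \emph{exact} solution in the drift), so that $u-\tilde u^N$ is a pure consistency error with no feedback, and then treats $E^N_t=\tilde U^N_t-U^N_t$ through the error equation \eqref{eq: U-tilder(U)^N_matrix}, which has the same structure as \eqref{eq:Feller}; the dissipativity estimate of Proposition~\ref{propo: error problem  exact}, resting on the spectral-gap condition $L_f<\lambda_1$ of Assumption~\ref{assu:1}(ii) via the negative-norm energy functional $\sum_j\langle\cdot,\phi_j\rangle^2/(2\lambda_j)$ and the contraction rate $\epsilon_0$, is what replaces Gr\"onwall and delivers the uniform-in-time stability. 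Your closing remark that ``the uniform-in-time $L^\infty$ moment bounds \ldots make this closure possible'' is not accurate: moment bounds control the Gr\"onwall \emph{constant} but not its exponential-in-$t$ propagation; the one-sided Lipschitz structure is the indispensable ingredient.

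A secondary gap concerns $\mathcal{E}_1$: you cannot bound the semigroup difference acting on $u_0$ uniformly near $t=0$ by combining $\|u_0-u_0\circ\kappa_N\|_{L^\infty}\le Ch$ with the kernel estimate \eqref{es: Delta_N G_s^N-Delta G_s(x, y) }, since the latter carries the singular factor $t^{-(2\alpha_1+5)/8}$ at fixed small $t$, with nothing to integrate it against. The paper circumvents this by rewriting $\int_{\mathcal{O}}G_t(x,y)u_0(y)\,\mathrm{d}y=u_0(x)-\int_0^t\int_{\mathcal{O}}\Delta G_{t-s}(x,y)u_0''(y)\,\mathrm{d}y\,\mathrm{d}s$ (and its discrete analogue), which moves the spatial regularity of $u_0$ under an integrable-in-time kernel difference; this is precisely where the strengthened Assumption~\ref{assu:2.2} ($u_0\in\mathcal{C}^5$) is consumed, a step your proposal invokes nominally but does not actually use. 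Your handling of $\mathcal{E}_3$ via It\^o isometry and a Kolmogorov-type continuity argument is consistent with the paper's treatment of $\mathcal{I}_4$, and your identification of the stochastic convolution as the rate-limiting $h^{1-\epsilon}$ term is correct.
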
   
	The proof is provided in Appendix D.
	
	\begin{proposition}\label{thm: ||u^{N}-u^{tau,N}||}  
		Suppose that Assumptions  \ref{assu:1}, \ref{assu:3 noise}, \ref{assu:2.2} hold and $h^{-1}\tau^9\leq1$.
		Then for $p \geq 1$, there exists a constant $C>0$, independent of $\tau$ and $h$,  such that  
		\begin{equation}\label{ineq: ||u^{M,N}-u||}
			\sup_{t \geq 0}\left\|u^N(t,\cdot)-u^{\tau,N}(t, \cdot)\right\|_{L^p(\Omega;L^\infty)}   \leq C\left(h^{1-\epsilon}+ \tau^{\frac{3}{8}-\frac{\epsilon}{2}} \right),
		\end{equation} 
		where $\epsilon$ is an arbitrarily small positive number.
	\end{proposition}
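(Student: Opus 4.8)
The plan is to follow the strategy of \cite[Theorem 4.2]{deng2025}, adapting it to the strongly tamed scheme and, crucially, to a uniform-in-time setting. Since $u^N$ and $u^{\tau,N}$ share the same spatial grid, I would work with the error $E_t:=U^N_t-U^{\tau,N}_t\in l^2_N$ between the two mild representations \eqref{eq: solution_auxiliary_full_discrete} and \eqref{eq: mild_solution_full_discrete}, estimate it in $l^2_N$ and $w^{1,2}_N$, and only at the end transfer to the $L^\infty$-norm. Throughout I would use the uniform moment bounds of Theorem \ref{thm: the moment boundedness of the numerical solution U^{M,N}} (and the identical bound for $U^N$), the discrete smoothing estimates \eqref{ineq: e^{-A_N^2t}(-A_N)^r to l2}--\eqref{ineq: (-A_N)^re^{-A_N^2t}(I- e^{-A_N^2t}) to l2}, the local Lipschitz property \eqref{es: f(x)-f(y)| }, and the regularity of the discrete stochastic convolutions from Lemma \ref{lem: O^{M,N}_continuous}. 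The exponential weight $e^{-\lambda_{N,1}^2 t/2}$ built into the smoothing estimates is what turns finite-time bounds into uniform-in-time ones.

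Subtracting the two mild solutions, I would split $E_t$ into four parts: (i) the stochastic-convolution discretization error $\mathscr{O}^N_t-\mathscr{O}^{\tau,N}_t$; (ii) the semigroup time-shift error $\int_0^t A_N\big(e^{-A_N^2(t-s)}-e^{-A_N^2(t-\lfloor s\rfloor)}\big)F_N(U^N_s)\,\mathrm{d}s$; (iii) the taming error $\int_0^t A_N e^{-A_N^2(t-\lfloor s\rfloor)}\big(F_N-\tilde F_N\big)(U^{\tau,N}_{\lfloor s\rfloor})\,\mathrm{d}s$; and (iv) the genuine nonlinear error $\int_0^t A_N e^{-A_N^2(t-\lfloor s\rfloor)}\big(F_N(U^N_s)-F_N(U^{\tau,N}_{\lfloor s\rfloor})\big)\,\mathrm{d}s$. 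For (i), an It\^o-isometry computation identical to the one behind \eqref{ineq: continuous of O^{M,N}(t,x)} gives the temporal rate $\tau^{3/8-\epsilon}$ at the $l^2_N$ level. For (ii), writing $e^{-A_N^2(t-s)}-e^{-A_N^2(t-\lfloor s\rfloor)}=e^{-A_N^2(t-s)}\big(I-e^{-A_N^2(s-\lfloor s\rfloor)}\big)$ and invoking \eqref{ineq: (-A_N)^re^{-A_N^2t}(I- e^{-A_N^2t}) to l2} with a fractional exponent $\gamma_3$ trades a factor $(s-\lfloor s\rfloor)^{\gamma_3}\le\tau^{\gamma_3}$ against an integrable singularity, the growth of $F_N(U^N_s)$ being controlled by the moment bounds. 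For (iii), the identity $F_N-\tilde F_N=\tau\|\cdot\|_{w^{1,2}_N}^{12}F_N\big/\big(1+\tau\|\cdot\|_{w^{1,2}_N}^{12}\big)$ produces an explicit factor $\tau$, which after taking moments and using \eqref{ineq: uv to u v} together with the uniform $w^{1,2}_N$-bounds is absorbed at order $\tau$; the strong taming is exactly what lets $(-A_N)^{1/2}$ act on this term without loss.

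Term (iv) is the one carrying the solution error: using the time-regularity estimate for $U^{\tau,N}$ established above and \eqref{es: f(x)-f(y)| }, it reduces, up to the known rates $h^{1-\epsilon}$ and $\tau^{3/8-\epsilon/2}$, to a singular-kernel convolution of $\|E_s\|$ against $(t-s)^{-\alpha}e^{-\lambda_{N,1}^2(t-s)/2}$. Collecting (i)--(iv) and encoding the time integrals through the functionals $\mathbb{I}_1$ and $\mathbb{I}_2$, I would close the estimate by the same two-step bootstrap used in Proposition \ref{propo: perturbed problem}: first an energy inequality yields the $l^2_N$-bound (with the integrated $\|A_N E\|_{l^2_N}^2$ term), and then the mild form upgrades it to $w^{1,2}_N$. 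Because every kernel carries the factor $e^{-\lambda_{N,1}^2(t-s)/2}$, all time integrals stay finite uniformly in $t$, giving $\sup_{t\ge0}\|E_t\|_{L^p(\Omega;w^{1,2}_N)}\le C(h^{1-\epsilon}+\tau^{3/8-\epsilon/2})$. Finally, to reach the $L^\infty$-norm at the same rate rather than only through the crude embedding \eqref{ineq: l_infty to l2}, I would bound the error pointwise in $x$ via the Green's-kernel representation \eqref{eq:eq:discrete_CH_FractionalNoise_SDE_spatial} and pass to $\sup_x$ by a Kolmogorov-type argument using the spatial Lipschitz regularity \eqref{ineq: futher property of O_t}, which preserves the pointwise temporal rate $\tau^{3/8-\epsilon/2}$.

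The hard part will be twofold. First, because $A_N^2$ has eigenvalues of size $N^4\sim h^{-4}$, the explicit time-stepping in (ii)--(iv) generates negative powers of $\tau$ multiplied by $h$-dependent factors; keeping these at the advertised orders is precisely what forces the coupling $h^{-1}\tau^9\le1$ and is also the source of the $h^{1-\epsilon}$ contribution, even though $u^N$ and $u^{\tau,N}$ live on the same grid. This bookkeeping must be tracked through every application of the smoothing estimates. Second, the uniform-in-time closure is delicate: a naive Gronwall argument would produce $e^{CT}$ growth, so the dissipativity of Assumption \ref{assu:1} (entering through $\lambda_{N,1}^2/2$ in the exponentials) must be used to dominate the nonlinear feedback in (iv), exactly as in the weighted-energy computation already performed for the continuous auxiliary equation.
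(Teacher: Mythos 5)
The paper does not actually write out a proof of this proposition: it states that the argument follows Theorem 5.7 of \cite{deng2025} with minor modifications, and your overall template --- subtract the two mild forms \eqref{eq: solution_auxiliary_full_discrete} and \eqref{eq: mild_solution_full_discrete}, split into stochastic-convolution error, semigroup time-shift error, taming error and nonlinear feedback, control everything with the exponentially weighted smoothing estimates \eqref{ineq: e^{-A_N^2t}(-A_N)^r to l2}--\eqref{ineq: (-A_N)^re^{-A_N^2t}(I- e^{-A_N^2t}) to l2} and the uniform moment bounds of Theorem \ref{thm: the moment boundedness of the numerical solution U^{M,N}}, close with an $\mathbb{I}_1$/$\mathbb{I}_2$-type bootstrap, and pass to $L^\infty$ at the end --- is exactly that intended argument (compare the paper's own Appendix D, where the spatial analogue is carried out in full). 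So in structure your proposal matches the omitted proof, including the correct identification of the taming factor $\tau\|\cdot\|_{w^{1,2}_N}^{12}F_N/(1+\tau\|\cdot\|_{w^{1,2}_N}^{12})$ as an $O(\tau)$ term and of the dissipative exponential weights as the mechanism for uniformity in time.

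One intermediate claim, as literally stated, would fail: the assertion that the bootstrap closes to $\sup_{t\ge 0}\|E_t\|_{L^p(\Omega;w^{1,2}_N)}\le C\bigl(h^{1-\epsilon}+\tau^{3/8-\epsilon/2}\bigr)$ for the \emph{full} error $E_t=U^N_t-U^{\tau,N}_t$. The error contains the stochastic-convolution difference $\mathscr{O}^N_t-\mathscr{O}^{\tau,N}_t$, and the It\^o-isometry computation behind Lemma \ref{lem: O^{M,N}_continuous} gives temporal order $\frac{3}{8}-\frac{\beta}{4}-\epsilon$ in a norm of smoothness $\beta$: its variance in the $(-A_N)^{\beta/2}$-norm is $\sum_j \lambda_{N,j}^{\beta}\int_0^t e^{-2\lambda_{N,j}^2(t-s)}\bigl(1-e^{-\lambda_{N,j}^2(s-\lfloor s\rfloor)}\bigr)^2\mathrm{d}s\le C\tau^{2\hat\alpha}\sum_j\lambda_{N,j}^{\beta+4\hat\alpha-2}$, whose summability forces $\hat\alpha<\frac{3}{8}-\frac{\beta}{4}$. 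At the $w^{1,2}_N$ level ($\beta=1$) this is only $\tau^{1/8-\epsilon}$, so closing in $w^{1,2}_N$ with term (i) included and then invoking the embedding \eqref{ineq: l_infty to l2} would lose a quarter of the advertised temporal rate. The repair is already implicit in your final paragraph: the $l^2_N$ closure (where the noise difference does carry order $\tau^{3/8-\epsilon}$) should feed the nonlinear convolution term (iv); the $w^{1,2}_N$ upgrade via the mild form should be applied only to the drift integrals, which gain regularity from the kernel $A_Ne^{-A_N^2(t-s)}$ at the cost of an integrable singularity; and the noise difference must be kept separate and estimated pointwise in $x$ with the Kolmogorov-type argument, exactly as the paper treats $\mathcal{I}_4$ in Appendix D. With that bookkeeping corrected your plan is sound; note also that your account of where $h^{1-\epsilon}$ and the coupling $h^{-1}\tau^9\le 1$ enter remains a heuristic rather than a verified estimate, and this is precisely where the omitted details of \cite[Theorem 5.7]{deng2025} do the real work.
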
 
	The proof of this proposition is similar to Theorem 5.7 in \cite{deng2025}. Therefore, these detailed steps are omitted.

	\section{Approximation of invariant measures} \label{section5}
	\subsection{Existence and uniqueness of invariant measure for the numerical solution}
	Since $u^{\tau,N}(t_m,\cdot)$ is defined as the interpolation of $	U^{\tau,N}_{t_m}$, the existence and uniqueness of the invariant measure for $\left\{u^{\tau,N}(t_m,\cdot)\right\}_{m \in \mathbb{N}}$ on $V^N$ is equivalent to that  for $\left\{U^{\tau,N}_{t_m}\right\}_{m \in \mathbb{N}}$ on $\mathbb{R}^N$, where $V^N=\text{Span}\left\{P_N\phi_{N,0},P_N\phi_{N,1},\cdots,P_N\phi_{N,N-1}\right\}$. We develop an new ergodicity framework, building on the work of \cite{mattingly2002}, to prove the existence and uniqueness of the invariant measure for $\left\{U^{\tau,N}_{t_m}\right\}_{m \in \mathbb{N}}$. This guarantees the corresponding result for $\left\{u^{\tau,N}(t_m,\cdot)\right\}_{m \in \mathbb{N}}$ .
	We are aware of several notable works, such as \cite{chen2017approximation, chen2020approximation}, which applied this theory to establish the existence and uniqueness of invariant measures for numerical approximations to SPDEs. These studies primarily focused on the numerical invariant measures for SPDEs under Dirichlet boundary conditions.
	We aim to generalize the application of this theory to the study of numerical invariant measures for SPDEs subject to Neumann boundary conditions. The key point is to establish  a minorization condition restricted to the hyperplane  $D_\alpha :=
	\left\{\vec{u}\in  \mathbb{R}^N:\ \ \frac{1}{\pi} \left\langle\vec{u}, \phi_{N, 0}  \right\rangle_{l^2_N}=  \alpha    \right\}$.  Hence we introduce the 
	minorization condition restricted to $D_\alpha$.
	
	\begin{assumption} \label{ass: 5} (Minorization condition restricted to $D_\alpha$)
		The Markov chain $\left\{\vec{v}_k\right\}_{k \in \mathbb{N}}$ on a state space $\left(\mathbb{R}^N, \mathscr{B}\big(\mathbb{R}^N\big)\right)$ with transition kernel $P_k(\vec{v}, B):=\mathbb{P}\left\{  \vec{v}_k \in B \mid \vec{v}_0=\vec{v}\right\}, k \in \mathbb{N}, \vec{v} \in \mathbb{R}^N, B \in \mathscr{B}\left(\mathbb{R}^N\right)$ satisfies the following conditions:  \\
		\textbf{(i)}   $ \vec{v}_k\in D_\alpha,\ \forall k \in \mathbb{N}$ .\\
		Moreover, for a fixed compact set $S \in \mathcal{B}\big(\mathbb{R}^N\big)$, we have:\\
		\textbf{(ii)} for some $\vec{w}^* \in \operatorname{Int}(S)\cap D_\alpha $ and any $\delta>0$, there exists  $\bar{k}=\bar{k}(\delta) \in \mathbb{N}$ such that
		
		$$
		P_{\bar{k}}\left(\vec{w}, B_\delta\left(\vec{w}^*\right)\cap D_\alpha\right)>0, \quad \forall\ \vec{w} \in S\cap D_\alpha ,
		$$
		
		where $B_\delta\left(\vec{w}^*\right)\subset \mathbb{R}^N$ denotes the open ball in   of radius $\delta$ centered at $\vec{w}^*$.\\
		\textbf{(iii)} for $k \in \mathbb{N}$ the transition kernel $P_k(\vec{v}, B )$ possesses a density $p_k(\vec{v},\vec{w})$ such that
		
		$$
		P_k(\vec{v}, B)=\int_B p_k(\vec{v}, \vec{w})   \delta_{D_\alpha}(\vec{w})\  \mathrm{d} \vec{w}, \quad \forall\ \vec{v} \in S\cap D_\alpha    , B \in \mathscr{B}\big(\mathbb{R}^N\big) \cap \mathscr{B}(S\cap D_\alpha ), 
		$$
		and $p_k(\vec{v}, \vec{w})$ is jointly continuous in $(\vec{v}, \vec{w}) \in \left(S\cap D_\alpha\right)  \times\left( S\cap D_\alpha \right)$. Here $\delta_{D_\alpha}(\cdot)$ denotes the Dirac function concentrated on the set $D_\alpha$.
	\end{assumption}
	
	\begin{lemma} \label{lem: geometric ergodicity for Markov chains 1}
		Suppose Assumption \ref{ass: 5} holds.  Then there exist $k^* \in \mathbb{N}$, $\eta>0$, and a probability measure $\nu$ satisfying 
		$\nu\left(\left(S\cap D_\alpha\right)^c\right)=0$ and $\nu(S\cap D_\alpha)=1$, such that 
		$$
		P_{k^*}(\vec{v}, B\cap D_\alpha) \geq \eta \nu(B\cap D_\alpha), \quad \forall B \in \mathscr{B}\big(\mathbb{R}^N\big),\ \vec{v} \in S\cap D_\alpha .
		$$
	\end{lemma}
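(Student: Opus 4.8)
The plan is to exhibit $S\cap D_\alpha$ as a \emph{small set} by factoring a single transition into two blocks: a first block of $k_1$ steps that carries every starting point of $S\cap D_\alpha$ into a fixed relatively open neighbourhood $U$ with \emph{uniformly} positive probability, and a second block of $k_0$ steps whose transition density is bounded below on $U$. The minorizing measure $\nu$ will be normalized Lebesgue measure on $D_\alpha$ restricted to a small ball, and $\eta$ will be the product of the two uniform lower bounds, stitched together by the Chapman--Kolmogorov identity. Throughout I write $\lambda_{D_\alpha}$ for the surface (Lebesgue) measure on $D_\alpha$, i.e.\ the measure $\delta_{D_\alpha}(\vec w)\,\mathrm{d}\vec w$ appearing in Assumption \ref{ass: 5}(iii).

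First I would build the second block. Since $\vec w^*\in\operatorname{Int}(S)\cap D_\alpha$, fix $\delta_0>0$ with $\overline{B_{\delta_0}(\vec w^*)}\cap D_\alpha\subset\operatorname{Int}(S)\cap D_\alpha$ and apply Assumption \ref{ass: 5}(ii) (with $\vec w=\vec w^*$) to obtain $k_0:=\bar k(\delta_0)$ such that $P_{k_0}(\vec w^*,B_{\delta_0}(\vec w^*)\cap D_\alpha)>0$. By the density representation in Assumption \ref{ass: 5}(iii), this positivity forces the continuous integrand $p_{k_0}(\vec w^*,\cdot)$ to be strictly positive at some $\vec w_0\in B_{\delta_0}(\vec w^*)\cap D_\alpha$. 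Joint continuity of $p_{k_0}$ on $(S\cap D_\alpha)^2$ then yields relatively open neighbourhoods $\vec w^*\in U$ and $\vec w_0\in V$, both inside $\operatorname{Int}(S)\cap D_\alpha$, and a constant $c>0$ with $p_{k_0}(\vec z,\vec w)\ge c$ for all $(\vec z,\vec w)\in U\times V$; hence $P_{k_0}(\vec z,B\cap V)\ge c\,\lambda_{D_\alpha}(B\cap V)$ for every $\vec z\in U$.

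For the first block I would invoke Assumption \ref{ass: 5}(ii) again, now with a radius $\delta_1$ small enough that $B_{\delta_1}(\vec w^*)\cap D_\alpha\subset U$, producing $k_1:=\bar k(\delta_1)$ with $P_{k_1}(\vec w,B_{\delta_1}(\vec w^*)\cap D_\alpha)>0$ for every $\vec w\in S\cap D_\alpha$, and in particular $P_{k_1}(\vec v,U)>0$ for all $\vec v\in S\cap D_\alpha$. The needed uniformity comes here: writing $P_{k_1}(\vec v,U)=\int_U p_{k_1}(\vec v,\vec w)\,\lambda_{D_\alpha}(\mathrm{d}\vec w)$ and combining continuity of $p_{k_1}$ with Fatou's lemma shows that $\vec v\mapsto P_{k_1}(\vec v,U)$ is lower semicontinuous, so on the compact set $S\cap D_\alpha$ it attains a minimum $m:=\inf_{\vec v\in S\cap D_\alpha}P_{k_1}(\vec v,U)>0$. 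Setting $k^*:=k_1+k_0$ and $\nu(\cdot\cap D_\alpha):=\lambda_{D_\alpha}(\cdot\cap V)/\lambda_{D_\alpha}(V)$ (a probability measure supported on $V\subset S\cap D_\alpha$, hence with $\nu((S\cap D_\alpha)^c)=0$ and $\nu(S\cap D_\alpha)=1$), the two bounds chain through Chapman--Kolmogorov: for $\vec v\in S\cap D_\alpha$,
\[
P_{k^*}(\vec v,B\cap D_\alpha)\ \ge\ \int_U P_{k_0}(\vec z,B\cap V)\,P_{k_1}(\vec v,\mathrm{d}\vec z)\ \ge\ c\,\lambda_{D_\alpha}(B\cap V)\,P_{k_1}(\vec v,U)\ \ge\ c\,\lambda_{D_\alpha}(V)\,m\,\nu(B\cap D_\alpha),
\]
so $\eta:=c\,\lambda_{D_\alpha}(V)\,m>0$ is the desired constant.

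I expect the main obstacle to be the two passages from pointwise positivity to uniformity, both of which rely on coupling the density structure of Assumption \ref{ass: 5}(iii) with compactness: extracting a genuine point $\vec w_0$ of strict positivity of $p_{k_0}$ out of the mere positivity of the measure $P_{k_0}(\vec w^*,\cdot)$ (so that continuity can subsequently be upgraded to a neighbourhood lower bound), and promoting the pointwise positivity $P_{k_1}(\vec v,U)>0$ to a strictly positive infimum over $\vec v$ via lower semicontinuity. A secondary but essential bookkeeping point is to keep every auxiliary set ($U$, $V$, and the balls) inside $S\cap D_\alpha$, since the density representation of Assumption \ref{ass: 5}(iii) is only available for arguments in $S\cap D_\alpha$ and must be applied legitimately at each of the two blocks.
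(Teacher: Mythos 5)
Your proposal is correct and follows essentially the same route as the paper's proof: both factor $k^*=k_1+k_0$ into a first block driving all of $S\cap D_\alpha$ into a neighbourhood of $\vec w^*$ (with the uniform lower bound obtained from continuity of the density plus compactness of $S\cap D_\alpha$), and a second block whose density is bounded below on a product of neighbourhoods via Assumption (iii), chained by Chapman--Kolmogorov with $\nu$ the normalized Hausdorff measure on a small ball in $D_\alpha$. The only cosmetic difference is that you obtain the uniform infimum of $P_{k_1}(\cdot,U)$ by lower semicontinuity via Fatou, while the paper extends continuity of $p_{\bar k_1}$ to $P_{\bar k_1}(\cdot,B)$ by dominated convergence; both are valid.
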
 
	\begin{proof}
		Based on Assumption \ref{ass: 5} (ii), it can be readily shown that for any $\delta^{\prime}>0$ and $\vec{w}^*\in \operatorname{Int}(S)\cap D_\alpha$, there exists $\bar{k}_2={\bar{k}_2}(\delta^{\prime}) \in \mathbb{N}$ such that $P_{\bar{k}_2}\left(\vec{w}^*, B_{\delta^{\prime}}\left(\vec{w}^*\right)\cap D_\alpha\right)>0$.  Furthermore, the existence of the density function  $p_{\bar{k}_2}(\cdot, \cdot)$  from Assumptions \ref{ass: 5} (iii) implies that there exists $\vec{x}^* \in B_{\delta^{\prime}}\left(\vec{w}^*\right) \cap D_\alpha$  and  $\gamma_1>0$ such that  
		$$
		p_{\bar{k}_2}\left(\vec{w}^*, \vec{x}^*\right) \geq 2 \gamma_1>0.
		$$
		Owing to the joint continuity of the density stated in Assumption \ref{ass: 5} (iii),  there exist $\epsilon_1^*,\ \epsilon_2^*>0$ such that
		$$ B_{\epsilon_2^*}\left(\vec{x}^*\right) \subset S \quad \text{and} \quad     	p_{\bar{k}_2}\left(\vec{w}, \vec{x}\right) \geq  \gamma_1 ,\quad \forall \vec{w}\in \mathscr{B}_{\epsilon_1^*}\left(\vec{w}^*\right) \cap D_\alpha,  \vec{x}\in B_{\epsilon_2^*}\left(\vec{w}^*\right) \cap D_\alpha, $$  
		which implies
		$$
		P_{\bar{k}_2}(\vec{w}, B \cap D_\alpha)=\int_{B \cap D_\alpha} p_{\bar{k}_2}(\vec{w}, \vec{x})  \delta_{D_\alpha}(\vec{x}) \mathrm{d} \vec{x} 
		\geq \int_{B   \cap B_{\epsilon_2}\left(\vec{w}^*\right)\cap D_\alpha} p_{\bar{k}_2}(\vec{w}, \vec{x})  \delta_{D_\alpha}(\vec{x}) \mathrm{d} \vec{x} 
		\geq \epsilon_1 \lambda\left(B  \cap B_{\epsilon_2}\left(\vec{w}^*\right)\cap D_\alpha\right),
		$$ 
		for all $\vec{w} \in B_{\epsilon_1^*}\left(\vec{w}^*\right) \cap D_\alpha$. Here $\lambda(\cdot)$ is 
		the  $(N-1)$-dimensional Hausdorff measure on the hyperplane $D_\alpha$.
		
		According to Assumption \ref{ass: 5}  (ii), there exists a $ \bar{k}_1>0$ such that 
		$$P_{\bar{k}_1}\left(\vec{v}, B_{\epsilon_1^*}\left(\vec{w}^*\right)\cap D_\alpha\right)>0, \quad \forall \vec{v} \in S\cap D_\alpha. $$
		The continuity of $p_{\bar{k}_1}(\cdot, \vec{w})$ on $ \left(S\cap D_\alpha\right)  \times\left( S\cap D_\alpha \right)$ , as stated in Assumption \ref{ass: 5} (iii), can be extended to $P_{\bar{k}_1}(\cdot, B)$  via the dominated convergence theorem. Therefore,
		$$	\inf _{\vec{v}\in S\cap D_\alpha} P_{\bar{k}_1}\left(\vec{v}, B_{\epsilon_1^*}\left(\vec{w}^*\right)\cap D_\alpha\right) \geq \gamma_2, $$ 
		for some $\gamma_2>0$. Now let $ k^* =\bar{k}_1+\bar{k}_2$. Then, for all $\vec{v} \in S \cap D_\alpha$,
		$$ 
		\begin{aligned}
			P_{k^*}(\vec{v}, B\cap D_\alpha)  & \geq \int_{B_{\epsilon_1}\left(\vec{w}^*\right)\cap D_\alpha} p_{\bar{k}_1}(\vec{v}, \vec{w})   P_{\bar{k}_2}(\vec{w}, B\cap D_\alpha) \mathrm{d} \vec{w}\\
			& \geq \gamma_1 \lambda\left(B \cap B_{\epsilon_2}\left(\vec{w}^*\right)\cap D_\alpha\right) \int_{B_{\epsilon_1}\left(\vec{w}^*\right)\cap D_\alpha} p_{\bar{k}_1}(\vec{v}, \vec{w})  \delta_{D_\alpha}(\vec{w})   \mathrm{d} \vec{w}\\
			& =\gamma_1 \lambda\left(B \cap B_{\epsilon_2}\left(\vec{w}^*\right)\cap D_\alpha\right) P_{\bar{k}_1}\left(\vec{v}, B_{\epsilon_1^*}\left(\vec{w}^*\right)\cap D_\alpha\right)\\
			& \geq \gamma_1\gamma_2 \lambda\left( B_{\epsilon_2}\left(\vec{w}^*\right)\cap D_\alpha\right)\nu(B),
		\end{aligned}
		$$ 
		where $\nu(\cdot):=\lambda\left(\cdot \cap B_{\epsilon_2}\left(\vec{w}^*\right)\cap D_\alpha\right)/\lambda\left( B_{\epsilon_2}\left(\vec{w}^*\right)\cap D_\alpha\right).$ Letting  $\eta=\gamma_1 \gamma_2 \lambda\left(B_{\epsilon_2}\left(\vec{w}^*\right)\cap D_\alpha\right)$, we obtain $ 	P_{k^*}(\vec{v}, B\cap D_\alpha) \geq \eta \nu(B\cap D_\alpha) $ for all $B \in \mathscr{B}\big(\mathbb{R}^N\big)$ and $\vec{v}\in S\cap D_\alpha$. Since $B_{\epsilon_2}\left(\vec{w}^*\right) \subset S$,  it follows that  $\nu(S\cap D_\alpha)=1$ and   $\nu\left(\left(S\cap D_\alpha\right)^c\right)=0$. This completes the proof.
	\end{proof}
	
	Next we introduce the Lyapunov condition.
	\begin{assumption} \label{ass: 4} (Lyapunov condition)
		There is a function $V: \mathbb{R}^N \rightarrow[1, \infty)$ with $\lim _{|\vec{v}| \rightarrow \infty} V(\vec{v})=\infty$ and real numbers $\alpha_1 \in$ $(0,1), \alpha_2 \in[0, \infty)$ such that 
		$$
		\mathbb{E}\left[V\left(\vec{v}_{k+1}\right) \mid \mathscr{F}_{t_k}\right] \leq \alpha_1 V\left(\vec{v}_k\right)+\alpha_2,
		$$ 
		where $\{ \mathscr{F}_t\}_{t\in[0,\infty)}$ is a filtration with respect to the probability space $(\Omega, \mathscr{F}, \mathbb{P})$.
	\end{assumption}

	\begin{lemma} \cite[Lemma A.6]{mattingly2002} \label{lem: geometric ergodicity for Markov chains 2}
		Suppose $\{\vec{v}_k\}_{k\in \mathbb{N}}$ and $\{\vec{w}_k\}_{k\in \mathbb{N}}$ are two Markov chains satisfying Assumption \ref{ass: 4}. Let $\gamma \in (\sqrt{\alpha_1},1)$ and define
		$$ S:=\left\{\vec{v}:\  V(\vec{v}) \leq \frac{2 \alpha_2}{\gamma-\alpha_1}\right\} \quad \text{and} \quad  \tilde{\zeta}=\inf _{k \geq 0}\left\{\left(\vec{v}^\prime_k, \vec{w}^\prime_k\right) \in  S \times S, \hat{\psi}_k=0\right\} , $$   
		where $\{\hat{\psi}_k\}_{k\in \mathbb{N}}$ is a sequence of independent and identically distributed random variables satisfying $\mathbb{P}(\hat{\psi}_k=1)=1-\eta$  and  $ \mathbb{P}(\hat{\psi}_k=0)=\eta$.
		Then there exists an $r \in(0,1)$ such that 
		$$
		\max \left\{\mathbb{E} [V\left(\vec{v}_k^{\prime}\right)] 1_{n<\tilde{\zeta}},\  \mathbb{E}[ V\left(\vec{w}_k^{\prime}\right) 1_{n<\tilde{\zeta}}]\right\} \leq C \left[\mathbb{E} [V\left(\vec{v}_0\right)+V\left(\vec{w}_0\right)] +1\right] r^n .
		$$ 
	\end{lemma}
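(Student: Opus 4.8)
The plan is to run a coupling argument driven by the Lyapunov drift of Assumption~\ref{ass: 4}, and to reduce the asserted geometric decay to a one-step contraction of a combined Lyapunov functional together with a geometric tail bound for the coupling time $\tilde{\zeta}$. First I would exploit the choice $\gamma\in(\sqrt{\alpha_1},1)$, which in particular gives $\gamma>\alpha_1$: for $\vec{v}\notin S$ one has $V(\vec{v})>2\alpha_2/(\gamma-\alpha_1)$ and hence $\alpha_1V(\vec{v})+\alpha_2\le\gamma V(\vec{v})$, while for $\vec{v}\in S$ the crude bound $\alpha_1V(\vec{v})+\alpha_2\le\gamma V(\vec{v})+\alpha_2$ holds since $\alpha_1\le\gamma$. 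Applying Assumption~\ref{ass: 4} to each chain, summing, and using $1_{\{\vec{v}\in S\}}+1_{\{\vec{w}\in S\}}\le2$, the combined functional $\widetilde{V}_k:=V(\vec{v}_k')+V(\vec{w}_k')$ then obeys the global geometric drift
\begin{equation*}
\mathbb{E}\big[\widetilde{V}_{k+1}\,\big|\,\mathscr{F}_{t_k}\big]\le\gamma\,\widetilde{V}_k+2\alpha_2.
\end{equation*}

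Next I would derive a scalar recursion for $b_n:=\mathbb{E}\big[\widetilde{V}_n\,1_{\{n<\tilde{\zeta}\}}\big]$. Setting $E_k:=\{(\vec{v}_k',\vec{w}_k')\in S\times S\}\cap\{\hat{\psi}_k=0\}$, one has $\tilde{\zeta}=\inf\{k\ge0:E_k\ \text{holds}\}$ and $\{n+1<\tilde{\zeta}\}=\{n<\tilde{\zeta}\}\cap E_{n+1}^c$, so that $1_{\{n+1<\tilde{\zeta}\}}\le1_{\{n<\tilde{\zeta}\}}$. Since $\{n<\tilde{\zeta}\}$ is $\mathscr{F}_{t_n}$-measurable, conditioning on $\mathscr{F}_{t_n}$ and inserting the drift yields
\begin{equation*}
b_{n+1}\le\mathbb{E}\big[1_{\{n<\tilde{\zeta}\}}\,\mathbb{E}[\widetilde{V}_{n+1}\mid\mathscr{F}_{t_n}]\big]\le\gamma\,b_n+2\alpha_2\,\mathbb{P}(\tilde{\zeta}>n).
\end{equation*}

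It then remains to establish a geometric tail $\mathbb{P}(\tilde{\zeta}>n)\le C\rho_0^{\,n}$ for some $\rho_0\in(0,1)$, after which the linear recursion above unrolls to $b_n\le C\big(\mathbb{E}[\widetilde{V}_0]+1\big)r^n$ with $r=\max\{\gamma,\rho_0\}<1$; because $V(\vec{v}_n')\le\widetilde{V}_n$ and $V(\vec{w}_n')\le\widetilde{V}_n$, this delivers the stated bound. For the tail, I would condition on the whole trajectory $\{(\vec{v}_k',\vec{w}_k')\}$: the switches $\hat{\psi}_k$ are independent of the trajectory and i.i.d.\ $\mathrm{Bernoulli}(1-\eta)$ for the event $\{\hat{\psi}_k=1\}$, so $\mathbb{P}(\tilde{\zeta}>n\mid\text{trajectory})=(1-\eta)^{N_n}$, where $N_n$ counts the visits of $(\vec{v}_k',\vec{w}_k')$ to $S\times S$ before time $n$. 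The Lyapunov drift forces the pair to return to the bounded sublevel region with return times having uniform exponential moments, which stochastically lower-bounds $N_n$ and makes $\mathbb{E}\big[(1-\eta)^{N_n}\big]$ decay geometrically.

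I expect the tail estimate for $\tilde{\zeta}$ to be the main obstacle. Each visit to $S\times S$ supplies an independent chance $\eta$ of coupling, but a trajectory launched far from $S\times S$ may enter it only slowly, so the geometric decay of $\mathbb{P}(\tilde{\zeta}>n)$ is not automatic: it must be extracted by pairing the Bernoulli coupling trials with the exponential recurrence furnished by Assumption~\ref{ass: 4}, and it is precisely here that the strengthened threshold $\gamma>\sqrt{\alpha_1}$ provides the margin between the drift rate and the coupling rate needed to close the combined estimate.
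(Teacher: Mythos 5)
A preliminary remark: the paper does not actually prove this lemma---it is imported verbatim from \cite[Lemma A.6]{mattingly2002}---so your proposal has to be measured against the appendix of that reference rather than against anything in the present text. The first half of your sketch is essentially sound: the global drift $\mathbb{E}[\widetilde{V}_{k+1}\mid\mathscr{F}_{t_k}]\le\gamma\widetilde{V}_k+2\alpha_2$ for $\widetilde{V}_k=V(\vec{v}_k')+V(\vec{w}_k')$, the recursion $b_{n+1}\le\gamma b_n+C\,\mathbb{P}(\tilde{\zeta}>n)$, and its unrolling to $b_n\le C\bigl(\mathbb{E}[\widetilde{V}_0]+1\bigr)r^n$ are all correct modulo two bookkeeping repairs that you must make explicit. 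First, on $\{n<\tilde{\zeta}\}$ with $(\vec{v}_n',\vec{w}_n')\in S\times S$ the surviving branch evolves under the modified kernel $\frac{1}{1-\eta}\bigl(P_{k^*}-\eta\nu\bigr)$ from the split-chain construction (Step 2 of the proof of Theorem \ref{th: geometric ergodicity for Markov chains}), not under $P_{k^*}$, so the drift constant on that branch degrades by $\frac{1}{1-\eta}$---harmless only because $V\le 2\alpha_2/(\gamma-\alpha_1)$ on $S$, which should be said. Second, $\{n<\tilde{\zeta}\}$ is $\mathscr{F}_{t_n}$-measurable only after enlarging the filtration to carry $\hat{\psi}_n$, and it is precisely this enlargement that forces the modified kernel into the inner conditional expectation.

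The genuine gap is the tail estimate $\mathbb{P}(\tilde{\zeta}>n)\le C\rho_0^{\,n}$, which is the heart of the lemma and which you assert rather than prove; worse, the one concrete mechanism you offer for it is wrong. The switches $\hat{\psi}_k$ are \emph{not} independent of the trajectory, because in the split-chain construction $\vec{w}_{k+1}$ and $\vec{v}_{k+1}$ are generated \emph{using} $\hat{\psi}_k$; hence the identity $\mathbb{P}(\tilde{\zeta}>n\mid\text{trajectory})=(1-\eta)^{N_n}$ fails and must be replaced by sequential conditioning: each visit of the uncoupled ($\tilde{P}$-driven) pair to $S\times S$ contributes, conditionally on the past, a survival factor $1-\eta$, giving only $\mathbb{P}(\tilde{\zeta}>n)\le\mathbb{E}[(1-\eta)^{N_n}]$ with $N_n$ the visit count of the \emph{modified} chain. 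Decay of this quantity then still requires the quantitative step you skip: outside $S\times S$ one has $\widetilde{V}_n>2\alpha_2/(\gamma-\alpha_1)$, hence $\mathbb{E}[\widetilde{V}_{n+1}\mid\mathscr{F}_{t_n}]\le\gamma\widetilde{V}_n$, so $\gamma^{-(n\wedge\sigma)}\widetilde{V}_{n\wedge\sigma}$ is a supermartingale; the successive return times $T_i$ to $S\times S$ therefore satisfy $\mathbb{E}[\gamma^{-T_i}\mid\cdot]\le K$ uniformly, and since $\{N_n\le m\}\subseteq\{T_1+\cdots+T_{m+1}>n\}$ a Chernoff bound gives $\mathbb{P}(N_n\le\beta n)\le K^{\beta n+1}\gamma^{\,n}$, whence $\mathbb{E}[(1-\eta)^{N_n}]\le(1-\eta)^{\beta n}+K^{\beta n+1}\gamma^{\,n}$ decays geometrically once $\beta$ is chosen with $K^{\beta}\gamma<1$. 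Without this computation the sketch is not a proof. Finally, your closing claim that the threshold $\gamma>\sqrt{\alpha_1}$ is ``precisely'' what closes the tail estimate is unsupported: every inequality in your outline uses only $\gamma>\alpha_1$. In \cite{mattingly2002} the square root enters through a genuinely different device (trading the survival probability over part of the time window against the raw decay of $\mathbb{E}[V]$ at rate $\alpha_1$ over the remainder), so you should either delete the attribution or restructure the argument so that the stronger hypothesis is actually consumed.
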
 
	
	By Lemmas \ref{lem: geometric ergodicity for Markov chains 1} and \ref{lem: geometric ergodicity for Markov chains 2}, we can establish the following theorem.
	\begin{theorem} \label{th: geometric ergodicity for Markov chains}
		If Markov chain $\left\{\vec{v}_k\right\}_{k \in \mathbb{N}}$ satisfies  Assumption \ref{ass: 4} with  the function $V$ and Assumptions \ref{ass: 5} with
		$$ S=\left\{\vec{v}:\  V(\vec{v}) \leq \frac{2 \alpha_2}{\gamma-\alpha_1}\right\}   $$ 
		for some $\gamma \in\left(\sqrt{\alpha_1}, 1\right)$.  Then $\left\{\vec{v}_k\right\}_{k \in \mathbb{N}}$ is ergodic with a unique invariant measure. 
	\end{theorem}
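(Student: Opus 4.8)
The plan is to combine the restricted minorization (Lemma~\ref{lem: geometric ergodicity for Markov chains 1}) with the Lyapunov drift (Assumption~\ref{ass: 4}) through a coupling argument in the spirit of \cite{mattingly2002}, but carried out on the affine hyperplane $D_\alpha$ rather than on all of $\mathbb{R}^N$. Since Assumption~\ref{ass: 5}(i) forces $\vec{v}_k\in D_\alpha$ for every $k$, the chain effectively lives on $D_\alpha$, and the relevant small set is $S\cap D_\alpha$ with $S=\{\vec v: V(\vec v)\le 2\alpha_2/(\gamma-\alpha_1)\}$. On this set Lemma~\ref{lem: geometric ergodicity for Markov chains 1} supplies $\eta\in(0,1)$, $k^*\in\mathbb{N}$, and a probability measure $\nu$ concentrated on $S\cap D_\alpha$ such that $P_{k^*}(\vec{v},\,B\cap D_\alpha)\ge \eta\,\nu(B\cap D_\alpha)$ for all $\vec{v}\in S\cap D_\alpha$ and $B\in\mathscr{B}(\mathbb{R}^N)$. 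The Lyapunov condition ensures the chain is recurrent to $S$, so these minorization instances occur infinitely often along the trajectory.

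First I would construct a Markovian coupling $(\vec{v}'_k,\vec{w}'_k)$ of two copies of the chain started from $\vec{v}_0,\vec{w}_0\in D_\alpha$. Away from the small set the two components are run independently; whenever both lie in $S\cap D_\alpha$ at a time that is a multiple of $k^*$, I apply the Doeblin splitting $P_{k^*}(\vec{v},\cdot)=\eta\,\nu(\cdot)+(1-\eta)\,Q(\vec{v},\cdot)$. With probability $\eta$ both components are set equal to a common draw from $\nu$ and thereafter driven by identical noise, so that they coincide forever; with probability $1-\eta$ they move independently under the residual kernels $Q$. The Bernoulli$(\eta)$ variables governing these merging attempts are precisely the sequence $\{\hat\psi_k\}$ of Lemma~\ref{lem: geometric ergodicity for Markov chains 2}, and the first successful merge is the coupling time $\tilde\zeta$ defined there. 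Lemma~\ref{lem: geometric ergodicity for Markov chains 2} then yields, for some $r\in(0,1)$,
\[
\max\big\{\mathbb{E}\big[V(\vec{v}'_n)\,1_{n<\tilde\zeta}\big],\ \mathbb{E}\big[V(\vec{w}'_n)\,1_{n<\tilde\zeta}\big]\big\}\le C\big[\mathbb{E}\big(V(\vec{v}_0)+V(\vec{w}_0)\big)+1\big]\,r^n.
\]

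Since $\vec{v}'_n=\vec{w}'_n$ on $\{n\ge\tilde\zeta\}$, for any measurable $\phi$ with $|\phi|\le V$ the difference $\phi(\vec{v}'_n)-\phi(\vec{w}'_n)$ is supported on $\{n<\tilde\zeta\}$, so the coupling bound gives the $V$-weighted contraction
\[
\big|\mathbb{E}[\phi(\vec{v}_n)]-\mathbb{E}[\phi(\vec{w}_n)]\big|\le \mathbb{E}\big[(V(\vec{v}'_n)+V(\vec{w}'_n))\,1_{n<\tilde\zeta}\big]\le C\,r^n\big(V(\vec{v}_0)+V(\vec{w}_0)+1\big).
\]
This estimate is the crux. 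Uniqueness follows at once: if $\pi_1,\pi_2$ were two invariant measures on $D_\alpha$, taking $\vec{v}_0\sim\pi_1$ and $\vec{w}_0\sim\pi_2$ keeps both laws stationary, whence $|\int\phi\,\mathrm{d}\pi_1-\int\phi\,\mathrm{d}\pi_2|\le C r^n(\cdots)\to0$ and $\pi_1=\pi_2$. For existence, the same contraction applied to the one-step-shifted chain shows that the laws $\mathrm{Law}(\vec{v}_n)$ form a Cauchy sequence in the dual of $V$-bounded functions; equivalently, the Lyapunov bound $\sup_n\mathbb{E}[V(\vec{v}_n)]<\infty$ makes the Cesàro averages tight on $D_\alpha$, and the Krylov--Bogoliubov theorem produces an invariant limit $\pi$. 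Ergodicity in the sense of Definition~\ref{defn: invariant measure} then follows from uniqueness together with the geometric convergence of $\mathbb{E}[\phi(\vec v_n)]$ to $\int\phi\,\mathrm{d}\pi$.

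The main obstacle is the rigorous construction of the coupling on the hyperplane $D_\alpha$: the minorizing measure $\nu$ is concentrated on $S\cap D_\alpha$ and is expressed through the $(N-1)$-dimensional Hausdorff measure $\lambda$ together with the Dirac factor $\delta_{D_\alpha}$, so one must check that the Doeblin decomposition $P_{k^*}=\eta\,\nu+(1-\eta)\,Q$ produces a genuine sub-probability residual kernel $Q$ that again preserves the constraint $\vec{v}_k\in D_\alpha$, and that the merging events are jointly measurable and synchronisable with the Bernoulli sequence $\{\hat\psi_k\}$. Once this measure-theoretic bookkeeping on $D_\alpha$ is in place, the remaining steps are a direct application of Lemmas~\ref{lem: geometric ergodicity for Markov chains 1} and~\ref{lem: geometric ergodicity for Markov chains 2}.
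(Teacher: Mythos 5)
Your proposal is correct and follows essentially the same route as the paper: existence via the Lyapunov bound, tightness, and the Krylov--Bogoliubov theorem, and uniqueness via a Doeblin splitting $P_{k^*}=\eta\,\nu+(1-\eta)\,Q$ on $S\cap D_\alpha$ (the paper's kernel $\tilde P_{k^*}$ and Algorithm~1), synchronized merging through the Bernoulli sequence $\{\hat\psi_k\}$, and the geometric decay of Lemma~\ref{lem: geometric ergodicity for Markov chains 2}. The only cosmetic difference is that you bound $|\mathbb{E}[\phi(\vec v_n')]-\mathbb{E}[\phi(\vec w_n')]|$ directly on $\{n<\tilde\zeta\}$ using $|\phi|\le V$, whereas the paper splits the test function into $f^{+}$ and $f^{-}$ before applying the same estimate; the measure-theoretic point you flag (well-definedness of the residual kernel on $D_\alpha$) is exactly what the paper settles with its explicit definition of $\tilde P_{k^*}$ together with Lemma~\ref{lem: geometric ergodicity for Markov chains 1}.
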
 
	\begin{proof}
		\textbf{Step 1: }We first establish the existence of invariant measure for the Markov chain   $\left\{\vec{v}_k\right\}_{k \in \mathbb{N}}$. \par 
		By Assumption \ref{ass: 4}, we have that for any $k \in \mathbb{N}$,
		\begin{equation*}  
			\mathbb{E}\left[ V(\vec{v}_k)  \right] =	\mathbb{E}\left[ 	\mathbb{E}\left[ V(\vec{v}_k)\mid \mathscr{F}_{k-1} \right] \right]
			\leq \alpha_1 \mathbb{E}\left[ V(\vec{v}_{k-1})  \right]+\alpha_2\leq \alpha_1^k \mathbb{E}\left[ V(\vec{v}_{0})  \right]
			+\alpha_2\sum_{i=1}^{k}\alpha_1^{i-1}\leq   \alpha_1^k \mathbb{E}\left[ V(\vec{v}_{0})  \right]
			+\frac{\alpha_2}{1-\alpha_1} .
		\end{equation*}
		Since $\alpha_1 \in (0,1)$, the above estimate implies that
		\begin{equation}  \label{es: V(x)}
			\sup_{k \geq 0} \mathbb{E}\left[V\left(\vec{v}_k\right)\right]<\infty.
		\end{equation}
		We define a sequence of probability measures  by
		\begin{equation*}  
			\pi^{\tau,N}_m(B) := \frac{1}{m+1} \sum_{k=0}^m \mathbb{P}\left\{\vec{v}_k \in B\right\}, \quad   B\in  \mathscr{B}\big( \mathbb{R}^N \big),\ m \in \mathbb{N}.
		\end{equation*} 
		It follows from \eqref{es: V(x)} that the sequence $\left\{\pi^{\tau,N}_m\right\}_{m \in \mathbb{N}}$ is tight.  Consequently, by the Krylov–Bogoliubov Theorem \cite[Theorem 1.2]{Hong2019}, the Markov chain $\left\{\vec{v}_k\right\}_{k \in \mathbb{N}}$  admits an invariant measure $\pi^{\tau,N}$ .\par  
		
		\textbf{Step 2: }   
		Before establishing the uniqueness of an invariant measure, we construct an auxiliary Markov chain $ \left\{\vec{w}_k \right\}_{k \in \mathbb{N}} $, which is  equal in distribution to $\left\{\vec{v}_{k  k^*}\right\}_{k \in \mathbb{N}}$, with $k^*$  as defined in Lemma \ref{lem: geometric ergodicity for Markov chains 1}. \par

		We now introduce a new transition kernel
		$$
		\tilde{P}_{k^*}(\vec{v}, B)= \begin{cases}P_{k^*}(\vec{v}, B), & \forall \vec{v} \in S^c \cap D_\alpha, \\
			\frac{1}{1-\eta}[P_{k^*}(\vec{v},B)-\eta v(B)], & \forall \vec{v} \in S\cap D_\alpha,\end{cases}
		$$
		where $\eta$ and $\nu$ are defined in Lemma \ref{lem: geometric ergodicity for Markov chains 1}. It can be verified that this kernel is well-defined, as ensured by  Lemma \ref{lem: geometric ergodicity for Markov chains 1} and Assumption \ref{ass: 5}.
		
		The new Markov chain $\left\{\vec{w}_k \right\}_{k \in \mathbb{N}}$ can be generated by Algorithm 1.
	\begin{algorithm}
		\caption{Construction of the new Markov chain $\{\vec w_k\}$}
		\begin{algorithmic}[1]
			\State \textbf{Input:} Initial value $\vec w_0=\vec v_0\in D_\alpha$
			\State \textbf{Output:} Markov chain $\{\vec w_k\}$
			\For{$k = 0,1,2,\dots$}
			\State Sample $\hat{\psi}_k$
			\If{$\vec w_k \in S$}
			\State $\vec w_{k+1} \gets \hat{\psi}_k\,\vec w_k^* + (1-\hat{\psi}_k)\vec\xi_k$
			\Else
			\State $\vec w_{k+1} \gets \vec w_k^*$
			\EndIf
			\EndFor
		\end{algorithmic}
	\end{algorithm}

		It is straightforward to verify that the transition law of $\{\vec{w}_k \}$ coincides with $P_{k^*}$.
		
		%
		%
		%
		%

		\textbf{Step 3: }  In this step, we now establish the uniqueness of the invariant measure for  $\left\{\vec{v}_k\right\}_{k \in \mathbb{N}}$.\par 
		Assume that $\hat{\pi}^{\tau,N}$ is another invariant measure. We will show that $ {\pi}^{\tau,N}=\hat{\pi}^{\tau,N}$. 
		Using the construction in Step 2, we generate two chains  $\left\{\vec{v}^\prime_k \right\}_{k \in \mathbb{N}}$ and $\left\{\vec{w}^\prime_k \right\}_{k \in \mathbb{N}}$ ,  each of which is  equal in distribution to $\left\{\vec{v}_{k  k^*}\right\}_{k \in \mathbb{N}}$, with initial distributions $\vec{v}_{0}^\prime=\pi^{\tau,N}$ and   $\vec{w}^\prime_{0}=\hat{\pi}^{\tau,N}$, respectively. Obviously, $\vec{v}^\prime_k,\ \vec{w}^\prime_k\in D_\alpha$.
		Let $f:\ \mathbb{R}^N\to \mathbb{R}$ be a test function satisfying $|f(x)|\leq V(x)$ for all $x\in\mathbb{R}^N$, where $V$ is the Lyapunov function.
		We decompose $f$ into its positive and negative parts, denoted by $f^+$ and $f^-$. Then,
		$$
		\left|\mathbb{E} [f\left(\vec{v}^\prime_k\right) ]-\mathbb{E} [ f\left(\vec{w}^\prime_k\right) ]\right| \leq\left|\mathbb{E} [f^{+}\left(\vec{v}^\prime_k\right)]-\mathbb{E} [f^{+}\left(\vec{w}^\prime_k\right)]\right|+\left|\mathbb{E} [f^{-}\left(\vec{v}^\prime_k\right)]-\mathbb{E} [f^{-}\left(\vec{w}^\prime_k\right)]\right| .
		$$ 
		Define the coupling time
		$$
		\zeta=\inf _{k \geq 0}\left\{\left(\vec{v}^\prime_k, \vec{w}^\prime_k\right) \in  \left(S\cap D_\alpha\right) \times \left(S\cap D_\alpha\right), \hat{\psi}_k=0\right\}
		$$ 
		It follows that
		$$
		\mathbb{E}[ f^{+}\left(\vec{v}^\prime_k\right)]=\mathbb{E} [f^{+}\left(\vec{v}^\prime_k\right) \chi_{k \geq \zeta}]+\mathbb{E} [f^{+}\left(\vec{v}^\prime_k\right) \chi_{k<\zeta }].
		$$
		Since $\mathbb{E}[ f^{+} (\vec{v}^\prime_k ) \chi_{k \geq \zeta}]=\mathbb{E}[ f^{+} (\vec{w}^\prime_k ) \chi_{k \geq \zeta}] \leq \mathbb{E} [f^{+} (\vec{w}^\prime_k )]$ and $f^{+} \leq V$, we obtain 
		$$
		\mathbb{E}[ f^{+}\left(\vec{v}^\prime_k\right) ]\leq \mathbb{E} [f^{+}\left(\vec{w}^\prime_k\right)]+\mathbb{E}[ V\left(\vec{v}^\prime_k\right) \chi_{k<\zeta}] .
		$$ 
		Since $\vec{v}^\prime_k$ and $\vec{w}^\prime_k$ play symmetric roles, we obtain
		$$
		\left|\mathbb{E} [f^{ +}\left(\vec{v}^\prime_k\right)]-\mathbb{E} [f^{ +}\left(\vec{w}^\prime_k\right)]\right| \leq \max \left\{\mathbb{E}[ V\left(\vec{v}^\prime_k\right) \chi_{k<\zeta}], \mathbb{E} [V\left(\vec{w}^\prime_k\right) \chi_{k<\zeta}]\right\}.
		$$ 
		Similarly 
		$$
		\left|\mathbb{E} [f^{-}\left(\vec{v}^\prime_k\right)]-\mathbb{E} [f^{-}\left(\vec{w}^\prime_k\right)]\right| \leq \max \left\{\mathbb{E} [V\left(\vec{v}^\prime_k\right) \chi_{k<\zeta}], \mathbb{E}[ V\left(\vec{w}^\prime_k\right) \chi_{k<\zeta}]\right\}. 
		$$ 
		Hence,
		$$
		\left|\mathbb{E} [f\left(\vec{v}^\prime_k\right)]-\mathbb{E} [f\left(\vec{w}^\prime_k\right)]\right| \leq 2 \max \left\{\mathbb{E} [V\left(\vec{v}^\prime_k\right) \chi_{k<\zeta}], \mathbb{E} [V\left(\vec{w}^\prime_k\right) \chi_{k<\zeta}]\right\} .
		$$
		
		Let $\tilde{\zeta}$ be another coupling time defined by 
		$$ \tilde{\zeta}:=\inf _{k \geq 0}\left\{\left(\vec{v}^\prime_k, \vec{w}^\prime_k\right) \in  S \times S, \hat{\psi}_k=0\right\}.$$   
		It follows from Assumption \ref{ass: 5} (i) that 
		$$ \max \left\{\mathbb{E} [V\left(\vec{v}^\prime_k\right) \chi_{k<\zeta}], \mathbb{E} [V\left(\vec{w}^\prime_k\right) \chi_{k<\zeta}]\right\}=\max \left\{\mathbb{E} [V\left(\vec{v}^\prime_k\right) \chi_{k<\tilde{\zeta}}], \mathbb{E}[ V\left(\vec{w}^\prime_k\right) \chi_{k<\tilde{\zeta}}]\right\}.    $$
		Then, by Lemma \ref{lem: geometric ergodicity for Markov chains 2}, we obtain
		$$
		\begin{aligned}
			&	\left|\int_{\mathbb{R}^N\cap D_\alpha} f(\vec{v}) \pi^{\tau,N}(\mathrm{d}\vec{v} )- \int_{\mathbb{R}^N\cap D_\alpha} f(\vec{v}) \hat{\pi}^{\tau,N}(\mathrm{d}\vec{v} )   \right| \\
			= & \left|\mathbb{E}[ f \left(\vec{v}^\prime_k\right)]-\mathbb{E} [f\left(\vec{w}^\prime_k\right)]\right| \\
			\leq&  C\left[ 
			\int_{\mathbb{R}^N\cap D_\alpha} V(\vec{v}) \pi^{\tau,N}(\mathrm{d}\vec{v} )
			+\int_{\mathbb{R}^N\cap D_\alpha} V(\vec{w}) \hat{\pi}^{\tau,N}(\mathrm{d}\vec{w} )+1\right] r^k  \rightarrow 0 \text { as } k \rightarrow \infty.
		\end{aligned}
		$$ 
		This shows that $\pi^{\tau,N} = \hat{\pi}^{\tau,N}$, and hence uniqueness is proved.
	\end{proof}

	\begin{theorem} \label{th: invariant measure for the SCHE discrete}
		Suppose that Assumptions \ref{assu:1}-\ref{assu:3 noise} hold, then $\left\{ u^{\tau,N}(t_m,\cdot) \right\}_{m\in \mathbb{N}}$ admits a unique invariant measure $\pi^{\tau,N}$. 
	\end{theorem}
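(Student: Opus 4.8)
The plan is to apply the abstract ergodicity result of Theorem~\ref{th: geometric ergodicity for Markov chains} to the finite-dimensional Markov chain $\{U^{\tau,N}_{t_m}\}_{m\in\mathbb{N}}$ on $\mathbb{R}^N$; the assertion for $\{u^{\tau,N}(t_m,\cdot)\}_{m\in\mathbb{N}}$ on $V^N$ then follows by pushing forward through the interpolation isomorphism $P_N$, as noted at the start of the section. That $\{U^{\tau,N}_{t_m}\}$ is a time-homogeneous Markov chain is immediate from the one-step recursion~\eqref{eq: mild_solution_full_discrete discrete}, since $U^{\tau,N}_{t_{m+1}}$ is an explicit measurable function of $U^{\tau,N}_{t_m}$ and the increment $\Delta\beta_m$, which is independent of $\mathscr{F}_{t_m}$. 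The first structural observation is mass conservation: because $A_N\phi_{N,0}=0$ and the increment $\Delta\beta_m=\sum_{j=1}^{N-1}\phi_{N,j}\Delta\beta_{j,m}$ carries no $\phi_{N,0}$-component, pairing~\eqref{eq: mild_solution_full_discrete discrete} with $\phi_{N,0}$ gives $\langle U^{\tau,N}_{t_{m+1}},\phi_{N,0}\rangle_{l^2_N}=\langle U^{\tau,N}_{t_m},\phi_{N,0}\rangle_{l^2_N}$. Hence the chain remains on the hyperplane $D_\alpha$ for all $m$, which is exactly Assumption~\ref{ass: 5}(i) and explains why the minorization must be posed on $D_\alpha$ rather than on $\mathbb{R}^N$.

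Next I would verify the Lyapunov condition (Assumption~\ref{ass: 4}) with $V(\vec{v})=1+\|\vec{v}\|_{w^{1,2}_N}^2$, which is coercive on $\mathbb{R}^N$ so that the sublevel set $S$ is compact. Conditioning on $\mathscr{F}_{t_m}$ and using that $\sigma e^{-A_N^2\tau}\Delta\beta_m$ is centred and independent, the cross term in the inner-product norm vanishes and I obtain
\[
\mathbb{E}\big[V(U^{\tau,N}_{t_{m+1}})\mid\mathscr{F}_{t_m}\big]=1+\big\|D_m\big\|_{w^{1,2}_N}^2+\sigma^2\,\mathbb{E}\big[\|e^{-A_N^2\tau}\Delta\beta_m\|_{w^{1,2}_N}^2\big],
\]
where $D_m:=e^{-A_N^2\tau}U^{\tau,N}_{t_m}+\tau A_N e^{-A_N^2\tau}\tilde{F}_N(U^{\tau,N}_{t_m})$. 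The noise variance is a finite constant independent of $U^{\tau,N}_{t_m}$. For the deterministic part, decomposing along the eigenbasis and using $\lambda_{N,0}=0$, $\lambda_{N,1}>0$ yields $\|e^{-A_N^2\tau}U^{\tau,N}_{t_m}\|_{w^{1,2}_N}^2\le c_0+e^{-2\lambda_{N,1}^2\tau}\|U^{\tau,N}_{t_m}\|_{w^{1,2}_N}^2$, where $c_0$ collects the fixed $\phi_{N,0}$-mode on $D_\alpha$. For the tamed term I would use~\eqref{ineq: uv to u v} to bound $\|F_N(\vec{v})\|_{w^{1,2}_N}\le C(1+\|\vec{v}\|_{w^{1,2}_N}^3)$, combine it with~\eqref{ineq: e^{-A_N^2t}(-A_N)^r to l2} and the elementary bound $\tau^{1/2}(1+r^3)/(1+\tau r^{12})\le C\tau^{1/4}$ to show $\tau\|A_N e^{-A_N^2\tau}\tilde{F}_N(U^{\tau,N}_{t_m})\|_{w^{1,2}_N}\le C\tau^{1/4}$. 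A Young split $\|D_m\|^2\le(1+\epsilon)\|e^{-A_N^2\tau}U^{\tau,N}_{t_m}\|^2+(1+\epsilon^{-1})\tau^2\|A_N e^{-A_N^2\tau}\tilde{F}_N(\cdot)\|^2$ then gives a drift with contraction factor $\alpha_1=(1+\epsilon)e^{-2\lambda_{N,1}^2\tau}$, strictly less than $1$ for $\epsilon$ small, and a finite additive constant $\alpha_2$.

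For the minorization (Assumption~\ref{ass: 5}(ii)--(iii)) I would exploit that the one-step law is Gaussian on the tangent space of $D_\alpha$. The deterministic map $\vec{v}\mapsto e^{-A_N^2\tau}\vec{v}+\tau A_N e^{-A_N^2\tau}\tilde{F}_N(\vec{v})$ preserves the $\phi_{N,0}$-component (again because $\langle A_N g,\phi_{N,0}\rangle_{l^2_N}=0$), while $\sigma e^{-A_N^2\tau}\Delta\beta_m$ is a nondegenerate Gaussian supported on $\mathrm{span}\{\phi_{N,j}\}_{j=1}^{N-1}$, the direction space of $D_\alpha$, with covariance $\sigma^2\tau\sum_{j=1}^{N-1}e^{-2\lambda_{N,j}^2\tau}\phi_{N,j}\phi_{N,j}^\top$ of full rank $N-1$. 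Consequently the transition kernel admits, with respect to the $(N-1)$-dimensional Hausdorff measure $\delta_{D_\alpha}$, a strictly positive and jointly continuous Gaussian density $p_1(\vec{v},\vec{w})$, which yields both the density statement (iii) and reachability (ii) with $\bar{k}=1$ for any target $\vec{w}^*\in\mathrm{Int}(S)\cap D_\alpha$. With Assumptions~\ref{ass: 5} and~\ref{ass: 4} verified and $S$ the prescribed sublevel set of $V$, Theorem~\ref{th: geometric ergodicity for Markov chains} gives a unique invariant measure for $\{U^{\tau,N}_{t_m}\}$ on $D_\alpha$, and transporting it through $P_N$ furnishes the unique $\pi^{\tau,N}$ for $\{u^{\tau,N}(t_m,\cdot)\}$. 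The main obstacle is the Lyapunov drift: one must extract a genuine contraction $\alpha_1<1$ despite the non-globally-Lipschitz cubic drift, which is possible only because the conserved mass removes the neutral $\phi_{N,0}$-direction and the strong taming confines the nonlinear contribution to order $\tau^{1/4}$; the Gaussian minorization, by contrast, is routine once nondegeneracy on the tangent space of $D_\alpha$ is observed.
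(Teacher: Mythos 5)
Your proposal is correct and follows the same architecture as the paper's proof: reduce to the chain $\{U^{\tau,N}_{t_m}\}$ via $P_N$, observe mass conservation so that the chain lives on $D_\alpha$, verify the Lyapunov condition (Assumption \ref{ass: 4}), verify the mass-preserving minorization (Assumption \ref{ass: 5}) from the nondegenerate Gaussian one-step law on the tangent space $\mathrm{span}\{\phi_{N,j}\}_{j=1}^{N-1}$, and conclude by Theorem \ref{th: geometric ergodicity for Markov chains}. The one substantive deviation is the choice of Lyapunov function. The paper takes the discrete $\dot H^{-1}$-type functional $V(\vec v)=\sum_{j=1}^{N-1}\lambda_{N,j}^{-1}\langle \vec v,\phi_{N,j}\rangle^2_{l^2_N}+\langle \vec v,\phi_{N,0}\rangle^2_{l^2_N}+1$; with this weighting the drift contribution becomes $\lambda_{N,j}^{-1}\langle A_N\tilde F_N,\phi_{N,j}\rangle^2=\lambda_{N,j}\langle\tilde F_N,\phi_{N,j}\rangle^2$, so only one power of $(-A_N)^{1/2}$ lands on the tamed nonlinearity, and after \eqref{ineq: uv to u v} the taming absorbs everything into a constant without any semigroup smoothing, yielding the clean contraction factor $e^{-2\lambda_{N,1}\tau}$ directly. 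Your choice $V(\vec v)=1+\|\vec v\|^2_{w^{1,2}_N}$ is a strictly stronger norm, so you must put $(-A_N)^{3/2}$ on $\tilde F_N$; this forces you through the smoothing estimate \eqref{ineq: e^{-A_N^2t}(-A_N)^r to l2} (producing the $\tau^{-1/2}$ singularity) and the elementary taming bound $\tau^{1/2}(1+r^3)/(1+\tau r^{12})\le C\tau^{1/4}$, plus a Young split giving the slightly less tidy contraction $\alpha_1=(1+\epsilon)e^{-2\lambda_{N,1}^2\tau}<1$. Both computations close (your $\tau^{1/4}$ bound on $\tau\|A_Ne^{-A_N^2\tau}\tilde F_N\|_{w^{1,2}_N}$ is correct, and the cross term does vanish since $w^{1,2}_N$ is an inner-product norm and the increment is centred and independent); your stronger Lyapunov function has the side benefit of being consistent with the uniform $w^{1,2}_N$ moment bounds of Theorem \ref{thm: the moment boundedness of the numerical solution U^{M,N}}, while the paper's weighted choice minimizes the technical overhead of the drift verification. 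The minorization part of your argument is essentially identical to the paper's, which solves explicitly for the increments $\langle\Delta\beta_m,\phi_{N,j}\rangle_{l^2_N}$ rather than invoking the full-rank covariance $\sigma^2\tau\sum_{j=1}^{N-1}e^{-2\lambda_{N,j}^2\tau}\phi_{N,j}\phi_{N,j}^\top$, but the content is the same.
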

	
	\begin{proof}
		Since  $u^{\tau,N}(t,x)=P_NU^{\tau,N}_{t}$, it is sufficient to establish  the existence and uniqueness of an invariant measure for  $\left\{ U^{\tau,N}_{t_m} \right\}_{m\in \mathbb{N}}$. From \eqref{eq: mild_solution_full_discrete discrete},  
		it follows that  $\left\{ U^{\tau,N}_{t_m} \right\}_{m\in \mathbb{N}}$ forms a time-homogeneous Markov chain. According to Theorem \ref{th: geometric ergodicity for Markov chains},  establishing the existence and uniqueness of an invariant measure for  $\left\{ U^{\tau,N}_{t_m} \right\}_{m\in \mathbb{N}}$ necessitates verifying that this Markov chain  satisfies both the Lyapunov and minorization conditions restricted to $D_\alpha$.
		
		We first establish the Lyapunov condition. Define the Lyapunov function by
		$$V(\vec{v}):= \sum_{j=1}^{N-1}\lambda_{N, j}^{-1} \langle\vec{v},\phi_{N,j}\rangle^2_{l^{2}_N} + \langle\vec{v},\phi_{N,0}\rangle^2_{l^{2}_N}+1,$$ 
		for any $\vec{v}\in l^2_N$.  By applying \eqref{eq: mild_solution_full_discrete discrete} and using properties of conditional expectation, we obtain
		\begin{equation} \label{ineq: Lyapunov condition 1}
			\begin{aligned}
				\mathbb{E}\left[V\left(	U^{\tau,N}_{t_{m+1}}\right) \mid \mathscr{F}_{t_m}\right] 
				=&\sum_{j=1}^{N-1}\lambda_{N, j}^{-1}\mathbb{E}\left[ \langle	e^{-A_N^2\tau} U^{\tau,N}_{t_{m}}+\tau A_N e^{-A_N^2\tau}   \tilde{F}_N(		U^{\tau,N}_{t_{m}}) +\sigma e^{-A_N^2\tau} \Delta \beta_{m} ,\phi_{N,j}\rangle^2_{l^{2}_N}\mid \mathscr{F}_{t_m}\right]  \\
				& +\mathbb{E}\left[\langle e^{-A_N^2\tau} 	U^{\tau,N}_{t_{m}}+\tau A_N e^{-A_N^2\tau}  \tilde{F}_N(		U^{\tau,N}_{t_{m}}) +\sigma e^{-A_N^2\tau} \Delta \beta_{m} ,\phi_{N,0}\rangle^2_{l^{2}_N}\mid \mathscr{F}_{t_m}\right] +1\\
				=&\sum_{j=1}^{N-1}e^{-2 \lambda_{N,j} \tau}\lambda_{N, j}^{-1}  \langle	U^{\tau,N}_{t_{m}}+\tau A_N  \tilde{F}_N(		U^{\tau,N}_{t_{m}})   ,\phi_{N,j}\rangle^2_{l^{2}_N} +\sum_{j=1}^{N-1}\lambda_{N, j}^{-1}e^{-2 \lambda_{N,j} \tau}\mathbb{E}\left[ \langle \sigma   \Delta \beta_{m} ,\phi_{N,j}\rangle^2_{l^{2}_N}   \right]  \\
				&+ \langle	U^{\tau,N}_{t_{m}}+\tau A_N  \tilde{F}_N(		U^{\tau,N}_{t_{m}})  ,\phi_{N,0}\rangle^2_{l^{2}_N} + 
				\mathbb{E}\left[\langle\sigma   \Delta \beta_{m} ,\phi_{N,0}\rangle^2_{l^{2}_N} \right] +1.
			\end{aligned}
		\end{equation}  
		According to \eqref{ineq: uv to u v}, we have 
		$$  \left\| F_N(U^{\tau,N}_{t_{m}}) \right\|_{w^{1,2}_N}\leq C \left( 1+ \left\| U^{\tau,N}_{t_{m}}\right\|^3_{w^{1,2}_N}\right) , $$
		which, combined with   the H\"older inequality, yields
		\begin{equation*}  
			\begin{aligned}
				& \sum_{j=1}^{N-1}e^{-2 \lambda_{N,j} \tau}\lambda_{N, j}^{-1}  \langle	U^{\tau,N}_{t_{m}}+\tau A_N  \tilde{F}_N(		U^{\tau,N}_{t_{m}})  ,\phi_{N,j}\rangle^2_{l^{2}_N} \\
				\leq & e^{-2 \lambda_{N,1} \tau} \left( \sum_{j=1}^{N-1}\lambda_{N, j}^{-1} \langle	U^{\tau,N}_{t_{m}}  ,\phi_{N,j}\rangle^2_{l^{2}_N}+  \tau^2\left\| (-A_N)^{\frac{1}{2}}\tilde{F}_N(U^{\tau,N}_{t_{m}}) \right\|^2_{l^2_N}+2 \tau\left\| U^{\tau,N}_{t_{m}} \right\|_{l^2_N} \left\| \tilde{F}_N(U^{\tau,N}_{t_{m}}) \right\|_{l^2_N} \right)     \\
				\leq & e^{-2 \lambda_{N,1} \tau}\Bigg(\sum_{j=1}^{N-1}\lambda_{N, j}^{-1} \langle	U^{\tau,N}_{t_{m}}  ,\phi_{N,j}\rangle^2_{l^{2}_N}+ 
				\frac{C  \tau^2 \left(1+\left\| U^{\tau,N}_{t_{m}}\right\|^6_{w^{1,2}_N}\right) +C\tau\left\| U^{\tau,N}_{t_{m}}\right\|_{w^{1,2}_N}\Big(1+\left\| U^{\tau,N}_{t_{m}}\right\|^3_{w^{1,2}_N}\Big)}{1+\tau\left\| U^{\tau,N}_{t_{m}}\right\|^{12}_{w^{1,2}_N}}  \Bigg) \\
				\leq & e^{-2 \lambda_{N,1} \tau} \sum_{j=1}^{N-1}\lambda_{N, j}^{-1} \langle	U^{\tau,N}_{t_{m}}  ,\phi_{N,j}\rangle^2_{l^{2}_N}+C. \\
			\end{aligned}
		\end{equation*} 
		Observe that $  \lambda_{N, 0}=0\  \text{and}\ \left\langle U^{\tau,N}_{t_i} ,\phi_{N,0}\right\rangle_{l^{2}_N}=\left\langle u^N_0 ,\phi_{N,0}\right\rangle_{l^{2}_N}$ for all $ i\in \mathbb{N} .$ Hence, we have 
		$$   \langle	U^{\tau,N}_{t_{m}}+\tau A_N  \tilde{F}_N(		U^{\tau,N}_{t_{m}})  ,\phi_{N,0}\rangle^2_{l^{2}_N} = \left\langle u^N_0 ,\phi_{N,0}\right\rangle_{l^{2}_N}\leq C.  $$
		Applying  Itô's isometry and the orthonormality of $\left\{\phi_{N,j}\right\}_{j=0}^{N-1} $, we obtain
		\begin{equation*} 
			\begin{aligned}
				& \sum_{j=1}^{N-1}\lambda_{N, j}^{-1}e^{-2 \lambda_{N,j} \tau}\mathbb{E}\left[ \langle \sigma   \Delta \beta_m ,\phi_{N,j}\rangle^2_{l^{2}_N}   \right] 
				+ \mathbb{E}\left[\langle\sigma   \Delta \beta_m ,\phi_{N,0}\rangle^2_{l^{2}_N} \right] \\
				= &\sum_{j=1}^{N-1}\lambda_{N, j}^{-1}e^{-2 \lambda_{N,j} \tau}\sigma^2  \mathbb{E}\left[    | \Delta \beta_{j,m}|^2     \right] 
				+ \mathbb{E}\left[\langle\sigma   \Delta \beta_m ,\phi_{N,0}\rangle^2_{l^{2}_N} \right] 
				\leq C .
			\end{aligned} 
		\end{equation*} 
		Substituting these inequalities into \eqref{ineq: Lyapunov condition 1}, we conclude that
		$$  \mathbb{E}\left[V\left(	U^{\tau,N}_{t_{m+1}}\right) \mid \mathscr{F}_{t_m}\right]
		\leq  e^{-2 \lambda_{N,1} \tau}  V\left(	U^{\tau,N}_{t_{m}}\right)+ C ,$$
		which demonstrates that the Lyapunov condition is satisfied with the Lyapunov function $V$ . \par 
		
		Now we are ready to prove the minorization condition. Define the set
		$$ S:=\Big\{\vec{v}:\  V(\vec{v}) \leq \frac{2 \alpha_2}{\gamma-\alpha_1}\Big\}    $$ 		for some $\gamma \in\left(\sqrt{\alpha_1}, 1\right)$. 
		Since $\big\langle U^{\tau,N}_{t_i} ,\phi_{N,0}\big\rangle_{l^{2}_N}=\left\langle u^N_0 ,\phi_{N,0}\right\rangle_{l^{2}_N} $, we have 
		$   U^{\tau,N}_{t_m}\in D_\alpha ,\ \forall m \in \mathbb{N}$ . 
		
		Now, for any  $\vec{v} \in S \cap D_\alpha$, $\vec{v}^* \in \text{Int}(S )\cap D_\alpha $, $\delta>0$ and $\vec{w}  \in B_\delta\left(\vec{v}^* \right)\cap D_\alpha $, we apply \eqref{eq: mild_solution_full_discrete discrete} together with the fact that $\left\{\phi_{N,j}\right\}_{j=0}^{N-1}$ is an orthonormal basis of $l^2_N$ to obtain
		$$
		\left\langle  \Delta \beta_{m}, \phi_{N,j} \right\rangle_{l^2_N}
		= e^{\lambda_{N,j}^2 \tau}\left\langle \vec{w} ,\phi_{N,j} \right\rangle_{l^2_N}-\left\langle \vec{v} , \phi_{N,j} \right\rangle_{l^2_N}
		-\tau \lambda_{N,j}    \left\langle  \tilde{F}_N(\vec{v} ),\phi_{N,j} \right\rangle_{l^2_N}, \quad  j=0,1, \cdots, N-1.
		$$ 
		This demonstrates that one can choose $\Delta \beta_{m}$ such that  $U^{\tau,N}_{t_{m+1}}=\vec{w}$ starting from $U^{\tau,N}_{t_{m}}=\vec{v}$.
		Since  $\left\{ \left\langle  \Delta \beta_{m},\phi_{N,j} \right\rangle_{l^2_N}\right\}_{j=1}^{N-1}$ are Gaussian random variables, it follows that
		$	P_{1}\left(\vec{v}, B_\delta\left(\vec{v}^*\right)\cap D_\alpha\right)>0$, which verifies the second part of the minorization condition. To verify the third condition in the minorization condition, observe that each Gaussian variable  $ \left\langle  \Delta \beta_{m},\phi_{N,j} \right\rangle_{l^2_N}$ admits a $C^{\infty}$ density function for $j=1,\cdots,N-1$, and $ \left\langle  \Delta \beta_{m},\phi_{N,0} \right\rangle_{l^2_N}=0$. Hence, the one-step transition kernel $P_1(\vec{v}, \cdot)$ admits a density $p_1(\vec{v}, \vec{w})$, which is jointly continuous on $(\vec{v}, \vec{w}) \in\left(S\cap D_\alpha\right)  \times\left( S\cap D_\alpha \right)$. Finally, the time-homogeneity of the Markov chain $\left\{U^{\tau,N}_{t_{m}}\right\}_{m \in \mathbb{N}}$  implies that the density functions $\left\{p_m(\vec{v}, \vec{w}) \right\}_{m \in \mathbb{N}}$ are jointly continuous. This completes the proof.
	\end{proof}

	\subsection{Approximation of invariant measure}
	We are in a position to state the approximation result of the invariant measure.
	\begin{theorem} \label{th: error: invariant measure }
		Suppose that Assumptions  \ref{assu:1}, \ref{assu:3 noise}, \ref{assu:2.2} hold,  then for any bounded uniformly continuous function  $\phi:\ H \to \mathbb{R}$, there exists a constant $C>0$, independent of $h$ and $\tau$,  such that
		\begin{equation}\label{ineq: ||tilder U^{M,N} || 1,N}
			\left|\int_{H_\alpha} \phi(v) \pi(\mathrm{d} v)-   \int_{V^N} \phi(v) \tilde{\pi}^{\tau,N}( \mathrm{ d}v ) \right| \leq   C\left(h^{1-\epsilon}+ \tau^{\frac{3}{8}-\frac{\epsilon}{2}} \right),
		\end{equation}   
		where $\epsilon$ is an arbitrarily small positive number.
	\end{theorem}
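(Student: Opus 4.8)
The plan is to combine the uniform-in-time strong convergence of Theorem~\ref{thm: ||u^{M,N}-u||} with the geometric ergodicity of both the exact and the discrete systems through a telescoping argument anchored at a single deterministic datum. Fix $u_0\in\mathcal{C}^5(\mathcal{O})$ with $\frac{1}{\pi}\int_{\mathcal{O}}u_0\,\mathrm{d}x=\alpha$, so that the exact solution remains in $H_\alpha$ and the scheme started from $u_0^N=\mathcal{S}_Nu_0$ remains on the hyperplane $D_\alpha$. Writing $P_{t_m}\phi(u_0)=\mathbb{E}[\phi(u(u_0;t_m,\cdot))]$ and $P^{\tau,N}_m\phi(u_0)=\mathbb{E}[\phi(u^{\tau,N}(u_0;t_m,\cdot))]$ with $t_m=m\tau$, I would decompose, for every $m\in\mathbb{N}$,
\begin{equation*}
\int_{H_\alpha}\phi\,\mathrm{d}\pi-\int_{V^N}\phi\,\mathrm{d}\tilde\pi^{\tau,N}
=\underbrace{\Big(\int_{H_\alpha}\phi\,\mathrm{d}\pi-P_{t_m}\phi(u_0)\Big)}_{\mathrm{(I)}}
+\underbrace{\Big(P_{t_m}\phi(u_0)-P^{\tau,N}_m\phi(u_0)\Big)}_{\mathrm{(II)}}
+\underbrace{\Big(P^{\tau,N}_m\phi(u_0)-\int_{V^N}\phi\,\mathrm{d}\tilde\pi^{\tau,N}\Big)}_{\mathrm{(III)}}.
\end{equation*}
The strategy is to bound (I) and (III) by quantities that vanish as $m\to\infty$, to control (II) uniformly in $m$ by the strong error, and then to pass to the limit $m\to\infty$.

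For term (I), I would use the invariance $\int\phi\,\mathrm{d}\pi=\int P_{t_m}\phi\,\mathrm{d}\pi$ together with the exponential contraction \eqref{ineq: Feller 01} from Corollary~\ref{coro: Feller}. Writing the difference as $\int_{H_\alpha}\big(P_{t_m}\phi(w)-P_{t_m}\phi(u_0)\big)\,\pi(\mathrm{d}w)$ and estimating $|P_{t_m}\phi(w)-P_{t_m}\phi(u_0)|\le C\,\|u(w;t_m,\cdot)-u(u_0;t_m,\cdot)\|_{L^1(\Omega;H)}\le Ce^{-\frac{\epsilon_0}{2}t_m}\|w-u_0\|_{L^2}$, the finiteness of $\int_{H_\alpha}\|w\|_{L^2}\,\pi(\mathrm{d}w)$ (inherited from the uniform moment bounds underlying Theorem~\ref{th: invariant measure for the SCHE 1}) gives $|\mathrm{(I)}|\le Ce^{-\frac{\epsilon_0}{2}t_m}$. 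For term (III), the geometric ergodicity of Theorem~\ref{th: invariant measure for the SCHE discrete}, obtained through Theorem~\ref{th: geometric ergodicity for Markov chains} and the coupling estimate of Lemma~\ref{lem: geometric ergodicity for Markov chains 2} applied to $\phi$ (bounded, hence dominated by a multiple of the Lyapunov function $V$), yields $|\mathrm{(III)}|\le C r^{m}$ for some $r\in(0,1)$. Both terms therefore tend to $0$ as $m\to\infty$.

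The decisive term is (II). Since both $u(u_0;t_m,\cdot)$ and $u^{\tau,N}(u_0;t_m,\cdot)$ evolve from the same $\mathcal{C}^5$ datum $u_0$, the time-uniform estimate of Theorem~\ref{thm: ||u^{M,N}-u||} applies directly and gives, uniformly in $m$,
\begin{equation*}
|\mathrm{(II)}|\le \mathbb{E}\big[\big|\phi(u(u_0;t_m,\cdot))-\phi(u^{\tau,N}(u_0;t_m,\cdot))\big|\big]
\le C\big(h^{1-\epsilon}+\tau^{\frac{3}{8}-\frac{\epsilon}{2}}\big),
\end{equation*}
the last step combining the regularity of $\phi$ with $\sup_{t\ge0}\|u(t,\cdot)-u^{\tau,N}(t,\cdot)\|_{L^p(\Omega;L^\infty)}\le C(h^{1-\epsilon}+\tau^{\frac{3}{8}-\frac{\epsilon}{2}})$. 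Inserting the three bounds and letting $m\to\infty$ annihilates (I) and (III) while the $m$-independent bound on (II) survives, which is precisely \eqref{ineq: ||tilder U^{M,N} || 1,N}.

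The main obstacle is to render the two limiting mechanisms compatible: the strong-error bound on (II) must be uniform in $m$, for otherwise the passage $m\to\infty$ would be illegitimate, and this is exactly why the \emph{time-uniform} convergence of Theorem~\ref{thm: ||u^{M,N}-u||}, rather than a finite-horizon rate, is indispensable here. A second delicate point is the transfer from the strong $L^\infty$-error to the weak error against $\phi$: when $\phi$ is Lipschitz the rate passes through verbatim, whereas for a merely bounded uniformly continuous $\phi$ one must split $\mathbb{E}[|\phi(u)-\phi(u^{\tau,N})|]$ over $\{\|u-u^{\tau,N}\|_{L^\infty}<\delta\}$ and its complement and balance the modulus of continuity against the high-moment Markov bound furnished by Theorem~\ref{thm: ||u^{M,N}-u||}. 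Finally, one must verify that the discrete mass $\frac1\pi\langle u_0^N,\phi_{N,0}\rangle_{l^2_N}$ matches $\alpha$, so that (I) and (III) are posed consistently on $H_\alpha$ and on $D_\alpha$.
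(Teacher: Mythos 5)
Your proposal is correct in substance, but it takes a genuinely different route from the paper. The paper's proof is a short ergodic-average argument: by Theorem \ref{th: invariant measure for the SCHE discrete} and the definition of ergodicity, $\int_{V^N}\phi\,\mathrm{d}\pi^{\tau,N}=\lim_{M\to\infty}\frac{1}{M}\sum_{m=0}^{M-1}\mathbb{E}\bigl[\phi(u^{\tau,N}(t_m,\cdot))\bigr]$, the analogous time-average representation holds for $\int_{H_\alpha}\phi\,\mathrm{d}\pi$, and the difference of the two Ces\`aro averages is bounded termwise by the uniform strong error of Theorem \ref{thm: ||u^{M,N}-u||} --- i.e.\ exactly your bound on (II), applied inside the average, with no mixing rates needed. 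Your telescoping decomposition at a fixed time $t_m$ instead requires \emph{quantitative} convergence of both marginal laws: for (I) the exponential contraction \eqref{ineq: Feller 01} of Corollary \ref{coro: Feller} (legitimately applicable, since $w-u_0$ is mean-zero for $w\in H_\alpha$ and $\pi$ integrates $\|\cdot\|_{L^2}$ by Theorem \ref{th: u_regularity} plus invariance and Fatou), and for (III) geometric decay of $|P^{\tau,N}_m\phi(u_0)-\int\phi\,\mathrm{d}\tilde\pi^{\tau,N}|$, which the paper never states in this form but which does follow from its coupling machinery (Lemma \ref{lem: geometric ergodicity for Markov chains 2}, Theorem \ref{th: geometric ergodicity for Markov chains}) by running one chain from $\delta_{u_0^N}$ and one from $\tilde\pi^{\tau,N}$; note the coupling estimate is native to the subsampled times $mk^*$, so a small interpolation step is needed for general $m$, or one simply lets $m\to\infty$ along multiples of $k^*$, which suffices. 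In exchange your argument yields a non-asymptotic bound at every fixed time, slightly more than the paper extracts. Two final remarks. First, the caveat you flag about merely bounded uniformly continuous $\phi$ is real but is \emph{shared} by the paper: the splitting/Markov balancing you describe gives convergence with rate $\omega_\phi$ of (a power of) the strong error, not the stated polynomial rate, so both proofs genuinely deliver the rate only for Lipschitz-type $\phi$; the paper's termwise estimate silently makes the same Lipschitz-like step, so this does not count against your proposal specifically. Second, your worry about matching the discrete mass is harmless: $\tilde\pi^{\tau,N}$ is by construction the invariant measure of the chain started from $u_0^N=\mathcal{S}_N u_0$, so (III) is automatically posed on the chain's own hyperplane $D_{\alpha_N}$ even though $\frac{1}{\pi}\langle u_0^N,\phi_{N,0}\rangle_{l^2_N}$ only approximates $\alpha$.
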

	\begin{proof}
		By Theorem \ref{th: invariant measure for the SCHE discrete} and the definition of ergodicity, we obtain
		\begin{equation}\label{eq: ergodicity M N}
			\lim_{M \rightarrow \infty} \frac{1}{M} \sum_{m=0}^{M-1}\mathbb{E} \left[\phi (u^{\tau,N}( t_m,\cdot)  )\right]=  \int_{V^N} \phi(v)  \pi^{\tau,N}( \mathrm{ d} v ), \quad  \phi \in C_b(H),
		\end{equation}
		which, combined with Theorem \ref{thm: ||u^{M,N}-u||}, leads to
		\begin{equation*}
			\begin{aligned}
				\left|\int_{H_\alpha} \phi(v) \pi(\mathrm{d} v)-\int_{V^N} \phi(v) \pi^{\tau,N}( \mathrm{ d} v )\right|  
				\leq &  \lim _{M \rightarrow \infty} \frac{1}{M \tau} \sum_{m=0}^{M-1} \int_{t_m}^{t_{m+1}}\left|\mathbb{E} \left[\phi\left(u(t_m,\cdot)\right)\right]-\mathbb{E} \left[\phi (u^{\tau,N}( t_m,\cdot)  )\right]\right| \mathrm{d} t \\ 
				\leq &  C\left(h^{1-\epsilon}+ \tau^{\frac{3}{8}-\frac{\epsilon}{2}} \right).
			\end{aligned}
		\end{equation*} 
	\end{proof}
	\begin{remark} \label{re: 5.2 numerical ergodic limit}
		Based on Theorem \ref{th: error: invariant measure } and the definition of ergodicity, we can obtain the expectation of the time averages of the numerical solutions converges to the ergodic limit,  i.e.  
		\begin{equation*} 
			\lim _{T \rightarrow \infty} \lim _{\tau \rightarrow 0} \lim _{N \rightarrow \infty}	\frac{\tau}{T}\sum_{m=0}^{\lfloor T/\tau \rfloor}  \mathbb{E} \left[\phi (u^{\tau,N}( t_m,\cdot)  )\right]=\int_{H_\alpha} \phi(v) \pi(\mathrm{d} v).
		\end{equation*}    
	\end{remark}
	
	\subsection{Strong law of large numbers}
	In this subsection, we present the strong law of large numbers of the exact solution and the full discretization.
	\begin{theorem} \label{th:  strong law of large numbers}
		Suppose that Assumptions \ref{assu:1}-\ref{assu:3 noise} hold,  then for any bounded uniformly continuous function  $\phi:\ H \to \mathbb{R}$,   the following strong law of large numbers holds
		\begin{equation}\label{ineq:  strong law of large numbers}
			\lim _{T \rightarrow \infty} \frac{1}{T} \int_0^T \phi(u(t,\cdot)) \mathrm{d} t= \int_{H_\alpha} \phi(v) \pi(\mathrm{d} v),\quad \text {a.s.}
		\end{equation}    
	\end{theorem}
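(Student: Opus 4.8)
The plan is to establish the result first for Lipschitz continuous $\phi$, where the exponential contraction estimate \eqref{ineq: Feller 01} produces a quantitative decay of correlations, and then to remove the Lipschitz assumption by a uniform approximation argument. For a bounded measurable $\psi$ write $S_T(\psi):=\frac1T\int_0^T\psi(u(t,\cdot))\,\mathrm{d}t$ and $\bar\psi:=\int_{H_\alpha}\psi\,\mathrm{d}\pi$; the goal is to show $S_T(\phi)\to\bar\phi$ almost surely. Throughout I use that the conserved mass forces $u(t,\cdot)\in H_\alpha$ for all $t\ge0$ with $\alpha=\frac1\pi\int_{\mathcal O}u_0\,\mathrm{d}x$, that $\pi$ is the unique invariant measure on $H_\alpha$ (Theorem~\ref{th: invariant measure for the SCHE}), and that $\pi$ has a finite first moment, $\int_{H_\alpha}\|v\|_{L^2}\,\pi(\mathrm{d}v)<\infty$, which follows from the uniform-in-time moment bound (Theorem~\ref{th:  u_regularity}) together with the Krylov--Bogoliubov construction of $\pi$.

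\emph{Step 1 (Lipschitz observables and decay of correlations).} Suppose $\phi$ is bounded with Lipschitz constant $L$. Using the invariance of $\pi$, the Lipschitz bound, and \eqref{ineq: Feller 01},
\begin{equation*}
|P_r\phi(w)-\bar\phi|=\Big|\int_{H_\alpha}\big(P_r\phi(w)-P_r\phi(v)\big)\pi(\mathrm{d}v)\Big|
\le L\int_{H_\alpha}\|u(w;r,\cdot)-u(v;r,\cdot)\|_{L^2(\Omega;H)}\,\pi(\mathrm{d}v)
\le CL\,e^{-\frac{\epsilon_0}{2}r}\big(\|w\|_{L^2}+1\big).
\end{equation*}
By the Markov property, for $0\le s\le t$ the correlation equals $\mathbb{E}\big[(\phi(u(s))-\bar\phi)\,(P_{t-s}\phi(u(s))-\bar\phi)\big]$, and bounding $|\phi-\bar\phi|\le2\|\phi\|_\infty$, applying the estimate above, and invoking the uniform moment bound of Theorem~\ref{th:  u_regularity} gives
\begin{equation*}
\big|\mathbb{E}\big[(\phi(u(s))-\bar\phi)(\phi(u(t))-\bar\phi)\big]\big|\le C\,e^{-\frac{\epsilon_0}{2}(t-s)}\,\mathbb{E}\big[\|u(s)\|_{L^2}+1\big]\le C\,e^{-\frac{\epsilon_0}{2}(t-s)}.
\end{equation*}
Integrating over $[0,T]^2$ then yields the mean-square rate $\mathbb{E}\big[|S_T(\phi)-\bar\phi|^2\big]\le \tfrac{2}{T^2}\int_0^T\!\int_0^t C e^{-\frac{\epsilon_0}{2}(t-s)}\,\mathrm{d}s\,\mathrm{d}t\le C/T$.

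\emph{Step 2 (upgrade to almost sure, then to general $\phi$).} Along $T_n=n^2$ the bound $\mathbb{E}[|S_{T_n}(\phi)-\bar\phi|^2]\le C/n^2$ is summable, so Chebyshev's inequality and Borel--Cantelli give $S_{T_n}(\phi)\to\bar\phi$ a.s. The gaps are filled using boundedness of $\phi$: for $T\in[T_n,T_{n+1}]$ one has $|S_T(\phi)-S_{T_n}(\phi)|\le 2\|\phi\|_\infty\,(T_{n+1}-T_n)/T_n=2\|\phi\|_\infty(2n+1)/n^2\to0$, whence $S_T(\phi)\to\bar\phi$ a.s.\ for Lipschitz $\phi$. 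For a general bounded uniformly continuous $\phi$, the inf-convolutions $\phi_k(v):=\inf_{w\in H}\big[\phi(w)+k\|v-w\|_{L^2}\big]$ are bounded, $k$-Lipschitz, and satisfy $\|\phi_k-\phi\|_\infty\to0$ by uniform continuity, so $\bar\phi_k\to\bar\phi$. For each $k$ the Lipschitz case furnishes a full-measure event on which $S_T(\phi_k)\to\bar\phi_k$; intersecting these countably many events and using $|S_T(\phi)-S_T(\phi_k)|\le\|\phi-\phi_k\|_\infty$ gives $\limsup_{T\to\infty}|S_T(\phi)-\bar\phi|\le 2\|\phi-\phi_k\|_\infty$ a.s.\ for every $k$, and letting $k\to\infty$ completes the proof.

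The main obstacle is Step~1. The contraction \eqref{ineq: Feller 01} is a mean-square coupling estimate for two solutions driven by the \emph{same} noise, and converting it into a genuine decay of correlations for the time average requires both the Lipschitz regularity of the observable (to pass from an $L^2(\Omega;H)$ distance between trajectories to a difference of $\phi$-values) and the finiteness of the first moment of $\pi$ together with the uniform-in-time moment bound (to render the correlation estimate uniform in the base time $s$). Once this exponential decay of correlations is secured, the passage from mean-square to almost-sure convergence and the removal of the Lipschitz hypothesis via inf-convolution are routine.
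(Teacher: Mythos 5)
Your proposal is correct, and it reaches the conclusion by a genuinely different (and in one respect more careful) route than the paper. The paper works with high moments: setting $\mathcal{S}(t,\phi)=\int_0^t\phi(u(s,\cdot))\,\mathrm{d}s$, it expands $\mathbb{E}\big[|\mathcal{S}(t,\phi)|^{2p}\big]$ by iterated conditioning (tower property) so that everything reduces to the mixing kernel $\hat g(s_1,s_2)=\phi(u(s_1))\,\mathbb{E}[\phi(u(s_2,\cdot))\mid\mathscr{F}_{s_1}]$, bounds $(\mathbb{E}|\hat g(s_1,s_2)|^p)^{1/p}\le Ce^{-\frac{\epsilon_0}{2}(s_2-s_1)}$ via the same synchronous-coupling contraction of Proposition \ref{propo: error problem  exact} that you invoke through \eqref{ineq: Feller 01}, and concludes $\mathbb{E}\big[|\frac1T\mathcal{S}(T,\phi)|^{2p}\big]\le CT^{-p}$; Borel--Cantelli with $p$ large then yields not just a.s.\ convergence but a quantitative almost-sure rate $O(T^{-\frac12+\epsilon})$. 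You use the identical dynamical inputs (exponential contraction, invariance of $\pi$, the uniform moment bound of Theorem \ref{th:  u_regularity}, and finiteness of $\int_{H_\alpha}\|v\|_{L^2}\,\pi(\mathrm{d}v)$, which both proofs need and which you, unlike the paper, justify explicitly) but stop at a covariance-decay/second-moment bound $\mathbb{E}|S_T(\phi)-\bar\phi|^2\le C/T$, upgrading to a.s.\ convergence along $T_n=n^2$ and filling gaps deterministically by boundedness of $\phi$. What your route buys is elementarity (no combinatorial $2p$-moment recursion) and, more substantively, rigor at one point: the paper's bound on $\mathbb{E}[\phi(u(s_2,\cdot))\mid\mathscr{F}_{s_1}]$ passes from the $L^2(\Omega;H)$ distance of two trajectories to a difference of $\phi$-values as if $\phi$ were Lipschitz, whereas the theorem assumes only bounded uniform continuity; your two-stage argument (Lipschitz observables first, then inf-convolution approximation $\phi_k$ with $\|\phi_k-\phi\|_\infty\to0$ and a countable intersection of full-measure events) closes exactly this gap. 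What the paper's route buys in exchange is the explicit a.s.\ convergence rate $T^{-\frac12+\epsilon}$, which your second-moment-plus-subsequence scheme cannot deliver for general $T$ (Chebyshev at the $L^2$ level along $T_n=n^2$ caps the attainable rate well below $T^{-1/2}$). Since the theorem as stated asserts only convergence, both arguments suffice; yours is a legitimate, self-contained alternative.
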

	\begin{proof}  	Without loss of generality, suppose $\int_{H_\alpha} \phi(v) \pi(\mathrm{d}v) = 0$. Otherwise, define 
		$\tilde{\phi} := \phi - \int_{H_\alpha} \phi(v) \pi(\mathrm{d}v)$ and consider $\tilde{\phi}$ in place of $\phi$.\par 
		Define 
		$$
		\mathcal{S}(t,\phi):=\int_0^t\phi(u(s,\cdot)) \mathrm{d}s,\ I_p(T):=\sup _{0 \leq t \leq T} \mathbb{E}\left[\left|\mathcal{S}(t,\phi)\right|^{2 p}\right], \ \text{and}\ 	\hat{g}\left(s_1, s_2\right)  :=	\phi\left(u(s_{1}) \right) \mathbb{E}\left[\phi\left(u(s_2,\cdot) \right)  \mid \mathscr{F}_{s_1}\right].
		$$ 
		By the tower property of conditional expectation and Hölder's inequality, we obtain
		$$
		\begin{aligned}
			\mathbb{E}\left[\left|\mathcal{S}(t,\phi)\right|^{2 p}\right]  
			=&\mathbb{E}\left[\int_{[0, t]^{2 p}} \prod_{i=1}^{2 p} \phi(u(s_i,\cdot))\  \mathrm{d} s_1 \cdots \mathrm{d} s_{2 p}\right] \\ 
			=&(2 p)!\mathbb{E}\left[\int_{0\leq s_1\leq\cdots\leq s_{2p-1} }\prod_{i=1}^{2 p-2} \phi\left(u(s_{i}) \right)\int_{0\leq   s_{2p-1}\leq  s_{2p}\leq t}
			\hat{g}\left(s_{2p-1}, s_{2p}\right)
			\mathrm{d} s_1 \cdots \mathrm{~d} s_{2 p}\right] \\ 
			\leq & \frac{(2p)!}{(2 p-2)!} \int_{0\leq   s_{2p-1}\leq  s_{2p}\leq t}
			\left(	\mathbb{E}\left[  \left|\mathcal{S}(s_{2p-1} ,\phi)\right|^{2 p} \right]\right)^{\frac{p-1}{p}}\left(\mathbb{E}\left[|
			\hat{g}\left(s_{2p-1}, s_{2p}\right)|^p\right]\right)^{\frac{1}{p}}
			\mathrm{d}s_{2p-1}  \mathrm{d} s_{2p} .
		\end{aligned}
		$$  
		It follows that
		$$
		I_p(T) \leq C \left(I_p(T)\right)^{\frac{p-1}{p}}\int_{0\leq   s_{1}\leq  s_{2}\leq T}\left(\mathbb{E}\left[\left|\hat{g}\left(s_1, s_2\right)\right|^p\right]\right)^{\frac{1}{p}} \mathrm{ d} s_1 \mathrm{ d} s_2,
		$$
		which yields 
		\begin{equation} \label{ineq: big number 1}
			\begin{aligned}
				I_p(T) \leq \left(C  \int_{0\leq   s_{1}\leq  s_{2}\leq T}\left(\mathbb{E}\left[\left|\hat{g}\left(s_1, s_2\right)\right|^p\right]\right)^{\frac{1}{p}} \mathrm{~d} s_1 \mathrm{d} s_2\right)^p.
			\end{aligned}
		\end{equation} 
		It follows from the Markov property and Proposition \ref{propo: error problem exact} that
		$$
		\begin{aligned}
			\mathbb{E}\left[\phi\left(u(s_2,\cdot) \right)  \mid \mathscr{F}_{s_1}\right]
			=&	\mathbb{E}\left[\phi\left(u(s_2,\cdot) \right) - \int_{H_\alpha} \phi(v) \pi(\mathrm{d}v)\ \Big|\mathscr{F}_{s_1}\right]\\
			=&   \int_{H_\alpha}	\mathbb{E}\left[  \phi\left(u(s_2,\cdot) \right)-  P_t \phi\left(v\right) |\mathscr{F}_{s_1}  \right]\pi(\mathrm{d} v)\   \\
			\leq &Ce^{-\frac{  \epsilon_0 }{2}(s_2-s_1)}\int_{H_\alpha}  \left\|u(u_0;s_1,\cdot)-u(v;s_1,\cdot)\right\|_{L^2}   \pi\left(d v\right) .  
		\end{aligned}
		$$   
		Furthermore, by Theorem \ref{th: u_regularity}, we conclude
		\begin{equation} \label{ineq: big number 2}
			\begin{aligned}
				\begin{aligned}
					\left(\mathbb{E}\left[\left|\hat{g}\left(s_1, s_2\right)\right|^p\right]\right)^{\frac{1}{p}}\leq &Ce^{-\frac{  \epsilon_0 }{2} (s_2-s_1) }.
				\end{aligned}
			\end{aligned}
		\end{equation} 
		Combining \eqref{ineq: big number 1} and \eqref{ineq: big number 2}, we obtain
		$$
		\mathbb{E}\left[\left|\frac{1}{T} \int_0^T\phi(u(t,\cdot))  \mathrm{d} t\right|^{2 p}\right] \leq
		\frac{I_p(T)}{T^{2p}} \leq 
		\left(\frac{C}{T^2} \int_{0\leq   s_{1}\leq  s_{2}\leq t}e^{-\frac{  \epsilon_0 }{2} (s_2-s_1) } \mathrm{~d} s_1 \mathrm{ d} s_2\right)^p\leq 
		CT^{-p}.$$ 
		It follows from the Borel–Cantelli Lemma that for any $\epsilon > 0$, 
		$$
		\left|\frac{1}{T} \int_0^T\phi(u(t,\cdot))  \mathrm{d} t-\int_{H_\alpha} \phi(v) \pi(\mathrm{d} v)\right| \leq C t^{-\frac{1}{2}+\epsilon},\quad \text {a.s.}
		$$
		Taking the limit as $T \to \infty$, this completes the proof.
	\end{proof}
	
	\begin{theorem} \label{th: err  strong law of large numbers}
		Suppose that Assumptions  \ref{assu:1}, \ref{assu:3 noise}, \ref{assu:2.2} hold,  then  for any bounded uniformly continuous function  $\phi:\ H \to \mathbb{R}$, the following strong law of large numbers holds
		\begin{equation}\label{ineq:  err strong law of large numbers}
			\lim _{T \rightarrow \infty} \lim _{\tau \rightarrow 0} \lim _{N \rightarrow \infty} \frac{\tau}{T} \sum_{m=0}^{\lfloor T/\tau\rfloor} \phi(u^{\tau,N}({t_m},\cdot))=\int_{H_\alpha} \phi(v) \pi(\mathrm{d} v),\quad \text {a.s.}
		\end{equation}    
	\end{theorem}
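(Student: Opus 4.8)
The plan is to reduce the statement to the continuous strong law of large numbers already proved in Theorem \ref{th: strong law of large numbers} by splitting the numerical ergodic average into three pieces whose successive limits are governed, respectively, by the uniform strong convergence of Theorem \ref{thm: ||u^{M,N}-u||}, an elementary Riemann-sum estimate, and Theorem \ref{th: strong law of large numbers} itself. As in the proof of Theorem \ref{th: strong law of large numbers}, one may assume without loss of generality that $\int_{H_\alpha}\phi(v)\pi(\mathrm{d}v)=0$, since replacing $\phi$ by $\phi-\int_{H_\alpha}\phi\,\mathrm{d}\pi$ preserves bounded uniform continuity. Writing $M=\lfloor T/\tau\rfloor$, I would decompose
\begin{equation*}
\frac{\tau}{T}\sum_{m=0}^{M}\phi(u^{\tau,N}(t_m,\cdot)) - \int_{H_\alpha}\phi\,\mathrm{d}\pi = \mathrm{(I)}+\mathrm{(II)}+\mathrm{(III)},
\end{equation*}
with $\mathrm{(I)}=\frac{\tau}{T}\sum_{m=0}^{M}\bigl(\phi(u^{\tau,N}(t_m,\cdot))-\phi(u(t_m,\cdot))\bigr)$, $\mathrm{(II)}=\frac{\tau}{T}\sum_{m=0}^{M}\phi(u(t_m,\cdot))-\frac1T\int_0^T\phi(u(t,\cdot))\,\mathrm{d}t$, and $\mathrm{(III)}=\frac1T\int_0^T\phi(u(t,\cdot))\,\mathrm{d}t$. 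Because the limits are taken in the order $\lim_{T\to\infty}\lim_{\tau\to0}\lim_{N\to\infty}$, the terms $\mathrm{(II)}$ and $\mathrm{(III)}$ do not depend on $N$, and each term is matched to exactly one of the three successive limits.

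The outermost limit is immediate: by Theorem \ref{th: strong law of large numbers}, $\mathrm{(III)}\to0$ almost surely as $T\to\infty$. For the middle term, I would fix $T$ and exploit that the mild solution has almost surely continuous sample paths in $H$ (indeed in $L^\infty$, via the regularity underlying Theorem \ref{th: u_regularity}), so that $s\mapsto\phi(u(s,\cdot))$ is almost surely continuous and bounded; hence the Riemann sum $\frac{\tau}{T}\sum_{m=0}^{M}\phi(u(t_m,\cdot))$ converges to $\frac1T\int_0^T\phi(u(t,\cdot))\,\mathrm{d}t$, i.e. $\mathrm{(II)}\to0$ almost surely as $\tau\to0$, the boundary discrepancy arising from $M\tau\neq T$ being $O(\tau)$.

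The genuine difficulty is term $\mathrm{(I)}$, which must be driven to zero under the innermost limit $N\to\infty$ followed by $\tau\to0$. Here I would invoke the uniform strong convergence estimate $\sup_{t\ge0}\|u(t,\cdot)-u^{\tau,N}(t,\cdot)\|_{L^p(\Omega;L^\infty)}\le C(h^{1-\epsilon}+\tau^{3/8-\epsilon/2})$ of Theorem \ref{thm: ||u^{M,N}-u||}, together with the bounded uniform continuity of $\phi$: for any $\delta>0$ there is $\rho>0$ such that $|\phi(a)-\phi(b)|\le\delta+2\|\phi\|_\infty\,\chi_{\{\|a-b\|_{L^\infty}\ge\rho\}}$, whence, after Markov's inequality and using $\frac{\tau}{T}(M+1)\approx1$,
\begin{equation*}
\mathbb{E}\bigl[|\mathrm{(I)}|\bigr]\le \delta + 2\|\phi\|_\infty\,\rho^{-p}\,C\bigl(h^{1-\epsilon}+\tau^{3/8-\epsilon/2}\bigr)^p .
\end{equation*}
Letting $N\to\infty$ (so $h\to0$), then $\tau\to0$, then $\delta\downarrow0$ shows $\mathrm{(I)}\to0$ in expectation, which is precisely the in-expectation version recorded in Remark \ref{re: 5.2 numerical ergodic limit}.

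To upgrade this to the almost-sure statement claimed in the theorem, I would take $p$ large and run a Borel--Cantelli argument along dyadic sequences $N_k\to\infty$ and $\tau_l\to0$: the uniformity in time of the bound makes $\mathbb{E}|\mathrm{(I)}|^{2p}$ summable over $(k,l)$ once $p$ exceeds an explicit threshold, the $\tau/T$ normalization absorbing the growing number $M\approx T/\tau$ of summands, and the uniform continuity of $\phi$ interpolating from the dyadic subsequences to the full limits. The main obstacle, and the point demanding the most care, is exactly this passage from the $L^p$-type estimate of Theorem \ref{thm: ||u^{M,N}-u||}---where the supremum in time sits \emph{outside} the expectation---to almost-sure control of a time average whose number of terms diverges as $\tau\to0$. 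A cleaner alternative that sidesteps the Borel--Cantelli bookkeeping is to first identify the almost-sure spatial limit $u^{\tau}:=\lim_{N\to\infty}u^{\tau,N}$ of the fully discrete scheme, for which $\|u(t_m,\cdot)-u^{\tau}(t_m,\cdot)\|_{L^p(\Omega;L^\infty)}\le C\tau^{3/8-\epsilon/2}$, so that the innermost limit becomes the convergence of a finite sum of terms, each of which converges almost surely. Finally, intersecting the three full-measure events on which $\mathrm{(I)}$, $\mathrm{(II)}$, $\mathrm{(III)}$ vanish in the respective limits, and undoing the normalization $\int_{H_\alpha}\phi\,\mathrm{d}\pi=0$, completes the proof.
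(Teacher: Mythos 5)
Your proposal is correct and takes essentially the same route as the paper's proof: the paper decomposes the error into $\mathcal{K}_1^{\tau,N,T}$ (your terms (I) and (II) combined, bounded in mean square via the uniform strong convergence of Theorem \ref{thm: ||u^{M,N}-u||} and then sent to zero almost surely by Borel--Cantelli) and $\mathcal{K}_2^{T}$ (your term (III), handled by the continuous strong law of large numbers, Theorem \ref{th:  strong law of large numbers}). If anything, you are more careful than the paper at the two points it glosses over---the $\delta$--$\rho$/Markov argument needed because $\phi$ is only bounded and uniformly continuous (the paper's displayed rate for $\mathbb{E}\big[|\mathcal{K}_1^{\tau,N,T}|^2\big]$ tacitly treats $\phi$ as Lipschitz), and the Borel--Cantelli bookkeeping along sequences in $N$ and $\tau$ required to upgrade the moment estimate to the almost-sure iterated limit.
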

	\begin{proof}
		We now establish that
		\begin{equation}
			\begin{aligned}
				&\frac{\tau}{T} \sum_{m=0}^{\lfloor T/\tau\rfloor} \phi(u^{\tau,N}({t_m},\cdot))-\int_{H_\alpha} \phi(v) \pi(\mathrm{d} v)\\
				=&\left(\frac{\tau}{T} \sum_{m=0}^{\lfloor T/\tau\rfloor} \phi(u^{\tau,N}({t_m},\cdot))-\frac{1}{T} \int_0^T \phi(u(t,\cdot)) \mathrm{d} t\right) 
				+\left(\frac{1}{T} \int_0^T \phi(u(t,\cdot)) \mathrm{d} t -\int_{H_\alpha} \phi(v) \pi(\mathrm{d} v)\right)\\
				=&:\mathcal{K}_1^{\tau,N,T}+\mathcal{K}_2^{T} .
			\end{aligned}
		\end{equation} 
		Using Theorem \ref{thm: ||u^{M,N}-u||}, one obtains
		\begin{equation*}
			\begin{aligned}
				\mathbb{E}\left[ |\mathcal{K}_1^{\tau,N,T} |^2 \right]\leq &
				\mathbb{E}\Big[	\Big|\frac{\tau}{T} \sum_{m=0}^{\lfloor T/\tau\rfloor} \phi(u^{\tau,N}({t_m},\cdot))-\frac{\tau}{T} \sum_{m=0}^{\lfloor T/\tau\rfloor}  \phi\left(u(t_m,\cdot)\right)\Big|^2\Big]
				+	\mathbb{E}\Big[\Big|\frac{\tau}{T} \sum_{m=0}^{\lfloor T/\tau\rfloor}  \phi\left(u ({t_m},\cdot)\right)-
				\frac{1}{T} \int_0^T \phi(u(t,\cdot)) \mathrm{d} t\Big|^2\Big]\\
				\leq&
				C\left(h^{1-\epsilon}+ \tau^{\frac{3}{8}-\frac{\epsilon}{2}} \right).
			\end{aligned}
		\end{equation*} 
		By  the Borel--Cantelli Lemma,  it follows that
		\begin{equation*}
			\begin{aligned}
				\lim _{T \rightarrow \infty} \lim _{\tau \rightarrow 0} \lim _{N \rightarrow \infty} \mathcal{K}_1^{\tau,N,T}=0,\quad \text {a.s.}
			\end{aligned}
		\end{equation*}  
		According to Theorem \ref{th:  strong law of large numbers}, it is straightforward that 
		\begin{equation*}
			\begin{aligned}
				\lim _{T\rightarrow \infty}   \mathcal{K}_2^{T} =0,\quad \text {a.s.}
			\end{aligned}
		\end{equation*}
		This completes the proof.
	\end{proof}

	\section{Numerical example}  \label{section6}
	In this section, we provide   numerical experiments to validate the theoretical results  and to illustrate them from three perspectives: 
	(i) verification of the strong convergence rate of the proposed numerical scheme;  
	(ii)  demonstration of the numerical approximation of the ergodic limit;   
	(iii) investigation of the dependence of ergodic limits on initial conditions.\par 
	
	\begin{example}\label{ex: 6.1}
		We consider the SCHE driven by white noise as follows 
		\begin{equation} \label{eq: exm 1}
			\begin{cases}
				\dfrac{\partial u(t,x)}{\partial t}+\Delta^2 u(t,x)
				= \frac{1}{2}\Delta\!\left(u^3(t,x)-u^2(t,x)+2u(t,x)-2\right)
				+ \dfrac{\partial W(t,x)}{\partial t}, & t\in(0,T],\ x\in(0,\pi), \\[6pt]
				\frac{\partial u(t,0)}{\partial x}=\frac{\partial u(t,\pi)}{\partial x}=\frac{\partial^3 u(t,0)}{\partial x^3} =\frac{\partial^3 u(t,\pi)}{\partial x^3}=0,& t\in(0,T], \\  
				u(0, x)=u_0(x) ,  & x\in(0,\pi).
			\end{cases}
		\end{equation}   
	\end{example}

	\noindent\textbf{Part 1: Strong convergence rates of the numerical scheme.}  \par 
	To measure approximation errors in the mean-square sense, we employ the Monte--Carlo method with $L$  independent sample trajectories. The error is calculated by 
	$$
	E(\tau, N)=\max _{0 \leq i \leq {T}/{\tau}}\left(\frac{1}{L} \sum_{k=1}^{L} \max _{0 \leq j \leq N}\left|u^{\tau, N}\left(t_i, x_j, \omega_k\right)-u^{\tau_{\mathrm{ref}}, N_{\mathrm{ref}}}\left(t_i, x_j, \omega_k\right)\right|^2\right)^{\frac{1}{2}},
	$$ 
	where $\left\{t_i, x_j\right\}_{ 0\leq i\leq T/\tau, 0 \leq j \leq N}$ represents the grid points of $[0, T] \times[0, \pi]$ with temporal step-size $\tau$ and spatial step-size $\pi / N$. Here $u^{\tau, N}\left(t_i, x_j, \omega_k\right)$ is the numerical solution at the $k$-th trajectory. In the following, we take  $L=1000$. To verify the strong convergence rate, we simulate the reference solution by taking $\tau_\mathrm{ref}=T\times2^{-13}$ and $N_\mathrm{ref}=2^{10}$. 
	
	We first examine the strong convergence rates of the fully discrete scheme \eqref{eq: mild_solution_full_discrete}. The mean-square errors  and strong convergence rates are reported in Table \ref{tbl:1}, which aligns with the theoretical predictions stated in Theorem  \ref{thm: ||u^{M,N}-u||}. These results demonstrate that the mean-square temporal convergence rate of the full discretization is about $\frac{3}{8}$, and the spatial convergence rate is about $1$.  \par 
	
	\begin{table}[htbp]
		\centering  
		\caption{ Mean-square errors and  strong convergence rates ( $T=100,$ $u_0=\frac{1}{3}\cos(x)+\frac{1}{3} $).}
		\begin{tabular}{ c c  c c  c  c }
			\specialrule{1pt}{10pt}{1pt} \specialrule{0.5pt}{1pt}{2pt} 
			$(\tau,N)$   &   $E(\tau,N)$        & rate            &  	  $(\tau,N)$   &   $E(\tau,N)$        & rate           \\ \specialrule{1pt}{2pt}{2pt}
			$(T\times2^{-4}, 2^9)$  &  $5.19 \times 10^{-1}$     &       &   	$(T\times2^{-13}, 2^3)$  &  $1.04 \times 10^{-1}$     &                 \\ [1.5mm]
			$(T\times2^{-5}, 2^9)$   & $4.32 \times 10^{-1}$      &0.264  & 		$(T\times2^{-13}, 2^4)$ & $5.05 \times 10^{-2}$      & 1.041  \\ [1.5mm]
			$(T\times2^{-6}, 2^9)$   & $3.61 \times 10^{-1}$      &0.262  &    	$(T\times2^{-13}, 2^5)$ & $2.21 \times 10^{-2}$      & 1.189  \\ [1.5mm]
			$(T\times2^{-7}, 2^9)$   & $2.61 \times 10^{-1}$      &0.634  &     	$(T\times2^{-13}, 2^6)$ & $1.01\times 10^{-2}$       & 1.134  \\[1.5mm]
			$(T\times2^{-8}, 2^9)$   &$2.07\times 10^{-1}$        &0.338  &  		$(T\times2^{-13}, 2^7)$ & $5.18 \times 10^{-3}$      & 0.962  \\  
			\specialrule{0.5pt}{2pt}{1pt} \specialrule{1pt}{1pt}{0pt}
		\end{tabular}\label{tbl:1} 
	\end{table}

	\noindent\textbf{Part 2: Numerical approximation of the ergodic limit.}  \par 
	Next, we investigate the long-time behavior of the solution to \eqref{eq: exm 1}. According to Remark \ref{re: 5.2 numerical ergodic limit} and Theorem \ref{th: err  strong law of large numbers}, the time average of $\phi\big(u^{\tau,N}(u_0;t,\cdot)\big)$    converges almost surely to the ergodic limit $\int_{H_\alpha} \phi(v) \pi(\mathrm{d} v)$, and t
	This  motivates us to approximate the ergodic limit using two approaches:
	\begin{itemize}
		\item \textbf{Approximation I}:\     $  \frac{\tau}{LT} \sum_{m=0}^{\lfloor T/\tau\rfloor}\sum_{k=1}^{L} \phi\left( u^{\tau, N}\left(t_m, \cdot\ , \omega_k\right)\right)  \approx \int_{H_\alpha} \phi(v) \pi(\mathrm{d} v);$ 
		\item \textbf{Approximation II}:  \   $  \frac{\tau}{T} \sum_{m=0}^{\lfloor T/\tau\rfloor}  \phi\left( u^{\tau, N}\left(t_m, \cdot\ , \omega\right)\right)  \approx \int_{H_\alpha} \phi(v) \pi(\mathrm{d} v).$ 
	\end{itemize}
	
	We define the function $g: H\times H\times\mathbb{R}\to \mathbb{R}$ by
	$$   g(v ,u,\alpha_1):=\int_{\mathcal{O}}v(x) u(x) \mathrm{d} x-\alpha_1\int_{\mathcal{O}}   v(x)  \mathrm{d} x\int_{\mathcal{O}}   u(x)  \mathrm{d} x,\quad \forall v,u\in H,\ \alpha_1\in \mathbb{R}.   $$ 
	Furthermore, we introduce the test function  $\phi_{v,\alpha_1,\alpha_2} :  H\to \mathbb{R}$ for any $v\in H$ and $\alpha_1,\ \alpha_2\in\mathbb{R}$ defined by
	$$  \phi_{v,\alpha_1,\alpha_2}(u):=   \frac{\alpha_2  g(v,u,\alpha_1)}{ 1+  \left| \frac{ 1}{\alpha_2}g(v,u,\alpha_1)  \right|^2},\quad  \forall u\in H.   $$
	This test function offers flexibility through the choice of $v$ and $\alpha_2$, thereby yielding a family of bounded and uniformly continuous functions whose ergodic limit is zero when $\alpha_1 = 1$ and $\alpha = -\frac{a_1}{3a_0}$, where $a_1$ and $a_3$ are the coefficients from the nonlinear term $f$, and  $\alpha$ denotes the parameter appearing in the index of $H_\alpha$ (the proof is provided in Appendix E).
	Specifically, we select three initial conditions $u_0 = \frac{1}{3}$, $\frac{1}{3} \cos(x) + \frac{1}{3}$, and $2 \cos(2x) + \cos(x) + \frac{1}{3}$, together with parameter  triples $(v, \alpha_1,\alpha_2) = (e^x,1, 2)$ and $(e^{-x},1, 3)$. These choices allow us to construct the  test functions $\phi_{v,\alpha_1,\alpha_2}$ and validate the two  approximations.\par 
	
	As shown in Figures \ref{fig:combined 1} and \ref{fig:combined 2}, both Approximations I and II converge to the same ergodic limit, regardless of the initial condition. Table \ref{tbl:2} compares the computational cost and error of both approximations. Notably, the CPU time required for Approximation  II  is  significantly less than that for Approximation I. This efficiency improvement comes from the fact  that Approximation  II uses only a single sample trajectory, thereby avoiding the simulation of a large number of samples and offering an advantage in computational efficiency.  \par 
	
	\begin{figure}[htbp]  
		\centering 
		\subfigure[ $\alpha_1=1$, $\alpha_2=2$, $v=e^x$ and $T=100$ .]{
			\includegraphics[width=0.45\textwidth]{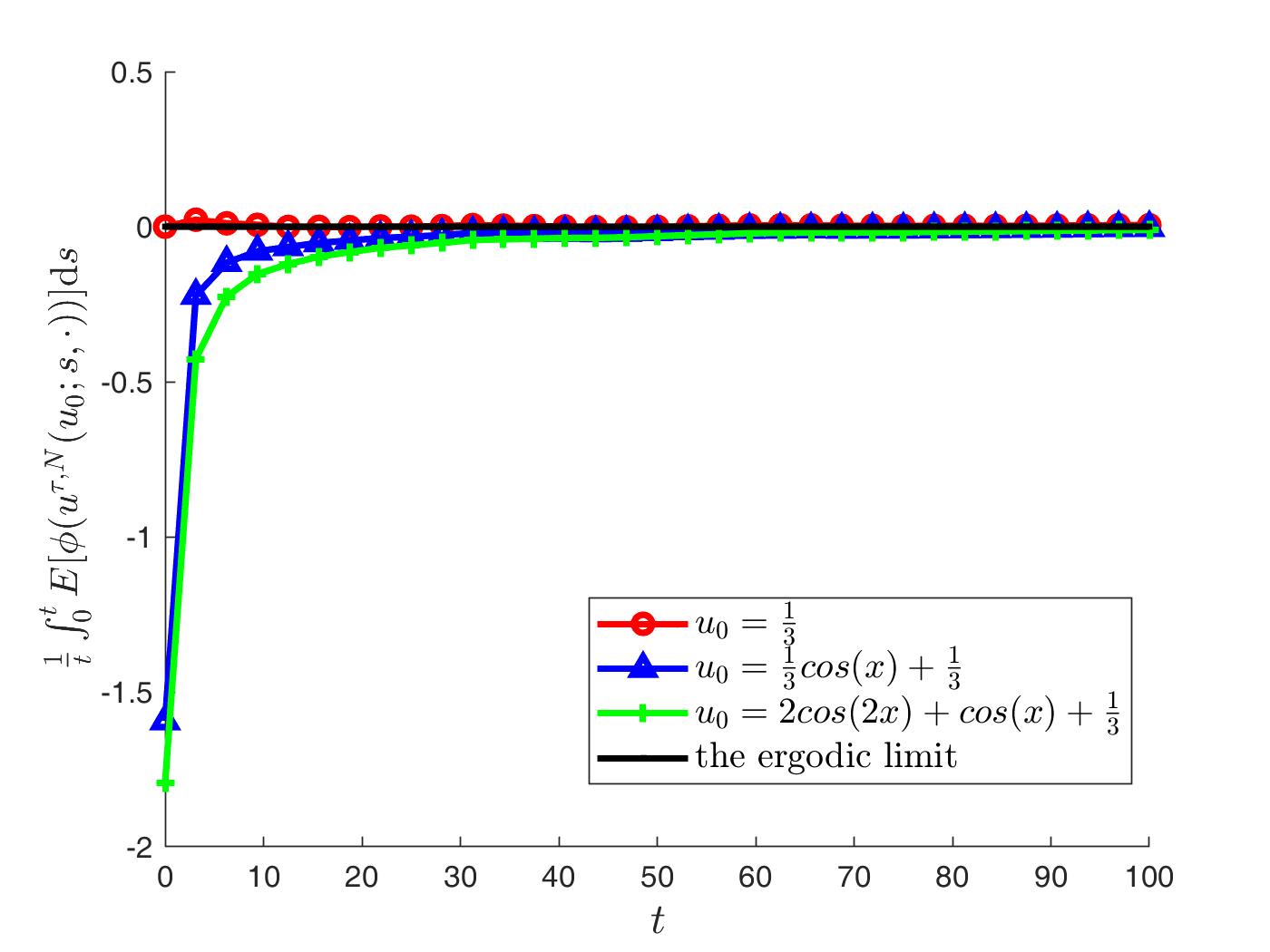}
		}
		\hspace{0.04\textwidth}
		\subfigure[ $\alpha_1=1$, $\alpha_2=3$, $v=e^{-x}$ and $T=100$.]{
			\includegraphics[width=0.45\textwidth]{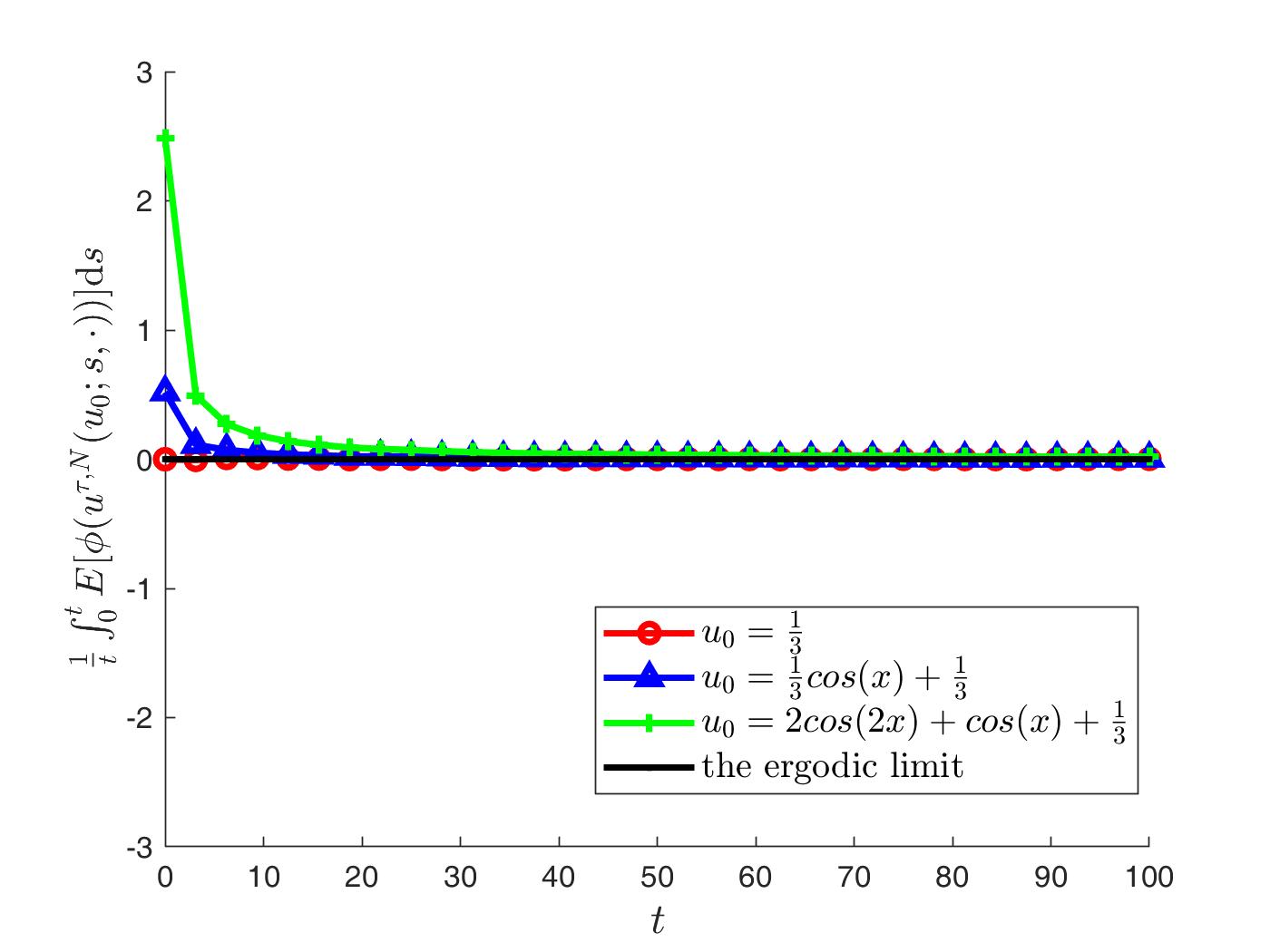}
		}
		
		\caption{The time average of $\mathbb{E}\left[\phi\big(u^{\tau,N}(u_0;t,\cdot)\big)\right]$ started from different initial values.}
		\label{fig:combined 1}
	\end{figure}

	\begin{figure}[htbp]  
		\centering 
		\subfigure[ $\alpha_1=1$, $\alpha_2=2$, $v=e^x$ and $T=100$ .]{
			\includegraphics[width=0.45\textwidth]{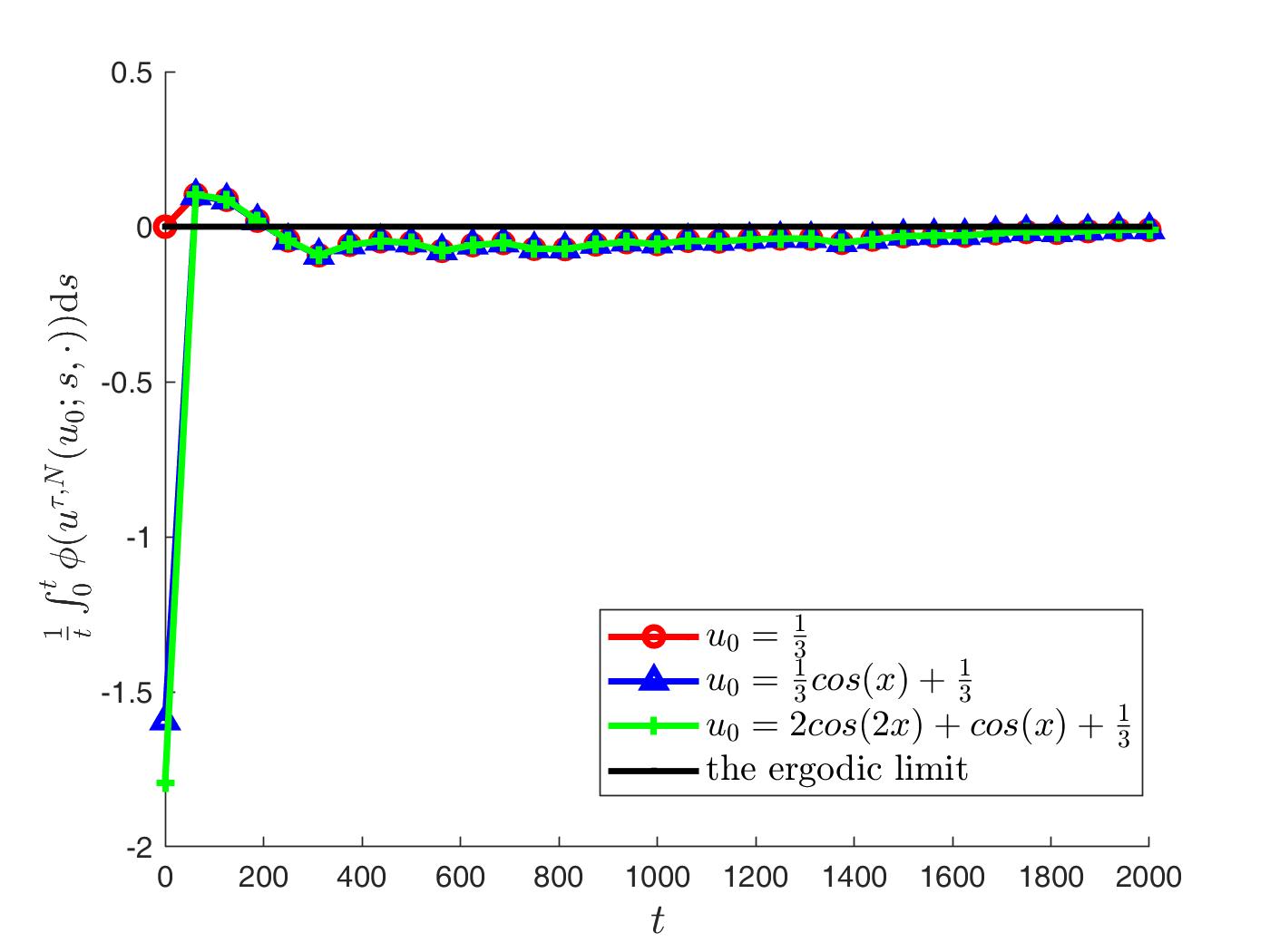}
		}
		\hspace{0.04\textwidth}
		\subfigure[ $\alpha_1=1$, $\alpha_2=3$, $v=e^{-x}$ and $T=100$.]{
			\includegraphics[width=0.45\textwidth]{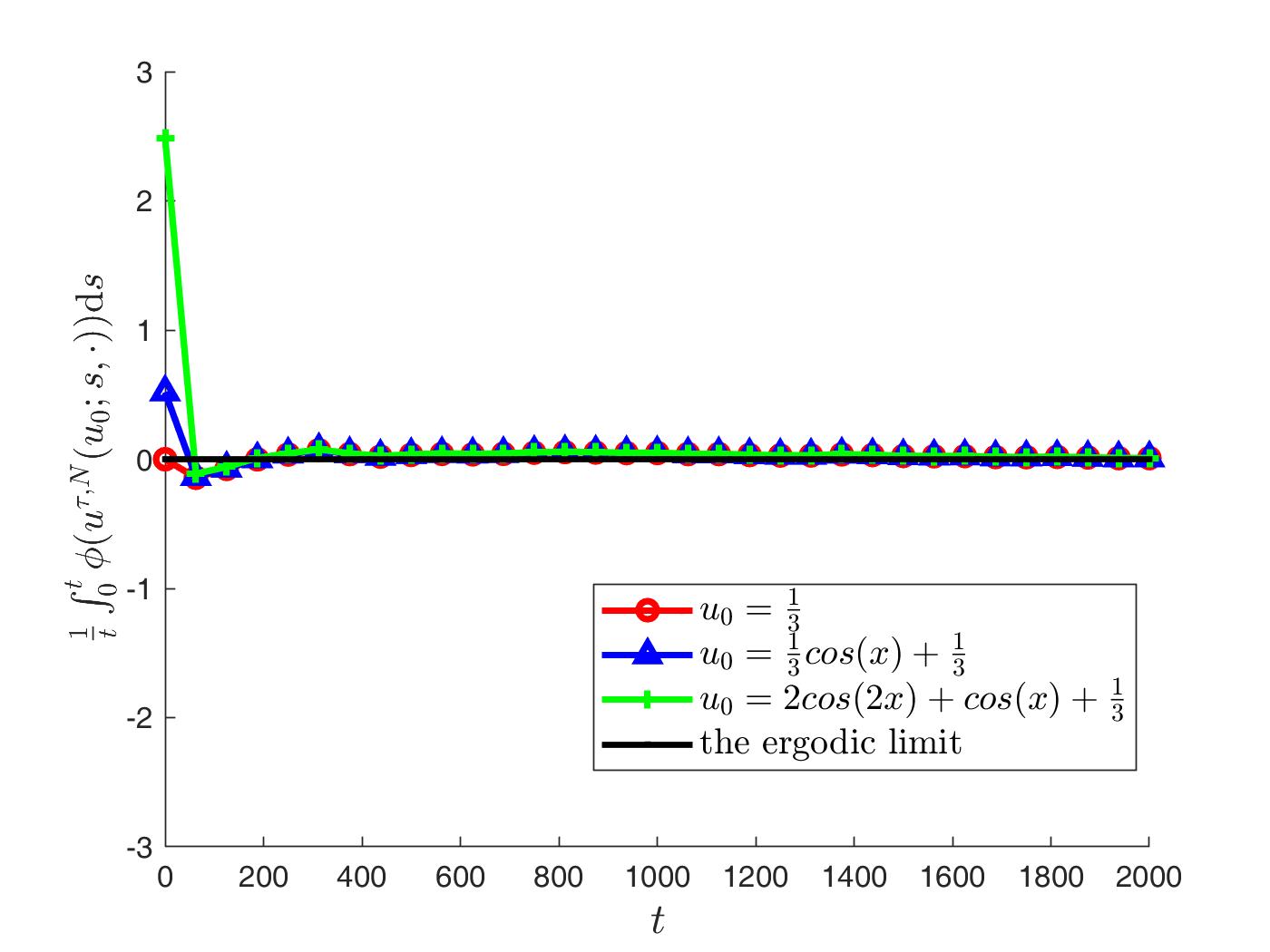}
		}
		
		\caption{The time average of $ \phi\big(u^{\tau,N}(u_0;t,\cdot)\big) $ started from different initial values.}
		\label{fig:combined 2}
	\end{figure}

	\begin{table}[htbp]
		\centering  
		\caption{ Errors and CPU time for approximating the ergodic limit.}
		\resizebox{\textwidth}{!}{ 
			\begin{tabular}{c c c  c c c c c  }
				\specialrule{1pt}{10pt}{1pt} \specialrule{0.5pt}{1pt}{2pt} 
				&		& \multicolumn{2}{c}{$u_0=\frac{1}{3}$} & \multicolumn{2}{c}{$u_0=  \frac{1}{3}\cos(x)+\frac{1}{3}$}  & \multicolumn{2}{c}{$u_0= \sum_{k=1}^{2} k\cos(kx)+\frac{1}{3}$} \\
				\cmidrule(r){3-4} \cmidrule(r){5-6}   \cmidrule(r){7-8} 
				$(v,\alpha_1,\alpha_2)$	& Approximation  &  error        & CPU time(s)       &  error        & time(s)    &  error        & CPU time(s)         \\ \specialrule{1pt}{2pt}{2pt} 
				$(e^x,1,2)$ & 	I $(T=100)$ &
				$4.92 \times 10^{-3}$     &   1795    &   $3.25 \times 10^{-3}$   &  1867    & $1.03 \times 10^{-2}$ & 1884   \\ [1.5mm] 
				&   II $(T=2000)$ &$9.54\times 10^{-3}$     &  214
				&$9.91\times 10^{-3}$     &  268     &$1.01\times 10^{-2}$     &  273   \\   \specialrule{0.5pt}{2pt}{1pt} 
				$(e^{-x},1,3)$ &  	I $(T=100)$ &
				$2.91 \times 10^{-3}$     &   1816    &   $6.88 \times 10^{-3}$   &  1768   & $1.97 \times 10^{-2}$ & 1988  \\ [1.5mm] 
				&   II $(T=2000)$ &$8.72\times 10^{-3}$     &  236  
				&$9.00\times 10^{-3}$     &  244    &$1.01\times 10^{-2}$     &  229     \\   
				\specialrule{0.5pt}{2pt}{1pt} \specialrule{1pt}{1pt}{0pt}
			\end{tabular}\label{tbl:2}  }
	\end{table}

	\noindent\textbf{Part 3: Dependence of ergodic limits on initial conditions.}  \par
	Finally, we examine the dependence of the ergodic limit on   initial conditions. We consider three distinct initial conditions, each belonging to a different $H_\alpha$ space:  
	$u_0 = \frac{1}{3}$ ($\alpha=\frac{1}{3}$),  
	$\frac{1}{3}\cos(x) + 1$ ($\alpha=1$), and  
	$2 \cos(2x) + \cos(x) + 2$ ($\alpha=2$),  
	together with the parameter triple $(v, \alpha_1, \alpha_2) = (e^x, 0, 3)$.  
	As shown in Figure \ref{fig: longtime b}, the ergodic limits will be different depending on the initial condition, which aligns with Remark \ref{rem: no unique} stating that the SCHE does not admit a unique invariant measure on $H$. 
	\begin{figure}[H]  
		\centering
		\includegraphics[width=0.5\textwidth]{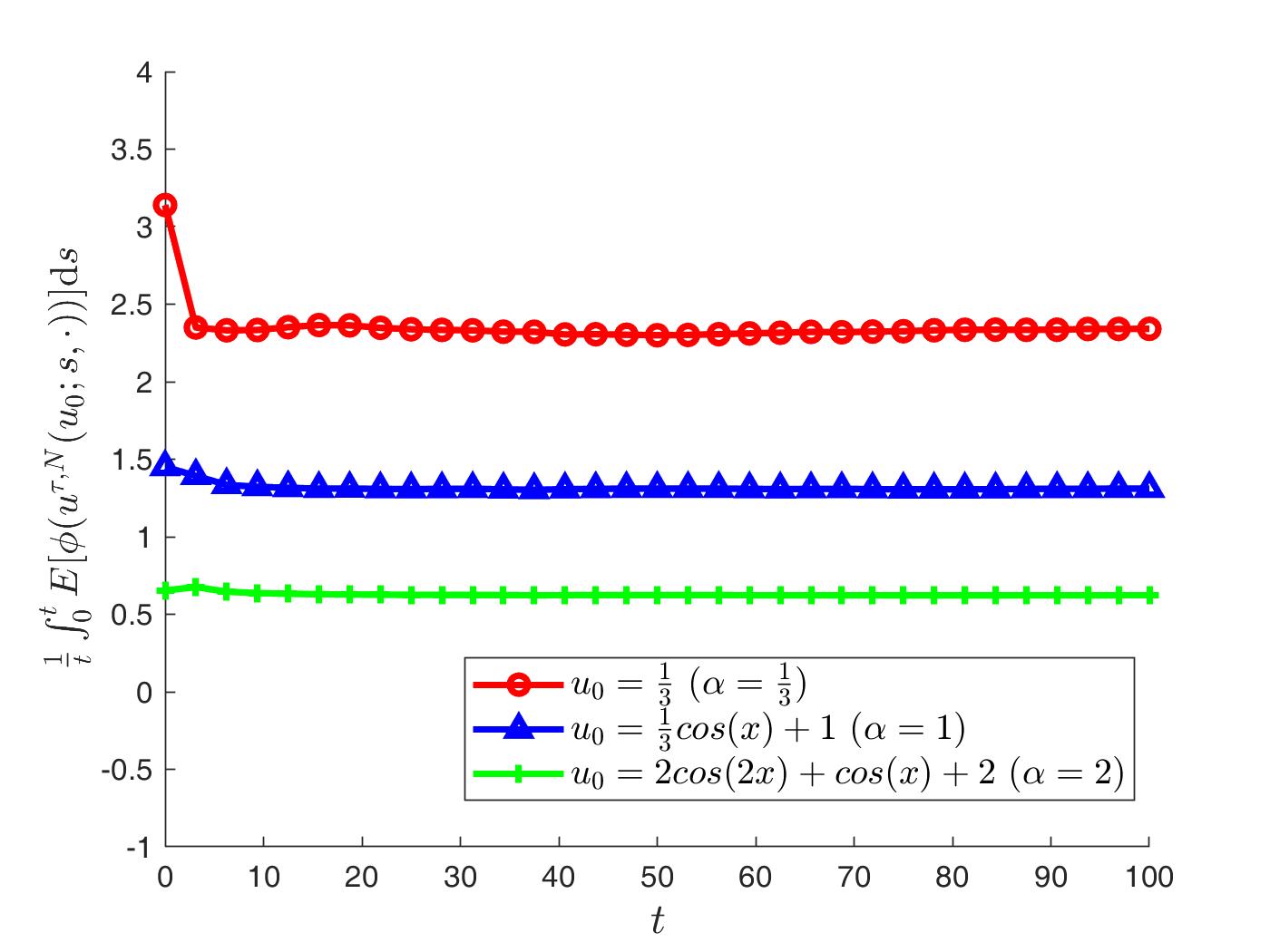}  
		\caption{Ergodic limits for   initial conditions  in different spaces.}
		\label{fig: longtime b}%
	\end{figure}

	\section*{Conclusion}
	
	In this work, we extend the classical ergodicity theory of \cite{da1996stochastic} for the stochastic Cahn--Hilliard equation (SCHE) from the negative Sobolev space $\dot{H}^{-1}_\alpha$ to the physically more relevant Hilbert space $H_\alpha$, and we show that uniqueness of invariant measures fails on $H=L^2(\mathcal{O})$.  We further introduce a fully discrete scheme—finite differences in space combined with a tamed exponential Euler method in time—for which uniform-in-time moment bounds are established.  This enables us to prove uniform strong convergence with rates $1-\varepsilon$ in space and $\frac{3}{8}-\frac{\varepsilon}{2}$ in time.  
	
	On the long-time side, we develop a mass-preserving minorization framework adapted to Neumann boundary conditions and prove that the numerical scheme is geometrically ergodic with a unique invariant measure.  A quantitative approximation bound is obtained for the numerical invariant measure, and strong laws of large numbers are derived for both the continuous and discrete systems, ensuring almost-sure convergence of temporal averages to their ergodic expectations.
	
	The numerical experiments corroborate our theoretical findings.  
	(i) The observed spatial and temporal convergence rates agree with the theoretical predictions.  
	(ii) Numerical invariant measures converge to the exact one, and computing ergodic limits via a single long trajectory is significantly more efficient than ensemble averaging.  
	(iii) In the space $H$, invariant measures are not unique; this is illustrated by constructing a family of test functions $\phi_{v,\alpha_1,\alpha_2}$ whose ergodic limits can be tuned, for instance, to zero by appropriate choices of parameters.
	
	Several directions for future research arise naturally.  
	Extending the present analysis to higher-dimensional SCHEs or to SPDEs with multiplicative noise remains an important challenge.  
	Another promising direction is the design of more efficient explicit schemes that preserve long-time statistical structures.  
	Finally, establishing uniform weak convergence for the proposed method is a compelling problem for further study.
	
	More broadly, the techniques developed in this work—perturbed-equation moment estimates, the strengthened taming strategy, and the mass-preserving minorization argument—are expected to extend to other fourth-order SPDEs and phase-field models.  
	This underlines the wider potential of the present approach beyond the SCHE itself.

\section*{Appendix A. Proof of Lemma 2.1}
\begin{proof} 
	Observe $\lambda_{0}=0$ and $\lambda_j\geq 1$ for $j\geq1$. It follows from expansion in terms of the eigenbasis of $A$, Parseval’s identity and
	\eqref{estimate: e^{-x} 1} that 
	\begin{equation*}
		\begin{aligned}
			\left\|(-A)^\gamma e^{-A^2t} \hat{u}\right\|_{L^{2}}^2=	& \sum_{j=0}^{\infty} \lambda_j^{2\gamma}e^{-2\lambda_{j}^2t}\left|\left\langle \hat{u}, \phi_j\right\rangle_{L^2}\right|^2  
			\leq    e^{-\lambda_{1}t} \sum_{j=1}^{\infty} \lambda_j^{2\gamma}e^{- \lambda_{j}^2t}\left|\left\langle \hat{u}, \phi_j\right\rangle_{L^2}\right|^2 + g(\gamma)\left|\left\langle \hat{u}, \phi_0\right\rangle_{L^2}\right|^2\\
			\leq & C e^{-\lambda_{1}t}t^{-\gamma}\sum_{j=1}^{\infty}  \left|\left\langle \hat{u}, \phi_j\right\rangle_{L^2}\right|^2 + g(\gamma)\left|\left\langle \hat{u}, \phi_0\right\rangle_{L^2}\right|^2
			\leq C e^{-\lambda_{1}t} t^{-\gamma} \left\|\hat{u}\right\|_{L^2}^2+ g(\gamma)\left|\left\langle \hat{u}, \phi_0\right\rangle_{L^2}\right|^2,
		\end{aligned}
	\end{equation*}
	which yields \eqref{ineq: e^{-A^2t}(-A)^r to l2}.
	Similarly, we can obtain
	\begin{equation*}
		\begin{aligned}
			\left\|(-A)^{\gamma_1} e^{-A^2t}\left( I- e^{-A^2s}\right)\hat{u}\right\|_{L^{2}} ^2 
			\leq &  e^{-\lambda_{1}t} \sum_{j=1}^{\infty} \lambda_j^{2\gamma_1}e^{- \lambda_{j}^2t}(1-e^{- \lambda_{j}^2s})^2\left|\left\langle \hat{u}, \phi_j\right\rangle_{L^2}\right|^2 + g(\gamma_1)\left|\left\langle \hat{u}, \phi_0\right\rangle_{L^2}\right|^2 ,
		\end{aligned}
	\end{equation*} 
	which together with \eqref{estimate: e^{-x} 1} yields
	\begin{equation*}
		\begin{aligned}
			\left\|(-A)^{\gamma_1} e^{-A^2t}\left( I- e^{-A^2s}\right)\hat{u}\right\|_{L^{2}} ^2 	 
			\leq & C e^{-\lambda_{1}t} t^{ -(\gamma_1-\gamma_2+2\gamma_3) }s^{2\gamma_3}\sum_{j=1}^{\infty}\lambda_{ j}^{2\gamma_2}  \left|\left\langle \hat{u}, \phi_j\right\rangle_{L^2}\right|^2 + g(\gamma_1) \left|\left\langle \hat{u}, \phi_0\right\rangle_{L^2}\right|^2\\
			= &C e^{-\lambda_{1}t} t^{ -(\gamma_1-\gamma_2+2\gamma_3) }s^{2\gamma_3}\left\|(-A)^{\gamma_2}\hat{u}\right\|_{L^2}^2+g(\gamma_1) \left|\left\langle \hat{u}, \phi_0\right\rangle_{L^2}\right|^2.
		\end{aligned}
	\end{equation*}

\end{proof}

\section*{Appendix B. Proof of Lemma 3.1}
\begin{proof}  
	Utilizing  the orthonormality of $\left\{\phi_j\right\}_{j\geq 0} $,   we obtain 
	\begin{equation*} 
		\begin{aligned}
			\int_\mathcal{O}|  G_{t}(x, y)- G_{t}(z, y)|^2 \mathrm{d} y =&\int_\mathcal{O}\left|\sum_{j=0}^{\infty} e^{-\lambda_j^2 t} \left(\phi_j(x)-\phi_j(z)\right) \phi_j(y)\right|^2\mathrm{d} y\\
			\leq& \sum_{j=0}^{\infty} e^{-2\lambda_j^2 t} \left|\phi_j(x)-\phi_j(z)\right|^2\\
			\leq & e^{-\lambda_{1}t}\sum_{j=0}^{\infty}\lambda_j e^{-\lambda_j^2 t} \left|x-z\right|^2, 
		\end{aligned}
	\end{equation*}
	where we use $ \left|\phi_j(x)-\phi_j(z)\right|\leq \lambda_{j}^\frac{\beta}{2}|x-y|^\beta$ for any $\beta\in [0,1]$.
	Similarly, using Hölder’s inequality, one can have
	\begin{equation*} 
		\begin{aligned}
			\int_\mathcal{O}\left|\Delta G_{t}(x, y)-\Delta G_{t}(z, y)\right|\mathrm{d} y  
			=&\int_\mathcal{O}\left|\sum_{j=0}^{\infty} \lambda_{j} e^{-\lambda_j^2 t} \left(\phi_j(x)-\phi_j(z)\right) \phi_j(y)\right| \mathrm{d} y\\
			\leq& C\sqrt{\sum_{j=0}^{\infty} \lambda_{j}^2 e^{-2\lambda_j^2 t}\left|  \phi_j(x)-\phi_j(z) \right|^2} \\
			\leq & Ce^{-\frac{1}{2}t}\left|x-z\right|^\alpha \sqrt{ \sum_{j=0}^{\infty}\lambda_j^{2+ \alpha }e^{-\lambda_j^2 t}}.
		\end{aligned}
	\end{equation*}
\end{proof}

\section*{Appendix C. Proof of Lemma 4.2}
\begin{proof}  
	According to the definitions of $G^N_t(x,y)$ and  $G_t(x,y)$, we  can separate $\Delta_N G_t^N(x, y)-\Delta G_t(x, y)$ into the following four terms:
	\begin{equation*} 
		\begin{aligned}
			\Delta_N G_t^N(x, y)-\Delta G_t(x, y)=\sum_{k=1}^5  \mathcal{L}_k^N(t,x,y),
		\end{aligned}
	\end{equation*}
	where 
	$$
	\begin{aligned}
		&  \mathcal{L}_1^N(t,x,y)=  \sum_{j=N}^{\infty}\lambda_{j} e^{-\lambda_j^2 t} \phi_j(x) \phi_j(y), \ 
		\mathcal{L}_2^N(t,x,y)=\sum_{j=0}^{N-1} \lambda_{N, j} e^{-\lambda_{N, j}^2 t} \bar{\phi}_{N, j}(x) \left(\phi_j(y)-\phi_j\left(\kappa_N(y)\right)\right), \\
		&\mathcal{L}_3^N(t,x,y)=\sum_{j=0}^{N-1} \lambda_{N, j}e^{-\lambda_{N, j}^2 t} \left(\phi_j(x) -\bar{\phi}_{N, j}(x)\right) \phi_j(y), \
		\mathcal{L}_4^N(t,x,y)=\sum_{j=0}^{N-1}\lambda_{N, j} \left( e^{-\lambda_j^2 t}- e^{-\lambda_{N, j}^2 t}\right) \phi_j(x) \phi_j(y),  \\
		&\mathcal{L}_5^N(t,x,y)=\sum_{j=0}^{N-1}\left(\lambda_{j} - \lambda_{N, j} \right) e^{-\lambda_j^2 t} \phi_j(x) \phi_j(y)  .
	\end{aligned}
	$$ 
	It follows from  the orthonormality of $\left\{\phi_j\right\}_{j\geq 0} $ and \eqref{estimate: e^{-x} 1} with $\alpha=\frac{ 2\alpha_1+5}{4}$ that 
	\begin{equation} \label{es: L1}
		\begin{aligned}
			\sup _{x \in \mathcal{O}} \int_\mathcal{O}\left| \mathcal{L}_1^N(t,x,y)\right|^2 \mathrm{d} y \leq \sup _{x \in \mathcal{O}}\sum_{j=N}^{\infty} \lambda_{j}^2e^{-2\lambda_j^2 t} |\phi_j(x)|^2  \leq C e^{-\lambda_1^2 t}t^\frac{ 2\alpha_1+5}{4}  \sum_{j=N}^{\infty} \lambda_{j}^{-\frac{ 2\alpha_1+1 }{2}}
			\leq  C e^{-\lambda_1^2 t}t^\frac{ 2\alpha_1+5}{4}  N^{-2\alpha_1}.
		\end{aligned}
	\end{equation} 
	We recall Eq. (3.10) from Lemma 3.2 in \cite{Cardon2000}, which states
	\begin{equation*} \label{eq: i neq j=0}
		\int_\mathcal{O}    \phi_i\left(\kappa_N(y)\right) \phi_j(y) \mathrm{d} y=0,\quad  i \neq j.
	\end{equation*}   
	This identity, together with \eqref{estimate: e^{-x} 1} with $\alpha=\frac{2 \alpha_1+5}{4}$, implies
	\begin{equation}  \label{es: L2}
		\begin{aligned}
			\sup _{x \in \mathcal{O}} \int_\mathcal{O}\left| \mathcal{L}_2^N(t,x,y)\right|^2 \mathrm{d} y 
			=& 	\sup _{x \in \mathcal{O}} \int_\mathcal{O} \sum_{j=1}^{N-1}\lambda_{N,j}^2   e^{-2\lambda_{N,j}^2 t} \left|\bar{\phi}_{N, j}(x)\right|^2   \left| \left(\phi_j\left(\kappa_N(y)\right)-\phi_j(y)\right)\right|^2\mathrm{d} y  \\
			\leq & C e^{- \lambda_{N,1}^2 t} \sum_{j=1}^{N-1} \lambda_{N,j}^2  e^{- \lambda_{N,j}^2 t} \int_\mathcal{O} \left| 
			\int_{\kappa_N(y)}^{y} j\sin(jz)\ \mathrm{d} z \right|^2\mathrm{d} y\\
			\leq & C  e^{- \lambda_{N,1}^2 t}N^{-2 } \sum_{j=1}^{N-1} \lambda_{N,j}^{3}e^{- \lambda_{N,j}^2 t}   \leq C  e^{- \lambda_{N,1}^2 t}  t^{-\frac{2 \alpha_1+5}{4}}N^{-2 \alpha_1}.
		\end{aligned}
	\end{equation} 
	In a similar manner, we can derive
	\begin{equation}  \label{es: L3}
		\begin{aligned}
			\sup _{x \in \mathcal{O}} \int_\mathcal{O}\left| \mathcal{L}_3^N(t,x,y)\right|^2 \mathrm{d} y 
			=&  \sum_{j=1}^{N-1}\lambda_{N,j}^2   e^{-2\lambda_{N,j}^2 t}	\sup _{x \in \mathcal{O}}   \left|\phi_j(x) -\bar{\phi}_{N, j}(x)\right|^2  \\ 
			\leq & C  e^{- \lambda_{N,1}^2 t}N^{-2 } \sum_{j=1}^{N-1} \lambda_{N,j}^{3}e^{- \lambda_{N,j}^2 t}   \leq C  e^{- \lambda_{N,1}^2 t}  t^{-\frac{2 \alpha_1+5}{4}}N^{-2 \alpha_1}.
		\end{aligned}
	\end{equation} 
	Given that
	$$\left|\lambda_{N,j}-\lambda_j \right|\leq Cj^4N^{-2}\  \ \text{and} \ \ \lambda_{N,j}\leq \lambda_j,\quad   1\leq j \leq N-1,$$
	and applying  \eqref{estimate: e^{-x} 1} with $\alpha=\frac{6 \alpha_1+5}{4}$ and   \eqref{estimate: e^{-x} 1} with $\hat{\alpha}=\frac{  \alpha_1 }{2}$, we obtain 
	\begin{equation}  \label{es: L4}
		\begin{aligned}
			\sup _{x \in \mathcal{O}} \int_\mathcal{O}\left| \mathcal{L}_4^N(t,x,y)\right|^2 \mathrm{d} y 
			=&  \sum_{j=1}^{N-1}\lambda_{N,j}^2   e^{-2\lambda_{N,j}^2 t}\left(1- e^{-2\left(\lambda_{j}^2-\lambda_{N,j}^2\right)  t}\right)^2  
			\leq C  e^{- \lambda_{N,1}^2 t}  t^{-\frac{2 \alpha_1+5}{4}}N^{-2 \alpha_1}.
		\end{aligned}
	\end{equation} 
	Similarly we can obtain 
	\begin{equation}  \label{es: L5}
		\begin{aligned}
			\sup _{x \in \mathcal{O}} \int_\mathcal{O}\left| \mathcal{L}_5^N(t,x,y)\right|^2 \mathrm{d} y 
			=&  \sum_{j=1}^{N-1}\left(\lambda_{j} - \lambda_{N, j} \right)^2   e^{-2\lambda_{j}^2 t}    \leq C  e^{- \lambda_{N,1}^2 t}  t^{-\frac{2 \alpha_1+5}{4}}N^{-2 \alpha_1}.
		\end{aligned}
	\end{equation} 
	\par By  H{\"o}lder's inequality and \eqref{es: L1}--\eqref{es: L5} one can get
	\begin{equation*}   
		\begin{aligned}
			\sup _{x \in \mathcal{O}}	\int_O\left|\Delta_N G_t^N(x, y)-\Delta G_t(x, y)\right|  \mathrm{ d} y \leq&C  \left(\int_O \sup _{x \in \mathcal{O}}\left|\Delta_N G_s^N(x, y)-\Delta G_s(x, y)\right|^2  \mathrm{ d} y \right)^\frac{1}{2}\\ 
			\leq&C  \left(\int_O \sup _{x \in \mathcal{O}} \sum_{k=1}^{5}	\left| \mathcal{L}_k^N(t,x,y)\right|^2   \mathrm{ d} y \right)^\frac{1}{2}\\
			\leq& C e^{-\lambda_{N, 1} t} t^{-\frac{2 \alpha_1+5}{8}} N^{-\alpha_1}.
		\end{aligned}
	\end{equation*}    
	This completes the proof.  
\end{proof}

\section*{Appendix D. Proof of Proposition 4.1}
\begin{proof}
	To estimate $\left\|u(t, \cdot)-u^N(t, \cdot)\right\|_{L^p(\Omega;L^\infty)}$, we introduce the auxiliary process $  \{\tilde{u}^N(t, x) \}_{(t, x) \in [0, T] \times \mathcal{O}}$ by
	\begin{equation}\label{eq:auxiliary_CH_FractionalNoise_solution_mild}
		\begin{aligned}
			\tilde{u}^N(t, x)= & \int_{\mathcal{O}} G_t^N(x, y) u_0\left(\kappa_N(y)\right) \mathrm{~d} y+\int_0^t \int_{\mathcal{O}} \Delta_N G_{t-s}^N(x, y) f\left(u\left(s, \kappa_N(y)\right)\right) \mathrm{~d} y \mathrm{d} s \\
			& +\sigma\sum_{j=1}^{N-1} \int_0^t \int_{\mathcal{O}} G_{t-s}^N(x, y)\phi_j\left(\kappa_N(y)\right) \mathrm{d} y \mathrm{d}  \beta_j(s)  , \quad (t, x) \in[0,\infty) \times \mathcal{O}.
		\end{aligned}
	\end{equation}
	Let $\tilde{U}^N_t=\big(\tilde{U}^N_{1,t} ,\cdots,\tilde{U}^N_{N,t}\big)^\top:=\big(\tilde{u}^N(t, x_1) ,\cdots,\tilde{u}^N(t, x_{N})\big)^\top $, which satisfies
	\begin{equation}\label{eq:eq:discrete_CH_FractionalNoise_SDE_matrix auxiliary process}
		\left\{  
		\begin{aligned}
			&\mathrm{d}\tilde{U}^N_t+A_N^2 \tilde{U}^N_t \mathrm{d} t=A_N F_N(\mathbb{U}_t) \mathrm{d} t+\sigma  \sum_{j=1}^{N-1}    \phi_{N,j}   \mathrm{d}  \beta_j(t)  , \quad t>0,\\
			&\tilde{U}_0=U_0.
		\end{aligned}
		\right.
	\end{equation}
	Here $\mathbb{U}_t:=\big(u(t, x_1),u(t, x_2),\cdots,u(t, x_{N})\big)^\top .$ 
	Then we have
	$$\left\|u(t, \cdot)-u^N(t, \cdot)\right\|_{L^p(\Omega;L^\infty)}\leq\left\|u(t, \cdot)-\tilde{u}^N(t,\cdot)\right\|_{L^p(\Omega;L^\infty)}+\left\|\tilde{u}^N(t, \cdot)-u^N(t,\cdot)\right\|_{L^p(\Omega;L^\infty)}.$$ \\
	\textbf{Step 1: } First, we need to verify
	\begin{equation}\label{es: u-tilder{u}^N}
		\sup_{t \geq 0}\left\|u(t,\cdot)-\tilde{u}^{N}(t, \cdot)\right\|_{L^p(\Omega;L^\infty)}   \leq C h^{1-\epsilon}.
	\end{equation}
	
	Following the proof of \cite{deng2025}[Proposition 3.3], we can directly verify that  
	\begin{equation*} 
		\begin{aligned}
			&\int_{\mathcal{O}} G_t(x, y) u_0\left(y\right) \mathrm{d}y=u_0(x)-\int_0^t \int_{\mathcal{O}} \Delta G_{t-s}(x, y) u_0^{\prime\prime}(y) \mathrm{d} y\mathrm{d}s ,\\
			&\int_{\mathcal{O}} G_t^N(x, y) u_0\left(\kappa_N(y)\right) \mathrm{d}y=\tilde{u}^N_0(x)-\int_{0}^{t}\int_{\mathcal{O}}\Delta_NG_s^N(x, y)\Delta_Nu_0((\kappa_N(y))\mathrm{d}y\mathrm{d}s,
		\end{aligned}
	\end{equation*}  
	where
	$  \tilde{u}^N_0(x):=(P_N\circ\mathbb{I}_4\circ u_0)(x).$\par 
	Then  we can obtain	  $ \tilde{u}^{N}(t,x)-u(t, x)=\sum_{j=1}^{N} \mathcal{I}_j(t,x)$, where
	\begin{equation*}
		\begin{aligned}
			\mathcal{I}_1(t,x) & :=\big(	\tilde{u}^N_0(x)-u_0(x)\big), \\
			\mathcal{I}_2(t,x) & :=	\int_0^t \int_{\mathcal{O}} \left(\Delta_NG_{t-s}^N(x, y)-\Delta G_{t-s}(x, y)\right)\left(f\left(u\left(s, \kappa_N(y)\right)\right)-\Delta_N u_0((\kappa_N(y)) \right)\mathrm{d}y\mathrm{d}s , \\
			\mathcal{I}_3(t,x) & :=\int_0^t \int_{\mathcal{O}}
			\Delta G_{t-s}(x, y)\Big(\left( f\left(u\left(s, \kappa_N(y)\right)\right)-f(u(s,y)) \right) -\left( \Delta_Nu_0(\kappa_N(y))-u^{\prime\prime}_0(y)\right)\Big) \mathrm{d} y \mathrm{d} s  , \\
			\mathcal{I}_4(t,x) & :=\sigma\sum_{j=1}^{N-1} \int_0^t \int_{\mathcal{O}} \left(G_{t-s}^N(x, y)\phi_j\left(\kappa_N(y)\right)- G_{t-s}(x, y)\phi_j\left(y\right) \right) \mathrm{d} y \mathrm{d}  \beta_j(s)\\
			& \quad-\sigma\sum_{j=N}^{\infty}\int_0^t \int_{\mathcal{O}} G_{t-s}(x, y)\phi_j\left(y\right) \mathrm{d} y \mathrm{d}  \beta_j(s).
		\end{aligned}
	\end{equation*}     
	For any $x\in\mathcal{O}$, according to  the definitions of $ \tilde{u}^N_0$ and $\Delta_N$ and Assumption \ref{assu:2},    we have
	\begin{equation} \label{ineq: tilde{u}^N_0-u_0}
		|\tilde{u}^N_0(x)-u_0(x)|\leq Ch^2 \  \text{and} \   |\Delta_Nu_0(\kappa_N(x))-u^{\prime\prime}_0(x)| \leq Ch^2. 
	\end{equation}
	It follows from \eqref{ineq: tilde{u}^N_0-u_0}, \eqref{ineq: u_regularity} and  \eqref{es: Delta_N G_s^N-Delta G_s(x, y) } with $\alpha_1=1-\epsilon$ that
	\begin{equation} \label{ineq: ||tilde{u^n}-u || 1}
		\begin{aligned}
			&\left\|\mathcal{I}_1(t,\cdot)\right\|_{L^p(\Omega;L^\infty)} 
			\leq 	\sup_{x\in\mathcal{O}}\big|\tilde{u}^N_0(x)-u_0(x)\big| 
			\leq Ch^2 ,
		\end{aligned}
	\end{equation}
	and
	\begin{equation} \label{ineq: ||tilde{u^n}-u || 1.1}
		\begin{aligned}
			\left\|\mathcal{I}_2(t,\cdot)\right\|_{L^p(\Omega;L^\infty)} 
			\leq& \int_{0}^{t}\sup_{x \in\mathcal{O}}\int_{\mathcal{O}}
			\left|\Delta_NG_{t-s}^N(x, y)-\Delta G_{t-s}(x, y)\right|\left(\|f\left(u\left(s, \kappa_N(y)\right)\right)\|_{L^p(\Omega;\mathbb{R})}+\Delta_Nu_0(y)   \right)\mathrm{d}y\mathrm{d}s  \\  
			\leq&	 \int_{0}^{t}\sup_{x \in\mathcal{O}}\int_{\mathcal{O}}
			\left|\Delta_NG_{t-s}^N(x, y)-\Delta G_{t-s}(x, y)\right|\left(\| u\left(s, \kappa_N(y) \right)\|_{L^{3p}(\Omega;\mathbb{R})}^3+1 \right)\mathrm{d}y\mathrm{d}s  \\  
			\leq& C\int_{0}^{t}\sup_{x \in\mathcal{O}}\int_{\mathcal{O}}
			\left|\Delta_NG_{t-s}^N(x, y)-\Delta G_{t-s}(x, y)\right| \mathrm{d}y\mathrm{d}s \\
			\leq& C\int_{0}^{t}e^{-\frac{\lambda_{N,1}}{2}(t-s)}  (t-s)^{-\frac{7+2\epsilon}{8}} h^{1-\epsilon}\mathrm{d}s  
			\leq Ch^{1-\epsilon}.
		\end{aligned}
	\end{equation}

	To estimate of  $\mathcal{I}_3(t,x)$, we need to investigate $u\left(t, \kappa_N(x)\right)-u(t,x)$. Taking an arbitrarily small positive number $\epsilon$, we can choose a sufficiently large number $q$ such that $q>\max\left\{ p+2,3/\epsilon \right\}$. For any $ t>0$ and $x\in \mathcal{O}$, utilizing Assumption \ref{assu:2}, \eqref{ineq: futher property of O_t}, H{\"o}lder's inequality, \eqref{es: Delta_N G_s^N-Delta G_s(x, y) } and \eqref{estimate: e^{-x} 1} with $\alpha=\frac{15}{8}$, we obtain 
	\begin{align*}
		& \left\|u\left(t, \kappa_N(x)\right)-u(t,x)\right\|_{L^{2q}(\Omega;\mathbb{R})}\\
		\leq&|u_0(\kappa_N(x))-u_0(x)|+ \left\|o(t,\kappa_N(x))-o(t,x)\right\|_{L^{2q}(\Omega;\mathbb{R})}\\
		&+\int_0^t \int_{\mathcal{O}}   \left|\Delta G_{t-s}(\kappa_N(x), y)- \Delta G_{t-s}(x, y)\right| \left(|u^{\prime\prime}_0\left(y\right)|+\left\|f\left(u\left(s, y\right)\right)\right\|_{L^{2q}(\Omega;\mathbb{R})}\right)  \mathrm{d} y \mathrm{d} s  \\
		\leq&Ch+   C\int_0^t \int_{\mathcal{O}}   \left|\Delta G_{t-s}(\kappa_N(x), y)- \Delta G_{t-s}(x, y)\right| \left(1+\left\|u\left(s, y\right)\right\|_{L^{6q}(\Omega;\mathbb{R})}^3\right)  \mathrm{d} y \mathrm{d} s\\
		\leq&Ch+ C  \int_0^t \Big(\int_{\mathcal{O}}   \left|\Delta G_{t-s}(\kappa_N(x), y)- \Delta G_{t-s}(x, y)\right|^2   \mathrm{d} y\Big)^\frac{1}{2} \mathrm{d} s 
		\leq Ch. 
	\end{align*}    
	It follows from the aforementioned estimate and the definition of $ G_t(x, y)$ that
	
	\begin{equation} \label{ineq: ||tilde{u^n}-u || 1.1}
		\begin{aligned}
			&\left\|\mathcal{I}_3(t,x)\right\|_{L^{q}(\Omega;\mathbb{R})}\\
			\leq& \int_0^t \int_{\mathcal{O}} \left|\Delta G_{t-s}(x, y)\right| 
			\left( 	|\Delta_Nu_0(y)-u^{\prime\prime}_0(y)|+	\|f\left(u\left(s, \kappa_N(y)\right)\right)-f(u(s,y)) \|_{L^{q}(\Omega;\mathbb{R})} \right)   \mathrm{d}y\mathrm{d}s   \\
			\leq&  C\int_0^t \int_{\mathcal{O}} 
			\Big| \sum_{j=0}^{\infty}\lambda_{ j} e^{-\lambda_{ j}^2 (t-s)}     \Big|
			\left(h^2+	\Big(1+\sup_{x\in\mathcal{O}}\|u(s,x) \|_{L^{4q}(\Omega;\mathbb{R})}^2\Big)\|u\left(s, \kappa_N(y)\right)-u(s,y) \|_{L^{2q}(\Omega;\mathbb{R})}\right)\mathrm{d}y\mathrm{d}s\\
			\leq& Ch\int_0^t  e^{-\frac{\lambda_{ 1}^2}{2} (t-s)}(t-s)^{-\frac{3}{4}} \mathrm{d}s\leq Ch .
		\end{aligned}
	\end{equation} 
	Moreover, for any $x,\ z\in \mathcal{O}$, by \eqref{es:  DeltaG_t(x,y)-DeltaG_t(z,y)} with $\alpha=\frac{3}{4}$ we obtain
	\begin{equation*} 
		\begin{aligned}
			\left\|\mathcal{I}_3(t,x)-\mathcal{I}_3(t,z)\right\|_{L^{q}(\Omega;\mathbb{R})}
			&\leq  Ch\int_0^t \int_{\mathcal{O}} \left|\Delta G_s(x, y)-\Delta G_s(z, y)\right|   \mathrm{d}y\mathrm{d}s  \\
			&\leq  Ch\int_0^t e^{-\frac{\lambda_{ 1}^2}{2}(t-s)}\left|x-z\right|^\frac{3}{4}\mathrm{d}s   
			\leq Ch|x-z|^\frac{3}{4}.  
		\end{aligned}
	\end{equation*} 
	Thus we employ \cite[Theorem 2.1]{revuz2013continuous}  to obtain 
	\begin{equation*}   
		\begin{aligned}
			\mathbb{E}\left[\sup_{x\neq z} \left( 
			\frac{\frac{1}{h}|\mathcal{I}_3(t,x)-\mathcal{I}_3(t,z)|}{|x-z|^{\frac{3}{4}-\frac{2}{q}}} \right)^q\right] \leq C,
		\end{aligned}
	\end{equation*}  
	which combined with \eqref{ineq: ||tilde{u^n}-u || 1.1} yields
	\begin{equation}  \label{ineq: ||tilde{u^n}-u || 2}
		\begin{aligned}
			\left\|\mathcal{I}_3(t,\cdot)\right\|_{L^p(\Omega;L^\infty)}\leq	\left\|\mathcal{I}_3(t,\cdot)\right\|_{L^q(\Omega;L^\infty)}
			\leq	\left\|\mathcal{I}_3(t,\cdot)-\mathcal{I}_3(t,1)\right\|_{L^q(\Omega;L^\infty)}+	\left\|\mathcal{I}_3(t,1)\right\|_{L^{q}(\Omega;\mathbb{R})}\leq Ch.
		\end{aligned}
	\end{equation} 
	Combining the orthonormality of $\left\{\phi_j\right\}_{j\geq 0} $ and $\left\{\phi_{N,j}\right\}_{j=0}^{N-1} $  with  Itô's isometry, we can obtain
	\begin{equation} \label{ineq: ||tilde{u^n}-u || 2.1}
		\begin{aligned} 
			\left\|\mathcal{I}_4(t,x)\right\|_{L^{q}(\Omega;\mathbb{R})}^2\leq C \sum_{k=1}^{3} 	\mathcal{I}_{4,k}(t,x),
		\end{aligned}
	\end{equation}
	where
	$$
	\begin{aligned}
		&	\mathcal{I}_{4,1}(t,x)=  \int_{0}^{t} \sum_{j=N}^{\infty}\lambda_{j}^2 e^{-2\lambda_j^2 (t-s)} \mathrm{d}s   , \  
		\mathcal{I}_{4,2}(t,x)= \int_{0}^{t}\sum_{j=0}^{N-1}  e^{-2\lambda_{N, j}^2 t} \left(\phi_j(x) -\bar{\phi}_{N, j}(x)\right)^2  \mathrm{d}s ,\\ 
		&		\mathcal{I}_{4,3}(t,x)=\int_{0}^{t}\sum_{j=0}^{N-1} \left( e^{-\lambda_j^2 t}- e^{-\lambda_{N, j}^2 t}\right) |\phi_j(x)|^2\mathrm{d}s .
	\end{aligned}
	$$ 
	A proof similar to the one used in \eqref{es: L1}, \eqref{es: L3}, and \eqref{es: L4} shows that
	\begin{equation} \label{ineq: ||tilde{u^n}-u || 2.1}
		\begin{aligned}
			\left\|\mathcal{I}_4(t,x)\right\|_{L^{q}(\Omega;\mathbb{R})}\leq C \sqrt{\sum_{k=1}^{3} 	\mathcal{I}_{4,k}(t,x)}\leq Ch^{1-\frac{\epsilon}{2}}.
		\end{aligned}
	\end{equation}
	According to \eqref{es:  G_t(x,y)-G_t(z,y)} and \eqref{ineq: futher property of O_t}, one has 
	\begin{equation*} 
		\begin{aligned}
			\left\|\mathcal{I}_4(t,x)-\mathcal{I}_4(t,z)\right\|_{L^{q}(\Omega;\mathbb{R})} 
			\leq C|x-z|.
		\end{aligned}
	\end{equation*} 
	Combining this with  \cite[Theorem 2.1]{revuz2013continuous}  leads to
	\begin{equation}   \label{ineq: ||tilde{u^n}-u || 2.2}
		\begin{aligned}
			\mathbb{E}\left[\sup_{x\neq z} \left(\frac{|\mathcal{I}_4(t,x)-\mathcal{I}_4(t,z)|}{|x-z|^{1-\frac{2}{q}}} \right)^q\right] \leq C.
		\end{aligned}
	\end{equation}  
	Then it can follow from \eqref{ineq: ||tilde{u^n}-u || 2.1} and \eqref{ineq: ||tilde{u^n}-u || 2.2} that
	\begin{equation}    \label{ineq: ||tilde{u^n}-u || 3}
		\begin{aligned}
			\left\|\mathcal{I}_4(t,\cdot)\right\|_{L^p(\Omega;L^\infty)} 
			\leq&\left\|\mathcal{I}_4(t,\cdot)\right\|_{L^q(\Omega;L^\infty)}\\ 
			\leq &
			C\left(\mathbb{E}\left[\sup_{1\leq i\leq N}\left| \mathcal{I}_4(t,x_i)\right|^q\right]\right)^\frac{1}{q} +C\left(\mathbb{E}\left[\sup_{1\leq i\leq N}\sup_{x \in[(i-1)h, ih]}\left|\mathcal{I}_4(t,x_i)-\mathcal{I}_4(t,x) \right|^q\right]\right)^\frac{1}{q}\\
			\leq &
			C\left(\sum_{i=1}^{N}\mathbb{E}\left[\left| \mathcal{I}_4(t,x_i)\right|^q\right]\right)^\frac{1}{q} +C\left(\sum_{i=1}^{N}\mathbb{E}\left[ \sup_{x \in[(i-1)h, ih]}\left|\mathcal{I}_4(t,x_i)-\mathcal{I}_4(t,x)\right|^q\right]\right)^\frac{1}{q}\\
			\leq &
			Ch^{1-\epsilon}.
		\end{aligned}
	\end{equation} 
	%
	
	\textbf{Step 2: } We next prove
	\begin{equation}\label{es: tilder{u}^N-u^N}
		\sup_{t \geq 0}\left\|\tilde{u}^{N}(t, \cdot)-u^N(t,\cdot)\right\|_{L^p(\Omega;L^\infty)}   \leq C h^{1-\epsilon}.
	\end{equation}
	We denote $E^N_t:= \tilde{U}^N_t-U^N_t$. This allows us to derive the error equation 
	\begin{equation} \label{eq: U-tilder(U)^N_matrix}
		\left\{\begin{aligned} 
			&\mathrm{d}  E_t^N+A_N^2 E_t^N \mathrm{d}  t=A_N\left[F_N (U_t^N+E_t^N+(\mathbb{U}_t-\tilde{U}^N_t) )-F_N (U_t^N)\right]\mathrm{d}  t,\ t \in(0,\infty), \\
			&E_0^N=0.
		\end{aligned}\right.
	\end{equation}
	For any $t\in[0,\infty)$,  an argument similar to the one used in Proposition \ref{propo: error problem  exact} show 
	\begin{equation}\label{es: solution_perturbed problem APPXI}
		\begin{aligned}
			\| E_t^N  \|_{L^p(\Omega;l^2_N)}  \leq &  C\sqrt{K_5(t)}
			\left(\mathbb{I}_1\big(t,\frac{1}{2},\frac{\lambda_{N,1}^2}{2},\big\|\mathbb{U} -\tilde{U}^N \big\|_{L^{2p}(\Omega;l^2_N))}^2    \big)  \right)^\frac{1}{2}  \\
			&+	C \sqrt{K_5(t)K_6(t)}   \left(
			\mathbb{I}_1\big(t,0,\epsilon_0,   \left\|\mathbb{U} -\tilde{U}^N \right\|_{L^{2p}(\Omega;l^4_N)}^4   \big) 	\right)^\frac{1}{4} , \quad t\in[0,\infty),
		\end{aligned}
	\end{equation} 
	where 
	\begin{equation*} 
		\begin{aligned}
			&K_5(t):=\mathbb{I}_1\big(t,0,\epsilon_0,1+
			\left\|\tilde{U}^N \right\|_{L^{2p}(\Omega;l^8_N)}^8+\left\|\mathbb{U} -\tilde{U}^N\right\|_{L^{2p}(\Omega;l^8_N)}^8\big) ,\\ 
			&K_6(t):=\mathbb{I}_1\big(t,\frac{1}{2},\frac{\epsilon_0}{2},1+	\big\|\mathbb{U}_s\big\|_{L^{2p}(\Omega; w^{1,2}_N)}^4+\big\|\tilde{U}^N_s\big\|_{L^{2p}(\Omega; w^{1,2}_N)}^4+	 U^N_s\|_{L^{2p}(\Omega; w^{1,2}_N)}^4\big) .  
		\end{aligned}
	\end{equation*}  
	It follows \eqref{es: U  uniform W1,2}  that 
	$$  \big\|\mathbb{U}_t\big\|_{L^{p}(\Omega; w^{1,2}_N)}\leq C.$$
	By a similar proof, we can obtain 
	$$  \big\|U^N_t\big\|_{L^{p}(\Omega; w^{1,2}_N)}\leq C.$$
	By utilizing \eqref{ineq: e^{-A_N^2t}(-A_N)^r to l2}   and Lemma \ref{lem: O^{M,N}_continuous} one can get that 
	\begin{equation}
		\begin{aligned}
			\left\|  \tilde{U}_t^N\right\|_{w^{1,2}_N} & \leq\left\|e^{-A_N^2 t} u^N_0\right\|_{w^{1,2}_N}+\int_0^t\left\|\left(-A_N\right)^{\frac{3}{2}} e^{-A_N^2(t-s)} F_N\left(\mathbb{U}_s\right)\right\|_{l_N^2} \mathrm{ d} s+\left\|  \mathscr{O}_t^N\right\|_{w^{1,2}_N} \\
			& \leq C\left(\left\|  u^N_0\right\|_{w^{1,2}_N}+\int_0^t(t-s)^{-\frac{3}{4}}e^{-\frac{\lambda_{N,1}}{2}}\left\|  F_N\left(\mathbb{U}_s\right)\right\|_{l_N^2} \mathrm{~d} s+\left\|  \mathscr{O}_t^N\right\|_{w^{1,2}_N} \right)   \leq C .
		\end{aligned}
	\end{equation} 
	Then it follows from the above estimates that
	$$
	\mathbb{E}\left[\left\|E_t^N\right\|_{l_N^2}^q\right] \leq C h^{q(1-\epsilon)}, \quad   q \geq 1.
	$$
	Furthermore,	we can obtain
	\begin{align*}
		&\left\|u^N(t, \cdot)-\tilde{u}^N(t,\cdot)\right\|_{L^p(\Omega;L^\infty)}\\
		=& \left\|E^N_t\right\|_{L^p(\Omega;l^\infty_N)}
		\leq C \left(	\mathbb{E}\left[	\left\| E^N_t \right\|_{w^{1,2}_N}^p\right]
		\right)^\frac{1}{p} \\ 
		& \leq C\left\|\int_0^t\Big\|\left(-A_N\right)^{\frac{3}{2}} e^{-A_N^2(t-s)}\big[F_N\left(\mathbb{U}_s\right)-F_N (U_s^N )\big]\Big\|_{l_N^2} \mathrm{ d} s\right\|_{L^p(\Omega; \mathbb{R})}+C h^{1-\epsilon} \\
		& \leq C \int_0^t(t-s)^{-\frac{3}{4}}e^{-\frac{\lambda_{N,1}}{2}}\Big\|1+\| \mathbb{U}_s \|_{l_N^{\infty}}^2+ \| U_s^N \|_{l_N^{\infty}}^2\Big\|_{L^{2 p}(\Omega; \mathbb{R})}\left\|\mathbb{U}_s-U_s^N\right\|_{L^{2 p}\left(\Omega; l_N^2\right)} \mathrm{d} s+C h^{1-\epsilon} \\
		& \leq C \int_0^t(t-s)^{-\frac{3}{4}}e^{-\frac{\lambda_{N,1}}{2}}\left(\left\|E_s^N\right\|_{L^{2 p}\left(\Omega; l_N^2\right)}+\left\|\mathbb{U}_t-\tilde{U}_t^N\right\|_{L^{2 p}\left(\Omega; l_N^2\right)}\right) \mathrm{d} s+C h^{1-\epsilon} \\
		& \leq C h^{1-\epsilon} 
	\end{align*} 
	This proof is completed. 
\end{proof}

	\section*{Appendix E.}
	\begin{proof}
		When $\alpha_1 = 1$,	 each function $\phi_{v,1,\alpha_2}$ is readily verified to be bounded and uniformly continuous. It remains to verify that the ergodic limit equals zero.\par 
		
		Suppose $u_0 = -\frac{a_1}{3a_0}$. Then, for any $\tau \in \mathbb{R}^+$ and $N \in \mathbb{N}^+$, we have  $U^{\tau,N}_{t_0}=-\frac{a_1}{3a_0}\phi_{N,0}$.
		We proceed by mathematical induction to demonstrate that, for any $m\in\mathbb{N} $,  
		\begin{equation}\label{math induction}
			U^{\tau,N}_{t_m}= -\frac{a_1}{3a_0}\phi_{N,0}+\sum_{j=1}^{N-1} \tilde{X}_{m,j}\phi_{N,j} ,
		\end{equation}
		where $ \{\tilde{X}_{m,j}\}_{j=1}^{N-1}$ denotes real-valued random variables symmetric about zero.

		For the base case $m=1$, it follows from \eqref{eq: mild_solution_full_discrete discrete} and  the orthonormality of $\left\{\phi_{N,j}\right\}_{j=0}^{N-1} $ that
		\begin{equation*} 
			\begin{aligned}
				\langle  	U^{\tau,N}_{t_{1}} ,\phi_{N,j}\rangle_{l^{2}_N}
				=& e^{-\lambda_{N,j}^2 \tau}  \langle  	U^{\tau,N}_{t_0} ,\phi_{N,j}\rangle_{l^{2}_N}
				+\tau \lambda_{N,j} e^{-\lambda_{N,j}^2 \tau}  \langle  \tilde{F} (U^{\tau,N}_{t_0}) ,\phi_{N,j}\rangle_{l^{2}_N}
				+\sigma e^{-\lambda_{N,j}^2 \tau}  \langle  \Delta \beta_0 ,\phi_{N,j}\rangle_{l^{2}_N}\\
				=&   \sigma e^{-\lambda_{N,j}^2 \tau}     \Delta \beta_{0,j} ,\quad j=1,2,\cdots,N-1.
			\end{aligned}
		\end{equation*} 
		Since $ \langle  	U^{\tau,N}_{t_{1}} ,\phi_{N,j}\rangle_{l^{2}_N}=-\frac{a_1}{3a_0} $, we have
		\begin{equation*} 
			\begin{aligned}
				U^{\tau,N}_{t_{1}}=\sum_{j=0}^{N-1} \langle  	U^{\tau,N}_{t_{1}} ,\phi_{N,j}\rangle_{l^{2}_N}\phi_{N,j}
				=-\frac{a_1}{3a_0}\phi_{N,0}+\sigma \sum_{j=1}^{N-1}  \Delta \beta_{0,j}\phi_{N,j},
			\end{aligned}
		\end{equation*}
		which, combined with the fact that $\{\beta_{0,j}\}_{j=1}^{N-1}$ are Gaussian random variables with mean zero, confirms that
		\eqref{math induction} holds.
		
		Assume now that \eqref{math induction} holds for  $m=k$. For $m=k+1$, it again follows from \eqref{eq: mild_solution_full_discrete discrete} and orthonormality that
		\begin{equation}  \label{math induction 1}
			\begin{aligned}
				\langle  	U^{\tau,N}_{t_{k+1}} ,\phi_{N,j}\rangle_{l^{2}_N}
				=& e^{-\lambda_{N,j}^2 \tau}  \langle  	U^{\tau,N}_{t_{k}} ,\phi_{N,j}\rangle_{l^{2}_N}
				+\tau \lambda_{N,j} e^{-\lambda_{N,j}^2 \tau}  \langle  \tilde{F} (U^{\tau,N}_{t_{k}}) ,\phi_{N,j}\rangle_{l^{2}_N}
				+\sigma e^{-\lambda_{N,j}^2 \tau}  \langle  \Delta \beta_{k} ,\phi_{N,j}\rangle_{l^{2}_N}\\
				=&   \tau \lambda_{N,j} e^{-\lambda_{N,j}^2 \tau}  \langle  \tilde{F} (U^{\tau,N}_{t_{k}}) ,\phi_{N,j}\rangle_{l^{2}_N}+\sigma e^{-\lambda_{N,j}^2 \tau}     \Delta \beta_{k,j} ,\quad j=1,2,\cdots,N-1.
			\end{aligned}
		\end{equation} 
		According the definition of $\tilde{F}$ and the orthonormality of $\left\{\phi_{N,j}\right\}_{j=0}^{N-1} $, we have
		\begin{equation} \label{math induction 2}
			\begin{aligned}
				\langle\tilde{F} (U^{\tau,N}_{t_{k}}) ,\phi_{N,j}\rangle_{l^{2}_N}
				=&  \frac{1}{1+\tau  \big\| U^{\tau,N}_{t_{k}} \big\|_{w^{1,2}_N}^{12}}  \Big\langle a_0  \Big(U^{\tau,N}_{t_{k}}\Big)^3+a_1 \Big(U^{\tau,N}_{t_{k}}\Big)^2+
				a_2  U^{\tau,N}_{t_{k}} +a_3  ,\phi_{N,j} \Big\rangle_{l^{2}_N}\\
				=&  \frac{ \Big\langle a_0  \Big(\sum_{j=1}^{N-1} \tilde{X}_{m,j}\phi_{N,j}\Big)^2+ 
					\Big(	-\frac{a_1^2}{3a_0}+a_2 \Big) \sum_{j=1}^{N-1} \tilde{X}_{m,j}\phi_{N,j} ,\phi_{N,j} \Big\rangle_{l^{2}_N}}{1+\tau  \Big( \frac{a_1^2}{9a_0^2}+\sum_{j=1}^{N-1}(1+\lambda_{N,j}) \tilde{X}_{m,j}^2  \Big)^6 }  \\
			\end{aligned}
		\end{equation} 
		By substituting \eqref{math induction 2} into \eqref{math induction 1} and using the symmetry of  $ \{\tilde{X}_{k,j}\}_{j=1}^{N-1}$ and  $\{\beta_{k,j}\}_{j=1}^{N-1}$, we deduce that \eqref{math induction} also holds for  $m=k+1$.
		Thus, by the principle of mathematical induction, the claim holds for all $m\in \mathbb{N}$.
		
		It then follows that $ \mathbb{E} \left[\phi_{v,1,\alpha_2} (u^{\tau,N}( t_m,\cdot)  )\right]=0$ for all $m\in \mathbb{N}$. 
		From \eqref{eq: ergodicity M N}, we conclude that 
		$$  \int_{V^N} \phi_{v,1,\alpha_2}(v) \tilde{\pi}^{\tau,N}( \mathrm{ d} v ) =0. $$
		Finally, applying Theorem \ref{th: error: invariant measure } , we obtain
		$$  \int_{H_\alpha} \phi_{v,1,\alpha_2}(v)  \pi ( \mathrm{ d} v ) =0,$$
		which completes the proof. 
	\end{proof}   


\begin{thebibliography}{44}
	
	\bibitem{antonopoulou2016existence}
	Antonopoulou D C, Karali G, Millet A. 
	Existence and regularity of solution for a stochastic Cahn–Hilliard/Allen–Cahn equation with unbounded noise diffusion.
	J. Differ. Equ., 2016, 260: 2383–2417.
	
	\bibitem{Brehier2016}
	Br\'ehier C E, Vilmart G. 
	High order integrator for sampling the invariant distribution of parabolic SPDEs with additive space-time noise.
	SIAM J. Sci. Comput., 2016, 38: A2283–A2306.
	
	\bibitem{Brehier2022}
	Br\'ehier C E.
	Approximation of the invariant distribution for ergodic SPDEs using an explicit tamed exponential Euler scheme.
	ESAIM Math. Model. Numer. Anal., 2022, 56: 151–175.
	
	\bibitem{Cahn1961}
	Cahn J W.
	On spinodal decomposition.
	Acta Metall., 1961, 9: 795–801.
	
	\bibitem{cai2023strong}
	Cai M, Qi R, Wang X.
	Strong convergence rates of an explicit scheme for the stochastic Cahn–Hilliard equation with additive noise.
	BIT, 2023, 63: 43.
	
	\bibitem{cai2023weak}
	Cai M, Gan S, Hu Y.
	Weak convergence of the backward Euler method for stochastic Cahn–Hilliard equation with additive noise.
	Appl. Numer. Math., 2023, 188: 1–20.
	
	\bibitem{Cardon2000}
	Cardon-Weber C.
	Implicit approximation scheme for the Cahn–Hilliard stochastic equation.
	Prépublication 613. Paris: Laboratoire de Probabilités et Modèles Aléatoires, Université Paris VI, 2000.
	
	\bibitem{Cardon-Weber2001}
	Cardon-Weber C.
	Cahn–Hilliard stochastic equation: existence of the solution and of its density.
	Bernoulli, 2001, 7: 777–816.
	
	\bibitem{chen2017approximation}
	Chen C, Hong J, Wang X.
	Approximation of invariant measure for damped stochastic nonlinear Schrödinger equation via an ergodic numerical scheme.
	Potential Anal., 2017, 46: 323–367.
	
	\bibitem{chen2020approximation}
	Chen Z, Gan S, Wang X.
	A full-discrete exponential Euler approximation of the invariant measure for parabolic SPDEs.
	Appl. Numer. Math., 2020, 157: 135–158.
	
	\bibitem{cook1970brownian}
	Cook H E.
	Brownian motion in spinodal decomposition.
	Acta Metall., 1970, 18: 297–306.
	
	\bibitem{cui2020absolute}
	Cui J, Hong J.
	Absolute continuity and numerical approximation of stochastic Cahn–Hilliard equation with unbounded noise diffusion.
	J. Differ. Equ., 2020, 269: 10143–10180.
	
	\bibitem{CuiHongSun2021}
	Cui J B, Hong J, Sun L Y.
	Weak convergence and invariant measure of full discretization for parabolic SPDEs with non-globally Lipschitz coefficients.
	Stoch. Process. Appl., 2021, 134: 55–93.
	
	\bibitem{cui2022wellposedness}
	Cui J, Hong J.
	Wellposedness and regularity estimates for stochastic Cahn–Hilliard equation with unbounded noise diffusion.
	Stoch. Partial Differ. Equ.-Anal. Comput., 2023, 11: 1635–1671.
	
	\bibitem{da1996stochastic}
	Da Prato G, Debussche A.
	Stochastic Cahn–Hilliard equation.
	Nonlinear Anal., 1996, 26: 241–263.
	
	\bibitem{da2014stochastic}
	Da Prato G, Zabczyk J.
	Stochastic Equations in Infinite Dimensions.
	Cambridge: Cambridge University Press, 2014.
	
	\bibitem{debussche2007}
	Debussche A, Zambotti L.
	Conservative stochastic Cahn–Hilliard equation with reflection.
	Ann. Probab., 2007, 35: 1706–1739.
	
	\bibitem{debussche2011}
	Debussche A, Goudenège L.
	Stochastic Cahn–Hilliard equation with double singular nonlinearities and two reflections.
	SIAM J. Math. Anal., 2011, 43: 1473–1494.
	
	\bibitem{deng2025}
	Deng N, Cao W.
	Finite difference method for stochastic Cahn–Hilliard equation driven by a fractional Brownian sheet.
	2025.
	
	\bibitem{elezovic1991stochastic}
	Elezovic N, Mikelic A.
	On the stochastic Cahn–Hilliard equation.
	Nonlinear Anal., 1991, 16: 1169–1200.
	
	\bibitem{furihata2018strong}
	Furihata D, Kovács M, Larsson S, Lindgren F.
	Strong convergence of a fully discrete finite element approximation of the stochastic Cahn–Hilliard equation.
	SIAM J. Numer. Anal., 2018, 56: 708–731.
	
	\bibitem{goudenege2015}
	Goudenège L, Manca L.
	Asymptotic properties of stochastic Cahn–Hilliard equation with singular nonlinearity and degenerate noise.
	Stoch. Process. Appl., 2015, 125: 3785–3800.
	
	\bibitem{Hong2024}
	Hong J, Jin D, Sheng D.
	Density convergence of a fully discrete finite difference method for stochastic Cahn–Hilliard equation.
	Math. Comp., 2024, 93: 2215–2264.
	
	\bibitem{Hong2019}
	Hong J, Wang X.
	Invariant measures for stochastic nonlinear Schrödinger equations: numerical approximations and symplectic structures.
	Singapore: Springer, 2019.
	
	\bibitem{Jiang2025}
	Jiang Y, Wang X.
	Uniform-in-time weak error estimates of explicit full-discretization schemes for SPDEs with non-globally Lipschitz coefficients.
	arXiv:2504.21364, 2025.
	
	\bibitem{langer1971theory}
	Langer J S.
	Theory of spinodal decomposition in alloys.
	Ann. Phys., 1971, 65: 53–86.
	
	\bibitem{Liu2025}
	Liu Z, Liu Z.
	Numerical unique ergodicity of monotone SDEs driven by nondegenerate multiplicative noise.
	J. Sci. Comput., 2025, 103: 87.
	
	\bibitem{LiuShen2025}
	Liu Z, Shen J.
	Geometric ergodicity and optimal error estimates for a class of tamed schemes for super-linear stochastic PDEs.
	arXiv:2502.19117, 2025.
	
	\bibitem{jiang2014}
	Jiang Y M, Shi K H, Wang S X.
	Stochastic Cahn–Hilliard equations driven by Poisson random measures.
	Sci. China Math., 2014, 57: 2563–2576.
	
	\bibitem{kovacs2011finite}
	Kovács M, Larsson S, Mesforush A.
	Finite element approximation of the Cahn–Hilliard–Cook equation.
	SIAM J. Numer. Anal., 2011, 49: 2407–2429.
	
	\bibitem{mattingly2002}
	Mattingly J C, Stuart A M, Higham D J.
	Ergodicity for SDEs and approximations: locally Lipschitz vector fields and degenerate noise.
	Stoch. Process. Appl., 2002, 101: 185–232.
	
	\bibitem{qi2022strong}
	Qi R, Cai M, Wang X.
	Strong convergence rates of a fully discrete scheme for the stochastic Cahn–Hilliard equation with additive noise.
	Commun. Math. Sci., 2024, 22: 1307–1346.
	
	\bibitem{Qi2020}
	Qi R, Wang X.
	Error estimates for semidiscrete and fully discrete finite element methods for the Cahn–Hilliard–Cook equation.
	SIAM J. Numer. Anal., 2020, 58: 1613–1653.
	
	\bibitem{revuz2013continuous}
	Revuz D, Yor M.
	Continuous Martingales and Brownian Motion.
	Berlin: Springer-Verlag, 1999.
	
	\bibitem{WangY2024}
	Wang Y, Cao W.
	Approximation of the invariant measure for stochastic Allen–Cahn equation via an explicit fully discrete scheme.
	arXiv:2408.00953, 2024.
	
	\bibitem{WangZhangCao2025}
	Wang Y, Zhang S, Cao W.
	Approximation of the invariant measure for stochastic Burgers–Huxley equation via an explicit fully discrete scheme.
	Discrete Contin. Dyn. Syst. S., 2025, 18: 3532–3559.
	
	\bibitem{Sun2023}
	Sun Z, Zhang Q, Gao G.
	Finite difference methods for nonlinear evolution equations.
	Berlin: De Gruyter, 2023.
	
\end{thebibliography}
\end{document}